\newcommand{\dev}{\mathsf{dev}}
\newcommand{\hol}{\mathsf{hol}} 
\newcommand{\Teich}{\mathsf{Teich}}
\newcommand{\Fix}{\mathsf{Fix}}
\newcommand{\proj}{\mathsf{proj}} 
\newcommand{\spann}{\mathsf{span}}
\newcommand{\V}{\mathcal{E}} 
\definecolor{bluegreen}{RGB}{5, 170, 204}
\definecolor{defnyellow}{RGB}{255,224,102} 
\definecolor{lightblue}{RGB}{170, 220, 255} 
\definecolor{darkblue}{RGB}{0, 125, 230}
\newcommand{\Gtwo}{\mathsf{G}_2}
\newcommand{\Gtwosplit}{\mathsf{G}_2^{\mathsf{'}}}
\newcommand{\Sym}{\mathsf{Sym}}
\newtcolorbox{bluebox}[1]{colback=lightblue,colframe=darkblue,fonttitle=\bfseries,title=#1}
\newtcolorbox{redbox}[1]{colback=red!5!white,colframe=red!75!black,fonttitle=\bfseries,title=#1}
\pgfplotsset{
    compat=newest,
    colormap={mycolormap}{color=(lightgray) color=(white) color=(lightgray) } }
\definecolor{mydarkblue}{RGB}{37, 42, 200}
\newcommand{\sff}{\mathrm{I\!I}}
\newcommand{\tff}{\mathrm{I\!I\!I}}
\definecolor{mygreen}{RGB}{0, 150, 50}
\newcommand{\radialtorus}{\mathbb{S}^1 \times \mathbb{S}^1 \times \R_+} 
\newcommand{\Ein}{\mathsf{Ein}}
\newcommand{\imoct}{\mathsf{Im}(\Oct')}
\newcommand{\SO}{\mathsf{SO}}
\newcommand{\Gr}{\mathsf{Gr}} 
\newcommand{\Stab}{\mathsf{Stab}} 
\newcommand{\R}{\mathbb R}
\newcommand{\RP}{\mathbb R \mathbb P}
\newcommand{\C}{\mathbb C}
\newcommand{\Z}{\mathbb Z}
\newcommand{\Ha}{\mathbb{H}} 
\newcommand{\F}{\mathbb{F}}
\newcommand{\g}{\mathfrak{g}}
\newcommand{\frakk}{\mathfrak{k}} 
\newcommand{\h}{\mathfrak{h}}
\newcommand{\K}{\mathcal{K}}
\newcommand{\End}{\mathsf{End}}
\newcommand{\Stwofour}{\mathbb{S}^{2,4}} 
\newcommand{\quadric}{\hat{ \mathbb{S}}^{2,4}}
\newcommand{\PSL}{\mathsf{PSL}}
\newcommand{\Sp}{\mathsf{Sp}}
\newcommand{\GL}{\mathsf{GL}}
\newcommand{\gl}{\mathfrak{gl}}
\newcommand{\id}{\text{id}}
  \newcommand{\Aut}{\mathsf{Aut}}
 \newcommand{\frakt}{\mathfrak{t}} 
 \newcommand{\Hom}{\mathsf{Hom}}
 \newcommand{\Fr}{\mathsf{Fr}\,}
 \newcommand{\Ad}{\text{Ad}}
\newcommand{\delbar}{\overline{\partial}}
\newcommand{\Ann}{\mathsf{Ann}}
\newcommand{\Iso}{\mathsf{Iso}} 
\newcommand{\Isom}{\mathsf{Isom}}
\newcommand{\der}[1]{\frac{\partial}{\partial #1}} 
\newcommand{\deriv}[2]{\frac{\partial #1}{\partial #2}}
\newcommand{\zbar}{\bar{z}}
\newcommand{\rank}{\text{rank}}
\newcommand{\im}{\text{im}}
\newcommand{\sig}{\mathsf{sig}}
\newcommand{\Eintwothree}{\mathsf{Ein}^{2,3}} 
\newcommand{\Oct}{\mathbb{O}}
\newcommand{\Hit}{\mathsf{Hit}}
\newcommand{\Diff}{\mathsf{Diff}}
\theoremstyle{plain}
\newtheorem{theorem}{Theorem}[section]
\newtheorem{proposition}[theorem]{Proposition}
\newtheorem{lemma}[theorem]{Lemma}
\newtheorem{corollary}[theorem]{Corollary}
\newtheorem{definition}[theorem]{Definition}
\newtheorem{remark}[theorem]{Remark}
\newcommand{\alignL}{\begin{flushleft}}
\newcommand{\alignLend}{\end{flushleft}}
\title{Geometric Structures for the $\Gtwosplit$-Hitchin Component}
\author{Parker Evans}
\thanks{The author acknowledge(s) support from U.S. National Science Foundation grants DMS-2005551 and NSF DMS-1745670.}
\address{Department of Mathematics, Rice University, 
Houston, TX 77005}
\email{pge1@rice.edu}
\keywords{$(G,X)$-structure, $\Gtwo$, Hitchin component, almost-complex curve, surface group representation, (2,3,5)-distribution.} 
\subjclass[2020]{Primary 57M50, 20H10; Secondary 53C26, 53C43, 22E40.}
\numberwithin{equation}{section}
\begin{document}

\maketitle 

\begin{abstract}
We give an explicit geometric structures interpretation of the $\Gtwosplit$-Hitchin component $\Hit(S, \Gtwosplit) \subset \chi(\pi_1S,\Gtwosplit)$ of a closed oriented surface $S$ of genus $g \geq 2$. In particular, 
we prove $\Hit(S, \Gtwosplit)$ is naturally homeomorphic to a moduli space $\mathscr{M}$ of $(G,X)$-structures for $G = \Gtwosplit$ and $X = \Eintwothree$ on a fiber bundle $\mathscr{C}$ over $S$ via the descended holonomy map. Explicitly, $\mathscr{C}$ is the direct sum of fiber bundles $\mathscr{C} =UT S \oplus UT S \oplus \uline{\R_+}$ 
with fiber $\mathscr{C}_p = UT_p S \times UT_p S \times \R_+$, where $UT S$ denotes the unit tangent bundle. \\

The geometric structure associated to a $\Gtwosplit$-Hitchin representation $\rho$ is explicitly constructed from the unique associated $\rho$-equivariant alternating almost-complex curve $\hat{\nu}: \tilde{S} \rightarrow \quadric$; we critically use recent work of Collier-Toulisse on the moduli space of such curves. 
Our explicit geometric structures are examined in the $\Gtwosplit$-Fuchsian case and shown 
to be unrelated to the $(\Gtwosplit,\Eintwothree)$-structures of Guichard-Wienhard. 
\end{abstract} 

\newpage 

\tableofcontents 

\newpage

\section{Introduction} 

Let $S := S_g$ be a closed oriented surface of genus $g \geq 2$. 
The Hitchin component $\mathsf{Hit}(S, G)$ of a split real simple (adjoint) Lie group $G$ is a connected component 
 in the character variety $\chi(\pi_1 S, G) = \Hom^{\mathsf{Red}}( \pi_1 S, G)/G$, first defined by Hitchin in 1992. The component $\Hit(S,G)$ is a generalization of 
 Teichm\"uller space $T(S) = \Hit(S, \PSL_2\R)$, and is defined precisely so that it contains an embedded copy of $T(S)$ 
 called the $G$-Fuchsian locus.
 There are many robust similarities between $\Hit(S,G)$ and $T(S)$. For example, $\Hit(S,G)$ is simply connected: it is homeomorphic to $\R^{(2g-2)\dim G}$.  
Moreover, Hitchin representations are irreducible \cite{Hit92} and discrete and faithful, as proven independently by Labourie \cite{Lab06} and Fock and Goncharov
\cite{FG06}. Thus, Hitchin components are examples of so-called \emph{higher Teichm\"uller spaces} \cite{Wie18}, connected components of $\chi(\pi_1S,G)$ for a semisimple Lie group $G$ of $\rank \, G \geq 2$, that consist exclusively
of discrete and faithful representations. The Hitchin components remain a central object of investigation of higher Teichm\"uller theory.

A classical fact of Teichm\"uller theory is that $T(S)$ is homeomorphic to the moduli space $\mathscr{H}(S)$ of marked hyperbolic structures on $S$.
Since any closed hyperbolic surface is locally isometric to hyperbolic space $\Ha^2$, a marked hyperbolic structure on $S$ is equivalently a $(\PSL_2\R, \Ha^2)$-structure on $S$ in the language of $(G,X)$-structures. 
In fact, the holonomy map $\hol: \mathscr{H}(S) \rightarrow T(S)$ is a homeomorphism. 
In 1993, Choi and Goldman proved that $\mathsf{Hit}(S, \mathsf{SL}_3 \R)$ parametrizes the deformation space of \emph{convex projective structures}
on $S$, namely certain $( \mathsf{SL}_3 \R, \RP^2)$-structures, thereby realizing the component $\Hit(S,\mathsf{SL}_3\R)$ as holonomies of geometric structures in the same fashion
as the earlier identification $T(S) \cong \mathscr{H}(S)$ \cite{Gol90, CG93}. 
An important aim of higher Teichm\"uller theory, promoted by Hitchin in \cite{Hit92}, is to realize all components $\Hit(S,G)$ as holonomies
of $(G,X)$-structures. We remark on a subtlety here. In the foundational examples of $G \in \{ \PSL_2\R, \mathsf{SL}_3\R\}$, the $(G,X)$-structures associated to Hitchin representations live on the surface $S$ itself. 
However, for $G = \PSL_n\R$ when $n \geq 4$ and for $G = \Gtwosplit$, the split real (adjoint) form of the exceptional complex simple Lie group $\Gtwo$, any flag manifold $X = G/H$ associated to a parabolic subgroup $H <G$ has $\dim X \geq 3$.
If one is set on using flag manifolds
as the model spaces for such a geometric structures interpretation, then the geometric structures cannot live on the surface $S$ in the case of $\Gtwosplit$ or $\PSL_n\R$ for $n \geq 4$. 

Since Choi and Goldman, few other \emph{explicit} examples of geometric structures interpretations of $G$-Hitchin components have arisen -- only the cases of 
$\PSL_4\R$ and $\mathsf{P}\Sp_4\R \cong \SO_0(2,3)$ have been addressed. We briefly recall these results for context now. Guichard and Wienhard proved in 2008 that $\mathsf{Hit}(S, \PSL_4\R)$ parametrizes the deformation space of \emph{properly convex foliated (PCF) projective structures},
on the unit tangent bundle $UTS$ \cite{GW08}. These PCF structures on $UTS$ are $(\PSL_4\R, \RP^3)$-structures with extra conditions related to a canonical foliation of $UTS$. The holonomy is trivial along the fibers of $UTS \rightarrow S$ and so $\hol: \pi_1 UTS \rightarrow \mathsf{PSL}_4\R$ descends to a map $\overline{\hol}: \pi_1S \rightarrow \mathsf{PSL}_4\R$. Guichard and Wienhard prove that the descended
holonomy map $\overline{\hol}$ is a homeomorphism from the deformation space of PCF structures on $UTS$ to $\Hit(S, \PSL_4\R)$. Now, the Hitchin component $\mathsf{Hit}(S, \mathsf{PSp}_4\R) $ 
includes in $\mathsf{Hit}(S, \PSL_4\R)$. As a corollary to the previous geometric structures result, Guichard and Wienhard show that $\mathsf{Hit}(S, \mathsf{PSp}_4\R)$ is homeomorphic to the moduli space of 
PCF $(\mathsf{PSp}_4\R, (\RP^3, \alpha))$-structures on $UTS$, where $\alpha$ is the standard contact structure on $\RP^3$. 
On the other side of the isomorphism $\mathsf{PSp}_4\R \cong \SO_0(2,3)$, Collier, Tholozan, and Toulisse re-interpreted the PCF projective structures of \cite{GW08} as $( \SO_0(2,3), \mathsf{Pho}(\R^{2,3}) )$-structures, where $\mathsf{Pho}(\R^{2,3})$ is the space of isotropic planes in $\R^{2,3}$, specifically 
\emph{fibered photon structures} on $UTS$, where the fibering of $UTS \rightarrow S$ is now explicitly involved in the geometry, unlike
in the PCF structures perspective \cite{CTT19}. In \cite{CTT19}, they also describe another explicit geometric structures interpretation for $\Hit(S, \mathsf{SO}_0(2,3))$ via  
\emph{fibered conformally flat Lorentz (CFL)} structures on $UTS$, which are, in particular, $(\SO_0(2,3), \Ein^{1,2})$-structures. The CFL structures on $UTS$ are shown to be 
an explicit realization of certain abstract geometric structures described by \cite{GW12}. We will discuss the work \cite{GW12} more momentarily. 

In the spirit of the aforementioned works, we consider the problem of finding an explicit geometric structures interpretation of the $\Gtwosplit$-Hitchin component as holonomies of $(G,X)$-structures on an $F$-fiber bundle $M$ over the surface $S$, which factor through the bundle projection $\pi_*: \pi_1 M \rightarrow \pi_1 S$. We emphasize all the indeterminates: $X$, $F$, $M$, and the explicit conditions on the geometric structures. 
Our main result is as follows. 
The manifold on which the geometric structures live is the $(\mathbb{S}^1 \times \mathbb{S}^1 \times \R_+)$-bundle $\mathscr{C} := \, UTS \oplus UTS \oplus \uline{\R_+}$ over $S$,
the direct sum of fiber bundles of the unit tangent bundle with itself and a trivial $\R_+$-bundle. In other words, $\mathscr{C}$ has fiber $\mathscr{C}_p = UT_p S \times UT_p S \times \R_+$.
We consider $(\Gtwosplit, X)$-structures on $\mathscr{C}$ for $X =( \Ein^{2,3} ,\mathscr{D})$, where $\Eintwothree = \mathbb{P}Q_0(\R^{3,4})$, the projective null quadric in $\R^{3,4}$, and $\mathscr{D}$ is the 
canonical (2,3,5)-distribution it carries. The group $\Gtwosplit$ is the full automorphism group of the pair $(\Eintwothree, \mathscr{D})$, as proven by Cartan \cite{Car10} (cf. \cite[Section 8.2]{Eva22}). Putting all of these ingredients together, along with the geometric conditions that characterize the moduli space $\mathscr{M}$ of geometric structures,
we show: \\

\textbf{Theorem A}: \emph{The descended holonomy map $\alpha: \mathscr{M} \rightarrow \Hit(S, \Gtwosplit)$ given by $[ (\dev, \hol )] \mapsto [ \, \overline{\hol} \, ]$, where $\hol = \overline{\hol} \, \circ \, \pi_*$ and $\pi: \mathscr{C} \rightarrow S$
is the bundle projection, is a homeomorphism from the moduli space $\mathscr{M}$ of cyclic-fibered, compatible, radial $(\Gtwosplit, (\Eintwothree, \mathscr{D}))$-structures on
$ UTS \oplus UTS \oplus \uline{\R_+}$ onto the $\Gtwosplit$-Hitchin component $\Hit(S,\Gtwosplit)$}. \\

The notions of \emph{cyclic-fibered, compatible,} and \emph{radial} (CCR) are technical and are defined precisely in Section \ref{GeometricStructuresDefinition}. 
However, we can think of these conditions roughly as follows. The \emph{cyclic-fibered} condition ensures that the holonomy factors through $\pi_1S$ and that each 
such geometric structure has an associated equivariant alternating almost-complex curve $\hat{\nu}: \tilde{S} \rightarrow \quadric$. 
A crucial remark here is that the existence of a $\rho$-equivariant
alternating almost-complex curve $\hat{\nu}: \tilde{S} \rightarrow \quadric$ does not imply $[\rho] \in \Hit(S,\Gtwosplit)$ (\cite{CT23} Theorem 5.9 and Theorem 5.11), as was the case in the analogous setting 
of $\rho$-equivariant affine spheres $\tilde{S} \rightarrow \R^3$ with $\rho \in \Hom(\pi_1S, \mathsf{SL}_3\R)$. The notions of 
\emph{compatible} and \emph{radial} constrain the relationship between the developing map and the almost-complex curve further. In particular, the CCR conditions serve both to 
ensure that such a geometric structure is completely determined by its associated almost-complex curve and also that the associated almost-complex curves have $\Gtwosplit$-Hitchin 
holonomy. 

On the other hand, Guichard and Wienhard gave an \emph{abstract} geometric structures interpretation of every Hitchin component in \cite{GW12}. There, they proved the following existence theorem: for any split real simple (adjoint) Lie group $G$, there is a compact manifold $M$ with a homomorphism $\phi: \pi_1 M \rightarrow  \pi_1 S$, a homogeneous $G$-space $X$, as well as a connected component $\mathscr{G}$ of the deformation space $\mathscr{D}_{(G,X)}(M)$ of $(G,X)$-structures
on $M$, such that the holonomy of a geometric structure in $\mathscr{G}$ factors as $\hol = \overline{\hol} \circ \phi$, and the descended holonomy map $[ \, (\dev, \hol) \, ] \mapsto [ \overline{\hol} ]$ is a homeomorphism from $\mathscr{G}$ onto $\Hit(S,G)$. The manifold $M$ is realized as $M_{\rho}: = \rho(\pi_1S) \backslash \Omega_{\rho}$, a quotient of a co-compact domain of discontinuity 
$\Omega \subset X$, such that the topology of $M_{\rho}$ is independent of $\rho$. Confirming a conjecture of Guichard-Wienhard, both \cite{AMTW23} and \cite{Dav23} recently proved independently that $M$ is, in fact, a fiber bundle over $S$. 

While Guichard and Wienhard identify the spaces $X$ explicitly depending on the group $G$, the topology of the manifold $M$ is quite difficult to determine in general and so are the geometric conditions on the $(G,X)$-structures that distinguish the connected component $\mathscr{G}$.
Identifying the topology of these manifolds $M$ is an important and challenging open question. There have been some results towards this end recently \cite{ADL21, AMTW23, DS20, Dav23}.
See also the survey \cite{Kas18} for many results on geometric structures related to higher rank Teichm\"uller theory. 

Our results are somewhat orthogonal to \cite{GW12}: while our geometric structures are highly explicit, the
developing maps exhibit very complicated and strange behavior. 
In Section \ref{GW}, we examine our construction in the $\Gtwosplit$-Fuchsian case in detail. We are able to see much of the structure of the developing map and how complicated its behavior is even in this simple setting. 
In general, our developing map $\dev$ descends from $\mathscr{C}$ to $\overline{\mathscr{C}}: = UT\tilde{S} \oplus UT\tilde{S} \oplus \uline{\R_+}$. This descended developing map $\overline{\dev}: \overline{\mathscr{C}} \rightarrow \Eintwothree$ is injective 
on fibers. In the $\Gtwosplit$-Fuchsian case, we show $\overline{\dev}$ is finite-1, with each point in the image having either 1, 2, or 3 pre-images. The map $\overline{\dev}$ is very much only
locally injective -- for any $p \neq q \in \tilde{S}$, the developed fibers $\overline{\dev}(\overline{\mathscr{C}}_p)$ and $\overline{\dev}(\overline{\mathscr{C}}_q)$ intersect 1-dimensionally. We also consider the relationship between the image $U := \mathsf{image}(\overline{\dev})$ and the Guichard-Wienhard $(\Gtwosplit, \Eintwothree)$-domain $\Omega := \Omega_{\rho}$ (which is the same for all $\Gtwosplit$-Fuchsian $\rho$). Namely, $\Omega_{U} := U \cap \Omega \neq \emptyset$ is a proper open subset of $\Omega$ and $K_{U}: = U \cap (\Eintwothree \backslash \Omega) \neq \emptyset$ as well. The intersection $K_U$ is an explicitly described 3-dimensional locus. Thus, this paper does not elucidate the \cite{GW12} geometric structures in the $(\Gtwosplit, \Eintwothree)$-setting. 
However, we believe the methods here are natural, as there are strong analogies with the \cite{CTT19} construction of conformally flat Lorentz structures on $UTS$ for $\Hit(S, \mathsf{SO}_0(2,3))$. In the next part of the introduction, we explore these ideas in more detail. 

\subsection{Remarks on the Proof of the Main Theorem} 

The problem of a geometric structures interpretation is really twofold: 
\begin{enumerate}
	\item[{(1)}] Determine a construction from which a $(\Gtwosplit,X)$-structure on $M$ can be associated to any $\Gtwosplit$-Hitchin representation.
	\item[{(2)}]  Define an appropriate moduli space $\mathscr{M}$ of geometric structures containing those structures from step (1), and, crucially, so that the holonomy map on 
	$\mathscr{M}$ factors through $\Hit(S,\Gtwosplit)$. 
\end{enumerate}
We first remark on the methods used for (1). 

The construction of (1) is done with the geometry of harmonic maps, and the proof of (1) relies on Higgs bundles. Our work
in both steps is influenced by \cite{CTT19}. We summarize our perspective on \cite{CTT19} to highlight some analogies with the present work.  
Fix a complex structure $\Sigma = (S, J)$ on $S$. By the non-abelian Hodge correspondence, associated to 
$\rho \in \Hit(S,G)$ is a unique harmonic map $f_{\rho, \Sigma}: \tilde{\Sigma} \rightarrow G/K$, up to isometry. 
In the case $G \in \{ \Gtwosplit, \mathsf{SL}_3\R, \mathsf{SO}_0(2,3) \} $ of $G$ a split real simple rank two Lie group, Labourie proved in \cite{Lab17} that for each $\rho$ there is 
a unique complex structure $\Sigma$ such that $f_{\rho, \Sigma}$ is conformal. 
We denote $f_{\rho}: \tilde{\Sigma} \rightarrow G/K$ as this unique minimal surface. 
In each of the rank two settings, $f_{\rho}$ has a factorization via a Gauss map construction applied to another equivariant harmonic map $g_{\rho}: \tilde{\Sigma}  \rightarrow G/H$, whose target
is a different homogeneous space $G/H$ than that of $f_{\rho}$. For $G= \mathsf{SO}_0(2,3)$, the map $f_{\rho}$ factors through a maximal spacelike surface $\hat{\sigma}: \tilde{\Sigma} \rightarrow \hat{\Ha}^{2,2} = Q_-\R^{2,3}$, where we denote $Q_{\pm} V := \{ x \in V \; | \; q(x) = \pm 1\}$. In this case, $G/K \cong_{\mathbf{Diff}} \Gr_{(2,0)}\R^{2,3}$
and the map $f_{\rho}$ is just given as follows: $f_{\rho}(p) = d\hat{\sigma}(T_p \tilde{S})$. 
In fact, the developing map $\dev$ of the CFL $(\mathsf{SO}_0(2,3), \Ein^{1,2})$-structure of \cite{CTT19} on $UTS$ is simply described with $\hat{\sigma}$. Indeed, since the holonomy of 
$\dev$ factors through $\pi_1S$, the map $\dev$ descends from the universal cover $\widetilde{UTS}$ to define a map $\dev: UT\tilde{S} \rightarrow \Ein^{1,2}$ given by 
$\dev( p,X) = [ \hat{\sigma}(p) + d\hat{\sigma}(X)] $, where we equip $\tilde{S}$ with the pullback metric from $\hat{\sigma}$ and $[ \cdot ] $ denotes a point in projective space. Since $\hat{\sigma}$ is spacelike
and $\hat{\sigma}(T_p\tilde{S})$ is orthogonal to $\hat{\sigma}(p)$, the Pythagorean identity says $\dev( p,X) \in \Ein^{1,2} = \mathbb{P}Q_0(\R^{2,3})$, where $Q_0(V,q) = \{ v \in V \; | \; q(v)= 0\}$. Then \cite{CTT19} uses Higgs bundles along with an application of the maximum principle to the Hitchin system to prove $\dev$ is a local diffeomorphism.

Our developing map construction is a natural analogue of the developing map construction of \cite{CTT19}. 
To define $\dev$, we introduce another central object: almost-complex curves in the psuedosphere $\quadric = Q_+(\R^{3,4})$.
The space $\quadric$ has a canonical almost-complex structure $J_{\quadric}$ coming from the \emph{cross-product} $\times_{3,4}: \R^{3,4} \times \R^{3,4} \rightarrow \R^{3,4}$, whose existence is intimately linked with the group $\Gtwosplit$. Here, one can imagine $\Gtwosplit$ as defined via $\Gtwosplit := \Aut(\R^{3,4}, \times_{3,4})$. 
Starting with a Hitchin representation $\rho$, we use the harmonic map factorization in the $\Gtwosplit$-setting, which runs through the unique $\rho$-equivariant alternating\footnote{Here, alternating means loosely that the tangent space is timelike and the second fundamental form outputs only spacelike vectors.} almost-complex curve $\hat{\nu}: \tilde{S} \rightarrow \quadric$ \cite{CT23}.\footnote{We sweep aside two subtleties here: the uniqueness is due to \cite{CT23}, and we can define `almost-complex' to mean the image is an almost-complex submanifold such that $\hat{\nu}^*J_{\quadric}$ is compatible the orientation of $\tilde{S}$.} 
This harmonic map factorization is discussed in detail in \cite{CT23, Nie24} as well as in the case of $S = \mathbb{C}$ in \cite[Section 3.4]{Eva22}. We briefly recall the construction. The symmetric space $\Gtwosplit/K$ identifies as $\Gr_{(3,0)}^\times(\R^{3,4}) := \{ P \in \Gr_{(3,0)}(\R^{3,4}) \; | \; P \times_{3,4} P = P \}$. Then $f_{\rho}: \tilde{S} \rightarrow \Gtwosplit/K$ is given by $f_{\rho}(p) = \R \{\hat{\nu}(p) \} \oplus \mathsf{image}(\sff_p)$, where $\sff$ is the second fundamental form of $\hat{\nu}$. 
In our context, the developing map descends from the universal cover $\tilde{\mathscr{C}}$ to the $\pi_1S$-cover $UT\tilde{S} \oplus UT\tilde{S}  \oplus \uline{\R_+}$ of $\mathscr{C}$ and obtains the form 
\begin{align}\label{DevIntro}
	\dev(p, X, Y, r) =  \left[ \, \hat{\nu}(p)+ (r^2+1)^{1/2} \, d\hat{\nu}(X)+ r \, \frac{ \sff(\, d\hat{\nu}(X), d\hat{\nu}(Y) \,) }{ q(\sff( \, d\hat{\nu}(X), d\hat{\nu}(Y) ) \, )^{1/2} } \; \right],
\end{align} 
where $\sff$ is the second fundamental form of $\hat{\nu}$ and we equip $\tilde{S}$ with the pullback metric $ \hat{\nu}^*g_{\quadric}$.  
The essential difference between the construction here that of \cite{CTT19} is that we need the 2-jet of $\hat{\nu}$ to construct the developing map. However, the proof that \eqref{DevIntro} is a local diffeomorphism uses the same main ingredients as that of \cite{CTT19} -- namely cyclic Higgs bundles and the maximum principle, and the overall construction
is quite similar. In fact, as discuss in Section \ref{DevReinterpret}, as $r \rightarrow 0$, the developing map from \eqref{DevIntro} becomes a precise analogue of the CTT developing map, with $\hat{\sigma}$ replaced by $\hat{\nu}$. 

The developing map \eqref{DevIntro} constructs $(\Gtwosplit, \Eintwothree)$-structures associated to $\Gtwosplit$-Hitchin representations via the associated equivariant alternating almost-complex curves, solving problem (1). To establish a converse association, to solve (2), we define a moduli space $\mathscr{M}$ of geometric structures on $\mathscr{C}$ that share key technical properties with the geometric structures constructed via \eqref{DevIntro}. In particular, we demand the holonomy is trivial along the fibers. Hence, 
we may define a \emph{descended holonomy map} $\alpha: \mathscr{M} \rightarrow \Hom(\pi_1S, \Gtwosplit)/\Gtwosplit.$ 
The cyclic-fibered, compatible, and radial conditions, as mentioned earlier, serve the purpose of forcing the descended holonomy to take values in $\Hit(S, \Gtwosplit)$. 
We explain briefly the idea now. 
Each geometric structure in $\mathscr{M}$ induces an associated almost-complex curve as a consequence of the cyclic-fibered condition. That is, there is a natural continuous map $H: \mathscr{M} \rightarrow \mathcal{H}$, where $\mathcal{H} :=\mathcal{H}(S) $ the moduli space of equivariant alternating almost-complex curves in $\quadric$. In other words,
$\mathcal{H} $ is the space of pairs $(\hat{\nu}, \rho)$ with $\hat{\nu}$ a $\rho$-equivariant alternating almost-complex curve and $\rho \in \Hom(\pi_1S, \Gtwosplit)$, up to isomorphism. 
The relationship between $\dev$ and its associated curve $\hat{\nu} = H(\dev)$ is constrained by the compatibility condition. This condition implies that the second fundamental form 
$\sff$ of $\hat{\nu}$ is non-vanishing, which by \cite{CT23} is equivalent to $\hat{\nu}$ having $\Gtwosplit$-Hitchin holonomy. Finally, the radial condition along with the compatibility, pins down the relationship between $\dev$ and $\hat{\nu}$, forcing that $\dev$ is entirely determined by $\hat{\nu}$.  

Collier and Toulisse show that the sub-locus $\{ [ (\hat{\nu}, \rho)] \in \mathcal{H} \; | \; [\rho] \in \Hit(S,\Gtwosplit) \}$ of almost-complex curves that are $\Gtwosplit$-Hitchin equivariant is the level set 
$\mathcal{H}(S)_{6g-6} := \{[ (\hat{\nu}, \rho)] \in \mathcal{H} \; | \; b(\hat{\nu})= 6g-6\}$ of a discrete invariant $b$
that stratifies $\mathcal{H}$ \cite{CT23}. The space $\mathcal{H}(S)_{6g-6}$ serves as a key intermediary between representations and geometric structures; both the map $s: \Hit(S, \Gtwosplit) \rightarrow \mathscr{M}$ assigning geometric structures to representations and the descended holonomy map $\alpha: \mathscr{M} \rightarrow \Hit(S, \Gtwosplit)$ by $[ (\dev, \hol) ] \mapsto [ \, \overline{\hol} \, ]$ 
factor through $\mathcal{H}(S)_{6g-6}$. This factorization is critical to our solution of both problem (1) and (2). 
The construction of $s$ is motivated by the fact that $\alpha \circ s = \id_{ \Hit(S, \Gtwosplit) }$ essentially by definition. The remainder of the proof of the main theorem is a small technical argument to show that the geometric structure is determined entirely from its associated almost-complex curve, i.e., $s \circ \alpha = \id_{\mathscr{M}}$. See Figure \ref{GeomStrDiagram}, which diagrammatically summarizes the relationship between main maps of interest.

\begin{figure}[ht]
\centering
\includegraphics[width = .50\textwidth]{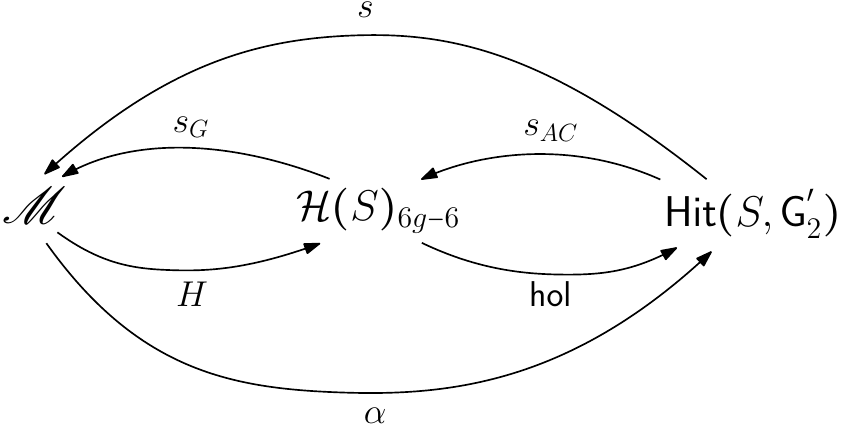}
\caption{\emph{\small{Commutative diagram summarizing the main theorem. The maps between $\Hit(S, \Gtwosplit)$ and $\mathscr{M}$ factor through $\mathcal{H(S)}_{6g-6}$. By \cite{CT23}, the  maps $s_{AC}, \hol$ are homeomorphisms. The central constructions are $s_G, H$, and $\mathscr{M}$. } }}
\label{GeomStrDiagram}
\end{figure} 

\subsection{Organization} 

We now discuss the organization of the paper. In Section \ref{Prelims}, we discuss the necessary preliminaries on $\Gtwosplit$ and Higgs bundles. 
In Section \ref{Sec:RepntoGeom}, we review the construction of $\rho$-equivariant alternating almost-complex curves for $[\rho] \in \Hit(S, \Gtwosplit)$. Using these curves,
we explicitly build an equivariant developing map on $\widetilde{\mathscr{C}}$ that defines an $(\Gtwosplit, \Eintwothree)$-structure on $\mathscr{C}$. In Section \ref{GXToRepn}, we present the technical definition of the moduli space $\mathscr{M}$ of geometric structures and reinterpret the work of Section \ref{Sec:RepntoGeom} now as defining a continuous map $s: \Hit(S,\Gtwosplit) \rightarrow \mathscr{M}$. We then 
present the descended holonomy map $\alpha: \mathscr{M} \rightarrow \Hit(S,\Gtwosplit)$ and show it is the inverse of $s$. In Section \ref{FuchsianCase}, we examine our geometric structures in the $\Gtwosplit$-Fuchsian case. We also show that our geometric structures are unrelated to the Guichard-Wienhard $(\Gtwosplit, \Eintwothree)$-structures from \cite{GW12}, and discuss some curious 
behavior of the developing map in this setting. 

\subsection{Acknowledgements} 

I would like to thank Mike Wolf and Alex Nolte for reading a draft of this paper and proving many comments. 
I also thank Brian Collier and Jeremy Toulisse for sharing a draft of their paper \cite{CT23}, which was essential to the creation of this work. 
Finally, I thank Xian Dai and Christos Mantoulidis for helpful conversations on analysis. 

\section{Preliminaries}\label{Prelims}

\subsection{$\Gtwo$ Preliminaries} \label{G2Prelims}

In this subsection, we provide a brisk introduction to $\Gtwo$ from the perspective of the octonions.
The reader can find a more comprehensive introduction to $\Gtwo$ in the excellent article \cite{Fon18}.\\

We denote $\Gtwo := \Gtwo^\C$ as the exceptional simply-connected complex Lie group with Lie algebra $\g_2^\C$. We write 
$\Gtwosplit := \Gtwo^\R$ for the split real (adjoint) form of $\Gtwo$ as well as $\Gtwo^\F$ for $\F = \R $ or $\C$ as appropriate. 
We will define the split-octonions $(\Oct')^\F$ and then see that $\Gtwo^\F$ is realized as $ \Gtwo^\F = \Aut_{\mathbf{\F-alg}}( (\Oct')^\F)$. 

For simplicity, we focus on the case of the $\R$-algebra $\Oct' := (\Oct')^\R$. To this end, we recall both the Cayley Dickson process CD and the split Cayley Dickson process CD'.
Let $\mathcal{A}$ be an $\R$-algebra with unit $1_{\mathcal{A} }$ and the following structures: a non-degenerate quadratic form $q: \mathcal{A} \rightarrow \R$ and an algebra involution
$*_{\mathcal{A}} : \mathcal{A} \rightarrow \mathcal{A}$ such that $(xy)^* = y^*x^*$ and $\Fix(*_{\mathcal{A}} ) = \R \{1_{\mathcal{A}} \}$. The Cayley Dickson processes produce a new unital algebra $\mathcal{B}$ over the vector space $\mathcal{A} \oplus \mathcal{A}$ with a non-degenerate quadratic form $q_{\mathcal{B}}$ and involution $*_{\mathcal{B}}$. First, define $*_{\mathcal{B}}(a, b) = (*_{\mathcal{A}} a, -b)$. Then given $(a,b), (c,d) \in \mathcal{A} \oplus \mathcal{A}$, the new algebra multiplication $\odot$ on $\mathcal{B}$, under CD', is given by
 \begin{align}\label{CDMultiplication}
  (a,b) \odot (c,d) := (ac + db^*, a^*d+ cb) ,
 \end{align}
 where $x^* := *_{\mathcal{A}}(x)$. Note that $1_{\mathcal{B}} = (1,0)$. We may write $xy $ for brevity to denote $x \odot y$. 
 Next, $q_{\mathcal{B}}(x) := x x^*$. 
 The multiplication in $\Oct'$ is not associative, but is \emph{alternative} -- the subalgebra generated by any two elements is associative.
 An easy consequence of the alternativity is that $q(x y) = q(x)q(y)$ for $x,y \in ( \Oct')^\F$, meaning $(\Oct')^\F$ is a \emph{composition algebra} - an algebra with unit equipped
 with a non-degenerate, multiplicative quadratic form. Composition algebras are very special. In fact, Hurwitz's theorem asserts that the only composition $\R$-algebras are $\R,\C,\Ha, \Oct$, and their split counterparts $\C', \Ha', \Oct'$.\footnote{See (\cite{HL82} Theorem A.12) for an elegant proof using the CD process, 
 in the case that the quadratic form is Euclidean (the proof is nearly the same as when $q$ is non-degenerate).}
 
 One may view the CD process in a more intrinsic way as an algebra extension of $\mathcal{A}$ by a new element $x$, given by $x= (0,1) \in \mathcal{A} \oplus \mathcal{A}$,
 which satisfies $x^2 = - 1$ in CD and $x^2 =+1$ with CD'.\footnote{In CD, the algebra formula \ref{CDMultiplication} changes by just one sign to 
 $ (a,b) \odot (c,d) := (ac - db^*, a^*d+ cb)$. However, this sign determines the signature of the quadratic form on the vector subspace $(0, \mathcal{A})$.} The pair $(a,b) \in \mathcal{A} \oplus \mathcal{A}$ corresponds to $a+xb$ and \eqref{CDMultiplication} gives the multiplicative relations between the subalgebra $\mathcal{A} := ( \mathcal{A}, 0)$ of $\mathcal{B}$ and $x$. The multiplication formula is slightly different if one instead uses $(a,b)$ to denote $a+bx$. In the case that $q_{\mathcal{B}}$ is multiplicative, $q_{\mathcal{B}}(a,b) = q_{\mathcal{A}}(a) \pm q_{\mathcal{A}}(b)$, according to CD or CD', respectively.
 
 The split octonions $\Oct' $ may be defined as the output of CD' applied to the quaternions $\Ha$. In fact, we have a sequence of $\R$-algebras 
 $\R \stackrel{\mathsf{CD}}{\longrightarrow} \C  \stackrel{\mathsf{CD}}{\longrightarrow} \Ha \stackrel{\mathsf{CD'}}{\longrightarrow} \Oct'$.\footnote{Here, $*_{\R}: \R \rightarrow \R$ is just 
 the identity map.} While we maintain this perspective on $\Oct'$, it is worth mentioning that one may also realize $\Oct'$ via $\R \stackrel{\mathsf{CD}}{\longrightarrow} \C  \stackrel{\mathsf{CD'}}{\longrightarrow} \Ha' \stackrel{\mathsf{CD}}{\longrightarrow} \Oct'$, where we pass through the \emph{split} quaternions $\Ha'$ (cf. \cite[Section 2.2]{CT23} for more details). We will explain shortly a simpler way to multiply in $\Oct'$ rather than use the formula \eqref{CDMultiplication}. 

The $+1$-eigenspace of $*_{\Oct'}$ defines a distinguished real subalgebra isomorphic to $\R$, which by abuse we refer to as $\R := \R \{ 1_{\Oct'} \}$. The $(-1)$-eigenspace of $*_{\Oct'}$ is 
called the \emph{imaginary} split-octonions, denoted $\imoct$, in analogy with the imaginary complex numbers. We may write $\mathsf{Re}, \mathsf{Im}$ for the orthogonal projections from $\Oct'$ onto the subspaces
$\R$ and $\imoct$, respectively. The non-degenerate quadratic form $q$ on $\Oct'$ is of split signature (4,4) since $(\Ha, 0) \cong \R^{4,0}$ and $(0, \Ha) \cong \R^{0,4}$ as normed vector spaces. 
In a standard abuse, we write $q$ for the bilinear form $q(x,y) = \mathsf{Re}(x y^*)$ as well as the induced quadratic form. For notational simplicity, we may write $x \cdot y $ rather than $q(x,y)$ if the context is clear.

Since any algebra automorphism of $\Oct'$ fixes the real axis pointwise,
it is standard to consider the action of $\Gtwosplit$ on $\imoct$ instead of $\Oct'$. In fact, the representation $\g_2' \rightarrow \mathfrak{gl}(\imoct)$ is one of the two fundamental representations of $\g_2'$ -- the other is the adjoint representation. As a consequence, the representation of $\g_2'$ on $\imoct$ is the lowest dimensional irreducible representation. 
Moreover, $\imoct$ carries two more algebraic structures of interest: a cross-product and a calibration 3-form.

The cross-product $\times: \imoct \times \imoct \rightarrow \imoct$ is defined by
\begin{align}
	x \times y : = \mathsf{Im}(xy) = x \odot y \; - \; (x \cdot y) 1_{\Oct'}. \label{CrossProductDefinition}
\end{align}
For $u \in \imoct$, we write $\mathcal{C}_u: \imoct \rightarrow \imoct$ as the cross-product endomorphism $\mathcal{C}_u(v) = u \times v$ of $u$. 
The \emph{double cross-product identity} says that 
\begin{align}\label{DCP}
	u \times (u \times v) = -q(u) v + q(u,v) u. 
\end{align} 
Note that $\imoct$ is not closed under $\odot$, so that $\times$ is the default binary operation on $\imoct$. 
Denote the cross-product of sets $A, B \subset \imoct$ as usual: $A \times_{\imoct} B := \{ a \times b \; | \; a \in A, \, b \in B\}$.  
We attempt to write $\times_{\imoct}$ whenever necessary so as to distinguish the cross-product of sets from the ordinary set-theoretic product. 
The map $\times$ is called a cross-product since $\times$ is bilinear, alternating, and normalized by $q(x \times y) = q(x)q(y) - (x\cdot y)^2$, just like the standard cross-product $\times_{\R^3}$ on $\R^3$. 
Extending $\Phi \in \GL(\imoct)$ linearly to $\tilde{\Phi} \in \GL(\Oct')$ by demanding $\tilde{\Phi}|_{\imoct} = \Phi$ and $\tilde{\Phi}(1_{\Oct'}) =1_{\Oct'}$, one finds that 
\begin{align}\label{G2CrossProduct}
	\Gtwosplit = \{ \Phi \in \GL(\imoct) \; | \; \Phi(u \times v) = \Phi(u) \times \Phi(v) \}.
\end{align} 
In fact, (non-obviously) if $\Phi \in \GL(\imoct)$ satisfies $\Phi(u \times v) = \Phi(u) \times \Phi(v)$, then $\Phi \in O(3,4) = O(\imoct, q)$ and $ \Phi \in \mathsf{SL}(\imoct)$ \cite{Fon18}. 
Since $\Gtwosplit$ is connected, this means $\Gtwosplit < \mathsf{SO}_0(3,4)$, where $G_0$ denotes the connected component of the identity of $G$.
Moreover, equation \eqref{G2CrossProduct} yields a description of the Lie algebra as the set of derivations of the cross product: $$\g_2'  = \{ \varphi \in \gl(\imoct) \; | \; \varphi(u \times v) = \varphi(u) \times v + u \times \varphi(v) \} .$$ 

Finally, $\Oct'$ carries a $\Gtwosplit$-invariant 3-form $\Omega$ given by the scalar triple product: $\Omega(x,y,z) = (x \times y) \cdot z$. The form $\Omega $ is a calibration in the sense of Harvey-Lawson \cite{HL82}, and is also \emph{generic} in the sense that 
its $\GL_7\R$ orbit by pullback is open in the space $\Lambda^3((\R^7)^*)$ of 3-forms.\footnote{Remarkably, there are just two open $\GL_7\R$ orbits
in $\Lambda^3((\R^7)^*)$: one is $\Omega$, the calibration 3-form for $\imoct$, and the other is $\Omega_c$, the calibration 3-form defined completely analogously, 
on $\mathsf{Im} \, \Oct$, where $\Oct = \mathsf{CD}(\Ha)$ is standard octonions. See (\cite{Fon18} Theorem 4.9.)} The 3-form provides another perspective on $\Gtwosplit $ as $\Gtwosplit = \Stab_{\GL(\imoct)}(\Omega)$. The surprising fact is that one may recover $q|_{\imoct}$ and $\times$ just from $\Omega$. See \cite{Fon18} for further details on $\Gtwo$ from the 3-form perspective. Going forward,
we will take for granted the equivalence of these three perspectives on $\Gtwosplit$: 
\begin{enumerate}
	\item $ \Aut_{\R-\mathbf{algebra}}(\Oct') $
	\item $\Aut(\R^{3,4}, \times_{3,4})$
	\item $ \Stab_{\GL_7\R}\Omega$.
\end{enumerate} 

The complex split-octonions $(\Oct')^\C = \Oct' \otimes_{\R} \C$ form a $\C$-algebra and carry an algebra product $\odot$ and quadratic 
form $q^\C$ by complex linear and bilinear extension, respectively, of the same structures on $\Oct'$.\footnote{We exclusively use $i$ to denote a split-octonion and
use $\sqrt{-1}$ for the new scalar added by the complexification.}  The cross-product and 3-form are defined analogously on $\imoct^\C$. \\

We now define a ``standard'' basis for $\Oct'$.\footnote{One can consider this basis as standard insofar as one views $\Oct'$ as being
defined via the Cayley-Dickson sequence $\R \stackrel{\mathsf{CD}}{\longrightarrow} \C  \stackrel{\mathsf{CD}}{\longrightarrow} \Ha \stackrel{\mathsf{CD}'}{\longrightarrow} \Oct'$,
which gives distinguished algebra generators $i, j, l$ for $\Oct'$, where $\C = \R[i], \Ha = \C[j], \Oct = \Ha[l]$.}
 Consider, for abuse of notation, $i = (i,0)$, $j := (j, 0), l:= (0, 1) $ in $\Ha \oplus \Ha$. 
Defining the element $k := i\odot j$, we obtain a vector space basis 
\begin{align} \label{MultiplicationBasis}
	\mathcal{M} = (1, i, j, k, l, li, lj, lk) = (m_i)_{i=0}^7
\end{align} 
for $\Oct'$. We will call this basis 
$\mathcal{M} = (m_i)_{i=0}^7$ the \emph{standard multiplication basis} for $\Oct'$. The multiplication table for $\Oct'$ in this basis is shown in Figure \ref{SplitOctonionMult}.

\begin{figure}[ht]
\centering
\includegraphics[width = .5\textwidth]{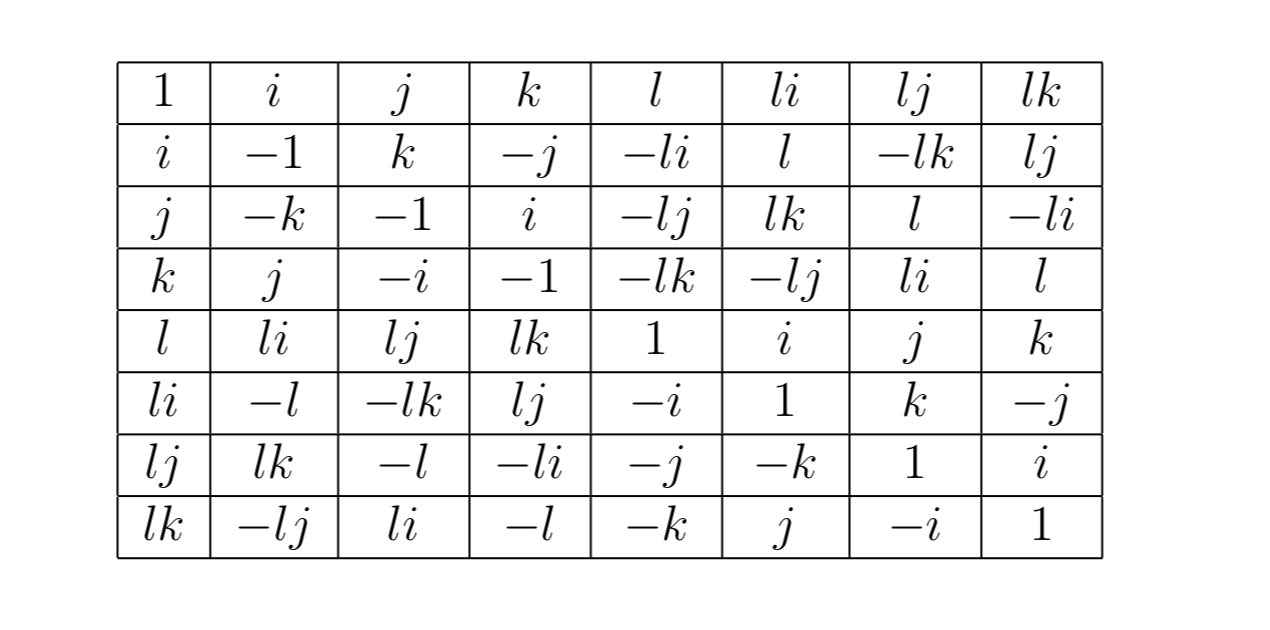}
\caption{\small{Multiplication table for the split octonions $\Oct'$, from \cite{Bar10}.} }
\label{SplitOctonionMult}
\end{figure} 
To prove the transitivity of the $\Gtwo^\F$-action on different spaces, the following Stiefel-triplet model is often useful; this 
model is very well-known. 
\begin{proposition}[\cite{Eva22} Proposition 2.3] \label{StiefelTripletModel}
 $\Gtwo^\F$ acts simply transitively on the Stiefel manifold  
$$ V_{(+,+,-)}(\imoct^\F) :=\{ \, (x,y,z) \in (\imoct^{\F})^3 | \; q(x) = q(y) = +1= -q(z), \, x \, \cdot \, y \, =  x \, \cdot \, z =\,  y \, \cdot \, z = 0 , \, z \, \cdot \, (x\times y) = 0 \}. $$
\end{proposition}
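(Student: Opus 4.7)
The plan is to combine a trivial-stabilizer computation at a distinguished reference triplet with an open-orbit dimension count and a connectedness argument on the target. Well-definedness of the action is immediate: by \eqref{G2CrossProduct}, every $\Phi \in \Gtwo^\F$ lies in $\mathsf{SO}(\imoct^\F, q)$ and intertwines the cross-product, so it preserves each of the seven defining conditions of $V_{(+,+,-)}(\imoct^\F)$.

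For the stabilizer, I take the reference triplet $(x_0, y_0, z_0) := (i, j, l)$ from the standard multiplication basis $\mathcal{M}$; inspection of Figure \ref{SplitOctonionMult} verifies $(i,j,l) \in V_{(+,+,-)}$. Suppose $\Phi \in \Gtwo^\F$ fixes $(i, j, l)$. Since $\Phi$ intertwines $\times$, it also fixes $k = i \times j$, $li = l \times i$, $lj = l \times j$, and $lk = l \times k$; in each case the product $u \odot v$ already lies in $\imoct^\F$, so $u \times v = \mathsf{Im}(u \odot v)$ equals the named basis vector in $\mathcal{M}$. Hence $\Phi$ fixes the basis $(i, j, k, l, li, lj, lk)$ of $\imoct^\F$, and combined with $\Phi$ fixing the algebra unit this forces $\Phi = \mathsf{Id}$.

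A parameter count gives $\dim V_{(+,+,-)}(\imoct^\F) = 3 \cdot 7 - 7 = 14 = \dim \Gtwo^\F$, so by orbit-stabilizer the $\Gtwo^\F$-orbit of $(i,j,l)$ is an open $14$-dimensional submanifold. To finish I will show $V_{(+,+,-)}(\imoct^\F)$ is connected. Over $\C$ this is immediate since the defining equations cut out a smooth connected affine variety. Over $\R$ I will exhibit the target as an iterated fibration with connected fibers: writing $\imoct^\R = \R^{3,4}$, $x$ ranges over the unit spacelike quadric $\mathbb{S}^2 \times \R^4$; given $x$, $y$ ranges over the unit spacelike sphere in $x^\perp \cong \R^{2,4}$, namely $\mathbb{S}^1 \times \R^4$; and given $(x, y)$, the identity $q(u \times v) = q(u) q(v) - (u \cdot v)^2$ shows $x \times y$ is itself a unit spacelike vector orthogonal to both, so the conditions $z \perp x, y, x \times y$ together with $q(z) = -1$ place $z$ on a copy of $\mathbb{S}^3$ inside a signature $(0, 4)$ subspace. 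Each stage is connected, so $V_{(+,+,-)}(\imoct^\R)$ is too. A connected manifold cannot decompose as a disjoint union of two or more open $\Gtwo^\F$-orbits, so there is just one; combined with the trivial stabilizer this delivers simple transitivity.

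The main obstacle, though mild, is the real connectedness argument: in indefinite signature, unit-vector loci and their intersections with orthogonality constraints can split into several components, and one must carefully track how successive orthogonality conditions refine the signature of $z$'s ambient space. In particular it is important that the seventh condition $z \cdot (x \times y) = 0$ is a genuinely independent constraint refining the ambient signature from $(1, 4)$ down to $(0, 4)$, so that $\{q(z) = -1\}$ in that subspace becomes the connected sphere $\mathbb{S}^3$ rather than a locus requiring further orbit analysis.
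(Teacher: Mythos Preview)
Your transitivity argument has a genuine gap. You establish that the orbit of $(i,j,l)$ is open and that $V_{(+,+,-)}$ is connected, and then conclude there is only one orbit because ``a connected manifold cannot decompose as a disjoint union of two or more open $\Gtwo^\F$-orbits.'' But that reasoning is valid only if \emph{every} orbit is open, and you have verified this only at the reference point. A connected manifold can perfectly well carry a single open dense orbit together with lower-dimensional orbits in its complement: consider $\GL_n\F$ acting on $\Mat_n\F$ by left multiplication, where the invertible matrices form the unique open orbit yet the rank-deficient locus persists. Nothing in your argument rules out positive-dimensional stabilizers elsewhere on $V_{(+,+,-)}$, so the open orbit through $(i,j,l)$ could a priori be merely dense rather than everything.

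The paper's route (sketched after the statement, citing \cite{Eva22}) is constructive and sidesteps this: for \emph{every} $(x,y,z)\in V_{(+,+,-)}$ one checks that $\mathcal{B}_{x,y,z}=(x,y,xy,z,zx,zy,z(xy))$ is a basis of $\imoct^\F$, and that the linear map sending the multiplication frame $\mathcal{M}$ to $\mathcal{B}_{x,y,z}$ lies in $\Gtwo^\F$. The first claim gives trivial stabilizer at every point; the second gives transitivity outright, with no topology needed. Your argument can be repaired by importing just the first claim --- one shows the seven vectors are pairwise $q$-orthogonal with nonzero norms using only the defining constraints of $V_{(+,+,-)}$, the alternation of $\Omega$, and the double cross-product identity \eqref{DCP}. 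With trivial stabilizers everywhere, all orbits are open and your connectedness step becomes legitimate; the payoff is that you then avoid verifying that the candidate linear map actually preserves $\times$, trading that algebraic check for your fibration argument.
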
 
The idea of the Proposition is that associated to $(x,y,z) \in V_{(+,+,-)}$ is an ordered basis $\mathcal{B}_{x,y,z} := (x,y, xy, z, zx, zy, z(xy))$ for $\imoct$
and a unique map $\varphi \in \Gtwo^\F$ such that $\varphi(i) =x, \varphi(j) = y, \varphi(l) = z$. In fact, $\varphi$ is the $\F$-linear map sending the multiplication frame $\mathcal{M}$ to $\mathcal{B}_{x,y,z}$.\footnote{The statement of (\cite{Eva22} Proposition 2.3) is only for the $\Gtwosplit$ case, but the same argument goes through for $\Gtwo^\C$.} 

The pseudosphere $\quadric$ is defined as $\quadric := Q_+(\imoct)$, and it inherits a signature $(2,4)$ pseudo-Riemannian metric from the ambient vector space, where
 $Q_{\pm}(V,q) = \{ v \in V \; | \; q(v) = \pm 1\}$. 
Moreover, the cross product defines a non-integrable (cf. \cite[Lemma 3.3]{CT23} and the surrounding discussion) almost-complex structure $J = J_{\quadric}$ on $\quadric$ as follows. Take $x \in \quadric$ and identify in standard fashion $T_{x} \quadric \cong [ x^\bot \subset \imoct]$. Then define $J_{x} := \mathcal{C}_{x}$, where $\mathcal{C}_x(y) = x \times y$ is the cross-product endomorphism of $x$. The double cross-product identity
\eqref{DCP} says for $u \in \imoct$ that 
$(\mathcal{C}_u \circ \mathcal{C}_u)|_{u^{\bot}} = - q(u) \mathsf{id}_{u^{\bot}}$, so that $J$ defines an almost-complex structure. While the group of isometries of the
pseudosphere is $\Isom(\quadric) = O(3,4)$, the restricted isometry group that preserves the almost-complex structure is $\Isom(\quadric, J_{\quadric}) = \Gtwosplit$. It is easy to show $\Gtwosplit$ acts transitively on $\quadric$ by Proposition \ref{StiefelTripletModel}. 
Later, we will also denote the projective negative and positive quadrics by $\Ha^{3,3} := \mathbb{P}Q_-(\imoct) $ and $\Stwofour := \mathbb{P}Q_+(\imoct)$.  \\

Consider the space 
$$\Eintwothree :=  \mathbb{P}Q_0 \imoct = \{ \, [ v] \in \mathbb{P} \imoct \; | \; q(v) = 0\}.$$
Topologically, $\Eintwothree \cong (\mathbb{S}^2 \times \mathbb{S}^3)/ \sim$, where $(x,y) \sim (-x,-y)$. In fact, a clever identification shows
$\Eintwothree \cong_{\mathbf{Diff}} \RP^2 \times \mathbb{S}^3$ \cite[Proposition 2]{BH14}. 
The intrinsic space $\Eintwothree$ can be $\Gtwosplit$-equivariantly identified with $\Gtwosplit/P_1$, where $P_1$
is a maximal parabolic subgroup of $\Gtwosplit$. Using the latter perspective, just from the root diagram of $\g_2'$, one finds $\Gtwosplit/P_1$ carries a canonical
$(2,3,5)$ distribution $\mathscr{D} \subset T\Eintwothree$. That is, $\mathscr{D}$ is a maximally non-integrable $2$-plane distribution in the sense that $\mathscr{D}^{(1)} = \mathscr{D} + [\mathscr{D}, \mathscr{D}]$ is 3-dimensional 
and $\mathscr{D}^{(2)} = \mathscr{D}^{(1)} + [\mathscr{D}^{(1)}, \mathscr{D}^{(1)}]$ is 5-dimensional. The paper \cite{Sag06} gives a beautiful and concrete interpretation of the distribution $\mathscr{D}$,
showing how one can see $\mathscr{D}_p$ at $T_p \Eintwothree$ intrinsically in the model $\Eintwothree$, without reference to the abstract space $\Gtwosplit/P_1$. We 
recall a re-interpretation of her work shortly. A crucial fact is that $\Aut(\Eintwothree, \mathscr{D} ) = \{ \, \varphi \in \Diff(\Eintwothree) \; | \; \varphi^* \mathscr{D} = \mathscr{D} \}$, as first proven by 
Cartan \cite{Car10}. We will later consider $(G,X)$-structures for $G = \Gtwosplit$ and $X = (\Eintwothree, \mathscr{D})$. 

There is one more important algebraic construction related to $\Eintwothree$. 
\begin{definition}
Given $u \in \imoct$, we define the \textbf{annihilator} of $u$ as $$\Ann(u) := \{ v \in \imoct \; | \; u \times v = 0 \}.$$
\end{definition} 
Now, note that $\Ann(u) = \Ann(c \, u) $ for $c \in \R^*$. Hence, $\Ann( [u] )$ makes sense. 
An important fact we shall need on annihilators is stated below: 

\begin{proposition}\label{Annihilators}
For $\F \in \{\R,\C\}$, if $u \in \imoct^\F$ has $q^\F(u) = 0$, then $\Ann(u) \subset Q_0(\imoct^\F)$ and $\dim_\F \Ann(u) =3$.\footnote{If $q(w) \neq 0$, then $\Ann(w) = \R \{w\}$ is uninteresting.} 
\end{proposition}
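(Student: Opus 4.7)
If $u = 0$ then $\Ann(u) = \imoct^\F$ is $7$-dimensional, so I may assume $u \neq 0$ is isotropic. The argument splits into the containment $\Ann(u) \subset Q_0(\imoct^\F)$ and the dimension count.

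\emph{Containment.} For any $v \in \Ann(u)$, the double cross-product identity \eqref{DCP} yields
$0 = u \times (u \times v) = -q(u)v + q(u,v)u = q(u,v)u$, so $q(u,v) = 0$. Substituting into \eqref{CrossProductDefinition} then gives $u \odot v = 0$ in $(\Oct')^\F$. To convert this into $q(v) = 0$, I would invoke that $(\Oct')^\F$ is alternative and that $v \in \imoct^\F$ satisfies $v \odot v = -q(v)\cdot 1_{\Oct'}$. Right-multiplying $u \odot v = 0$ by $v$ and applying the right alternative identity $(xy)y = x(y^2)$ gives $0 = (u \odot v) \odot v = u \odot v^2 = -q(v)\, u$; since $u \neq 0$, this forces $q(v) = 0$. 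Hence $\Ann(u) \subset Q_0(\imoct^\F)$.

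\emph{Dimension count.} Because the $3$-form $\Omega(x,y,z) = q(x \times y, z)$ is alternating, $\mathcal{C}_u$ is $q$-skew-adjoint; therefore $\Ann(u) = \ker \mathcal{C}_u = (\im \mathcal{C}_u)^\bot$ and $\dim_\F \Ann(u) = 7 - \rank(\mathcal{C}_u)$. It suffices to show $\rank \mathcal{C}_u = 4$. From \eqref{DCP}, $\mathcal{C}_u^2(v) = q(u,v)u$, so $\im \mathcal{C}_u^2 = \F u$ is $1$-dimensional (since $q(u,\cdot) \not\equiv 0$ by nondegeneracy) and $\mathcal{C}_u^3 = 0$. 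To pin down the rank, I would use $\Gtwo^\F$-equivariance $\varphi(\Ann(u)) = \Ann(\varphi(u))$ for $\varphi \in \Gtwo^\F$. Since $\Gtwo^\F$ acts transitively on isotropic lines in $\imoct^\F$, the quantity $\dim_\F \Ann(u)$ depends only on the orbit of $[u]$. Picking the concrete null vector $u = i + l$ in the standard multiplication basis and reading $\mathcal{C}_u$ off Figure \ref{SplitOctonionMult}, one verifies $\rank \mathcal{C}_u = 4$ directly, along with the three independent annihilators $u$, $j - lk$, $k + lj$.

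\emph{Main obstacle.} The real content lies in the dimension count. Skew-adjointness together with $\mathcal{C}_u^3 = 0 \neq \mathcal{C}_u^2$ and $\rank \mathcal{C}_u^2 = 1$ leaves two Jordan types compatible with $\mathcal{C}_u \in \fso(\imoct^\F)$: namely $[3,1,1,1,1]$ (rank $2$) and $[3,2,2]$ (rank $4$). Discriminating between the two is what forces the reduction to a concrete null vector via $\Gtwo^\F$-transitivity on isotropic lines followed by an explicit check (an alternative route would be to appeal to the classification of nilpotent $\Gtwo^\F$-orbits arising from elements of the form $\mathcal{C}_u$ on the $7$-dimensional representation).
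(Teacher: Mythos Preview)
Your argument is correct. For the dimension count you land on exactly the paper's approach: invoke $\Gtwo^\F$-transitivity on isotropic lines and verify at a single model vector. The Jordan-type discussion you add is accurate but, as you note, does not by itself discriminate between rank $2$ and rank $4$, so the model-point check is still doing the work.

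The one genuine difference is your treatment of the containment $\Ann(u)\subset Q_0(\imoct^\F)$. The paper does not argue this separately; it simply checks both claims simultaneously at the model point. Your route via the double cross-product identity (to get $q(u,v)=0$, hence $u\odot v=0$) followed by right-alternativity $(uv)v=u(v^2)=-q(v)\,u$ is a clean intrinsic argument that works uniformly, without needing transitivity for that half of the statement. This buys you a conceptual explanation of \emph{why} annihilators are isotropic rather than a bare verification, at essentially no extra cost.
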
 

 Since $\Gtwo^\F$ acts transitively on $Q_0(\imoct^\F)$, one can prove this by checking it holds on a model point. For example, in the basis $B$' from \eqref{RealCrossProductBasis}, labeling the elements $(x_i)_{i=3}^{-3}$ in order, one finds $\Ann(x_3) = \spann_{\R} \langle x_3, x_2, x_1 \rangle$.\footnote{cf. \cite{Eva22} Section 8.3 on the notion of a \emph{real cross-product basis} for $\imoct$ and its relation to annihilators.} By Proposition \ref{Annihilators}, for $l \in \Eintwothree$, the projective annihilator $\mathbb{P} \Ann( l)$ is an embedded
copy of $\RP^2$ in $\Eintwothree$. These projective planes yield the distribution $\mathscr{D}$ as follows:
$\mathscr{D}_{l} = T_{l} \mathbb{P}\Ann( l)$. That is, the (2,3,5)-distribution $\mathscr{D}$ is realized at $l \in \Eintwothree$ as the tangent space of its (projective) annihilator
submanifold $\mathbb{P}\Ann( l).$ See \cite[Section 8.3]{Eva22} for more details.

 \subsection{Higgs Bundle Preliminaries} \label{HiggsBundlePreliminaries}
 In this section, we recall some definitions and theory on Higgs bundles, specifically on how they can be used to parametrize $\Hit(S, \Gtwosplit)$. 
 These Higgs bundles will serve as a useful tool in the next section for understanding the Frenet frame of the almost-complex curves of interest. 
For the definition below, let $\Sigma = (S, J)$ be a Riemann surface on $S$. We denote $\K := (T^{1,0} \Sigma)^*$ as the canonical line bundle on $\Sigma$.
 
 \begin{definition} 
 A ($\mathsf{GL}_n\C$-) \textbf{Higgs bundle} on $\Sigma$ is a pair $(\V, \varphi)$ consisting of a rank $n$ holomorphic vector bundle $\V \rightarrow \Sigma$ and a \textbf{Higgs field} $\varphi \in H^0(\K \otimes \End(\V))$. 
 
 For a degree zero holomorphic vector bundle $\V$, we call $(\V, \varphi)$ \textbf{stable} when any proper holomorphic $\varphi$-invariant sub-bundle $\mathcal{F} \subset \mathcal{E}$, a sub-bundle
 such that $\varphi(\mathcal{F}) \subset \mathcal{F} \otimes K$, 
 satisfies $\deg(\mathcal{F}) < 0$.
 Call $\V$ \textbf{polystable} when $\V = \bigoplus_{i=1}^n (\V_i, \varphi_i)$ is a direct sum of degree zero stable Higgs sub-bundles. 
 \end{definition} 
There is a more general abstract definition of a $G$-Higgs bundle for $G$ a real semisimple Lie group from \cite{GPGMiR12} that we revisit momentarily,
as well as notions of stability of $G$-Higgs bundles.\footnote{If $G$ is a complex Lie group, then $G$ is regarded a real Lie group by restriction of scalars and the definition actually ends up simplifying.} 
The Higgs bundles of interest will be seen as special cases of $\mathsf{GL}_7\C$-Higgs bundles. 

The non-abelian Hodge (NAH) correspondence, developed by Hitchin \cite{Hit87}, Simpson \cite{Sim92}, Corlette \cite{Cor88}, and Donaldson \cite{Don87}, gives a homeomorphism  $\mathsf{NAH}_{\Sigma}$ between
a moduli space $\mathcal{M}_{G}(\Sigma)$ of \emph{polystable} $G$-Higgs bundles over $\Sigma$, up to gauge equivalence, and the $G$-character variety $\chi(\pi_1 S, G):= \Hom^{\mathsf{Red}}(\pi_1S, G)//G$ of \emph{reductive} representations, up to conjugation. Here, $\rho: \pi_1 M \rightarrow G$ is \emph{reductive} if $\Ad \circ \rho: \pi_1M \rightarrow \mathsf{GL}(\g)$ is a direct sum of 
irreducible representations. The map $\mathsf{NAH}_{\Sigma}$ is defined by building a flat connection $\nabla$ on the Higgs bundle
and then taking its holonomy $\hol(\nabla) \in \chi(\pi_1S, G)$. 
We refer the reader to the survey \cite{Col19} for more details on the application of the NAH correspondence in higher rank Teichm\"uller theory. 
The relevant portion of the NAH correspondence is Hitchin's smooth parametrization of the $G$-Hitchin component $\Hit(S,G)$ for $G$ a split real simple Lie group. For $G= \Gtwosplit$, Hitchin's map is of the form $F_{\Sigma}: H^0(\Sigma, \K_{\Sigma}^2) \oplus H^0(\Sigma, \K_{\Sigma}^6) \rightarrow \Hit(S,\Gtwosplit)$. A drawback of Hitchin's parametrization $F_{\Sigma}$ (in general) is that 
 depends highly on the choice of complex structure $\Sigma$. Labourie 
rectified this situation in the case $G$ is of rank two in \cite{Lab17}. Labourie's work gives a canonical mapping class group-equivariant diffeomorphism $\Psi: \V_6 \rightarrow \Hit(S,\Gtwosplit)$
where $\V_6 \rightarrow T(S)$ is the bundle over Teichm\"uller space with fiber $(\V_6)_{ \Sigma } \cong H^0(\Sigma, \K_{\Sigma}^6)$. Labourie's map is
$\Psi( \Sigma, q_6) = F_{\Sigma}(0, q_6)$. Thus, naturally associated to $\rho \in \Hit(S, \Gtwosplit)$
is a pair $(\Sigma, q_6)$, where $q_6$ is a holomorphic sextic differential on $\Sigma$. The data of this pair will be used to define a (stable) $\mathsf{GL}_7\C$-Higgs bundle below,
whose structure group reduces to $\Gtwosplit.$

Hitchin's map $F_{\Sigma}$ factors as $F_{\Sigma} = \mathsf{NAH}_{\Sigma} \circ s_{\Sigma}$, where
$s_{\Sigma}: H^0(\Sigma, \K_{\Sigma}^2) \oplus H^0(\Sigma, \K_{\Sigma}^6) \rightarrow \mathcal{M}_{\Gtwosplit}(\Sigma)$ is called the \emph{Hitchin section}. 
We now define the Higgs bundle $s_{\Sigma}(0, q_6) := (\V, \varphi(q_6))$ of interest.
The holomorphic vector bundle underlying the Higgs bundle is $\V = \bigoplus_{i=3}^{-3} \K_{\Sigma}^i $. Note that $\K_{\Sigma}^0 = \mathcal{O}$ is a trivial complex line sub-bundle
of $\V$. Then we identify in any
local coordinate $z$ for $\Sigma$ the local frame $(dz^i)_{i=3}^{-3}$ for $ \V$ and the following fixed frame $(u_i)_{i=3}^{-3}$ for $\imoct^\C$ from \eqref{BaragliaBasis} below
by $dz^i \leftrightarrow u_i$. 
\begin{align} \label{BaragliaBasis}
	\begin{cases} 
	u_{3} &= \frac{1}{\sqrt{2}}( \, jl + \sqrt{-1} kl).\\
	u_{2} &=\frac{1}{\sqrt{2}}( \, j + \sqrt{-1} k).\\
	 u_{1} &= \frac{1}{\sqrt{2}}( \, l + \sqrt{-1} il).\\
	 u_{0} &= i.\\
	 u_{-1} &= \frac{1}{\sqrt{2}}( \, l- \sqrt{-1} il).\\
	 u_{-2}&= \frac{1}{\sqrt{2}}( \, j - \sqrt{-1} k).\\
	 u_{-3} &= \frac{1}{\sqrt{2}}( \, jl - \sqrt{-1} kl).
	 \end{cases} 
\end{align}
This identification locally defines a cross-product on the fibers of $\V$, which yield a global cross-product $\times_{\V}: \V \times \V \rightarrow \V$ since the local transitions 
between $(dz_i)_{i=3}^{-3}$ and $(dw_i)_{i=3}^{-3}$ are of the form $\mathsf{diag}(\zeta^3, \zeta^2, \zeta, 1, \zeta^{-1}, \zeta^{-2}, \zeta^{-3})$ for $\zeta \in \C^*$ and hence respect the cross-product. Indeed, the $\g_2^\C$-transformations diagonal the basis $(u_i)$ are of the form $\mathsf{diag}(r+s, r, s, 0, -s,-r,-r-s)$ for $r,s \in \mathbb{C}$ \cite[Proposition 2.6]{Eva22}. By considering only frames for $\V$ that are fiber-wise cross-product compatible with the above model frames,
we reduce the structure group of $\V$ to $\Gtwo^\C$. That is, we consider the holomorphic principal $\Gtwo^\C$-frame bundle
$$ \Fr^{\times}(\V) :=  \{T \in \Hom_{\C}(\,\uline{\imoct^\C}\,, \V)  \; | \;  T(u \, \times_{\imoct^\C}  \,v) = T(u) \times_{\V} T(v)  \},$$
where $\uline{E}:= \Sigma \times E $ denotes a trivial vector bundle. 

We may write $q$ instead of $q_6$ when there is no opportunity to confuse $q_6$ with $q= q_{\imoct}$ on $\imoct$. 
Following Hitchin's original Lie algebra recipe, one finds (cf. \cite[Section 2.6, Appendix A]{Eva22}) the Higgs field $\varphi := \varphi(0,q)$ is of the form 
\begin{align}
		\varphi &= \begin{pmatrix} 0&  & & & & q&\\
				     \sqrt{3} & 0&  & & & & q\\
			  	      & \sqrt{5}&0 & & & &\\
			 	      & & -\sqrt{-6}&0 && &\\
				      & & & -\sqrt{-6}& 0& &\\
				      & & & &\sqrt{5} &0&\\
				      & & & & & \sqrt{3}&0\\
				     \end{pmatrix} .	
	\end{align}
Given a Higgs bundle $(\V, \varphi) \in \mathsf{image}(s_{\Sigma})$, \emph{Hitchin's equations} \eqref{Hitchin1}, \eqref{Hitchin2} for $(\V, \varphi)$ uniquely determine a hermitian metric $h$ on $\V$ such that the connection $\nabla := \nabla_{\delbar, h} + \varphi + \varphi^{*h}$ is flat and has holonomy in $\Gtwosplit$ \cite{Hit92}. 
\begin{align}	
 	F_{h} + [\varphi, \varphi^{*h} ] &= 0 .\label{Hitchin1} \\
	\nabla_{\delbar, h}^{0,1} \, \varphi &= 0. \label{Hitchin2} 
\end{align}
The metric $h$ is diagonal in the standard holomorphic coordinates since $(\V, \varphi)$ is \emph{cyclic} \cite{Bar10}. 
In particular,  
$h$ is of the form 
\begin{align}\label{HitchinMetric}
	h( \mathbf{x}, \mathbf{y} ) = \mathbf{\overline{x}}^T \, \mathsf{diag} (r^{-1}s^{-1}, r^{-1}, s^{-1}, 1, s, r, rs)\, \mathbf{y} =  \mathbf{\overline{x}}^T \,H \, \mathbf{y},
\end{align} 
where $r, s > 0$ are positive. Here, $s \in \Gamma(\K \otimes \overline{\K})$ and $r \in \Gamma(\K^2 \otimes \overline{\K}^2)$. In a local coordinate $z$, 
Hitchin's equation \eqref{Hitchin2} is equivalent to $q_{\zbar} = 0$ and 
\eqref{Hitchin1} is equivalent to the coupled elliptic system of PDE \eqref{HitchinsEquations_rs} \cite[page 16]{Eva22}. 
 \begin{align}\label{HitchinsEquations_rs}
 	\begin{cases} 
 		\partial_{z} \partial_{\zbar} \log r  =  5 \frac{r}{s} - 3s- \, \frac{|q|^2}{r^2 s} \\
		\partial_z \partial_{\zbar} \log s  = 6s-  5\frac{r}{s} .
	\end{cases} 
 \end{align} 
Denote $u_1= \log r + \frac{1}{2}\log s$, $v_1 = \frac{1}{2}\log s$ and the system \eqref{HitchinsEquations_rs} becomes 
  \begin{align}\label{HitEuc}
 	\begin{cases}  
		 2\partial_{z} \partial_{\zbar}  \, u_1= 5 e^{(u_1-3u_2)} - 2 e^{-2u_1}\,|q|^2 .\\
		2 \partial_{z} \partial_{\zbar} \, u_2 =  6e^{2u_2} -5e^{(u_1-3u_2) }. \end{cases} 
 \end{align} 
 We can also write Hitchin's equations in a global form. Let $\sigma = \sigma(z) |dz|^2$ denote any conformal metric on $\Sigma$. Given a (local) solution
$\boldsymbol{u} = (u_1, u_2)$ of \eqref{HitEuc}, we define $\boldsymbol{\psi} = (\psi_1, \psi_2)$ by 
 $$\begin{cases}
 	e^{\psi_1} \sigma^{5/2} &= e^{u_1} \\
	e^{\psi_2} \sigma^{1/2} &= e^{u_2} .
 \end{cases} $$
 Then one finds $\boldsymbol{u}$ solves \eqref{HitEuc} if and only if $\boldsymbol{\psi}$ solves \eqref{G2Hitchin_GeneralMetric}. 
Thus, solving Hitchin's equations is equivalent to finding global functions $\psi_1, \psi_2: S \rightarrow \R $ that solve \eqref{G2Hitchin_GeneralMetric}. 
\begin{align}\label{G2Hitchin_GeneralMetric} 
	\begin{cases}  
		2 \Delta_\sigma \psi_1 &= 5e^{\psi_1 - 3 \psi_2} - 2 |q|^2_{\sigma} \, e^{-2\psi_1} + \frac{5}{2} \kappa_{\sigma} \\
		2 \Delta_\sigma \psi_2 &= -5e^{\psi_1 - 3\psi_2} + 6 e^{2\psi_2} + \frac{1}{2} \kappa_\sigma  \end{cases} ,
\end{align} 
where $\Delta_{\sigma} = \frac{1}{\sigma} \partial_{z} \partial_{\zbar}$, $|q|^2_\sigma := \frac{q \bar{q}}{\sigma^6}$, and $\kappa_{\sigma} = - \frac{2}{\sigma} \partial_{z} \partial_{\zbar} \log \sigma$. 
In particular, there is a unique solution to the system \eqref{G2Hitchin_GeneralMetric}. \\
 
 We now discuss $\nabla$-parallel real forms on $\V$ and $\End(\V)$. 
 For the following discussion, we imagine the following structures on $\g_2^\C$ as being fixed:
 \begin{itemize}
 	\item the Cartan subalgebra $\mathfrak{h}$ of diagonal transformations in the basis $(u_i)$ from \eqref{BaragliaBasis}
	\item a choice of simple roots $\Pi= \{\alpha, \beta \} \subset \Delta$ 
	\item Chevalley basis $(t_{\delta}, e_{\delta})_{\delta \in \Delta}$ such that $[t_{\delta}, e_{\delta} ] = 2 e_{\delta},\, [t_{\delta}, e_{-\delta} ] = -2 e_{\delta}, \, [e_{\delta}, e_{-\delta}] = t_{\delta}$.\footnote{For an explicit choice of such basis, see \cite[Appendix A]{Eva22}.}
\end{itemize} 
  First, consider 
 $$\End^{\times}(\V) := \{ \psi \in \End(\V) \; | \; \psi(u \times_{\V} v) = \psi(u) \times_{\V} v + u \times_{\V} \psi(v)\; \},$$
 a $\g_2^\C$-fibered sub-bundle of $\End(\V)$. There is a natural vector bundle isomorphism $\End^{\times} \V \cong \bigoplus_{i=-5}^{5} \uline{\g_i} \times \K^i$, where $\g_i = \bigoplus_{\delta \in \Delta, \mathsf{height}_{\Pi}(\delta) =i} \g_\delta$ is the sum of root spaces of height $i$,
 and $\uline{\g_i}$ denotes the trivial bundle $\Sigma \times \g_i$. For example, 
consider the $\g_2^\C$ transformation $e_{\alpha} \in \g_{\alpha}$ given by $u_1 \mapsto u_{2}$,\, $u_{-2} \mapsto u_{-1}$, and $u_i \mapsto 0$ otherwise.
Given both the constant section $\widehat{e_{\alpha}} \in \Gamma(\Sigma, \uline{\g_i})$ by $\widehat{e_{\alpha}}(p) = e_{\alpha}$, and $X \in \Omega^{1,0}(\Sigma)$, 
the pair induces the transformation $(e_{\beta} \otimes X) \in \End^{\times}(\V)$ by mapping $\K^1 \mapsto \K^2$ via $Y \mapsto X \otimes Y$ and mapping $\K^{-2} \mapsto \K^{-1}$ via 
$Y \mapsto X \otimes Y$. Using this identification, we can reduce the structure group of $\End^{\times}(\V)$ from $\Gtwo^\C$ to $\Gtwosplit$ via real forms (and do the same for $\V$.)

The hermitian metric $h$ induces a fiber-wise choice of compact real on $\End^{\times}(\V)$. First, note that any $A_p \in \End(\V_p)$ has
a unique $h$-adjoint $A_p^{*h} \in \End(\V_p)$ satisfying $ h( A_p X, Y) = h(X, A_p^{*h} Y)$. Thus, we may define a $\C$-anti-linear Lie algebra bundle involution 
$\hat{\rho}: \End(\V_p) \rightarrow \End(\V_p)$ by $\hat{\rho}(A) = -A^{*h}$. The involution $\hat{\rho}$ leads to the sub-bundle $\End^{\times}_{\hat{\rho}}(\V): = \{ X \in \End^{\times}(\V) \; | \; \hat{\rho}(X) = X\}$ of $\mathfrak{su}(h)$ endomorphisms. Fiberwise, the map $\hat{\rho}$ is the involution of a compact real form of $\g_2^\C$. 
In the local coordinates $(dz^i)$, the involution obtains the form 
$\hat{\rho}(A) = -H^{-1}\overline{A}^T H$. 

Similarly, we define a bundle-wise Cartan involution, whose fixed point set is fiberwise a copy of $\frakk^\C$, where $\frakk < \g_2'$ is a 
copy of the maximal compact in $\g_2'$. Unlike $\rho$, the map $\sigma$ is $\C$-linear. On the level of $\g_2^\C$, first define 
$\sigma(e_{\alpha} ) = (-1)^{\mathsf{height}(\alpha)} e_{\alpha}$ and $\sigma|_{\mathfrak{h} } = \id_{\mathfrak{h}}$, where $\mathfrak{h} = \g_0$ is the Cartan subalgebra. 
Hitchin proved this map is a Lie algebra involution whose fixed point set is $\mathfrak{k}^\C$ in \cite[Proposition 6.1]{Hit92}. 
We extend $\sigma$ to a bundle involution on $\hat{\sigma}: \End^{\times}(\V) \rightarrow \End^{\times}(\V) $ by defining it on simple tensors via $ \hat{\sigma}( e_{\alpha} \otimes X ) = \sigma(e_{\alpha } ) \otimes X$. 
In the coordinates $(dz^i)$, the involution $\hat{\sigma}$ is of the form $\hat{\sigma}(A) = -QA^TQ $ 
where $Q= \begin{pmatrix} & & & & & &1\\
					& & & & &1&\\
				   	& & & & 1& & \\
				  	& & & 1& & & \\
				  	& & 1&& & &\\
				   	& 1& & & & & \\
				   	1 & & & & & & \end{pmatrix}.$ 
The involutions $\hat{\rho}, \hat{\sigma}$ pointwise commute and hence define a $\C$-anti-linear conjugation $\hat{\tau}$ of a split real form of $\g_2^\C$. 
In fact, $ \End^\times_{\hat{\tau}}(\V) = \{ \psi \in \End^\times \V \; | \; \hat{\tau}(\psi) = \psi \}$ is a $\nabla$-parallel sub-bundle of $\End(\V)$. 
Indeed, $\hat{\tau}$ is $\nabla_{\delbar,h}$-parallel \cite{Hit92} and $\varphi + \varphi^{*h}$ is directly found to satisfy $\hat{\tau}(\varphi + \varphi^{*h}) = \varphi + \varphi^{*h}$.

There is one more structure on $\V$ of interest, a non-degenerate complex bilinear form $B: \V \times \V \rightarrow \C$. 
Write $\V = \mathcal{O} \oplus \bigoplus_{i=1}^{3} (\K^i \oplus \K^{-i})$. Then set $B|_{\mathcal{O}} = 1$ and $B|_{\K^i \oplus \K^{-i}} = \begin{pmatrix} 0 & 1 \\ 1 & 0\end{pmatrix}$ to be
the natural dual pairing. We then declare $\mathcal{O}$ and each sub-bundle $\K^i \oplus \K^{-i} $ for $i \in \{1,2,3\}$ to be $B$-orthogonal to each other. In the frame $(dz^i)$,
the bilinear form $B$ is represented by the matrix $Q$. 
We are then led to consider $B$-orthonormal frames that also respect the cross-product. That is, define 
$$\Fr_{B}^{\times}(\V) = \{ T \in \Fr^{\times}(\V) \; | \; T(e_i) \; \text{is} \; B-\text{orthonormal}\}.$$
The bilinear form $B$ is also related to the involution $\hat{\sigma}: \End(\V) \rightarrow \End(\V)$. In fact, $\hat{\sigma}$ is equivalently given by $\hat{\sigma}(A) = -A^{*B}$, where 
$(\cdot)^{*B}$ denote the $B$-adjoint, i.e., $B(As, t) = B(s, A^{*B} t)$. \\

Now, we give the definition of a general $G$-Higgs bundle for $G$ a real reductive Lie group. Start with a Cartan decomposition $\g = \mathfrak{k} \oplus \mathfrak{p}$,
where $\mathfrak{k}$ is a maximal compact subalgebra of $\g$. Then complexify to get $\g^\C = \mathfrak{k}^\C \oplus \mathfrak{p}^\C$. A $G$-Higgs bundle
is then a pair $(\mathcal{P}, \Phi)$, with $\mathcal{P}$ a holomorphic principal $K^\C$-bundle and $\Phi \in H^0( \mathscr{E} \otimes \K_{\Sigma})$ is the Higgs field,
where $\mathscr{E} = \mathcal{P} \times_{\Ad} \mathfrak{p}^\C$. 

 In the present case, $s_{\Sigma}(0, q_6)$ is a $\Gtwosplit$-Higgs bundle, just the construction just has been factored
through the holomorphic vector bundle $\V$. Here, the principal $K^\C$-bundle is $\mathcal{P} = \mathsf{Fr}^\times_{B}(\V)$. Then view $\mathcal{P} \times_{\Ad} \g^\C \cong \End^{\times} (\V)$ and we find $\mathcal{P} \times_{\Ad} \mathfrak{p}^\C \cong \End^\times_{-\hat{\sigma}}(\V) =  \{ \,X \in \End^\times(\V) \; | \; \hat{\sigma}(X) = -X\}$.
Since $\hat{\sigma}(\varphi) = - \varphi $ and $\varphi( \der{z} ) \in \End^\times(\V)$ in the local frame $(dz^i)$, the Higgs field is just $\Phi = \varphi$. Hitchin proved in \cite{Hit92} that the Higgs bundles in the Hitchin section are stable. We refer the reader to \cite{CT23} for more 
comprehensive details on cyclic $\Gtwosplit$-Higgs bundles and to \cite{Col19} for more details on stability. \\

There is an involution $\hat{\tau}: \V \rightarrow \V$, by abuse also called $\hat{\tau}$, such that $\V^\R := \Fix(\hat{\tau})$ is a $\nabla$-parallel sub-bundle with fibers isomorphic to 
$\imoct$. The real forms $\hat{\tau}$ are multiplicative in the sense that $\hat{\tau}(\psi s) = \hat{\tau}(\psi)\hat{\tau}(s) $ for any sections $\psi \in \Gamma(\Sigma, \End(\V)\, ), s \in \Gamma(\Sigma, \V)$
\cite{Bar10}. Locally, $\hat{\tau}$ is found by $\hat{\tau}(x) = H^{-1}Q\overline{x}$, where $x $ is a vector in the local coordinates $(dz^i)$. 
More explicitly, the sub-bundle $\V^\R$ is realized as the (real) span of the basis $(w_i)_{i=1}^7$
from \eqref{HUnitaryMultiplicationFrame}. 
\begin{align} \label{HUnitaryMultiplicationFrame}
		\begin{cases}
		w_1 &= i = u_0.\\
		w_2 &= \,\frac{1}{\sqrt{2}} \, ( \, r^{1/2} u_{2} + r^{-1/2} u_{-2} \, ) \\ 
		w_3 &=  - \, \frac{ \sqrt{-1} }{\sqrt{2}} ( \, r^{1/2} u_{2} - r^{-1/2} u_{-2} \, ) \\ 
		w_4& = \,\frac{1}{\sqrt{2}} \, ( \, s^{1/2} u_{1} + s^{-1/2} u_{-1} \, ) \\
		w_5& = \,\frac{ \sqrt{-1} }{\sqrt{2}} \, ( \, s^{1/2} u_{1} - s^{-1/2} u_{-1} \, ) \\
		w_6& = -\,\frac{1}{\sqrt{2}} \, ( \, (rs)^{1/2} u_{3} + (rs)^{-1/2} u_{-3} \, ) \\ 
		w_7 &= \,\frac{ \sqrt{-1} }{\sqrt{2}} \, ( \, (rs)^{1/2} u_{3} - (rs)^{-1/2} u_{-3} \, ) \\ 
		\end{cases} 
	\end{align}
We also recall that $(w_i)_{i=1}^7$ is an $h$-unitary multiplication frame for $\V^\R$ -- that
is, the $\C$-linear map $(m_i \mapsto w_i) \in \Gtwo^\C$, where $(m_i)$ is from \eqref{MultiplicationBasis} \cite[Section 3]{Eva22}. Hence, $(w_i)_{i=1}^7$ is orthonormal under the $\imoct$ norm with 
$q(w_i) = +1$ for $i \in \{1,2,3\}$ and $q(w_i) = -1$ for $i \in \{4,5,6, 7\}$. 

\section{From Representations to Geometric Structures} \label{Sec:RepntoGeom}

Let $[\rho] \in \mathsf{Hit}( S, \Gtwosplit) $ be a Hitchin representation. In this section, we construct a $(G,X)$-structure on the 
($\mathbb{S}^1 \times \mathbb{S}^1 \times \R_+$)-bundle $\mathscr{C}:= UTS \oplus UTS \oplus \uline{\R}_+$ over $S$ with $G = \Gtwosplit$
and $X = (\Eintwothree, \mathscr{D})$, where $\mathscr{D}$ is the (2,3,5)-distribution on $\Eintwothree$ defined in Section \ref{G2Prelims}, such that the holonomy $\mathsf{hol}$ of the geometric structure factors through the projection $\pi: \mathscr{C} \rightarrow S$ and satisfies
$\hol = \rho \circ \pi_*$. Later, the work of this section will be repackaged as a map $s: \Hit(S, \Gtwosplit) \rightarrow \mathscr{M}$, once we have given the technical
conditions defining the moduli space $\mathscr{M}$. 

Recall from the previous subsection that  
associated to $\rho \in \Hit(S,\Gtwosplit)$ is a pair $(\Sigma, q_6) $ in the bundle $ \V_6 \rightarrow T(S)$. 
We then realize $\rho$ as the holonomy of the flat connection $\nabla$ associated to the Higgs bundle $(\V, \varphi_{q_6})$ 
on $\Sigma = (S,J)$. 
The geometric structure on $\mathscr{C}$ is first constructed with Higgs bundle methods, but we then describe the developing map
just in terms of the unique $\rho$-equivariant alternating almost-complex curve $\hat{\nu}: \tilde{S} \rightarrow \Stwofour$. 

\subsection{The Almost-Complex Curve $\hat{\nu}$ and Associated Maps} \label{AC_Curve} 
In this section, we define the almost-complex curves $\hat{\nu}$ of interest as well as a number of relevant associated maps. 
As an important foundational principle, we review an important 
intrinsic-extrinsic equivalence that allows us to view these curves $\hat{\nu}$ inside the aforementioned Higgs bundles. This technique comes 
from \cite{Bar10}, who applied this idea to $G$-Hitchin representations for rank two split real simple Lie groups, i.e.,
$G \in \{ \mathsf{SL}_3\R, \Sp(4,\R), \Gtwosplit\}$. \\

We first recall some notions on flatness. Let $F \rightarrow M$ be an $(X, G)$-fiber bundle. Such a bundle is necessarily of the form $F = \bigsqcup_{\alpha} (U_\alpha \times X)/\sim $, over an atlas $(U_{\alpha})$ covering $M$, where on the overlaps for each $p \in U_{\alpha} \cap U_{\beta}$, we have identified via $ (p,x) \in U_{\alpha} \times X$ with $(p, g_{\alpha \beta}(p) x) \in U_{\beta} \times X$
for some $g_{\alpha \beta}(p) \in G$. The transition data $U_{\alpha} \cap U_{\beta} \rightarrow G$ by $p \mapsto g_{\alpha \beta}(x)$ is continuous and we have the cocycle identity
$g_{\alpha \beta} g_{\beta \gamma} = g_{\alpha \gamma}$. 
When the atlas can be chosen in such a way that the transition data $g_{\alpha \beta}$ is locally constant, the bundle is \emph{flat}. 
 In this case, the local horizontal foliation in each product $U_{\alpha} \times X$ extends to a foliation of the total space of the bundle. Flat $(X,G)$-bundles can be used to construct equivariant developing maps, as we now recall. 

\textbf{Fundamental Correspondence.}\footnote{The following correspondence is well-known and was explained by Goldman in \cite{Gol88} as well as in a survey of Alessandrini on geometric structures \cite{Ale19}.}  Let $X$ be a topological space and $G \rightarrow \mathsf{Homeo}(X)$
a faithful homomorphism of Lie group $G$. 
Let $\mathscr{X}_{\rho}$ be a flat $(X,G)$-bundle over a smooth manifold $M$
with holonomy $\rho:\pi_1 M \rightarrow G$ in a fixed trivialization $\tau$. 
There is a homeomorphism between $\Gamma(M, \mathscr{X}_{\rho} )$ and the space $\mathsf{Equiv}_{\rho}(\tilde{M}, X)$ of $\rho$-equivariant maps $\tilde{M} \rightarrow X$, where each space is equipped with the $C^{\infty}$-topology. The correspondence
 $ \sigma \in \Gamma(M, \mathscr{X}_{\rho}) \mapsto f_{\sigma} \in \mathsf{Equiv}_{\rho}(\tilde{M}, X)$ is as follows. 
Pull back $\sigma \in \Gamma(M, \mathscr{X}_{\rho}) $ 
to $\tilde{\sigma} \in \Gamma(\tilde{M}, \pi^* \mathscr{X}_{\rho} ) $, where $\pi: \tilde{M} \rightarrow M$ denotes the universal covering. 
The pullback section $\tilde{\sigma}: \tilde{M} \rightarrow \tilde{M} \times X$ is a $\rho$-equivariant map in the trivialization induced by $\tau$. 
Denoting $f_{\rho} $ as the projection of $\tilde{\sigma}$ onto $X$ gives the desired map. 
If we instead allow the trivializations to vary, we get the $G$-orbit $(L_g \circ f_{\rho})_{g \in G}$ of maps associated to $\sigma \in \Gamma(M, \mathscr{X}_{\rho} )$,
where $L_g:X \rightarrow X$ denotes the $G$-action. The map $L_g \circ f_{\rho}$ is instead equivariant with respect to $C_g \circ \rho$, where $C_g: G \rightarrow G$ denotes $g$-conjugation, so 
there is only one representative from $(L_g \circ f_{\rho})_{g \in G}$ with holonomy $\rho$. 
The constructions of this section on the level of representatives will later be more naturally viewed on the level of moduli spaces in Lemma \ref{Continuity}. 

We now give the relevant definitions on equivariant alternating almost-complex curves. In particular, we use a slightly non-standard definition of `almost-complex' from \cite{CT23}, which
is justified after the definition. 

\begin{definition}
Let $\mathsf{S}$ be an oriented smooth surface. 
\begin{itemize}
	\item An \textbf{almost-complex curve} $\nu: \mathsf{S} \rightarrow \quadric$ is an immersion such that
the tangent space $d\nu(T_p\mathsf{S})$ is $J$-invariant and moreover $\nu^*J$ is compatible with the orientation on $\mathsf{S}$. 
	\item Write the pullback tangent bundle as $\nu^*T\quadric = T_{\nu} \oplus T_{\nu}^{\bot}$, where $T_{\nu}|_p= \mathsf{image}(d\nu_p)$. 
	Call $\nu$ \textbf{alternating} when $T_{\nu}|_p$ is a (0,2)-plane for all points $p \in \mathsf{S}$ and the second fundamental form $\sff: T_{\nu} \times T_{\nu} \rightarrow T_{\nu}^\bot$ of $\nu$ 
is not identically zero and is positive-definite in the sense that $\mathsf{image}(\sff_p)$ is a (possibly-empty) positive-definite subspace of $T_{\nu}^\bot|_p$. 
	\item When $\mathsf{S} = \tilde{S}$ is the universal cover of a surface $S$, call $\nu$ \textbf{equivariant} when there exists $\rho \in \Hom(\pi_1S, \Gtwosplit)$
such that $\nu(\gamma \cdot x) = \rho(\gamma) \cdot \nu(x)$ for all $x \in \tilde{S}$ and $\gamma \in \pi_1S$. 
\end{itemize} 
\end{definition} 

\begin{remark}
If $\nu: \mathsf{S} \rightarrow \quadric$ is an almost-complex curve under this definition, define $j:= \nu^*J_{\quadric}$, and then $\nu: (\mathsf{S}, j) \rightarrow \quadric$
is an almost-complex curve in the conventional sense: $d\nu \circ j = J_{\quadric} \circ d\nu$. The definition here is a minor convenience, 
since if $[\rho] \in \Hit(S,\Gtwosplit)$, there is a unique complex structure $[\Sigma] \in T(S)$ 
such that there is a $\rho$-equivariant alternating almost-complex curve $\nu: \tilde{\Sigma} \rightarrow \quadric$ in the conventional sense. This is proven by \cite[Theorem B]{CT23}, which shows that
$\mathcal{H}(S)$ fibers over Teichm\"uller space $T(S)$ via the map $[ \,(\hat{\nu}, \rho) \, ]\mapsto [ \hat{\nu}^*J_{\quadric} ] $. 
\end{remark} 

\begin{remark}
If $\sff \nequiv 0$, then $\sff$ vanishes only at isolated points and one may define a unique 2-dimensional sub-bundle $N_{\nu} \subset T_{\nu}^\bot$, called the \textbf{normal line}, such that $N_{\nu}|_p = \mathsf{image}(\sff_p)$, where $\sff_p \neq 0$ \cite[Proposition 3.15]{CT23}. However, as we shall see shortly,
$\sff$ is pointwise non-vanishing for the almost-complex curves we are interested in. The \textbf{binormal line} $B_{\nu}$ is then defined by $B_{\nu} : =\left[ (T_{\nu} \oplus N_{\nu})^\bot \subset \nu^*T\quadric \right]. $
\end{remark} 

Let $\rho \in \Hit(S,\Gtwosplit)$ and we now construct the unique $\rho$-equivariant alternating almost-complex curve $\hat{\nu}: \tilde{S} \rightarrow \quadric$
via the fundamental correspondence. This section addresses only the existence; uniqueness is explained later by Theorem \ref{HitchinAC}. The construction starts with the tautological section $\hat{\mu}: \Sigma \rightarrow \mathcal{O} \hookrightarrow \V$
by $\hat{\mu}(p) := 1 \in \mathcal{O}_p$. The almost-complex curve $\hat{\nu}$ corresponds to $\hat{\mu}$, which is seen after some intermediary identifications \cite{Bar10}. Observe that $q(\hat{\mu}) = +1$ and $\hat{\mu}_p \in \V_p^\R$ by the identification \eqref{HUnitaryMultiplicationFrame}.
Hence, view $\hat{\mu}$ as a section of the bundle $Q_+ \V^\R \cong \tilde{S} \times_{\rho} \quadric$, an $\quadric$-fibered flat bundle with fibers $ Q_+ \V_p^\R := \{x \in \V_p^\R \; | \; q_{\imoct}(x) = +1 \}$.
Thus, $\hat{\mu}$ corresponds to a $\rho$-equivariant map $\hat{\nu}: \tilde{S} \rightarrow \quadric$. To see $\hat{\nu}$ is an almost-complex curve, one checks
that $\hat{\nu} \times \hat{\nu}_z = \sqrt{-1} \hat{\nu}_z$, which is verified on the Higgs bundle side by computing $\hat{\mu} \times_{\V} \nabla_{z} \hat{\mu}  = \sqrt{-1} \nabla_{z} \hat{\mu} $. 

If $\nu$ is an alternating almost-complex curve, then the \emph{binormal line} is also realized by $B_p := T_p \times_{\imoct} N_p$, and is a timelike two-plane. 
The pullback bundle decomposes orthogonally as a direct sum $\nu^*T\quadric = T_{\nu} \oplus N_{\nu} \oplus B_{\nu}$. 
The terminology ``line'' is justified as each of the real 2-planes $T,N,B$ is a complex line in $\nu^*T\quadric$. 
Of course, we may replace $\hat{\nu}$ with $\nu$
in the case of taking tangents, normals, and binormals. 
One can see that $\hat{\nu}$ defined above is alternating through the fundamental correspondence 
applied to the tangent, normal, and binormals as follows: 

\begin{itemize} 
	\item (Tangent Line)  $T_{\nu}: \tilde{S} \rightarrow \Gr_{(0,2)}(\imoct)$ by $p \mapsto T_{\nu}(p)$ corresponds to $T_{\mu} \in \Gamma(S, \Gr_{(0,2)}(\V^\R))$,
	where $T_{\mu} = \im( \nabla \mu )$, where $\nabla \mu: TS \rightarrow \V$ is given by $(p, X) \mapsto \nabla_X \mu(p)$. 
	\item (Normal Line) $N_{\nu}: \tilde{S} \rightarrow \Gr_{(2,0)}(\imoct) $ by $p \longmapsto N_{\nu}(p)$ 
	corresponds to $N_{\mu} \in \Gamma(S,  \Gr_{(2,0)}(\V^\R))$ by 
	$N_{\mu}(p) = \im ({\sff_{\V}}|_p)$. 
	\item (Binormal Line) $B_{\nu}: \tilde{S} \rightarrow \Gr_{(0,2)}(\imoct) $ by $ B_{p}:= T_{\nu}(p) \times_{\imoct} N_{\nu}(p)$ corresponds to $T_{\mu} \times_{\V} N_{\mu}$.
\end{itemize} 
We may conflate $T, N, B$ with the sub-bundles of the pullback bundle $\nu^*T\quadric$, so that each of $T, N,B$ is itself an equivariant map (as above) and also a complex line bundle over $\tilde{\Sigma}$. 
Taking the images of the maps $T_{\mu}, N_{\mu}, B_{\mu}$ defines sub-bundles of $\V^\R$, denoted by $\mathcal{T}, \mathcal{N}, \mathcal{B} $. These can be found in a local coordinate
$z = x+ iy$ on $\Sigma$ as follows: 
\begin{align}\label{TNB_Bundle}
	\begin{cases} 
		 \mathcal{T} &= \spann_{\R} \langle \nabla_{\der{x} } \hat{\mu}, \nabla_{\der{y}} \hat{\mu} \rangle\\
		 \mathcal{N} &= \spann_{\R} \langle \sff_{\V}( \der{x}, \der{x}), \sff_{\V}( \der{x}, \der{y} ) \rangle \\
		 \mathcal{B} &:= \mathcal{T} \times_{\V} \mathcal{N}
	\end{cases}.
\end{align}

In terms of the frame \eqref{HUnitaryMultiplicationFrame}, $\mathcal{T} = \spann_{\R}  \langle w_4, w_5 \rangle , \; \mathcal{N} = \spann_{\R} \langle w_2, w_3 \rangle , \; \mathcal{B} = \spann_{\R} \langle w_6, w_7 \rangle$; these identities follow from Higgs bundle calculations (cf. \cite[Section 3]{Eva22}) and prove the desired signatures of $T_p, N_p, B_p$, verifying that $\hat{\nu}$ is alternating.
The name \emph{alternating} comes from
\cite{Nie24}, who considered a more general kind of such harmonic maps and related them to cyclic $\mathsf{SO}(p,p+1)$, and $\Gtwosplit$-Higgs bundles.

The bundle analogue $\sff_{\V}$ of $\sff$ is the 1-form $\sff_\V \in \Omega^1(S, \Hom(\mathcal{T}, \mathcal{N}) )$. However, as $X \mapsto \nabla_X \hat{\mu}$ is a
vector bundle isomorphism between $TS$ and $\mathcal{T}$,
regard $\sff_{\V}$ as a map $\sff_\V: TS \times TS \rightarrow \mathcal{N}$ by 
$$\sff_{\V}( X, Y) := \proj_{(\mathcal{O} \oplus \mathcal{T})^\bot} \nabla_{Y} \nabla_X \hat{\mu} .$$
The \emph{third fundamental form} of $\nu$ is the map $\tff \in \Omega^1(S, \Hom(N_{\nu}, B_{\nu} ))$ given by $\tff(X)(Y) = \overline{D}_X Y \mod ( T_{\nu} \oplus N_{\nu})$, 
where $\overline{D}$ is the tangential connection on $\quadric$ induced by the trivial connection $D$ on $\imoct$. The Higgs bundle analogue of $\tff$ is the bilinear map $\tff_\V: TS \times \mathcal{N} \rightarrow \mathcal{B}$ 
via $\tff_\V(X)(Y) = \nabla_X Y \mod (\mathcal{O} \oplus \mathcal{T} \oplus \mathcal{N})$. The reader can find more details on $\sff, \tff$, and the differential geometry of 
alternating almost-complex curves in $\quadric$ in \cite[\S 3]{CT23}. 

The tuple $( \hat{\nu}, T_{\nu}, N_{\nu}, B_{\nu} )$, called the \emph{Frenet Frame} of $\hat{\nu}$, consists of pairwise orthogonal elements and yields a block-decomposition 
$\imoct= \R \{\hat{\nu}\} \oplus T_{\nu} \oplus N_{\nu} \oplus B_{\nu} $. The Frenet frame lifts both $\hat{\nu}$ and the Hitchin harmonic map 
$f_{\rho}: \tilde{\Sigma} \rightarrow \Gtwosplit/K$ to the $\Gtwosplit$-symmetric space. To discuss the lift, we recall a result from \cite[Lemma 3.8]{Eva22} on a geometric model for the space $\Gtwosplit/T$, where $T < K$ is the maximal torus in the maximal compact subgroup $K < \Gtwosplit$. 
\begin{lemma}\label{GTGeometricModel} $\Gtwosplit/T$ is $\Gtwosplit$-equivariantly diffeomorphic to the space $\mathscr{Y}$ of pairwise orthogonal tuples $(x,T,N,B)$ such that $x \in \quadric, T \in \Gr_{(0,2)}(\imoct),$ $N \in \Gr_{(2,0)}(\imoct)$, $B \in \Gr_{(0,2)}(\imoct)$ and moreover $T,N,B$ are closed under cross-product with $x$.
Thus, there is a natural $\Gtwosplit$-equivariant projection $\pi: \Gtwosplit/T \rightarrow \quadric$ by $(x, T, N, B) \longmapsto x$.
\end{lemma}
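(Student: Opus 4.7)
The plan is to exhibit an explicit model point $\mathbf{y}_0 \in \mathscr{Y}$, show $\Gtwosplit$ acts transitively on $\mathscr{Y}$ using Proposition \ref{StiefelTripletModel}, and identify the stabilizer of $\mathbf{y}_0$ with the maximal torus $T$. I would take $\mathbf{y}_0 = (i,\, T_0,\, N_0,\, B_0)$ in the standard multiplication basis \eqref{MultiplicationBasis} with $T_0 = \spann_{\R}\{l, li\}$, $N_0 = \spann_{\R}\{j, k\}$, and $B_0 = \spann_{\R}\{lj, lk\}$; the multiplication table (Figure \ref{SplitOctonionMult}) together with \eqref{CrossProductDefinition} directly verifies $\mathbf{y}_0 \in \mathscr{Y}$. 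For transitivity, given $\mathbf{y} = (x, T, N, B) \in \mathscr{Y}$, pick $y \in N$ with $q(y) = +1$ and $z \in T$ with $q(z) = -1$. Then $(x, y, z) \in V_{(+,+,-)}(\imoct)$: orthogonality follows from that of the summands, and $z \cdot (x \times y) = 0$ since $x \times y \in N$ (as $N$ is $\mathcal{C}_x$-invariant) while $z \in T \perp N$. Proposition \ref{StiefelTripletModel} produces a unique $\varphi \in \Gtwosplit$ with $\varphi(i, j, l) = (x, y, z)$, and cross-product preservation forces $\varphi(N_0) = \spann_\R\{y, x \times y\} = N$ by dimension; the same argument applied to $T_0$ and $B_0$ completes transitivity.

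For the stabilizer $H := \Stab_{\Gtwosplit}(\mathbf{y}_0)$, any $g \in H$ fixes $i$ and preserves the $J$-invariant decomposition $T_0 \oplus N_0 \oplus B_0$ of $i^\perp$. Since $g$ preserves $\times$ and fixes $i$, it commutes with $J := \mathcal{C}_i$, so each restriction $g|_{T_0}, g|_{N_0}, g|_{B_0}$ is an orientation-preserving Euclidean isometry of a $J$-complex line, i.e., a rotation by some angle $\theta_T, \theta_N, \theta_B$. The multiplication table yields $T_0 \times_{\imoct} N_0 \subseteq B_0$ (e.g., $l \times j = lj$); applying $g$ and using that $\varphi \in \Gtwosplit$ respects $\times$ gives the relation $\theta_T + \theta_N = \theta_B$, so $H$ embeds into a 2-subtorus of $(S^1)^3$. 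Conversely, the weight-space basis \eqref{BaragliaBasis} realizes $T_0, N_0, B_0$ as real forms of $\spann_\C\{u_{\pm 1}\}, \spann_\C\{u_{\pm 2}\}, \spann_\C\{u_{\pm 3}\}$, whose $\g_2^\C$-weights $\alpha,\, \alpha+\beta,\, 2\alpha+\beta$ satisfy exactly the required relation. Hence the compact maximal torus $T \subset K$ of $\Gtwosplit$ fixes $\mathbf{y}_0$, giving $T \subseteq H$, and the dimension match $\dim T = 2$ forces $H = T$.

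Orbit-stabilizer then provides a $\Gtwosplit$-equivariant bijection $\Gtwosplit/T \to \mathscr{Y}$ by $gT \mapsto g \cdot \mathbf{y}_0$. Since $\mathscr{Y}$ is a smooth submanifold of $\quadric \times \Gr_{(0,2)}(\imoct) \times \Gr_{(2,0)}(\imoct) \times \Gr_{(0,2)}(\imoct)$ of dimension $6 + 4 + 2 = 12 = \dim \Gtwosplit/T$ on which $\Gtwosplit$ acts smoothly, this bijection is automatically a diffeomorphism by the standard theory of smooth transitive Lie group actions. The hardest step I anticipate is the stabilizer computation: while the upper bound $H \hookrightarrow (S^1)^3$ cut by $\theta_T + \theta_N = \theta_B$ is a direct check, showing that every such triple is realized in $\Gtwosplit$ requires the weight-space structure of $\imoct^\C$ under the Cartan of $\g_2^\C$ and is where the rank-two nature of $\Gtwosplit$ plays the essential role.
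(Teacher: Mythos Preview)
Your argument is correct. The paper does not give its own proof of this lemma, instead citing \cite[Lemma 3.8]{Eva22}; your orbit--stabilizer approach via the Stiefel model $V_{(+,+,-)}$ is the natural one and parallels the paper's proof of the analogous complexified statement, Lemma~\ref{GeometricCyclicSpace}. Two small points you could tighten: for transitivity, $\varphi(B_0)=B$ is not literally ``the same argument'' as for $T_0,N_0$---you need $T\times_{\imoct} N \subseteq B$, which follows from the orthogonal decomposition $\imoct=\R\{x\}\oplus T\oplus N\oplus B$ together with the $3$-form identity $\Omega(t,n,\cdot)|_{\R\{x\}\oplus T\oplus N}=0$; and for the stabilizer, ``dimension match'' alone does not give $H=T$, but since $H$ injects into the $2$-torus $\SO(2)\times\SO(2)$ via $g\mapsto(g|_{T_0},g|_{N_0})$ and already contains the $2$-torus $T$, equality is forced.
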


The map $\mathscr{F}_{\rho}: \tilde{\Sigma} \rightarrow \Gtwosplit/T$ by $p \mapsto ( \hat{\nu}(p), T_{p}, N_{p}, B_{p} )$ is a harmonic mutual lift of $\hat{\nu} = \hat{\nu}_{\rho}$ and $f_{\rho}$ (cf. \cite[\S 2.3.1]{Bar10},  \cite{Bar15}, and \cite[Section 3.4]{Eva22}).\footnote{See 
(\cite{CT23} Theorem 6.7) for the general case on the relation between
$f_{\rho}$ and $\hat{\nu}$ and a complexified Frenet frame $\mathscr{F}^\C$. We discuss the map $\mathscr{F}^\C$ in Section \ref{CyclicSpace}.}

Later, it will also be of interest to complexify the Frenet frame as in \cite{CT23}. Let us split it each of $T,N,B$ into holomorphic and anti-holomorphic line bundles. That is,
write $T^\C = T' \oplus T'' $, where $T' , T''$ are the $\pm \sqrt{-1}$ eigenspaces, respectively, of the $\C$-linear endomorphism $X \mapsto \hat{\nu} \times X $,
restricted to $T^\C$. Similarly, we obtain splittings 
$N^\C = N' \oplus N''$ and $B^\C = B' \oplus B''$. We write $\mathcal{T}^\C =\mathcal{T}' \oplus\mathcal{T}''$, $\mathcal{N}^\C =\mathcal{N}' \oplus\mathcal{N}''$,  $\mathcal{B}^\C =\mathcal{B}' \oplus\mathcal{B}''$ 
for the analogous splittings in $\V$. In the local frame \eqref{BaragliaBasis}, the lines $(\mathcal{B}', \mathcal{N}'', \mathcal{T}'', \mathcal{O}, \mathcal{T}', \mathcal{N}', \mathcal{B}'')$ correspond in order to the lines $( \C\{ u_i \} )_{i=3}^{-3}$. One then finds that $T'' = \overline{T'} $, $N'' = \overline{N'}$, and $B'' = \overline{B'}$,
under the complex structure on $\imoct^\C$ induced by the real subspace $\imoct$. Later, we
give a geometric interpretation of the space $\Gtwo^\C/T$ to relate the model in Lemma \ref{GTGeometricModel} to a model for the complex Frenet frame.
See Section \ref{CyclicSpace}, particularly Lemma \ref{GeometricCyclicSpace}. 

The final notion we need is the moduli space $\mathcal{H}(S)$ of equivariant alternating almost-complex curves in $\quadric$. 
First, consider the space of pairs of equivariant alternating almost-complex curves and their holonomies: 
\begin{align}\label{LiftedModuliHS}
\mathcal{A}(S) = \{ \, (\hat{\nu}, \rho) \; | \; \rho \in \Hom(\pi_1S, \Gtwosplit), \; \hat{\nu}:\tilde{S} \rightarrow \quadric \; \text{is a} \;\rho-\text{equivariant alternating a.c. curve} \}.
\end{align} 
Then $\mathcal{H}(S) = \mathcal{A}(S) /( \Diff_0(S) \times \Gtwosplit)$, where the action of $\Diff_0(S) \times \Gtwosplit$ is given by $(f,g) \cdot (\hat{\nu}, \rho) = (L_g \circ \hat{\nu} \circ \tilde{f} , C_g \circ \rho)$,
where $L_g$ denotes the $\Gtwosplit$-action on $\quadric$ and $C_g: \Gtwosplit \rightarrow \Gtwosplit$ is $g$-conjugation. 
The space $\mathcal{H}(S) $ inherits the quotient topology from the topology on $\mathcal{A}(S)$ determined by the $C^{\infty}$-convergence the almost-complex curves on compacta. 
\begin{remark}
Let $\hat{\nu}: \tilde{S} \rightarrow \quadric$ be a $\rho$-equivariant alternating almost-complex curve. If $\hat{\nu}$ is $\theta$-equivariant for some $\theta \in \Hom(\pi_1S, \Gtwosplit)$, then $\theta = \rho$. That is,
the pair $(\hat{\nu}, \rho) \in \mathcal{A}(S)$ is determined by $\hat{\nu}$. Moreover, if $(\hat{\nu}_k, \rho_k) \rightarrow (\hat{\nu}, \rho)$ in $\mathcal{A}(S)$ in the topology defined above, then
$\rho_k \rightarrow \rho$ pointwise. This explains why we ignore the representations in the topology.
\end{remark} 

There is a natural holonomy map on $\mathcal{H}(S)$. Indeed, we may define $\hol: \mathcal{H}(S) \rightarrow \Hom(\pi_1S, \Gtwosplit)/\Gtwosplit$ by $\hol( \,[(\hat{\nu}, \rho)] \,) = [\rho]$. 
A priori, if $\rho \in \Hom(\pi_1S,\Gtwosplit)$ is the holonomy of an equivariant alternating almost-complex curve $\hat{\nu}$, then 
$\rho$ might not be reductive. But, in fact, \cite[Theorem 6.7]{CT23} and \cite[Theorem 4.13]{Nie24} show that there is a $\rho$-equivariant harmonic map $f_{\rho}: \tilde{\Sigma} \rightarrow \Gtwosplit/K$, related to
$\hat{\nu}$, which shows $\rho$ is reductive by Corlette's part of the non-abelian Hodge correspondence \cite{Cor88}. Thus, the holonomy map actually takes values in $\chi(\pi_1S, \Gtwosplit)$. 

We recall some useful results on the structure of the moduli space $\mathcal{H}(S)$ from \cite{CT23}. There is a continuous map $b: \mathcal{H}(S) \rightarrow \Z$ given by $[(\nu, \rho)]  \mapsto \deg(\mathcal{B}'_{\nu})$, where $\mathcal{B}'_{\nu}$ is the holomorphic binormal line bundle of the Frenet frame of $\nu$  (recall Section \ref{AC_Curve}.) Then, the integer invariant $0 \leq b \leq 6g-6$ stratifies the space $\mathcal{H}(S)$ \cite[Theorem 5.4]{CT23}. A crucial result we will use going forward is the following: 

\begin{theorem}[\cite{CT23}Theorem 5.11]\label{HitchinAC}
$\hol: \mathcal{H}(S)_{6g-6} := b^{-1}( 6g-6 )\rightarrow \Hit(S, \Gtwosplit)$ is a homeomorphism.\footnote{While \cite{CT23} prove directly that $\hol$ is a continuous bijection, 
the continuity of the inverse is not explicitly addressed in the topology we have used here; we discuss the continuity of the inverse later in the proof of Lemma \ref{s_continuity}. Instead, \cite{CT23} equips $\mathcal{H}(S)$ with a topology by pullback from a moduli space of $\Gtwo^\C$-Higgs bundles.}
\end{theorem}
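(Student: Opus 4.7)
The plan is to construct an explicit inverse $s_{AC}: \Hit(S, \Gtwosplit) \to \mathcal{H}(S)_{6g-6}$ via Labourie's parametrization and the cyclic Higgs bundle machinery of Section \ref{HiggsBundlePreliminaries}, and then verify that $\hol$ and $s_{AC}$ are continuous. Given $[\rho] \in \Hit(S, \Gtwosplit)$, apply $\Psi^{-1}$ to obtain the canonical pair $(\Sigma, q_6) \in \V_6$, form the cyclic $\Gtwosplit$-Higgs bundle $(\V, \varphi(q_6)) = s_\Sigma(0, q_6)$, and solve the global Hitchin system \eqref{G2Hitchin_GeneralMetric} for the unique hermitian metric $h$ so that $\nabla = \nabla_{\delbar, h} + \varphi + \varphi^{*h}$ is flat with holonomy $\rho$. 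The tautological section $\hat{\mu}: \Sigma \to \mathcal{O} \hookrightarrow \V^\R$ satisfies $q(\hat{\mu}) = +1$, so viewing it as a section of the flat $\quadric$-bundle $\tilde{S} \times_\rho \quadric$ yields, via the fundamental correspondence, a $\rho$-equivariant map $\hat{\nu}: \tilde{S} \to \quadric$. Set $s_{AC}([\rho]) := [(\hat{\nu}, \rho)]$.

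Next I would verify that $s_{AC}$ lands in $\mathcal{H}(S)_{6g-6}$. The almost-complex condition $\hat{\nu} \times \hat{\nu}_z = \sqrt{-1}\,\hat{\nu}_z$ is checked on the Higgs bundle side via $\hat{\mu} \times_\V \nabla_z \hat{\mu} = \sqrt{-1}\,\nabla_z \hat{\mu}$. The alternating condition and non-vanishing of $\sff$ follow from the signature computations for $\mathcal{T}, \mathcal{N}, \mathcal{B}$ recorded in Section \ref{AC_Curve}, using the $h$-unitary frame \eqref{HUnitaryMultiplicationFrame}. Under the identification \eqref{BaragliaBasis}, the holomorphic binormal line bundle $\mathcal{B}'_\nu$ corresponds to $\K_\Sigma^3$, so $b([(\hat{\nu}, \rho)]) = \deg(\K_\Sigma^3) = 6g-6$. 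Bijectivity is then a formal consequence: $\hol \circ s_{AC} = \id$ by construction, and $s_{AC} \circ \hol = \id$ holds because the pair $(\Sigma, q_6)$ is uniquely determined by $\rho$ via Labourie, while any $\rho$-equivariant alternating a.c.\ curve with maximal binormal degree must arise as the tautological section of the corresponding cyclic Higgs bundle in the image of $s_\Sigma$.

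For continuity, $\hol: \mathcal{H}(S)_{6g-6} \to \Hit(S, \Gtwosplit)$ is continuous because convergence $(\hat{\nu}_k, \rho_k) \to (\hat{\nu}, \rho)$ in $\mathcal{A}(S)$ implies pointwise convergence $\rho_k \to \rho$ (equivariance plus $C^0$-convergence on a fundamental domain). For the inverse, factor
\[
s_{AC}: \Hit(S, \Gtwosplit) \xrightarrow{\Psi^{-1}} \V_6 \xrightarrow{s_\bullet(0, \cdot)} \mathcal{M}_{\Gtwosplit}(\Sigma) \to \mathcal{H}(S)_{6g-6},
\]
where the first two maps are continuous (the first by Labourie's theorem, the second by the smooth dependence of the Hitchin section on parameters and the continuity of the NAH correspondence). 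The main obstacle, and the technical heart of the argument, is the last arrow: showing that a continuous family of cyclic Higgs bundles, with varying underlying complex structures $\Sigma_k \to \Sigma$, produces a family of tautological sections $\hat{\mu}_k$ whose associated $\rho_k$-equivariant lifts $\hat{\nu}_k$ converge in $C^\infty$ on compacta. This reduces to continuous dependence of the metric solving \eqref{G2Hitchin_GeneralMetric} on the data $(\sigma, q_6)$ in $C^\infty_{\mathsf{loc}}$, which follows from standard elliptic estimates for the coupled system once one establishes uniform $L^\infty$-bounds on $(\psi_1, \psi_2)$ on compacta (for instance, by a maximum principle argument applied to \eqref{G2Hitchin_GeneralMetric} using that the background metric $\sigma$ and the differential $q_6$ vary continuously).
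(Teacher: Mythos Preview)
Your proposal is essentially correct and aligns with what the paper does, but you should be aware of what is actually being proven here versus cited. This theorem is \emph{not} given an independent proof in the paper: the bijection is taken wholesale from \cite{CT23}, and the only contribution of the present paper is the continuity of the inverse in the $C^\infty$-topology on $\mathcal{A}(S)$ (see the footnote). That continuity argument appears inside the proof of Lemma~\ref{s_continuity} and is backed by Lemma~\ref{HitContinuity} in the appendix.

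On the bijection, your sketch is fine as an outline but the step ``any $\rho$-equivariant alternating almost-complex curve with maximal binormal degree must arise as the tautological section of the corresponding cyclic Higgs bundle in the image of $s_\Sigma$'' is precisely the nontrivial content of \cite[Theorem 5.11]{CT23}. You have not indicated how you would prove this uniqueness; it requires the classification of cyclic $\Gtwosplit$-Higgs bundles in terms of the holomorphic data $(\mathcal{B}', \sff', \tff')$ and the identification of the top stratum $b = 6g-6$ with the Hitchin section. If you intend to cite this, say so; if you intend to reprove it, there is substantial work missing.

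On continuity of the inverse, your strategy matches the paper's: factor through Labourie's $\Psi^{-1}$ and prove continuous dependence of the solution of \eqref{G2Hitchin_GeneralMetric} on $(\sigma, q)$. The one technical difference is that the paper uses the implicit function theorem for Banach spaces (showing the linearization of the Hitchin system at the solution is an isomorphism $C^{k,\alpha} \times C^{k,\alpha} \to C^{k-2,\alpha} \times C^{k-2,\alpha}$, via an $L^2$-pairing argument for injectivity and Fredholm index zero for surjectivity), whereas you propose ``uniform $L^\infty$ bounds via maximum principle plus elliptic estimates.'' Both routes work; the implicit function theorem is cleaner and gives smoothness of the dependence for free, while your compactness-style argument would require more bookkeeping to pass from $L^\infty$ bounds to $C^\infty$ convergence of a specific sequence (not just subsequential convergence).
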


\subsection{The Developing Map}\label{Subsection:DevMap}
The work of this subsection implicitly defines the map $s_G$ from Figure \ref{GeomStrDiagram}. The map $s_G$ will be explicitly addressed in Section \ref{ModuliSpace}. 
More specifically, we define a geometric structure for the pair $G = \Gtwosplit$ and $ X = (\Eintwothree, \mathscr{D})$ (recall $\mathscr{D}$ from Section \ref{G2Prelims}) 
on the direct sum of fiber bundles $\mathscr{C} = UTS \oplus UTS \oplus \uline{\R_+}$. For the sake of notational brevity, we suppress $\mathscr{D}$ and write just $\Eintwothree$ going forward.\\ 

We recall the relevant construction of a `direct sum' of fiber bundles over the same base manifold. We remark that \cite[page 21]{Coh89} calls this construction the ``Whitney sum.''

\begin{definition}
Let $\pi_i: F_i \rightarrow M$ be smooth $X_i$-fiber bundles over a smooth manifold $M$ with $i \in \{1,2\}$. The \textbf{direct sum} bundle $\pi: F_1 \oplus F_2 \rightarrow M$
is defined as follows. Let $\Delta: M \hookrightarrow M \times M$ be the diagonal embedding and then define $F_1 \oplus F_2: = \Delta^*(F_1 \times F_2)$, where $\pi_1 \times \pi_2: F_1 \times F_2 \rightarrow M \times M$ 
is the projection realizing the Cartesian product $F_1 \times F_2$ as an $(X_1 \times X_2)$-bundle over $M \times M$. 
\end{definition}   

Denote $\uline{\R_+} := S \times \R_+$ as the trivial $\R_+$-bundle over $S$. 
Consider the direct sum bundle $\mathscr{Q}_{\rho} := Q_-\mathcal{T} \oplus  Q_-\mathcal{T}  \oplus \uline{\R_+}$ over $S$, with fibers
$\mathscr{Q}_{p} = Q_-\mathcal{T}_p \times Q_-\mathcal{T}_p \times \R_+ $, 
where we denote $Q_{\pm} E_p := \{ x \in E_p \; | \; q(x) = \pm 1\}$ for $E$ a sub-bundle of $\V^\R$. Here, $\mathscr{Q}:= \mathscr{Q}_{\rho}$ depends on $\rho$, unlike
$\mathscr{C}$. All $(\radialtorus)$-bundles in this paper will be regarded as having structure group $\mathsf{SO}(2) \times \mathsf{SO}(2) \times \Diff^+(\R_+)$. 
Recall that $\nabla \hat{\mu}: TS \rightarrow \mathcal{T}$ by $X \mapsto \nabla_X \hat{\mu}$ is a vector bundle isomorphism. Hence, $\nabla \hat{\mu}$ induces an isomorphism of circle bundles $UTS \cong Q_-\mathcal{T}$, which then induces an isomorphism $\mathscr{C} \cong \mathscr{Q}$ of $(\mathbb{S}^1 \times \mathbb{S}^1 \times \R_+)$-bundles over $S$. 

To define the desired $(\Gtwosplit, \Eintwothree)$-structure on $\mathscr{C}$, we construct a developing map $\dev: \tilde{\mathscr{C}} \rightarrow (\Eintwothree, \mathscr{D} )$
that is equivariant with respect to a holonomy map $\hol: \pi_1 \mathscr{C} \rightarrow \Gtwosplit$ such that $\hol = \rho \circ \pi_*$, where $\pi: \mathscr{C} \rightarrow S$. 
By our fundamental correspondence, constructing $\dev$ is equivalent to defining a section $s \in \Gamma(\mathscr{C}, \mathscr{X} )$, where $\mathscr{X}$ is a flat $\Eintwothree$-bundle over $\mathscr{C}$ with holonomy $ \rho \circ \pi_*$. We construct this section now. First, define the $\Eintwothree$ bundle $Ein \rightarrow \Sigma$ with fibers $Ein_p = \{ \; [x] \subset \V^\R_p \; | \; q(x) = 0 \}$, 
In other words, $Ein = \mathbb{P} Q_0(\V^\R) \cong \tilde{S} \times_{\rho} \Ein^{2,3}$.
We then define $\mathscr{X} := \pi^*(Ein)$. 

Here, to simplify identifications, we use the (extrinsic)
space $\mathscr{Q}$, rather than the intrinsic space $\mathscr{C}$. 
For $[\rho] \in \Hit(S, \Gtwosplit)$, we define a section $\sigma:= \sigma_{\rho} \in \Gamma(\mathscr{Q}, \mathscr{X})$, which yields the desired developing map. 
The following proof uses some techniques motivated by the proof of \cite[Proposition 4.21]{CTT19}. 

We first summarize the idea of the proof. To show $f:= f_{\sigma}$ is an immersion, we use the block-decomposition $\V = \mathcal{O} \oplus \mathcal{T}^{\C} \oplus \mathcal{N}^{\C} \oplus \mathcal{B}^\C$ closely related to the (complex) Frenet frame of $\nu$. 
 The fiber derivatives are straightforward to calculate and are clearly linearly independent, but the calculations are more complicated for $f_{z}, f_{\zbar}$ . 
 The linear independence of $(f_z, f_{\zbar})$ from the fiber
 derivatives boils down to an application of the maximum principle to the Hitchin system \eqref{HitchinsEquations_rs}. 
 Note below that $q(\sff_{\V}(u,v))> 0$ for any $u,v\in \mathcal{T}$ by equation \eqref{TNB_Bundle} and the description of $\mathcal{N}$ thereafter. 

\begin{lemma}\label{Lemma:DevelopingMap} 
Denote $\tilde{\mathscr{Q}}$ as the universal cover of $\mathscr{Q}$. Then the section $\sigma \in \Gamma(\mathscr{Q}, \mathscr{X})$ by  
\begin{align}\label{DevSection}
	 \sigma( x, r, u, v) = \left[ \hat{\mu}(x)+ (r^2+1)^{1/2} \; u + r \; \frac{\sff_{\V}(u,v)}{q(\sff_\V(u,v) )^{1/2} }  \right ]. 
\end{align} 
defines a $(\rho \circ \pi_*)$-equivariant local diffeomorphism $f_{\sigma}: \tilde{\mathscr{Q}} \rightarrow \Eintwothree$. In other words, 
$f_{\sigma}$ defines a $(\Gtwosplit, \Eintwothree)$-structure on $\mathscr{Q}$ with holonomy $\hol = \rho \circ \pi_*$ . 
\end{lemma}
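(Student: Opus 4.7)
The argument has three stages: verify that \eqref{DevSection} gives a well-defined section of $\mathscr{X}$, invoke the fundamental correspondence for equivariance, and then check that $df_\sigma$ is injective by combining a direct computation of the five partial derivatives with a maximum-principle argument applied to Hitchin's equations \eqref{HitchinsEquations_rs}.

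\emph{Well-definedness and equivariance.} Since $\mathcal{N}$ has signature $(2,0)$ by \eqref{TNB_Bundle} and the description of $\mathcal{N}$ as $\spann_\R\langle w_2, w_3\rangle$, one has $q(\sff_\V(u,v)) \geq 0$, and the non-vanishing of $\sff_\V$ on pairs of unit timelike tangent vectors (a pointwise statement that reduces to a direct computation in the frame \eqref{HUnitaryMultiplicationFrame}) ensures the formula is defined throughout $\mathscr{Q}$. The three summands of the vector $w$ whose projective class defines $\sigma$ lie in $\mathcal{O}$, $\mathcal{T}$, $\mathcal{N}$ respectively and are therefore pairwise $q$-orthogonal, with squared norms $+1$, $-(r^2+1)$, $+r^2$. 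Summing gives $q(w) = 0$, so $\sigma$ takes values in $\mathbb{P}Q_0(\V^\R) = \mathscr{X}$. Equivariance of $f_\sigma$ with holonomy $\rho \circ \pi_*$ is then immediate from the fundamental correspondence applied to the pullback $\mathscr{X} = \pi^*(Ein)$, because $Ein \cong \tilde{S} \times_\rho \Ein^{2,3}$ has holonomy $\rho$.

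\emph{Local diffeomorphism.} Since $\dim \mathscr{Q} = 5 = \dim \Ein^{2,3}$, it suffices to show injectivity of $df_\sigma$. Fix $(p, u, v, r)$, choose a local holomorphic coordinate $z$ on $\Sigma$ near $p$, and compute the lifted derivatives of $w$ in $\imoct$. The three fiber derivatives $\partial_r w$, $\partial_\theta w$, $\partial_\varphi w$ (the latter two being the circle-coordinate derivatives on the two copies of $Q_-\mathcal{T}_p$) have components only in $\mathcal{T} \oplus \mathcal{N}$ and their linear independence follows from the positive-definiteness of $\sff_\V(\cdot, \cdot)$ on $\mathcal{T}_p \times \mathcal{T}_p$ together with the observation that $\partial_\theta u$ is $\mathcal{T}$-orthogonal to $u$. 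The two base derivatives are computed using the connection $\nabla$ and the block decomposition $\V = \mathcal{O} \oplus \mathcal{T}^\C \oplus \mathcal{N}^\C \oplus \mathcal{B}^\C$: the $\mathcal{T}$-contribution comes from $\nabla_z \hat{\mu}$, the $\mathcal{N}$-contribution from the Higgs field acting on $\mathcal{T}$, and a crucial $\mathcal{B}$-contribution comes from the third fundamental form $\tff_\V$. The $\mathcal{B}^\C$-blocks of the base derivatives span a two-dimensional subspace since $\tff_\V$ is non-zero by the cyclic structure of the Higgs bundle.

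\emph{Main obstacle.} After projecting out the $\mathcal{O}$-direction (which is absorbed by the projective equivalence in $\Ein^{2,3}$) and the span of the three fiber derivatives, injectivity of $df_\sigma$ reduces to the non-vanishing of a $2 \times 2$ determinant built from the $\mathcal{B}^\C$-components of $\partial_z w$ and $\partial_{\bar z} w$, together with a coupling term in $\mathcal{T}^\C \oplus \mathcal{N}^\C$. In the frame \eqref{HUnitaryMultiplicationFrame} this determinant is an explicit algebraic expression in the Hitchin metric coefficients $r, s$ and the sextic differential $q_6$. I expect the technical heart of the proof — and the analogue of \cite[Proposition 4.21]{CTT19} — to be establishing strict positivity of this determinant by applying the maximum principle to an appropriate linear combination of \eqref{HitchinsEquations_rs} (equivalently \eqref{G2Hitchin_GeneralMetric}): at a hypothetical minimum the elliptic system and the cyclic structure of $\varphi$ would force a sign contradiction, precisely as in the rank-two analysis of Collier–Tholozan–Toulisse. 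This maximum-principle step is the one non-routine ingredient; the rest of the argument is bookkeeping in the Frenet decomposition.
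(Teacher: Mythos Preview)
Your outline is correct and matches the paper's approach closely: equivariance via the fundamental correspondence, then linear independence of the five partial derivatives together with the lift $\hat{\sigma}$, organized through the Frenet block decomposition $\V = \mathcal{O} \oplus \mathcal{T}^\C \oplus \mathcal{N}^\C \oplus \mathcal{B}^\C$, with the decisive step being that the $\mathcal{B}^\C$-projections of $\nabla_z\hat{\sigma}$ and $\nabla_{\bar z}\hat{\sigma}$ span $\mathcal{B}^\C$.

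Two refinements relative to the paper's execution. First, your ``coupling term in $\mathcal{T}^\C \oplus \mathcal{N}^\C$'' is a phantom obstacle: the paper sidesteps it by splitting $\hat{\sigma} = \hat{\sigma}_1 + \hat{\sigma}_2$ with $\hat{\sigma}_1 \in \mathcal{O} \oplus \mathcal{T}$ and $\hat{\sigma}_2 \in \mathcal{N}$, then showing the \emph{seven} vectors $(\hat{\sigma}_1, \nabla_r\hat{\sigma}, \nabla_\theta\hat{\sigma}, \nabla_\alpha\hat{\sigma}, \hat{\sigma}_2, \nabla_z\hat{\sigma}, \nabla_{\bar z}\hat{\sigma})$ span all of $\V$. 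This makes the argument strictly block-triangular and no cross-terms survive. Second, the $2\times 2$ determinant you anticipate is exactly $\det \tff_{\V,w}^\C = \frac{|q|^2}{r^2 s} - 3s$ in the metric coefficients of \eqref{HitchinMetric}, so non-vanishing is equivalent to $\frac{|q|^2}{r^2 s^2} \neq 3$; the maximum principle applied to \eqref{HitchinsEquations_rs} gives the strict global inequality $\frac{|q|^2}{r^2 s^2} < 3$ (this is Proposition~\ref{Prop:GlobalEstimates}), which is precisely the analogue of the estimate used in \cite[Proposition 4.21]{CTT19}.
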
 

\begin{proof}
The equivariance of $f_{\sigma}$ is addressed by the fundamental correspondence. The remainder of the proof shows that $f_{\sigma}$ is a local diffeomorphism. Recall that we have a natural identification $T_{u} \Eintwothree \cong \Hom(u, u^{\bot}/u )$. Alternatively, unnaturally choosing any $\hat{u} \in u$, we can identify $ u^\bot/ u \cong T_{u} \Eintwothree $ 
by $v + [u] \mapsto (\hat{u} \mapsto v + [u])$. Fix a local coordinate $z = x + i y$ on $\tilde{\Sigma}$ and coordinates $(\alpha, \theta, r)$ on $\tilde{\mathscr{Q}}$. Thus, $f:= f_{\sigma}$ is an immersion $\iff$ $(f_z, f_{\zbar},\, f_{r}, \, f_{\theta}, \, f_{\alpha}  )|_p$ is linearly independent in $T_{f(p)}\Ein^{2,3}$ $\iff$ $\spann_{\R} \langle f, \, f_z, f_{\zbar},\, f_{r}, \, f_{\theta}, \, f_{\alpha} \rangle$ is 6-dimensional $\iff$ 
\begin{align}\label{Dim6} 
	\dim_\C \spann_{\C} \langle  \sigma, \nabla_z \sigma, \nabla_{\zbar} \sigma, \nabla_{\alpha} \sigma, \nabla_{\theta} \sigma, \nabla_{r} \sigma \rangle \; = 6. \; \;
\end{align} 
 
The rest of the proof works in an open set $U \subset \Sigma$ on which we have trivialized $\V$. By the definition, $\sigma$ comes with a distinguished lift $\hat{\sigma}$, given by 
$$\hat{\sigma}( x, r, u, v) = \hat{\mu}(x)+ (r^2+1)^{1/2} \; u + r \; \frac{\sff_{\V}(u,v)}{q(\sff_\V(u,v) )^{1/2} } .$$
We split $\hat{\sigma}$ into two pieces and write $\sigma = [ \hat{\sigma}_1 + \hat{\sigma}_2 ] $, where $\hat{\sigma}_1= \hat{\mu} +  (r^2+1)^{1/2} \; u$ and $ \hat{\sigma}_2 =  r \; \frac{\sff_{\V}(u,v)}{q(\sff_{\V}(u,v))^{1/2}  }$
for $\hat{\sigma}_i \in \Gamma(\mathscr{Q}, \V^\R)$. Here, we write $u = u(\theta), v = v(\alpha)$ in local coordinates $(\alpha, \theta)$ for $Q_- \mathcal{T} \times Q_- \mathcal{T}$. Explicitly, $u(\theta) = \cos(\theta) w_4+ \sin(\theta) w_5$ and $v(\alpha) = \cos(\alpha) w_4+ \sin(\alpha) w_5$ in terms of the local frame \eqref{HUnitaryMultiplicationFrame} for $\V^\R$. To prove \eqref{Dim6} we show the stronger fact that 
$$\mathcal{S} := (\hat{\sigma}_1, \;\nabla_{r} \hat{\sigma}, \; \nabla_{\theta}\hat{\sigma},  \;\nabla_{\alpha }\hat{\sigma} , \; \hat{\sigma}_2,\; \nabla_{z} \hat{\sigma},  \; \nabla_{\zbar} \hat{\sigma}\; ) .$$ 
is a spanning set for $\V$. Denote the elements of $\mathcal{S}$ in order as $ (b_i)_{i=1}^7$. We show that $\mathcal{S}$ spans by inductively
proving it spans the following sub-bundles $(\mathcal{B}^\C, \mathcal{O} \oplus \mathcal{B}^\C,  \mathcal{O} \oplus \mathcal{B}^\C \oplus \mathcal{T}^\C, \V)$. In
particular, we will prove the following: 
\begin{enumerate}
	\item $ \proj_{\mathcal{B}^\C} \, b_6, \proj_{\mathcal{B}^\C}\, b_7$ span $\mathcal{B}^\C$. 
	\item $ \proj_{\mathcal{O}} \, b_1 $ spans $\mathcal{O} $ and $\proj_{\mathcal{B}^\C}b_1 = 0$.
	\item $ \proj_{\mathcal{T}}\, b_2, \proj_{\mathcal{T}} \, b_3$ span $\mathcal{T}$ and $\proj_{\mathcal{O} \oplus \mathcal{B}^\C} \, b_2 = 0 = \proj_{\mathcal{O} \oplus \mathcal{B}^\C} \, b_3$. 
	\item $\, b_4, b_5$ span $\mathcal{N}$. 
\end{enumerate}
We prove (2),(3), and (4) quickly and then spend more time on (1). \\

To prove (2), observe that $\proj_{\mathcal{O}} (\hat{\sigma}_1) = \hat{\mu}$ spans $\mathcal{O}$ and $\proj_{\mathcal{B}^\C}\hat{\sigma}_1 = 0$. For (3), we note 
$\proj_{\mathcal{T}}b_3 = \proj_{ \mathcal{T}} ( \der{\theta} (\hat{\sigma}_1+ \hat{\sigma}_2) \,) = \proj_{ \mathcal{T}}( \der{\theta} \hat{\sigma}_1) $, where last equality is because $\der{\theta} \hat{\sigma}_2(p) \in \mathcal{N}$ for $p \in \mathscr{Q}|_U$. Thus, 
$$\spann_{\R} \langle \, \proj_{\mathcal{T}} b_2,  \proj_{\mathcal{T}} b_3 \rangle \;=\; \spann_{\R}  \langle \cos(\theta) w_4+ \sin(\theta) w_5,  \, -\sin(\theta) w_4+ \cos(\theta) w_5 \rangle \; = \mathcal{T}. $$
Since $b_2, b_3$ are local sections of $ \mathcal{T} \oplus \mathcal{N} $, their projections on $\mathcal{O} \oplus \mathcal{B}^\C$ are trivial.

We now prove (4). We need to show $ \hat{\sigma}_2$ and $ \proj_{\mathcal{N}} (\nabla_{\alpha} \hat{\sigma}) =  \proj_{\mathcal{N}} ( \nabla_{\alpha} \hat{\sigma}_2)=  \nabla_{\alpha} \hat{\sigma}_2 = \nabla_{\alpha} \hat{\sigma}$ span $\mathcal{N}$. Note that
\begin{align}
	\nabla_{\alpha} \hat{\sigma}_2 &= \der{\alpha} \hat{\sigma}_2 = \der{\alpha}\left( \frac{1}{q(\sff_{\V}(u,v) )^{1/2} } \right) \, \sff_{\V}(u,v) +  \frac{1}{q(\sff_{\V}(u, v))^{1/2} } \der{\alpha} \sff_{\V}(u, v(\alpha))  \\
		&= \der{\alpha} \sff_{\V}(u, v(\alpha) \, ) \mod (\hat{\sigma}_2)  =   \sff_{\V}\left ( u, \der{\alpha} v(\alpha) \, \right) =  \sff_{\V}(u, -\sin(\alpha)w_4 + \cos(\alpha) w_5).
\end{align} 
If we take any $u_p, v_p, w_p \in \mathcal{T}_p$ with $u_p \neq 0$ and $v_p \neq w_p$, then the elements $\sff_\V(u_p, v_p), \sff_\V(u_p, w_p)$ span $\mathcal{N}_p$.
It follows that $\hat{\sigma}_2, \nabla_{\alpha} \hat{\sigma}_2$ are linearly independent and hence span $\mathcal{N}$. 

Finally, we prove point (1). We apologize to the reader as for the remainder of the proof $r$ is not a radial parameter, but a component of the hermitian metric $h$ from \eqref{HitchinMetric}. 
A computation shows that 
$0 = \mathsf{proj}_{\mathcal{B}^\C} (\nabla_{X} \hat{\sigma}_1) $ for $X \in T\Sigma^\C$. Indeed, $\mathsf{proj}_{\mathcal{B}^\C}\nabla_X \alpha = 0$ for $ \alpha \in \Gamma(S, \mathcal{T} \oplus \mathcal{O} )$.  
Hence, 
$$\mathsf{proj}_{\mathcal{B}^\C} (\nabla_{X} \hat{\sigma} ) = \mathsf{proj}_{\mathcal{B}^\C} (\nabla_{X} \hat{\sigma}_2) = \tff^\C_{\V}(X, \hat{\sigma}_2) .$$
Since $\hat{\sigma}_2$ is a local section of $\mathcal{N}$, it suffices to prove the more general fact that if $w \in \Gamma(U, \mathcal{N})$ is a nonzero local section, then
then $\tff^\C_\V (\der{z}, w), \tff^\C_\V ( \der{\zbar}, w)$ span $\mathcal{B}^\C$. Here, we use a block decomposition of $\tff_\V^\C$, splitting  
$\mathcal{N}^\C = \mathcal{N}' \oplus  \mathcal{N}'' $ and $\mathcal{B}^\C = \mathcal{B}' \oplus  \mathcal{B}''$, the analogues of the splitting of the complex Frenet frame. 
Then $ \tff_\V^\C: (\mathcal{N}' \oplus  \mathcal{N}'' ) \rightarrow (\mathcal{B}' \oplus  \mathcal{B}'' )$ takes the form $ \tff_\V^\C = \begin{pmatrix} q & \sqrt{3} s \\ \sqrt{3} & \frac{\overline{q}}{r^2s} \end{pmatrix} $, where each entry is a global tensor. For example $\sqrt{3} \in \Omega^{1,0}(\Sigma, \Hom(\mathcal{N}', \mathcal{B''})) = H^0(\Sigma, \mathcal{O}) $ since $\mathcal{N}' \cong \K^{-2}, \mathcal{B}'' \cong \K^{-3}$. 
Next, one finds $\det \tff^\C_{\V, w} = ( \frac{|q|^2}{r^2s} - 3s )$. Hence, $\tff_{\V,w}^\C|_p$ is a linear isomorphism if $\frac{|q|^2}{r^2s^2}(p) \neq 3$. By Proposition \ref{Prop:GlobalEstimates}, we have the global inequality $|q|^2_{\sigma}e^{-2\psi_1-2\psi_2} = \frac{|q|^2}{r^2s^2}< 3$, completing the proof. 
\end{proof}

We now introduce a definition before making some remarks on the Lemma \ref{Lemma:DevelopingMap}. Let $f: S \rightarrow \quadric$ be an alternating
almost-complex curve and $T_fS, N_fS, B_fS$ the tangent, normal, and binormal lines. We call a the second fundamental form
$\sff: T_fS \times T_fS \rightarrow N_fS$, to be \emph{non-degenerate} 
when for any non-vanishing local section $w = w_U \in \Gamma(U, T_fS)$, the map $T_fS |_U \rightarrow N_fS|_{U}$ by $X \mapsto \sff(w, X)$
a linear isomorphism. Analogously, we say $\tff: T_fS \times N_fS \rightarrow B_fS$ is non-degenerate when $N_fS |_U \rightarrow B_fS|_{U}$ by $X \mapsto \tff(w, X)$ is a linear isomorphism for all such $w$. 
We may also speak of non-degeneracy of $\sff_p, \tff_p$ at a single point $p$ in a similar fashion. 

\begin{remark}\label{NonDegeneracyRemark}
Note that that if $\hat{\nu}$ is an almost-complex curve in $\quadric$, then $\sff_p$ is non-degenerate $\iff$ $\sff_p \neq 0 $ 
by the $J$-invariance property $J_{\quadric} \sff(X,Y) = \sff(J_{\Sigma}(X), Y)$. 
Hence, the definition of $\sigma$ requires the non-degeneracy of the second fundamental form $\sff$ of $\hat{\nu}$. The proof that $f$ was an immersion in Lemma \ref{Lemma:DevelopingMap} 
also showed that $\tff$ is non-degenerate for $\hat{\nu}$ that is $\rho$-equivariant and $[\rho] \in \Hit(S, \Gtwosplit)$.
\end{remark} 

\begin{remark}\label{NonDegeneracyRemark2}
 In fact, $(\hat{\nu}, \rho) \in \mathcal{A}(S)$ has $[\rho] \in \Hit(S, \Gtwosplit)$ $\iff$ $\sff$ is non-degenerate \cite{CT23}. This holds because under the splittings
 $T = T' \oplus T'', N = N' \oplus N''$, the complex bilinear extension $\sff^\C $ decomposes as $\sff^\C = \begin{pmatrix} \sff' & 0 \\ 0& \overline{\sff'} \end{pmatrix}$, so $\sff^\C$ is determined by $\sff'$. On the other hand, $\sff' \in \Omega^{1,0}(\Sigma, \Hom(T', N')) = H^0(\K^2 \otimes N')$ is non-vanishing exactly when the holomorphic normal line $N'$ satisfies $N' \cong \K^{-2}$, which occurs if and only if $[\rho] \in \Hit(S,\Gtwosplit)$ by (\cite[Theorem 5.11]{CT23} and diagram (24) on page 23). 
\end{remark} 

We make some more identifications. There is a natural $\pi_1S$-equivariant vector bundle isomorphism $\pi^*\mathcal{T} \cong T_{\nu} = : T$. This identification is just a flip from viewing tangent vectors extrinsically in the Higgs bundle to intrinsically in the pullback bundle of the geometric tangent space of $\nu$. Then the identification $\pi^*\mathcal{T} \cong T $ induces a $\pi_1S$-equivariant circle bundle isomorphism $Q_-T \cong \overline{ Q_-\mathcal{T} }$, where $\overline{ Q_-\mathcal{T} }:= \pi^*Q_-\mathcal{T}$. Define $\overline{\mathscr{Q}}:= \pi^*\mathscr{Q}$. We are then led to a natural identifications $\overline{\mathscr{Q} } \cong Q_-T \oplus Q_-T \oplus \uline{\R_+} \cong UT\tilde{S} \oplus UT\tilde{S} \oplus \uline{\R_+}$ of ($\radialtorus$)-bundles.

\begin{remark}\label{DevDescends} 
The developing map $f_{\sigma}$ from Lemma \ref{Lemma:DevelopingMap} can be described without reference to the Higgs bundle. First, note that $f_{\sigma}: \tilde{\mathscr{Q}} \rightarrow \Eintwothree$ descends to a map $\overline{f}_{\sigma}: \overline{\mathscr{Q}} \rightarrow \Eintwothree$, since the holonomy of $\mathscr{X}$ is trivial along the fibers. Then 
$\overline{f}_{\sigma}:  Q_-T \oplus Q_-T \oplus \uline{\R_+}\rightarrow \Eintwothree$  takes the following form: 
\begin{align}\label{DevMapConstruction}
	\overline{f}_{\sigma}(p, u,v, r) = \left[ \, \hat{\nu}(p)+ \, (r^2+1)^{1/2} \,u \, + r \, \frac{ \sff(u,v)}{q(\sff(u,v))^{1/2} } \, \right].
\end{align}
Using the $\pi_1S$-equivariant vector bundle isomorphism $d\hat{\nu}: T\tilde{S} \rightarrow T$, the developing map takes the following form
if we explicitly emphasize the dependence on the almost-complex curve $\hat{\nu}$. We denote
$\overline{\dev}: UT\tilde{S} \oplus UT\tilde{S} \oplus \uline{\R_+} \rightarrow \Eintwothree$ as this map, given by 
\begin{align}\label{DevMapFromNu}
	\overline{\dev}(p, u,v,r) = \left[ \, \hat{\nu}(p)+ \, (r^2+1)^{1/2} \,\frac{ d\hat{\nu} (u) }{ |q(d\hat{\nu} (u)|^{1/2}} \, + r \, \frac{ \sff( d\hat{\nu}(u) , d\hat{\nu}(v) )}{q(\sff(d\hat{\nu}(u) ,d\hat{\nu}(v) ))^{1/2} } \, \right].
\end{align}
\end{remark} 

\begin{remark}
The $(\Gtwosplit, (\Eintwothree, \mathscr{D}))$-structure on $\mathscr{C}$ defines a 
$(2,3,5)$-distribution $D$ on $\mathscr{C} = UTS \oplus UTS \oplus \uline{\R_+}$, which 
is transverse to the fibration.
It seems difficult to give a clean algebraic description of this distribution. \end{remark} 

\subsection{Other Perspectives on $\dev$} \label{DevReinterpret} 
In this section, we discuss two different perspectives on the developing map $\overline{\dev}$ of the previous section. First,
we factor $\overline{\dev}$ as a composition of maps, which provides a 
simplified perspective on the image of $\overline{\dev}$. This perspective is used in Section \ref{FuchsianCase} to examine the developing map in the Fuchsian case. 
We then tweak the construction of $\overline{\dev}$, using that the radial parameter $r$ can be imagined in the fiber or in the base. The second construction  
leads to a different geometric interpretation of $\overline{\dev}$ -- as an interpolation between two simpler maps.\\

First, we need some more notation. Associated to $\hat{\nu}$, we have
the \emph{second extended osculating subspace map} $U: \tilde{S} \rightarrow \Gr_{(3,2)} \imoct$ by $ p \mapsto \mathscr{L}_{p} \oplus T_{p} \oplus N_{p}$, 
where $(\mathscr{L}, T, N, B)$ is the Frenet frame of $\hat{\nu}$ with $\hat{\nu}$ replaced by $\mathscr{L} = \R\{\hat{\nu}\}$. Since $\imoct = \mathscr{L}_p \oplus T_{p} \oplus N_{p} \oplus B_{p}$ is an orthogonal block
decomposition, we have orthogonal projection maps $\Pi_{T}, \Pi_{N}, \Pi_{B}$ (depending on $p$), mapping from $\imoct$ to each (respective) subspace in the decomposition. 

The Grassmannian $\Gr_{(3,2)}\imoct$ carries a tautological $\R^{3,2}$-sub-bundle $\mathscr{U} \rightarrow \Gr_{(3,2)}\imoct$
of the trivial bundle $\Gr_{(3,2)}\imoct \times \imoct$ with fiber $\mathscr{U}|_P = P$. The map $U$ defines an $\R^{3,2}$-bundle $U^*\mathscr{U}$ over $\tilde{S}$ by pullback. 
With the map $U$, we define the $\mathsf{Ein}^{2,1}$ bundle $\mathcal{Q} := \mathbb{P}Q_0(U^*\mathscr{U}) $ with fiber 
$\mathcal{Q}|_p := \mathbb{P} Q_0 U(p)$  
as well as an $(\mathbb{S}^1 \times \mathbb{S}^1 \times \R_+)$-subbundle $\mathcal{Q}_{\neq 0}$ of $\mathcal{Q}$ as follows:
\begin{align} \label{Qbundle}
	 (\mathcal{Q}_{\neq 0})|_{p} = \{ \, L \in \mathcal{Q}|_p \; | \; \Pi_{\mathscr{L}_p}(L) \neq 0, \; \Pi_{T_p}(L) \neq 0, \; \Pi_{N_p}(L) \neq 0 \}. 
\end{align} 
A simple argument later in Section \ref{RadialTorusFamilies} shows the fibers satisfy $ (\mathcal{Q}_{\neq 0})_{p} \cong \radialtorus$. 

Now, the map $\overline{\dev}$ factors through the innocuous fiber-forgetting map $\psi: \mathcal{Q}_{\neq 0} \rightarrow \Eintwothree$ by $ \psi(p, L) = L $. 
Denote $\overline{\mathscr{C}} = UT\tilde{S} \oplus UT \tilde{S} \oplus \uline{\R}_+$. 
The map $\overline{\dev}$ factors as $\overline{\dev} =\psi \circ \phi$, where $\phi: \overline{\mathscr{C}} \rightarrow \mathcal{Q}_{\neq 0}$ is a fiber-preserving diffeomorphism, which lifts the identity map $\id_{\tilde{S}}$. 
The map $\phi$ is just $\phi(p, u,v,r ) = (p, \overline{\dev}(p, u,v,r))$. 
Then $\phi$ is globally injective as it `remembers' the basepoint on $\tilde{S}$, 
so all questions of injectivity lie with $\psi$. 

It is essential that $\overline{\dev}$ factors through $\mathcal{Q}_{\neq 0}$, rather than $\mathcal{Q}$. 
Indeed, the map $\psi: \mathcal{Q}_{\neq 0} \rightarrow \Eintwothree$ extends continuously to a map $\overline{\psi}: \mathcal{Q} \rightarrow \Eintwothree$ by the same equation, but
$\overline{\psi}$ is \emph{not} an immersion. We explain why this is the case with an argument similar to the proof of Lemma \ref{Lemma:DevelopingMap}. At any point $X_p = [ \hat{\nu}(p) + t_p ]$ with $t_p \in Q_-(T_{p})$, we claim the 
differential $d\overline{\psi}|_{X_p}$ does not have full rank. Recall that 
$\Eintwothree \cong (\mathbb{S}^2 \times \mathbb{S}^3)/ \sim$, where $(x,y ) \sim (-x,-y)$. Denote $\mathbb{D} \subset \R^2$ as the unit disk and introduce local coordinates $\mathbb{D} \times U $ for $U \subset (0, 2\pi)$ on the fibers of $\mathcal{Q}$ as follows:
for $ (a,b) \in \mathbb{D}, \, \theta \in U$, 
define $X_p(a, b, \theta ) \in \mathcal{Q} $ as $ X_p = [ \sqrt{1-a^2-b^2} \, \hat{\nu}(p)+ a n_1 + b n_2 + \, t_p(\theta)  ] $, where $n_1, n_2$ is a local orthonormal frame for $N$.
Using local coordinates $x,y \in \tilde{S}$, one finds $ \spann_{\R} \langle d\overline{\psi}(\der{x}), \, d\overline{\psi}(\der{y}),\, d\overline{\psi}(\der{a}), \, d\overline{\psi}(\der{b}), \, d\overline{\psi}(\der{\theta}), \, \overline{\psi} \rangle \subset U_p$. In particular, these 6 elements are trapped in the 5-plane $U_p$ and hence are not linearly independent. As in the proof of Lemma \ref{Lemma:DevelopingMap}, this means $\overline{\psi}$ is not an immersion at $X_p$ if $\Pi_{N_p}(X_p) = 0$.
On the other hand, similar calculations show $\overline{\psi}$ \emph{is} an immersion at points $X_p \in \mathcal{Q}$ where $\Pi_{\mathscr{L}_p}(X_p) = 0$. As a consequence, $\mathcal{Q}_{\neq 0}$ does not contain all the immersed points of $\overline{\psi}$. We revisit this point shortly.\\

We now discuss a continuous extension $\overline{\overline{\dev}}$ of the developing map $\overline{\dev}$ to a compactly fibered space over $\tilde{S}$,
then we relate $\overline{\overline{\dev}}$ to $\overline{\psi}$. The map $\overline{\overline{\dev}}$ will give us a perspective of $\overline{\dev}$ as an interpolation. 
Going forward, we keep all the notation the same from Lemma \ref{Lemma:DevelopingMap}.
The first (small) step is to reconceive of the bundle $\mathscr{Q} \rightarrow S$ instead as a bundle $\mathcal{C} \rightarrow B$ over the base $B := S \times \R_+$. Then the fiber $\mathcal{C}_{(p,r)}$ at a point $(p,r) \in B$ is given by
$$\mathcal{C}_{(p, r)} = Q_{\sqrt{1+ r^2}}(\mathcal{T}_p ) \times Q_{r}(\mathcal{T}_p ) = \{ \,(u,v) \in \mathcal{T}_p \times \mathcal{T}_p\; | \; q(u) = -1-r^2, \; q(v) = -r^2 \}.$$ 
Recall the bundle $\mathscr{X} \rightarrow \mathscr{Q}$. Since $\mathscr{Q} \cong_{\mathbf{Diff}} \mathcal{C}$ via the map
$(p,(u,v,r)) \mapsto ( (p,r), (\sqrt{1+r^2}\,u, r \, v))$, we can regard $\mathscr{X}$ as a bundle over $\mathcal{C}$ as well. Now, the section $\sigma \in \Gamma(\mathscr{Q}, \mathscr{X})$ from \eqref{DevSection} is instead a map (by notational abuse) $\sigma :  \mathcal{C} \rightarrow \mathscr{X}$. 
Then $\sigma \in \Gamma( \mathcal{C}, \mathscr{X} )$ now takes the form: 
\begin{align}
	 \sigma(\, (p,r), (u,v) ) = \left[ \hat{\mu}(p) + u + |q(v)|^{1/2} \, \frac{ \sff_{\V}(u,v)}{ q(\sff_{\V}(u,v)) }\right] .
\end{align} 
While $r$ does not appear in the formula explicitly, it is still playing the role of determining $q(u)$ and $q(v)$.
Allowing degeneration of the radial parameter $r$ to $0$ and $\infty$, there is an evident continuous extension of this developing section to a space $\overline{ \mathcal{C}}$ fibered over $\overline{B} = S \times [0, \infty]$ with fibers at the new endpoints as follows: $ \overline{ \mathcal{C}}_{(p,0) } = Q_-\mathcal{T}_p$ and $\overline{ \mathcal{C}}_{ (p, \infty) } = Q_-\mathcal{T}_p \times Q_-\mathcal{T}_p$. 
Note, in particular, that the torus fibers of $\overline{\mathcal{C}}_{(p,r)}$ degenerate to a circle fiber at $r= 0$, so $\overline{\mathcal{C}}$
is not a fiber bundle over $B$, but is compact. Going forward, we sloppily write $\mathscr{X}$ to denote $\pi^*Ein$ for $\pi: F_r \rightarrow S \times \{ r\}$ the appropriate bundle over $S \times \{r\}$.
The map $\sigma$ can be viewed now as a family of sections $ \sigma_{r} \in \Gamma( \overline{ \mathcal{C}}_{S \times \{r\}}, \; \mathscr{X}) $ for $ r \in [0, \infty]$, which interpolate 
between the sections $ \sigma_0 \in \Gamma( \; Q_-\mathcal{T} , \mathscr{X} \; )$ and $ \sigma_{\infty} \in \Gamma( Q_-\mathcal{T}  \oplus Q_-\mathcal{T} , \mathscr{X} )$ given by 
\begin{align}
	\sigma_0(p, u) &= [ \hat{\mu}(p) + u]  \label{Dev0} \\
	\sigma_{\infty}(p, u, v) &= \left[ u + \frac{ \sff_\V(u,v) } { q(\sff_\V(u,v)) } \right].\label{DevInfty} 
\end{align} 
We emphasize that equation \eqref{Dev0} is exactly the \cite{CTT19} developing map construction, now seen as a developing section in a new geometrical context. 
Let us make some observations about the equivariant maps $\tilde{f}_0: \widetilde{Q_-\mathcal{T}  } \rightarrow \Eintwothree $ and $\tilde{f}_{\infty}:\widetilde{Q_-\mathcal{T}  \oplus Q_-\mathcal{T}} \rightarrow \Eintwothree$ associated to $\sigma_0$ and $\sigma_{\infty}$, respectively. Both $\tilde{f}_0$ and $ \tilde{f}_{\infty}$
are immersions by the proof of Lemma \ref{Lemma:DevelopingMap}. We now use the notation from before Remark \ref{DevDescends}. Also, $\tilde{f}_0$ and $ \tilde{f}_{\infty}$ are equivariant with respect to the holonomy $\rho \circ \pi_{*}$ and hence the maps descend to
 $UT \tilde{S} \cong Q_-T$ and $UT\tilde{S} \oplus UT\tilde{S} \cong Q_-T \oplus Q_-T$, respectively.

Similar to the conformally flat Lorentz structures of \cite{CTT19}, the map $\tilde{f}_0: UT \tilde{S}  \rightarrow \Eintwothree$ develops each fiber isomorphically into a timelike circle of $\Ein^{2,3}$, i.e., a copy of $\Ein^{0,1} \hookrightarrow \Eintwothree$. Indeed, if we fix $p \in \tilde{S}$, then $\tilde{f}_0: UT\tilde{S}_{p} \rightarrow \Eintwothree$ maps bijectively onto $\mathbb{P} Q_0(\mathscr{L}_p \oplus T_{p} ) \cong \Ein(\R^{1,2}) = \Ein^{0,1}$. 
The map $\tilde{f}_{\infty}$ has special fibering as well. To see this, we first observe that if $P \cong \R^{2,0}, R \cong \R^{0,2}$, then any $x \in \mathbb{P}Q_0(P \oplus R)$ 
can be written almost uniquely as $x = [u + v]$ with $u \in Q_+(P) , v \in Q_-(R)$, up to $x = [-u - v]$. Hence, 
$\tilde{f}_{\infty}: UT\tilde{S} \oplus UT\tilde{S} \rightarrow \Eintwothree $ develops the fiber at $p \in \tilde{S}$ in 2-1 fashion onto $\mathbb{P}Q_0( T_{p} \oplus N_{p} ) \cong \mathsf{Ein}^{1,1}$.\\

Recall that $\mathcal{Q}$, unlike $\mathcal{Q}_{\neq 0}$, contains lines $l \in \mathbb{P}Q_0(U)$ with vanishing projections onto either $\mathscr{L}$ or $N$. In this way,
the bundle $\mathcal{Q}$ decomposes into three disjoint sub-bundles $\mathcal{Q} = L_{0} \sqcup L \sqcup L_{\infty}$, 
where $ L_0|_p = \mathbb{P}Q_0( \mathscr{L}_p \oplus T_p ), \; L_p = \mathcal{Q}_{\neq 0}|_p, \; L_{\infty}|_p = \mathbb{P}Q_0( T_p\oplus N_p)$. 
Moreover, we saw that $\overline{\psi} $ is not an immersion at $p$ if and only if $p \in L_{0}$. Thus, while we could extend our geometric structure smoothly from $ L$ to $L \sqcup L_{\infty} $,
we prefer to keep the parametrization of $\overline{\dev}$ as in \eqref{DevMapFromNu} so that the developing map remains injective on fibers. 

\section{From Geometric Structures to Representations}  \label{GXToRepn}

In this section, we prove the main result in Theorem \ref{MainTheorem}. 
First, in Section \ref{RadialTorusFamilies} we discuss some
$\imoct$ geometry necessary for the technical definition of the geometric structures in $\mathscr{M}$. The essential new notion is that of an \emph{ ($\radialtorus$)-family} in $\Eintwothree$, which is a particular geometric locus that realized by the developed image of a fiber of the bundle $\overline{\mathscr{C}} \rightarrow \tilde{S}$ by $\overline{\dev} =\overline{\dev}(\hat{\nu}) $ constructed in the previous section. In fact, the ($\radialtorus$)-family $\mathscr{S}_p = \overline{\dev}({\overline{\mathscr{C}}_p})$ associated to $p \in \tilde{S}$ nearly carries the data of the Frenet frame. Indeed, $\mathscr{S}_p$ is in 1-1 correspondence with the tuple $(\R\{\hat{\nu}(p)\}, T_p, N_p, B_p)$. 
In Section \ref{GeometricStructuresDefinition}, we define the desired geometric structures, imposing multiple additional conditions on the developing map. 
We then define the moduli space $\mathscr{M}$ of such geometric structures, up to isomorphism, in Section \ref{ModuliSpace}. 
One such condition on the developing map, the \emph{cyclic-fibering} condition, 
which mimics the previously described fibering of $\overline{\dev}$, allows us to define 
a map $H: \mathscr{M} \rightarrow \mathcal{H}(S)$, so that our geometric structures have associated almost-complex curves. 
The developing map $f_{\sigma}$ from Lemma \ref{Lemma:DevelopingMap} satisfies 
all of the conditions we impose and descends to define a map $s: \Hit(S, \Gtwosplit) \rightarrow \mathscr{M}$. 
We prove $s$ is continuous in Section \ref{Continuity}. Now, the holonomy of a geometric structure $G \in \mathscr{M}$ factors through $\pi_1 S$ to $\overline{\hol}: \pi_1 S \rightarrow \Gtwosplit$ by definition. 
In Section \ref{GXtoRepn}, we show the `descended holonomy map' $\alpha: \mathscr{M} \rightarrow \Hit(S, \Gtwosplit) $ by $[ \, (\dev, \hol) \, ] \mapsto [ \overline{\hol} ] $ is inverse to $s$. We use crucially that both $\alpha$ and $s$ factor through $\mathcal{H}(S)_{6g-6}$. \\

 \subsection{$\Gr_{(3,2)}\imoct$ and ($\radialtorus$)-Families in $\Eintwothree$} \label{RadialTorusFamilies} 
 
The following lemma describes the structure of $(3,2)$-planes in $\imoct$. In particular, they come with a distinguished line.
One should imagine $U \in \Gr_{(3,2)}(\imoct)$ as the second extended osculating subspace $U = \ell \oplus T \oplus N$ of $\hat{\nu}$ and $\ell \in \mathbb{P} U$ as the (projective) almost-complex curve $\nu$. 

\begin{lemma}\label{32-Lemma}
$\Gtwosplit$ acts transitively on $\Gr_{(3,2)}(\imoct)$. 
Moreover, given $U \in\Gr_{(3,2)}(\imoct)$, there exists a unique line $\ell \in \mathbb{P}U$ such that $\ell \times U \subset U$. In fact, $\ell \in \mathbb{P}Q_+(U) \subset \Stwofour$. 
\end{lemma}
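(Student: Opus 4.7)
My plan is to first construct the distinguished line $\ell$ directly from the orthogonal complement $V := U^\perp$, then deduce transitivity from the Stiefel triplet model (Proposition~\ref{StiefelTripletModel}), and finally prove uniqueness by a weight-grading argument in the Baraglia basis.

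\medskip

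For a $(3,2)$-plane $U$, the complement $V$ is a signature-$(0,2)$ plane. For any basis $(v_1, v_2)$ of $V$, the cross product $v_1 \times v_2 \in \imoct$ is determined up to the determinant of the change of basis, so $\ell := \R\{v_1 \times v_2\}$ depends only on $V$. Three properties need checking. First, $\ell \subset U$: by total antisymmetry of the calibration 3-form $\Omega(a, b, c) = q(a \times b, c)$, one has $q(v_1 \times v_2, v_j) = \Omega(v_1, v_2, v_j) = 0$ for $j=1,2$, so $\ell \perp V$. Second, $\ell$ is positive: by the normalization $q(a \times b) = q(a)q(b) - q(a, b)^2$ applied to an orthonormal basis of $V$ with $q(v_j) = -1$, one obtains $q(v_1 \times v_2) = 1$, placing $\ell$ in $\mathbb{P}Q_+(U) \subset \Stwofour$. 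Third, $\ell \times U \subset U$: the endomorphism $\mathcal{C}_x = x \times (\cdot)$ is $q$-skew (from the antisymmetry of $\Omega$), so $\mathcal{C}_{v_1 \times v_2} \in \mathfrak{so}(\imoct, q)$ and it preserves $U = V^\perp$ iff it preserves $V$. The double cross-product identity \eqref{DCP} then gives $\mathcal{C}_{v_1 \times v_2}(v_1) = -v_2$ and $\mathcal{C}_{v_1 \times v_2}(v_2) = v_1$ in a two-line calculation, so $\mathcal{C}_{v_1 \times v_2}$ acts on $V$ as a rotation.

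\medskip

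For transitivity, choose a unit $x \in \ell$; since $U \cap x^\perp$ has signature $(2,2)$, pick $y \in U \cap x^\perp$ with $q(y) = 1$. Then $x \times y \in U$ by the previous step, $q(x \times y) = 1$, and $x \times y \perp x, y$. The remaining $(0,2)$-part of $U$ yields a unit $z \in U$ with $q(z) = -1$ and $z \perp x, y, x \times y$; in particular $z \cdot (x \times y) = 0$, so $(x, y, z)$ lies in the Stiefel manifold $V_{(+,+,-)}(\imoct)$ of Proposition~\ref{StiefelTripletModel}. That proposition supplies the unique $g \in \Gtwosplit$ taking a reference triple $(x_0, y_0, z_0)$ to $(x, y, z)$, and this $g$ carries the reference $(3,2)$-plane $U_0 = \spann_\R(x_0, y_0, x_0 \times y_0, z_0, z_0 \times x_0)$ onto $U = \spann_\R(x, y, x \times y, z, z \times x)$, proving transitivity.

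\medskip

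For uniqueness, by transitivity I may work in the model $U_0 = \mathcal{O} \oplus \mathcal{T} \oplus \mathcal{N}$. Complexifying to the Baraglia basis \eqref{BaragliaBasis}, $U_0^\C = \spann_\C(u_i)_{i=-2}^{2}$ and $\imoct^\C/U_0^\C \cong \C\{u_3, u_{-3}\}$. The cross product on $\imoct^\C$ is weight-preserving, with $u_i \times u_j \in \C\{u_{i+j}\}$ (and $0$ when $|i+j| > 3$). For $v = \sum_{i=-2}^{2} c_i u_i$, the induced map $\phi_v \colon U_0^\C \to \C\{u_3, u_{-3}\}$, $x \mapsto (v \times x) \bmod U_0^\C$, picks up only the weight-$\pm 3$ cross products $u_{\pm 1} \times u_{\pm 2}$, yielding $\phi_v(u_{\pm 2}) \propto c_{\pm 1}$ and $\phi_v(u_{\pm 1}) \propto c_{\pm 2}$, while $\phi_v(u_0) = 0$ automatically. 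These relevant cross products are nonzero, either by direct calculation using the split-octonion multiplication table and \eqref{BaragliaBasis}, or conceptually from the non-vanishing of the holomorphic second fundamental form $\sff'$ in Remark~\ref{NonDegeneracyRemark2}. Thus $\phi_v \equiv 0$ forces $c_{\pm 1} = c_{\pm 2} = 0$, so $v \in \C\{u_0\}$, and the only real line in $U_0$ with the desired property is $\R\{u_0\} = \R\{w_1\}$. The main obstacle is establishing $\ell \times U \subset U$ in a coordinate-free way; once one observes that $\mathcal{C}_{v_1 \times v_2}$ is $q$-skew, the invariance of $U$ reduces to the short double-cross-product calculation on $V$, and everything else falls out cleanly.
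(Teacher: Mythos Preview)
Your proof is correct and takes a genuinely different route from the paper's. Here is a brief comparison.

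\textbf{Existence of $\ell$.} You construct $\ell$ intrinsically as $\R\{v_1 \times v_2\}$ from an orthonormal basis of $V = U^\perp$, then verify $\ell \subset U$, $\ell$ spacelike, and $\ell \times U \subset U$ via the $q$-skewness of $\mathcal{C}_x$ and the double cross-product identity. This is elegant and coordinate-free. The paper, by contrast, first establishes transitivity and then simply exhibits $\ell_0 = [i]$ in the model $U_0 = \spann_\R\langle i,j,k,l,li\rangle$. Your approach explains \emph{why} such a line exists, while the paper's is more expedient.

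\textbf{Transitivity.} You build a Stiefel triple $(x,y,z) \in V_{(+,+,-)}$ directly inside $U$, using $x \in \ell$ and the invariance $\ell \times U \subset U$ to ensure $x \times y, z \times x \in U$. The paper instead introduces an auxiliary Stiefel manifold $V_{(-,-,+)}$ built from $U^\perp$, constructs an explicit $\Gtwosplit$-equivariant bijection $V_{(-,-,+)} \to V_{(+,+,-)}$, and deduces transitivity on $\Gr_{(0,2)}(\imoct) \cong \Gr_{(3,2)}(\imoct)$. Your route is more direct; the paper's gives the extra information that $\Gtwosplit$ acts simply transitively on $V_{(-,-,+)}$.

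\textbf{Uniqueness.} Both arguments amount to showing that any $v \in U$ with $v \times U \subset U$ has trivial $T$- and $N$-components. The paper does this over $\R$ using the single relation $T_0 \times_{\imoct} N_0 = B_0$; you complexify and use the weight grading in the Baraglia basis, which is more systematic but heavier.

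One small correction: your appeal to Remark~\ref{NonDegeneracyRemark2} for the non-vanishing of $u_{\pm 1} \times u_{\pm 2}$ is misplaced. That remark concerns the holomorphic second fundamental form $\sff' \in \Omega^{1,0}(\Sigma, \Hom(T',N'))$, which is built from the flat connection $\nabla$, not from the cross product. The non-vanishing you need is a bare algebraic fact about $\imoct^\C$ and should be justified by the direct calculation you also mention (indeed $u_1 \times u_2 = -\sqrt{2}\, u_3$).
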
 

\begin{proof}
There is a natural $\Gtwosplit$-equivariant diffeomorphism $F: \Gr_{(3,2)}(\imoct) \rightarrow \Gr_{(0,2)}(\imoct)$ by $U \mapsto U^\bot$. Now, one can show $\Gtwosplit$ acts transitively on $\Gr_{(0,2)}(\imoct)$ using a superficial alteration to the standard Stiefel model $V_{(+,+,-)}$ from Proposition \ref{StiefelTripletModel}. Define the Stiefel manifold 
$$V_{(-,-,+)} : = \{ (u, v, w) \in (\imoct)^3 \; | \; q(u) = q(v) = -1 = -q(w), \; u \cdot v = u \cdot w = v \cdot w =\; (u \times v) \cdot w = 0 \}.$$
The $\Gtwosplit$-equivariant map $f: V_{(-,-,+)}  \rightarrow V_{(+,+,-)}$ by $(u,v,w) \mapsto (u \times v, w, u)$ is a bijection, thus defining an isomorphism of $\Gtwosplit$-spaces. Here, one needs only the fact that
$ w \times (u \times v) = - u \times (w \times v)$, so that $((u \times v)\times w) \cdot u= 0$ and the map is well-defined, with inverse $f^{-1}: V_{(+,+,-)} \rightarrow V_{(-,-,+)}$ given by $(x,y,z) \mapsto (z, z \times x, y)$. 
Alternatively, by equivariance, $f$ is uniquely constrained by $f(l, li, j) = (i, j, l)$. 
By Proposition \ref{StiefelTripletModel}, $\Gtwosplit$ acts simply transitively on $V_{(-,-,+)}$. 
As a corollary, $\Gtwosplit$ acts transitively on $\Gr_{(0,2)}(\imoct)$ and $\Gr_{(3,2)}(\imoct).$

By the transitivity, it suffices to prove the uniqueness statement for $ U_0 = \spann_{\R} \langle i, j, k, l, li \rangle$. 
Clearly, $\ell_0= [i] \in \mathbb{P}(U_0)$ satisfies $\ell_0 \times U_0 \subset U_0$. Using the relations amongst $\ell_0 = \, \langle i \rangle $, $ T_0 = \, \spann_{\R} \langle l, li \rangle$, $N_0 = \, \spann_{\R} \langle j, k \rangle $, $B_0 = \, \spann_{\R} \langle lj, lk \rangle $, we show the uniqueness of $\ell_0$.
For the contrapositive, suppose $x \in \mathbb{P}(U_0)$ satisfies $x \times U_0 \subseteq U_0$ and $x \neq \ell_0$. Write $ x= [\hat{x}]$ for $ \hat{x} = x_{\ell} + x_T + x_N$ with
$x_{\ell} \in \ell_0,\, x_T \in T_0, \,x_N \in N_0$. Since $\ell_0\neq x$, then $x_T \neq 0 $ or $x_N \neq 0 $. Using $L_0 \times T_0 = T_0, L_0 \times N_0 = N_0, T_0 \times N_0 = B_0$, in either case of $x_T \neq 0$ or $x_N \neq 0$, there exists $w \in U_0$, with  
$w \in N_0$ or $w \in T_0$, respectively, such that $\proj_{B_0}( \hat{x} \times w) \neq 0$. Thus, $x \times U_0 \nsubseteq U_0$. This proves the uniqueness of $\ell_0$
and also that $\ell_0 \in \mathbb{P}Q_+(U_0)$. 
\end{proof} 

The following corollary needs no assumption on the holonomy of the almost-complex curve. 

\begin{corollary}
If $\hat{\nu}: \tilde{S} \rightarrow \quadric$ is an injective $\rho$-equivariant alternating almost-complex curve, 
then the second extended osculating subspace map $U:  \tilde{S} \rightarrow \Gr_{(3,2)} \imoct$ is injective.
\end{corollary}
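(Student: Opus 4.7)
The plan is to show that $\mathscr{L}_p := \R\{\hat{\nu}(p)\}$ is precisely the distinguished line picked out by Lemma~\ref{32-Lemma} inside $U(p) = \mathscr{L}_p \oplus T_p \oplus N_p$; this reduces the injectivity of $U$ to that of $\hat{\nu}$, up to a sign ambiguity. First I would verify the cross-product invariance $\mathscr{L}_p \times U(p) \subseteq U(p)$. The key point is that the endomorphism $\mathcal{C}_{\hat{\nu}(p)}$ agrees with the almost-complex structure $J_{\quadric}$ on $T_{\hat{\nu}(p)}\quadric = \hat{\nu}(p)^{\perp}$, by the very definition of $J_{\quadric}$ in Section~\ref{G2Prelims}. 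By the discussion in Section~\ref{AC_Curve}, both $T_p$ and $N_p$ are complex lines in $\hat{\nu}^{*}T\quadric$, i.e., $\mathcal{C}_{\hat{\nu}(p)}$-invariant: for $T_p$ this is part of the almost-complex curve definition, while for $N_p$ it is the remark that $T, N, B$ are all complex lines in $\nu^{*}T\quadric$. Hence $\mathscr{L}_p \times T_p \subseteq T_p$ and $\mathscr{L}_p \times N_p \subseteq N_p$, and combined with $\mathscr{L}_p \times \mathscr{L}_p = 0$ from skew-symmetry of $\times$, this yields $\mathscr{L}_p \times U(p) \subseteq U(p)$. The uniqueness clause of Lemma~\ref{32-Lemma} then identifies $\mathscr{L}_p$ as the distinguished line.

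Now suppose $U(p) = U(q)$. By the uniqueness just established, $\mathscr{L}_p = \mathscr{L}_q$ as $1$-dimensional subspaces of $\imoct$. Both $\hat{\nu}(p)$ and $\hat{\nu}(q)$ are then unit vectors in $Q_+(\imoct)$ lying in this shared line, so $\hat{\nu}(p) = \pm \hat{\nu}(q)$. In the $+$ case, injectivity of $\hat{\nu}$ gives $p = q$ immediately.

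The main obstacle is ruling out the sign case $\hat{\nu}(p) = -\hat{\nu}(q)$. The plan for this is to invoke the alternating hypothesis: since $\mathcal{C}_{-\hat{\nu}(p)} = -\mathcal{C}_{\hat{\nu}(p)}$, the almost-complex structures $J_{\hat{\nu}(p)}$ and $J_{\hat{\nu}(q)}$ on the common ambient space $\hat{\nu}(p)^{\perp} = \hat{\nu}(q)^{\perp}$ differ by sign. Consequently, the pulled-back structures $(\hat{\nu}^{*}J)|_p$ and $(\hat{\nu}^{*}J)|_q$ would induce opposite complex orientations on the identified $(2,2)$-plane $T_p \oplus N_p = T_q \oplus N_q$, which is incompatible with the single fixed orientation on $\tilde{S}$ that makes $\hat{\nu}$ alternating at both basepoints. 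Making this sign-chasing precise via the full Frenet frame $(\hat{\nu}, T_\nu, N_\nu, B_\nu)$, and its interaction with the orientation-compatibility in the alternating condition, is the delicate step.
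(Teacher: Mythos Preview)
Your approach is essentially identical to the paper's: establish $\mathscr{L}_p \times U(p) \subseteq U(p)$ via the $J_{\quadric}$-invariance of $T_p$ and $N_p$, invoke the uniqueness clause of Lemma~\ref{32-Lemma} to get $\mathscr{L}_p = \mathscr{L}_q$, and then appeal to injectivity of $\hat{\nu}$. The paper's proof is equally brief and passes directly from the line equality to ``$\hat{\nu}(x) = \hat{\nu}(y)$'' without addressing the sign.

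You are right to flag the sign ambiguity, but your proposed orientation fix does not work as stated. On a real $4$-dimensional space such as $W := T_p \oplus N_p = T_q \oplus N_q$, the complex structures $J := J_{\hat{\nu}(p)}|_W$ and $-J = J_{\hat{\nu}(q)}|_W$ induce the \emph{same} orientation: if $(e_1, Je_1, e_2, Je_2)$ is $J$-adapted, then $(e_1, -Je_1, e_2, -Je_2)$ is $(-J)$-adapted, and the two sign flips cancel. So no contradiction arises from comparing complex orientations on $W$. If instead you try to compare on the $2$-planes, note that $T_p$ and $T_q$ need not coincide as subspaces of $\imoct$ even when $U(p)=U(q)$ --- as the proof of Lemma~\ref{USplitting} shows, splittings $U = \ell \oplus T \oplus N$ with $\ell \times T = T$, $\ell \times N = N$ are far from unique --- so there is no common $2$-plane on which to match the pulled-back orientations from $T_p\tilde{S}$ and $T_q\tilde{S}$. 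The sign case thus remains a genuine loose end under the hypothesis as stated; it disappears if one assumes the projectivization $[\hat{\nu}]$ is injective, which is what the paper's argument actually uses and which holds in the Hitchin setting where the corollary is applied.
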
 

\begin{proof}
Suppose that $U:= U(x) = U(y)$. 
The small remark here is that the Frenet frame relations stated in Lemma \ref{GTGeometricModel} hold at $p \in \tilde{S}$
even if $\sff_p = 0$. Thus, $\R \{\hat{\nu}(p) \} \times U_p \subset U_p$, which then means 
$\hat{\nu}(x) \times U \subset U $ and $ \hat{\nu}(y) \times U \subset U$. Then Lemma \ref{32-Lemma} says $\hat{\nu}(x) = \hat{\nu}(y)$, so that $x = y$.
\end{proof} 

We will also need the double cover $\widehat{\Ein}^{2,3} \cong_{\mathbf{Diff}} \mathbb{S}^2 \times \mathbb{S}^3$ of $\Eintwothree$. Be advised that the identification
$\widehat{\Ein}^{2,3} \cong \mathbb{S}^2 \times \mathbb{S}^3$ is not $\Gtwosplit$-equivariant. 

\begin{definition}
Define space $\widehat{\Ein}^{2,3}$ of null rays in $\imoct$ by $\widehat{\Ein}^{2,3} := (Q_0(\imoct) - \{ 0 \} )/\R_+$. That is, $u, v \in Q_0(\imoct)$ satisfy $u \sim v$ when there exists $\lambda \in \R_+$ such that $v = \lambda u$. 
\end{definition} 

In the following definition, we use again the model splitting $ \imoct = \ell_0 \oplus T_0 \oplus N_0 \oplus B_0$ and $U_0 = \ell_0 \oplus T_0 \oplus N_0$ from the proof of Lemma \ref{32-Lemma}.

  \begin{definition}\label{S1C*family}
  Consider the model subset $ \widehat{\mathscr{S}}_0 \subset \widehat{\Ein}^{2,3}$ given by 
  \begin{align}\label{ModelFamily}
 	 \widehat{\mathscr{S}}_0 = \{  \; [i + (r^2+ 1)^{1/2} \, u_T + r \, u_N ] \in \widehat{\Ein}^{2,3}\; | \; r \in (0,\infty), \; u_T \in Q_-(T_0), \; u_N \in Q_{+}(N_0) \}.
 \end{align} 
 Let $\pi: \widehat{\Ein}^{2,3} \rightarrow \Eintwothree$ denote the quotient map and we define $\mathscr{S}_0 := \pi( \widehat{\mathscr{S}}_0 )$. 
 We call a subset $\mathscr{S}$ an \textbf{($\mathbb{S}^1 \times \mathbb{S}^1 \times \R_+)$-family} in $\Eintwothree $ (resp. $\widehat{\Ein}^{2,3}$) when $\mathscr{S} = \varphi (\mathscr{S}_0)$ or $\mathscr{S} = \varphi (\widehat{\mathscr{S}}_0)$, respectively, for some $\varphi \in \Gtwosplit$. \end{definition} 
 
 Shortly, we offer an equivalent definition of an ($\mathbb{S}^1 \times \mathbb{S}^1 \times \R_+)$-family. Here, we observe that the linear span of the points in $\mathscr{S}_0$ gives $U_0$
 back: $\spann_{x \in \mathscr{S}_0} \, x = U_0$.
By the $\Gtwosplit$-transitivity on ($\mathbb{S}^1 \times \mathbb{S}^1 \times \R_+$)-families in $\Eintwothree $ and
Lemma \ref{32-Lemma}, an ($\radialtorus)$-family $\mathscr{S}$ comes with the following associated data:
 	\begin{enumerate}[label=(\roman*)]
		\item A subset $U = U(\mathscr{S}) := \spann_{x \in \mathcal{S} } \, x\, \in \Gr_{(3,2)}(\imoct)$, 
		\item A unique line $\ell \in \mathbb{P}Q_+ U$ such that $\ell \times U \subset U$. 
	\end{enumerate}  
The model subset $\mathscr{S}_0$ from \eqref{ModelFamily} is also realized as the following open subset of $\mathbb{P}Q_0(U_0) \cong \Ein^{2,1}$, for $U_0 = (\ell_0 \oplus T_0 \oplus N_0)$:
 \begin{align}\label{FamilyReformulation}
 	\mathscr{S}_0 = \{ \, x \in \mathbb{P}Q_0(U_0)\; | \; x = [x_{\ell} + x_{t} + x_{n} ],  \; x_{\ell},x_{t}, x_{n} \neq 0, \; x_\ell \in \ell_0, \; x_t \in T_0, \; x_N \in N_0 \}.
\end{align}
Of course, if $\ell \in  \mathbb{P}Q_0(U_0)$, then $\Pi_{T_0}(\ell) \neq 0$ automatically. 

By the description of $\mathscr{S}_0$ in \eqref{FamilyReformulation}, if we begin with a triplet of pairwise orthogonal subspaces $(L,T,N)$ with $L \in \Gr_{(1,0)}\imoct,$ $ T \in \Gr_{(0,2)}\imoct$, $N \in \Gr_{(2,0)}\imoct$ such that 
$L \times T =T$, $L \times N = N$, then 
we can construct an ($\mathbb{S}^1 \times \mathbb{S}^1 \times \R_+$)-family $\mathscr{S}$ in $\Eintwothree$ by 
\begin{align}\label{RadialTorusFamily}
	\mathscr{S} := \mathscr{S}(L,T,N) =  \{ x \in \mathbb{P}Q_0(U)\; | \; x = [x_{\ell} + x_{t} + x_{n} ],  \; x_{\ell},x_{t}, x_{n} \neq 0, \; x_\ell \in L, \; x_t \in T, \; x_N \in N \}.
\end{align} 
Indeed, choose $x \in L, y \in T, z \in N$ of appropriate norm $\pm 1$ and by Proposition \ref{StiefelTripletModel} there is a transformation $\varphi \in \Gtwosplit$ such that
$\varphi(i) = x, \varphi(l) = y, \varphi(j) = z$, forcing $\varphi(\mathscr{S}_0) = \mathscr{S}(L,T,N)$. 
Conversely, by equation \eqref{FamilyReformulation}, every $(\radialtorus)$-family obtains this form. 
Thus, \eqref{RadialTorusFamily} is an equivalent definition of a ($\radialtorus)$-family. Crucially, we can also uniquely recover the triplet $(L,T,N)$ from $\mathscr{S}$ alone by decomposing $U(\mathscr{S})$ into subspaces 
so that the projections from $U$ onto these subspaces never vanish. 

\begin{lemma}\label{USplitting}
Let $\mathscr{S}$ be an $(\mathbb{S}^1 \times \mathbb{S}^1 \times \R_+)$-family in $\Eintwothree$ and let $U \in \Gr_{(3,2)} \imoct$ and $\ell \in \mathbb{P}Q_+U$ be the associated subspaces from (i), (ii) after Definition \ref{S1C*family}. Then there exists a unique orthogonal splitting $ U = \ell \oplus T \oplus N$ with $N \in \Gr_{(2,0)}\imoct, \, T \in \Gr_{(0,2)}\imoct$, $\ell \times T = T, \; \ell \times N = N$, such that the projection maps 
$\Pi_{\ell}: \mathscr{S} \rightarrow \ell, \; \Pi_{T}: \mathscr{S} \rightarrow T, \; \Pi_{N}: \mathscr{S} \rightarrow N$ are all non-vanishing. 
In other words, $\mathscr{S} = \mathscr{S}(\ell, T, N)$ uniquely. 
\end{lemma}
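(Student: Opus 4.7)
My plan is to establish existence via $\Gtwosplit$-transitivity and then deduce uniqueness by extracting the splitting intrinsically from the complement of $\mathscr{S}$ in $\mathbb{P}Q_0(U)$. For existence, Definition \ref{S1C*family} gives $\mathscr{S} = \varphi(\mathscr{S}_0)$ for some $\varphi \in \Gtwosplit$, and setting $(\ell, T, N) := (\varphi(\ell_0), \varphi(T_0), \varphi(N_0))$ produces an orthogonal splitting $U = \ell \oplus T \oplus N$ with the required signatures and cross-product invariances, satisfying $\mathscr{S} = \mathscr{S}(\ell, T, N)$ by equivariance of the construction in \eqref{RadialTorusFamily}; the three projections $\Pi_\ell, \Pi_T, \Pi_N$ are then non-vanishing on $\mathscr{S}$ by definition.

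For uniqueness, $\ell$ is already canonically determined by $U$ through Lemma \ref{32-Lemma}, so I only need to recover $T$ (and then $N = T^\perp \cap U \cap \ell^\perp$ is forced). The plan is to analyze $\mathscr{S}^c := \mathbb{P}Q_0(U) \setminus \mathscr{S}$. A null line $[x] \in \mathbb{P}Q_0(U)$ fails at least one of the non-vanishing conditions precisely when one of its $\ell$-, $T$-, or $N$-components vanishes, yielding
\[
\mathscr{S}^c \;=\; \mathbb{P}Q_0(T \oplus N) \;\cup\; \mathbb{P}Q_0(\ell \oplus N) \;\cup\; \mathbb{P}Q_0(\ell \oplus T).
\]
Now $\ell \oplus N$ has signature $(3,0)$, so $\mathbb{P}Q_0(\ell \oplus N) = \emptyset$; and the other two pieces are disjoint since $\mathbb{P}Q_0(T) = \emptyset$ ($T$ being negative definite). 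The first is $\mathbb{P}Q_0(U \cap \ell^\perp) \cong \Ein^{1,1}$, a 2-dimensional locus independent of the splitting, while the second is a 1-dimensional $\Ein^{0,1}$-circle depending on $T$.

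Since $\mathscr{S}^c$ is intrinsic to $\mathscr{S}$, so is its unique 1-dimensional stratum, namely the circle $\mathbb{P}Q_0(\ell \oplus T)$. Its linear span inside $U$ recovers the 3-plane $\ell \oplus T$ — the null cone of a nondegenerate signature-$(1,2)$ space linearly spans its ambient space, which is a one-line check — so $T = (\ell \oplus T) \cap \ell^\perp$ and $N$ are forced. The main obstacle I anticipate is resisting the tempting but insufficient approach of trying to characterize $T$ directly from the non-vanishing projection condition: in fact, many alternative orthogonal $J$-invariant splittings $(T', N')$ of $U \cap \ell^\perp$ yield non-vanishing $\Pi_{\ell}, \Pi_{T'}, \Pi_{N'}$ on $\mathscr{S}$ (they just fail to satisfy $\mathscr{S} = \mathscr{S}(\ell, T', N')$), so the finer data encoded in $\mathscr{S}^c$ — and in particular its 1-dimensional stratum — is essential to pinning down the splitting.
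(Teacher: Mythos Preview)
Your proof is correct and takes a genuinely different route from the paper's. The paper argues by direct contradiction: assuming a second splitting $\mathscr{S}_0 = \mathscr{S}(\ell_0, T', N')$ with $N' \neq N_0$, it picks $v \in Q_+(N')$ with nonzero $T_0$-component, then explicitly manufactures a point $p \in \mathscr{S}(\ell_0, T', N')$ whose $N_0$-projection vanishes, so $p \notin \mathscr{S}_0$. Your approach is more structural: you observe that $\mathscr{S}^c = \mathbb{P}Q_0(\ell^\perp \cap U) \sqcup \mathbb{P}Q_0(\ell \oplus T)$ decomposes into two disjoint closed connected pieces of different dimensions, so the $1$-dimensional component is intrinsic to $\mathscr{S}$ and its linear span recovers $\ell \oplus T$. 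This is cleaner and explains \emph{why} the splitting is unique rather than merely verifying it. Both arguments reduce to the model $\mathscr{S}_0$ by $\Gtwosplit$-transitivity for existence.

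One correction, though it does not affect your proof: your final parenthetical remark is mistaken. You assert that many alternative $J$-invariant splittings $(T', N')$ still yield non-vanishing $\Pi_\ell, \Pi_{T'}, \Pi_{N'}$ on $\mathscr{S}$, but in fact the non-vanishing condition alone already forces $(T', N') = (T, N)$. Indeed, $\Pi_{N'}$ non-vanishing on $\mathscr{S}$ means $\mathbb{P}Q_0(\ell \oplus T') \subseteq \mathscr{S}^c = \mathbb{P}Q_0(\ell^\perp \cap U) \sqcup \mathbb{P}Q_0(\ell \oplus T)$; since every null line in the $(1,2)$-space $\ell \oplus T'$ has nonzero $\ell$-component, the embedded circle $\mathbb{P}Q_0(\ell \oplus T')$ must sit inside the embedded circle $\mathbb{P}Q_0(\ell \oplus T)$, forcing equality and hence $T' = T$. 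So the two formulations in the lemma statement really are equivalent, and the ``tempting but insufficient'' approach you warn against is in fact sufficient --- your complement-stratification argument simply makes the mechanism transparent.
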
 

\begin{proof}
By the tautological transitivity of $\Gtwosplit$ on ($\mathbb{S}^1 \times \mathbb{S}^1 \times \R_+$)-families in $\Eintwothree$, it suffices to prove the claim for the model $\mathscr{S}_0$. 
Equation  \eqref{FamilyReformulation} proves existence of such a decomposition, showing $\mathscr{S}_0 = \mathscr{S}(\ell_0,T_0,N_0) $.
Now, to prove uniqueness, suppose for contradiction there were another splitting
$U_0= \ell_0 \oplus T' \oplus N'$ such that $\mathscr{S}_0 = \mathscr{S}(\ell_0,T',N')$. 

Note that the pair $(N',T')$ is determined by a single nonzero element $v \in N'$. 
Indeed, the subspaces are realized as $N' = \spann_\R \langle v, i \times v \rangle$ and $T' = \left[ \, (N')^\bot \subset (T_0 \oplus N_0) \, \right] $. Thus, if $N' = N_0$, then $T' = T_0$. We may then suppose $N' \neq N_0$. Hence, there exists
$v \in Q_+(N')$ of the form $v= c_1 v_T + c_2 v_N$ with $v_T \in Q_-(T), v_N \in Q_+(N)$,\; $c_2^2-c_1^2 = 1$, and $c_1 \neq 0$. 
Note that $v^\bot:= i \times v= c_1 (i \times v_T) + c_2 (i \times v_N)$. One then finds
$T' = \spann_{\R} \langle u, u^\bot \rangle$, where $u = c_2 v_T + c_1 v_N$ and $u^{\bot} =  c_2( i \times v_T ) +c_1 (i \times v_N)$.
Since $| \frac{c_2}{c_1} | > 1$, choose $t \neq 0 $ to solve $ \frac{ (t^2+1)^{1/2} }{ t } = \frac{c_2}{c_1} $. Take any $x_l \neq 0 \in \ell_0$, then define 
$$p := [ \, x_{\ell} + (t^2+1)^{1/2} u + t (-v) ].$$ 
Since $q(p) = 0$, we have $p \in \mathscr{S}(\ell_0,T',N')$. On the other hand, one can rewrite $p =  [  x_{\ell} + x_{t} ]$ for $x_{t} \in T$,
so that $p \notin \mathscr{S}(\ell_0, T_0, N_0)$, a contradiction. 
We conclude that $\mathscr{S}_0 = \mathscr{S}(\ell_0,T_0,N_0) $ uniquely.
\end{proof}
Thus, even though there are many splittings of $U \in \Gr_{(3,2)} \imoct $ into $U = \ell \oplus T \oplus N$ with $ \ell \times T = T, \, \ell \times N = N$,
if we fix the additional data of $\mathscr{S}$ with $U(\mathscr{S} ) = U$ and demand further that the projections from each line $p \in \mathscr{S}$ don't vanish on the component subspaces $\ell, T, N$,
then there is a unique such splitting $U = \ell \oplus T \oplus N$.

As a corollary to Lemma \ref{USplitting}, an ($\radialtorus$)-family in $\widehat{\Ein}^{2,3}$ has a similar associated splitting, but with slightly finer data. 
Below, we may naturally identify $\quadric = Q_+\imoct \cong Q_{+} \imoct /\R_+$ by $v \mapsto [v]$. 

\begin{corollary}\label{Cor:USplitting}
Let $\widehat{\mathscr{S}}$ be an $(\mathbb{S}^1 \times \mathbb{S}^1 \times \R_+)$-family in $\widehat{\Ein}^{2,3}$. Then let $U = \ell \oplus T \oplus N$ be the associated splitting 
of $\pi(\widehat{\mathscr{S}})$. Then there is a unique element $\hat{x} \in Q_+(\ell)$ such that every point $L \in \widehat{\mathscr{S}}$ uniquely obtains the form 
$L = [ \hat{x} + (r^2+1)^{1/2} t + r n ]$ for some $t \in Q_-T, \, n \in Q_+ N, \, r \in \R_+$. Thus, associated to $\widehat{\mathscr{S}}$ is the tuple $(\hat{x}, T, N)$. 
\end{corollary}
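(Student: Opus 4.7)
The plan is to reduce the statement to the model family $\widehat{\mathscr{S}}_0$ by equivariance and then verify it directly there.

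By the definition of an $(\mathbb{S}^1\times\mathbb{S}^1\times\R_+)$-family in $\widehat{\Ein}^{2,3}$, write $\widehat{\mathscr{S}} = \varphi(\widehat{\mathscr{S}}_0)$ for some $\varphi \in \Gtwosplit$. Since $\varphi$ preserves $q$ and commutes with $\pi$, and since the splitting of $U(\pi(\widehat{\mathscr{S}}))$ provided by Lemma~\ref{USplitting} is characterized by intrinsic properties (the distinguished line from Lemma~\ref{32-Lemma} together with the non-vanishing of the three projections on the associated $(\radialtorus)$-family), one has $\ell = \varphi(\ell_0)$, $T = \varphi(T_0)$, $N = \varphi(N_0)$. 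It therefore suffices to prove the claim for $\widehat{\mathscr{S}}_0$ with candidate $\hat{x}_0 = i \in Q_+(\ell_0)$, then transport the result by setting $\hat{x} := \varphi(i)$, which lies in $Q_+(\ell)$ because $\varphi$ is an isometry.

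Existence for the model is built into the definition of $\widehat{\mathscr{S}}_0$. For uniqueness of the triple $(r,t,n)$ given $L \in \widehat{\mathscr{S}}_0$: suppose two representatives $v = i + (r^2+1)^{1/2} t + rn$ and $v' = i + (r'^2+1)^{1/2} t' + r' n'$ of $L$ satisfy $v = \lambda v'$ for some $\lambda > 0$. Projecting onto the orthogonal summands $\ell_0, T_0, N_0$ of $U_0$, the $\ell_0$-component forces $\lambda = 1$; then norm comparison in the $T_0$- and $N_0$-components, together with the constraints $q(t) = q(t') = -1$, $q(n) = q(n') = +1$, and $r, r' > 0$, yields $r = r'$, whence $t = t'$ and $n = n'$.

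For uniqueness of $\hat{x}_0$, observe that $Q_+(\ell_0) = \{\pm i\}$. If $\hat{x}_0 = -i$ were also valid, then a fixed $L \in \widehat{\mathscr{S}}_0$ would admit representatives $v = i + (r^2+1)^{1/2} t + rn$ and $v' = -i + (r'^2+1)^{1/2} t' + r' n'$ with $v = \lambda v'$ for some $\lambda > 0$; the $\ell_0$-projection would then force $\lambda = -1$, contradicting positivity. Applying $\varphi$ transports both existence and uniqueness to the general family $\widehat{\mathscr{S}}$. The only conceptual subtlety, and hence the main (mild) obstacle, is the sign tracking on $Q_+(\ell)$: because $\widehat{\Ein}^{2,3}$ is a ray space rather than a projective space, the choice between $\pm\hat{x}$ is genuine, and the argument above exploits precisely that $Q_+(\ell)$ has two elements of which only one lies on the correct side of the positive cone swept out by $\widehat{\mathscr{S}}$.
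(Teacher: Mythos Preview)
Your proof is correct. Your route differs slightly from the paper's: you reduce to the model family via the $\Gtwosplit$-action and then verify existence and both uniqueness claims by a direct algebraic computation with the orthogonal projections onto $\ell_0, T_0, N_0$. The paper instead argues abstractly: the projection $\Pi_\ell$ on $\pi(\widehat{\mathscr{S}})$ lifts to a continuous map $\hat{\Pi}_\ell: \widehat{\mathscr{S}} \rightarrow Q_+(\ell)$, and since $Q_+(\ell) = \{\hat{x}, -\hat{x}\}$ is discrete while $\widehat{\mathscr{S}}$ is connected, $\hat{\Pi}_\ell$ must be constant. Your approach is more hands-on and makes the uniqueness of $(r,t,n)$ explicit as a bonus (the paper does not spell this out), while the paper's connectedness argument is shorter and coordinate-free; both exploit the same underlying fact that $Q_+(\ell)$ has exactly two elements and the ray-space structure of $\widehat{\Ein}^{2,3}$ selects one of them.
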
 

\begin{proof}
The projection maps $\Pi_\ell, \Pi_T, \Pi_N$ each lift to $Q_+(\imoct), Q_-(\imoct), Q_+(\imoct)$, respectively. In particular, we have a lift $\hat{\Pi}_\ell : \mathscr{S} \rightarrow Q_+(\ell)$ by
Lemma \ref{32-Lemma} and Lemma \ref{USplitting}. But 
$Q_+(\ell) = \{\hat{x}, -\hat{x}\}$ for some $\hat{x} \in \quadric$. Since $\hat{\Pi}_{\ell} $ is continuous, it must be constant. 
\end{proof} 

\subsection{Cyclic Surfaces and $\Gtwo^\C/T$ } \label{CyclicSpace}
Before defining the geometric structures of interest, we give one more essential 
definition, that of a \emph{cyclic surface}, which will serve as an intermediary between the developing map $\dev$ and its associated almost-complex curves $\nu$. 
We then give a geometric model for $\Gtwo^\C/T$ and interpret the complex Frenet frame of \cite{CT23} in this model.

To give the definition of cyclic surfaces from \cite{CT23}, we need an aside on Lie theory. 
Going forward, we fix the basis \eqref{BaragliaBasis} for $\imoct^\C$, which determines a representation 
$\g_2^\C \hookrightarrow \gl_7\C$. We then fix the compact real form $\mathfrak{k}$ of $\g_2^\C$ associated to the involution $\rho:= (A \mapsto -\overline{A}^T)$
and the split real form $\g_2' $ associated to the involution $\tau(A) = Q\overline{A}Q$ with 
$$Q = \begin{pmatrix} & & & & & &1\\
					& & & & &1&\\
				   	& & & & 1& & \\
				  	& & & 1& & & \\
				  	& & 1&& & &\\
				   	& 1& & & & & \\
				   	1 & & & & & & \end{pmatrix}.$$
One finds $\tau = \sigma \rho = \rho \sigma$, so that $\rho$ and $\tau$ define compatible compact and split real forms, respectively.

Let $\h < \g_2^\C$ be the maximal torus of diagonal transformations in the basis \eqref{BaragliaBasis}. 
Then $\mathfrak{t} := \Fix(\tau) \cap \Fix(\rho) \cap \h$ is a maximal torus 
in the maximal compact subalgebra $\mathfrak{k} \cap \g_2'$ of $\g_2'$. In terms of the Cartan subalgebra $\mathfrak{h} = \mathfrak{t} \oplus i \mathfrak{t}$ of $\g_2^\C$,
we have the root space decomposition $\g_2^\C = \mathfrak{h} \oplus \bigoplus_{\alpha \in \Delta} \g_{\alpha}$, where $\Delta$ is the set of roots of $\h$
and $\g_{\alpha}$ the $\alpha$-root space. 
Fix a set of primitive roots $\Pi = \{ \alpha, \beta \} \subset \Delta$, with $\beta$ the short root, to discuss $\Pi$-height of roots.
Then any root $\delta \in \Delta $ is of the form $\delta = a \alpha + b \beta $ for $a,b \in \mathbb{Z}$ of the same sign and $\mathsf{height}_{\Pi}(\delta) := a+b$. 

The Lie algebra $\g_2^\C$ admits a $\mathbb{Z}_6$-grading, which will descend to $\g_2^\C/\mathfrak{t}$. 
For $[j] \in \mathbb{Z}/6\mathbb{\mathbb{Z}}$, define $\g_{ j} : = \bigoplus_{\mathsf{height}_{\Pi}(\alpha) \equiv - j \, \text{mod} \, 6} \; \g_{\alpha}$ and $\g_0 = \h$. 
The highest root $\gamma = 2\alpha+ 3\beta$ in $\Delta$ has height 5. 

We then get a decomposition 
\begin{align}\label{CyclicDecomposition}
		\g_2^\C = \bigoplus_{k \in \Z_6 } \g_{k}. 
\end{align} 
Since $ \g_{0} = \mathfrak{h}$, the decomposition \eqref{CyclicDecomposition} descends to the decomposition 
\begin{align}\label{CyclicDecomposition_2}
\g_2^\C/\mathfrak{t} \cong i \mathfrak{t} \oplus \bigoplus_{k \neq 0 \in \Z_6 } \g_{k}. 
\end{align} 
Denote $\mathfrak{t}^\bot :=  i \mathfrak{t}\oplus  \bigoplus_{k \neq 0 \in \Z_6 } \g_{k}$.
Consider the space $Y := \Gtwo^\C/T$, where $T \cong \mathsf{U}(1) \times \mathsf{U}(1)$ is the maximal torus in $\Gtwo^\C$ corresponding to $\mathfrak{t}$. 
Then $T_{eT} Y \cong \frakt^\bot$. Let $\pi_{\frakt^\bot}: \g_2^\C \rightarrow \frakt^\bot$ denote orthogonal projection and $\omega:T\Gtwo^\C \rightarrow \g_2^\C$ the Maurer-Cartan form. Define the 1-form $\omega_{\frakt^\bot} := \pi_{\frakt^\bot} \circ \omega \in \Omega^1(\Gtwo^\C, \frakt^\bot)$. 
Now, the natural projection $\pi: \Gtwo^\C \rightarrow Y$ realizes $\Gtwo^\C$ as a principal $T$-bundle over $Y$. The form $\omega_{\frakt^\bot}$ allows us to identify $TY  \cong [\frakt^\bot ] $, where $[\frakt^\bot ] := \Gtwo^\C \times_{\Ad} \mathfrak{t}^\bot$ is the associated
vector bundle over $Y$ to the $T$-bundle $\Gtwo^\C$ via the adjoint representation $ \Ad: T \mapsto \mathsf{GL}(\frakt^\bot)$. 
That is, the map $\Gtwo^\C \times \frakt^\bot \rightarrow TY$ by $(g, X) \mapsto d\pi \circ dL_g(X)$ descends to the isomorphism $[\frakt^\bot]\cong TY$. 
 Since $T$ preserves the root spaces $\g_{\alpha}$ as well as the subspaces $\g_{k}$, there are well-defined
sub-bundles $[\g_{\alpha}]$ and $[\g_{k}]$ of $[\frakt^\bot]$ given by $ [\g_{\alpha}] := \Gtwo^\C \times_{\Ad} \g_{\alpha} $ and $[\g_{k}] := \Gtwo^\C \times_{\Ad} \g_{k} $. 
Correspondingly, for roots $\sigma \in \Delta$, we have a 1-forms $\omega_{\sigma}: TY \rightarrow [\g_{\sigma}] $ as well as 1-forms $\omega_{k}: TY \rightarrow [\g_{k}].$
Since the adjoint action of $T$ on $\frakt^\bot$ and $\sigma, \rho, \tau$ commute, each involution descends to the bundle $[\frakt^\bot]$. 

 Now, consider the distribution $\mathcal{D}$ on $TY$ defined by $\Fix(\tau) \cap [\g_{1} \oplus \g_{-1}] $. 
Define the map $J$ on $\g_{ 1} \oplus \g_{-1}$ by $J (x, y) = (\sqrt{-1} \, x, -\sqrt{-1} y)$. Then $J$ clearly commutes with $\Ad(T)$, $\sigma$, and $\rho$, so that $J$ commutes with $\tau$. Hence, $J$ descends to $\mathcal{D}$ and induces an almost-complex structure $\mathcal{J}: \mathcal{D} \rightarrow \mathcal{D}$. 
Now, following \cite[Definition 6.4]{CT23}, we define cyclic surfaces. Here, we let $\mathsf{S}$ be any smooth oriented surface. 
Note in particular that $\g_{1} = \g_{-\alpha} \oplus \g_{-\beta} \oplus \g_{\gamma}$. 
\begin{definition}
Let $f: \mathsf{S} \rightarrow Y$ be an orientation-preserving smooth map
tangent to the distribution $\mathcal{D}$ and moreover have tangent space $df_p(T_p\mathsf{S})$ that is $\mathcal{J}$-invariant. 
If we also have $f^*\omega_{\sigma}$ not identically zero for $\sigma \in \{-\alpha, -\beta\}$ and $f^*\omega_{-\beta}$ non-vanishing, then we call $f$
a (CT) cyclic surface. 
\end{definition} 

The link between (equivariant) cyclic surfaces and (equivariant) alternating almost-complex curves is the following powerful theorem; the original 
theorem covers also the non-equivariant case, but we state only the relevant portion. 

\begin{theorem}[\cite{CT23} Theorem A] \label{FrenetFrameTheorem} 
Let $[\rho] \in \chi(\pi_1S, \Gtwosplit)$. The Frenet frame furnishes a bijection between isomorphism classes of [$\rho$]-equivariant alternating almost-complex curves $\nu: \tilde{S} \rightarrow \quadric$ and isomorphism classes of [$\rho$]-equivariant (CT) cyclic surfaces $f: \tilde{S} \rightarrow \Gtwo^\C/T$.
\end{theorem}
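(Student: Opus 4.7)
\emph{Proof sketch.} The plan is to construct mutually inverse correspondences between the two moduli spaces. For the forward direction, start with a $\rho$-equivariant alternating almost-complex curve $\nu: \tilde{S} \to \quadric$ and build its complex Frenet frame $\mathscr{F}^\C: \tilde{S} \to \Gtwo^\C/T$ as follows. The real Frenet frame $\mathscr{F}(p) = (\R\{\nu(p)\}, T_p, N_p, B_p)$ lives in the geometric model $\mathscr{Y} \cong \Gtwosplit/T$ from Lemma \ref{GTGeometricModel}; I refine it by splitting each of $T_p, N_p, B_p$ into the $\pm\sqrt{-1}$-eigenlines of $\mathcal{C}_{\nu(p)}$ to obtain an ordered orthogonal decomposition of $\imoct^\C$ into seven complex lines matching (up to $T$-action) the model basis \eqref{BaragliaBasis}. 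This decomposition determines a point in $\Gtwo^\C/T$, defining $\mathscr{F}^\C$, and $\rho$-equivariance is inherited from $\nu$.

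Next, I would verify the cyclic-surface axioms by working in a local holomorphic coordinate $z$ on $\tilde S$ and decomposing $(\mathscr{F}^\C)^*\omega$ along the $\Z_6$-grading of $\g_2^\C$. Using the Frenet relations, the $\partial_z$-derivative of $\nu$ contributes a component in $\g_{-\beta}$, the $(1,0)$-part of $\sff$ contributes in $\g_{-\alpha}$, and the $(1,0)$-part of $\tff$ contributes in $\g_{\gamma}$; all three root spaces lie in $\g_{1}$ while their conjugates lie in $\g_{-1}$. This single computation establishes simultaneously that $\mathscr{F}^\C$ is tangent to $\mathcal{D}$, is $\mathcal{J}$-holomorphic (no $\partial_z$-contribution lands in $\g_{-1}$), has non-vanishing $\omega_{-\beta}$-pullback (from immersivity of $\nu$), and has non-trivial $\omega_{-\alpha}$-pullback (from the alternating condition $\sff \nequiv 0$).

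For the reverse direction, given a $\rho$-equivariant cyclic surface $f: \tilde{S} \to \Gtwo^\C/T$ with $\rho$ valued in $\Gtwosplit$, I would project $f$ onto the first line factor of the decomposition, producing a map $\nu: \tilde{S} \to \mathbb{P}\imoct^\C$. The $\rho$-equivariance forces $\nu$ into the $\Gtwosplit$-orbit of a positive line, which after lifting is $\quadric$. Non-vanishing of $f^*\omega_{-\beta}$ yields immersivity, $\mathcal{J}$-holomorphicity yields the almost-complex condition, and $f^*\omega_{-\alpha} \nequiv 0$ yields $\sff \nequiv 0$, hence the alternating condition. To see the two constructions are mutually inverse, one notes that projecting $\mathscr{F}^\C(\nu)$ tautologically returns $\nu$, while starting from $f$, the other six lines of the decomposition are uniquely recovered from the holomorphic and antiholomorphic jets of $\nu$ by the very same root-space bookkeeping, so the reconstructed Frenet frame coincides with $f$.

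The technical heart of the argument is the root-space bookkeeping in the second paragraph: matching each piece of $(\mathscr{F}^\C)^*\omega$ to a concrete differential-geometric object (the differential, $\sff$, or $\tff$ of $\nu$) and confirming the correct $\Z_6$-heights relative to the chosen basis \eqref{BaragliaBasis}. A second subtlety is confirming that a cyclic surface with holonomy in $\Gtwosplit$ projects into the real pseudosphere $\quadric$ rather than just its complexification; this relies on the tangency distribution $\mathcal{D}$ being cut out by $\Fix(\tau)$, so that the $\rho$-equivariance under $\Gtwosplit < \Fix(\tau)$ forces the image to lie in the real orbit.
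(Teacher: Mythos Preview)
This theorem is not proved in the present paper; it is quoted from \cite{CT23} (their Theorem~A) and invoked as a black box. There is therefore no argument here to compare your sketch against.

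That said, your outline has the right shape, and the paper's surrounding material corroborates it: the forward map is precisely the complexified Frenet frame $\mathscr{F}^\C = \iota \circ \mathscr{F}$ via Remark~\ref{SymmetricSpaceInclusion}, and the inverse is the projection $\pi_{\quadric}$ of the Corollary following Lemma~\ref{GeometricCyclicSpace} (cf.\ Remark~\ref{InverseFrenetFrame}). Your root-space bookkeeping (\,$d\nu \leftrightarrow \g_{-\beta}$, $\sff \leftrightarrow \g_{-\alpha}$, $\tff \leftrightarrow \g_{\gamma}$, all in $\g_1$) matches the paper's conventions.

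Two points would need tightening in an actual proof. First, in the reverse direction the relevant line is $L_0$, the middle one in the model $\mathscr{T}$, not the ``first line factor''; the projection $\pi_{\quadric}$ selects the unique $x_0 \in Q_+(\Fix_\sigma|_{L_0})$ satisfying normalization~(v) of Lemma~\ref{GeometricCyclicSpace}. Second, your argument that tangency to $\mathcal{D} \subset \Fix(\tau)$ forces the image into the real locus is incomplete as written: tangency constrains velocities, not positions. What one needs is that $\Fix(\tau)\cap\frakt^\bot$ integrates to the $\Gtwosplit$-orbit foliation of $\Gtwo^\C/T$, so a connected cyclic surface lies in a single orbit; one then still has to identify that orbit with $\iota(\Gtwosplit/T)$, i.e.\ show $\sigma \equiv \sigma_0$ in the model $\mathscr{T}$. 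You also do not explicitly recover the \emph{alternating} signatures (timelike $T$, spacelike $N$) in the reverse direction; these are forced by the signature computation inside the proof of Lemma~\ref{GeometricCyclicSpace}, but that step should be named.
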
 

\begin{remark}
We remark that the non-vanishing of $f^*\omega_{\alpha}$ for a cyclic surface $f$ corresponds to the non-vanishing of the holomorphic
second fundamental form $\sff' \in \Omega^1(S, \Hom(T',N'))$ of the associated almost-complex curve, which in turn  
corresponds to the non-vanishing of $\sff$, as noted before in Remark \ref{NonDegeneracyRemark2}.\footnote{We apologize to the reader for the notational clash with \cite{CT23}. In their paper, 
the holomorphic data $(\alpha, \beta, \delta)$ that determines a cyclic surface has $\alpha$ and $\beta$ with roles reversed from our notation here.} 
On the other hand, $f^*\omega_{\beta}$ non-vanishing corresponds to $\hat{\nu}$ being an immersion. See \cite[Section 6.1]{CT23}. 
\end{remark} 

So far in this subsection, we have only recalled definitions of \cite{CT23}. 
We now try to geometrically reinterpret the complex Frenet frame. To this end, we give a (somewhat) clunky geometric model for the space $\Gtwo^\C/T$.  
First, a small definition: suppose $\imoct^\C =  \bigoplus_{i=3}^{-3} L_i$ for lines $L_i$ such that $L_i \times L_j \subseteq L_{i+j}$. Just as \cite[Proposition 8.23]{Eva22},
one can show $q^\C|_{L_0}$ non-degenerate is forced from this condition. 
Fix any $x_0 \in L_0$ with $q(x_0) = +1$ and we call such a decomposition \emph{normalized} when 
$L_{-1} \subset \mathbb{E}_{\sqrt{-1}}(\mathcal{C}_{x_0})$, where $\mathbb{E}$ denotes eigenspace. We use this technical condition in a necessary way in the following proof. 
 
 \begin{lemma}\label{GeometricCyclicSpace}
 Consider the space $\mathscr{T}$ of pairs $( \, (L_i)_{i=3}^{-3}, \, \sigma)$ such that  $\sigma: \imoct^\C \rightarrow \imoct^\C$ is the conjugation of a real subspace, the sesquilinear form $h(z,w) = q^\C(z,\sigma(w))$ is non-degenerate, 
 and $\imoct^\C = \bigoplus_{i=3}^{-3} L_i$ is an $h$-orthogonal line decomposition satisfying:
	\begin{enumerate}[label= (\roman*)] 
		\item $L_i \times L_j \subseteq L_{i+j}$
		\item $\sigma(x \times y ) = \sigma(x) \times \sigma(y)$
		\item $\sigma(L_i) = L_{-i}$
		\item $q^\C|_{\Fix(\sigma)}: \Fix(\sigma) \rightarrow \R$ is of signature (3,4) 
		\item $(L_i)$ is normalized. 
	\end{enumerate}
Then $\mathscr{T}$ is $\Gtwo^\C$-equivariantly diffeomorphic to $\Gtwo^\C/T$. 
 \end{lemma}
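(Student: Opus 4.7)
The plan is to realize $\mathscr{T}$ as a single homogeneous $\Gtwo^\C$-space with stabilizer $T$. Define the $\Gtwo^\C$-action by $g \cdot ((L_i), \sigma) := ((gL_i)_{i=3}^{-3},\, g\sigma g^{-1})$; since $\Gtwo^\C$ preserves both $\times$ and $q^\C$ on $\imoct^\C$ by \eqref{G2CrossProduct}, each of (i)--(v) is preserved. As basepoint take $t_0 := ((L_i^0)_{i=3}^{-3}, \sigma_0)$ with $L_i^0 := \C\{u_i\}$ for $(u_i)$ the basis from \eqref{BaragliaBasis}, and $\sigma_0$ the $\C$-antilinear involution with $\Fix(\sigma_0) = \imoct$. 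That $t_0 \in \mathscr{T}$ is direct: the $(u_i)$ are weight vectors for the diagonal Cartan $\h$ and $\times$ is $\h$-equivariant (giving (i)); $\imoct$ is closed under $\times$ (giving (ii)); $\sigma_0$ reverses the weight of $u_i$, sending $L_i^0 \to L_{-i}^0$ (giving (iii)); $\imoct$ has signature $(3,4)$ (giving (iv)); and the direct computation $u_0 \times u_{-1} = \sqrt{-1}\, u_{-1}$ confirms (v).

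For transitivity, first use condition (iv): $(\Fix(\sigma), \times)$ is a split imaginary octonion of signature $(3,4)$ isomorphic to $\imoct$, so applying Proposition \ref{StiefelTripletModel} to an orthogonal triple inside $\Fix(\sigma)$ yields $h \in \Gtwo^\C$ with $h\sigma_0 h^{-1} = \sigma$. After replacing $((L_i), \sigma)$ by $h^{-1}\cdot ((L_i), \sigma)$, reduce to $\sigma = \sigma_0$. Next, build a basis $e_i \in L_i$ realizing the cross-product structure constants of $(u_i)$: condition (v) distinguishes $e_0 \in L_0$ with $q^\C(e_0) = +1$ such that $L_{-1} \subset \mathbb{E}_{+\sqrt{-1}}(\mathcal{C}_{e_0})$; pick $e_{-1} \in L_{-1}$ normalized by $q^\C(e_{-1}, \sigma_0 e_{-1}) = q^\C(u_{-1}, u_1)$, and set $e_1 := \sigma_0 e_{-1}$; similarly choose $e_{-2} \in L_{-2}$ normalized analogously and set $e_2 := \sigma_0 e_{-2}$; finally set $e_3 := c\,(e_1 \times e_2)$ for an appropriate scaling $c$ and $e_{-3} := \sigma_0 e_3$. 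Conditions (i)--(iii) guarantee the resulting $(e_i)$ has the same structure constants as $(u_i)$, so the $\C$-linear map $g : u_i \mapsto e_i$ preserves $\times$, lies in $\Gtwo^\C$ by \eqref{G2CrossProduct}, and satisfies $g \cdot t_0 = ((L_i), \sigma_0)$.

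Finally, compute $\Stab_{\Gtwo^\C}(t_0)$. Preservation of each $L_i^0$ forces $g$ into the complex torus $H := \exp(\h)$, and $g\sigma_0 g^{-1} = \sigma_0$ forces $g \in \Gtwosplit$. Writing $g = \exp(X)$ with $X \in \h$ diagonal, the condition $\exp(-\bar{X}) = \exp(X)$ yields $X + \bar{X} \in \ker(\exp) \cap \h_\R = \{0\}$, so $X$ has purely imaginary entries, $X \in \mathfrak{t}$, and $g \in T$. Hence the orbit map descends to a $\Gtwo^\C$-equivariant bijection $\Gtwo^\C/T \to \mathscr{T}$; $\mathscr{T}$ is a smooth submanifold of the ambient product of projective spaces and real structures on $\imoct^\C$ as the orbit of a smooth $\Gtwo^\C$-action, and the bijection is therefore a $\Gtwo^\C$-equivariant diffeomorphism. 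The main obstacle lies in the middle stage: one must propagate the normalizations of $e_i$ across the grading using cross-products without residual phase ambiguity, which ultimately leverages condition (v) to fix the single genuine sign choice and the 7-dimensional irreducibility of the $\Gtwo^\C$-representation on $\imoct^\C$ to ensure the structure constants are genuinely pinned down.
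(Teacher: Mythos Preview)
Your overall architecture matches the paper's: fix the basepoint $t_0 = ((\C\{u_i\}), \sigma_0)$, prove transitivity, and compute the stabilizer. The stabilizer computation and the initial reduction of $\sigma$ to $\sigma_0$ via Proposition~\ref{StiefelTripletModel} are fine. The gaps are in the transitivity step, and your closing paragraph essentially concedes this.

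Concretely, two facts you assert without proof are exactly the substantive content of the paper's argument. First, you set $e_3 := c\,(e_1 \times e_2)$, but condition (i) only gives $L_1 \times L_2 \subseteq L_3$; you never rule out $L_1 \times L_2 = \{0\}$. The paper spends a paragraph on this, showing that $L_1 \times L_2 = 0$ would force $\dim_\C \Ann(L_2) \geq 4$, contradicting Proposition~\ref{Annihilators}. Second, your normalizations ``$q^\C(e_{-1}, \sigma_0 e_{-1}) = q^\C(u_{-1}, u_1)$'' presuppose that $h|_{L_{-1}}$ has the \emph{same sign} as $h|_{L_{-1}^0}$; since $h(\lambda v, \lambda v) = |\lambda|^2 h(v,v)$, a sign mismatch cannot be fixed by rescaling. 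The paper handles this by showing that the real subspaces $T = \Fix(\sigma)|_{L_1 \oplus L_{-1}}$, $N = \Fix(\sigma)|_{L_2 \oplus L_{-2}}$, $B = \Fix(\sigma)|_{L_3 \oplus L_{-3}}$ necessarily have alternating signatures $(0,2), (2,0), (0,2)$, using $T \times N = B$ and the multiplicativity of $q$.

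Finally, ``conditions (i)--(iii) guarantee the resulting $(e_i)$ has the same structure constants as $(u_i)$'' is not a proof; matching a few structure constants does not automatically pin down all of them. The paper sidesteps this entirely: rather than building $(e_i)$ by hand, it extracts a Stiefel triple $(x_0, x_2, x_1) \in V_{(+,+,-)}$ from the real subspaces $\Fix(\sigma)|_{L_0}, N, T$ and invokes Proposition~\ref{StiefelTripletModel} directly to produce $\varphi \in \Gtwo^\C$, which then automatically sends each $L_i^0$ to $L_i$ and $\sigma_0$ to $\sigma$. That is the missing idea in your middle stage.
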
 
 
 \begin{proof}
Consider the basis $(u_i)_{i=3}^{-3}$ from \eqref{BaragliaBasis}. Then associated to it, we have the basepoint $P_0 \in \mathscr{T}$ given by
$P_0 = (\, (L_i^0)_{i=3}^{-3}, \sigma_0)$, with $L_i^0 = \C \{  u_i \}$ and $\sigma_0: \imoct^\C \rightarrow \imoct^\C$ the complex conjugation 
induced from the real subspace $\imoct \hookrightarrow \imoct^\C$ (cf. \cite[Proposition 2.6]{Eva22}.) The stabilizer of all the lines $(L_i^0)_{i=3}^{-3}$ is 
the subgroup $T^\C < \Gtwo^\C$ of diagonal transformations in the basis $(u_i)$. Clearly, $\Stab(\sigma_0 ) = \Gtwosplit$. 
Thus, $\Stab(P_0 )= T^\C \cap \Gtwosplit$. Now, $\psi \in \Gtwo^\C$ preserves $\sigma_0$ if and only if $\psi$ preserves $h_0 = q^\C(\cdot, \sigma_0\cdot)$. On the other hand, $\psi \in T^\C $ preserves $h_0 = \sum_{i=3}^{-3} \mathsf{sgn}(i) z_i \overline{z}_i$ if and only if $\psi$ preserves the hermitian form $\sum_{i=3}^{-3} z_i\overline{z}_i $. 
Defining $\boldsymbol{K} :=\Gtwo^\C \cap \mathsf{U}(7)$, we then have $\Stab(P_0) = T^\C \cap \boldsymbol{K} $. We conclude that $\Stab(P_0) = T^\C \cap \boldsymbol{K} \cap \Gtwosplit$.
One can show $T^\C \cap \boldsymbol{K} \cap \Gtwosplit \cong \mathsf{U}(1) \times \mathsf{U}(1)$, 
so that $ \Stab(P_0) $ is a maximal torus $T$ in the maximal compact subgroup $K = \boldsymbol{K} \cap \Gtwosplit$ of $\Gtwosplit$. To finish the proof, we show $\Gtwo^\C$ acts transitively on $\mathscr{T}$
after a number of small observations. 

Take any $x_0 \neq 0 \in L_0$ such that $\sigma(x_0 ) = x_0$. 
Then $q^\C(x_0) \neq 0$ by the normalization (iv), but in fact, this follows directly from (i). If $u \in \imoct$ has $\mathcal{C}_{u}: \imoct^\C \rightarrow \imoct^\C$ diagonalizable, then $q(u) \neq 0$. One proves this by
contrapositive with the double cross-product identity \eqref{DCP}, which implies that
this implies that $\mathcal{C}_{u}$ is nilpotent if $q(u) = 0$. On the other hand, $\dim \Ann(u) = 3$ for $u$ isotropic by Proposition \ref{Annihilators}, so that $C_{u}$ is not diagonalizable. 
Since $\Fix(\sigma)$ is closed under $\times$ and $q|_{\Fix(\sigma)}$ is of signature (3,4), this means $(\Fix(\sigma), \times) \cong (\imoct, \times_{\imoct})$. 
Using $(\mathcal{C}_{x_0}\circ \mathcal{C}_{x_0})|_{x_0^\bot} = -q(x_0) \id_{x_0^\bot}$, we conclude $\mathcal{C}_{x_0}(L_i) = L_i$ for $i \neq 0$.

We claim that $L_2 \times L_1= L_3$ must occur. Suppose otherwise, so that $L_2 \times L_1 = \{0\}$. Hence, $L_1 \oplus L_2 \oplus L_3 \subseteq \Ann(L_2)$ by (i).
Using 3-form $\Omega$, we see $\Omega(L_1, L_2, L_{-3}) = (L_1 \times_{\imoct^\C} L_2) \cdot L_3 = 0 $. Observe that
$0 = \Omega(L_2, L_{-3}, L_1) $, so $(L_2 \times L_{-3}) \bot L_1$, which combined with $L_2 \times L_{-3} \subset L_{-1}$, is only possible if
$L_2 \times L_{-3} =0$ by (iii) and the non-degeneracy of $h$. Hence, $L_{-3} \subset \Ann(L_2)$, implying $\dim_{\C} \Ann(L_2) \geq 4$, a contradiction to Proposition \ref{Annihilators}. 
By (ii), (iii), we conclude $L_{-2} \times L_{-1} = L_{-3}$ as well. 

Let us show $q^\C(x_0) > 0$. Fix any $i \in \{1,2,3\}$ and take any $y_i \in L_i$. 
Next, write $y_i = y_i^+ + \sqrt{-1} y_i^-$, with $y_i^{\pm} \in \Fix(\sigma)$. Then one 
finds $q^\C(y_i^+) = q^\C( y_i^-)$ since $q^\C(y_i) = h(y_i, \sigma(y_i) )=0$ and $q|_{\Fix(\sigma)}$ is real-valued. 
In particular, 
$\Fix(\sigma)|_{L_i \oplus L_{-i} }$ has signature (2,0) or (0,2). Now, if $q^\C(x_0) < 0$, then we would have 
$q^\C|_{\Fix(\sigma) \cap L_0^\bot }$ of signature (3,3), but this is impossible by the previous observation.

Next, define (suggestively) $T:= \Fix(\sigma)_{L_1 \oplus L_{-1}}$, $N:= \Fix(\sigma)_{L_2 \oplus L_{-2}}$, $B:= \Fix(\sigma)_{L_3 \oplus L_{-3}}$.
Then one finds $T \times N = B$. Recall that $q^\C|_{T \oplus N \oplus B} $ is of signature (2,4). Since $T, N$ are orthogonal, by the multiplicativity of $q$, 
the only possibility is that the signatures alternate: $\sig \, T = (0,2), \;\sig\,  N = (2,0), \; \sig \,{B} = (0,2)$. 
Similarly, set $V^0 =L_0, \; V^1 = \mathbb{E}_{\sqrt{-1}}(\mathcal{C}_{x_0}), \, V^2 = \mathbb{E}_{-\sqrt{-1}}(\mathcal{C}_{x_0})$
and one finds the $\mathbb{Z}_3$-cross-product grading $V^i \times_{\imoct^\C} V^j  \subseteq V^{i+j}$, indices mod 3, very similar to \cite[Lemma 8.2]{Eva22}. 
Also, $V^1,V^2$ are isotropic and $q^\C$ defines a non-degenerate pairing of $V_1 \times V_2 \rightarrow \C$. Condition (iii) and the orthogonality of $h$ then force $V^1 = L_{-1} \oplus L_{-2} \oplus L_3$ and 
$V^2 = L_{1} \oplus L_{2} \oplus L_{-3}$. 

Finally, we prove the transitivity. Take any point $P := ((L_i), \sigma) \in \mathscr{T}$. Define $T,N,B $ as above but now with respect to $P$. 
Choose $x_1 \in T$ such that $q^\C(x_1) = -1$ and $x_2 \in N$ such that $q^\C(x_2) = 1$. 
By condition (v), $z_{\pm 1} = \frac{1}{\sqrt{2}}( x_1 \pm \sqrt{-1} x_0 \times x_1)$ are generators for $L_{\pm 1}$. 
Similarly, $z_{\pm 2}: = \frac{1}{\sqrt{2}}(x_2 \pm \sqrt{-1} x_0 \times x_2)$ are generators for $L_{\pm 2}$. Finally, we set 
$x_3 := x_1 \times x_2. $ Then $z_{\pm 3} =\frac{1}{\sqrt{2}}( x_3  \pm \sqrt{-1} x_0 \times x_3)$ and $x_{\pm 3}$ span $L_{\pm 3}$.
By Proposition \ref{StiefelTripletModel}, there is a unique $\varphi \in \Gtwo^\C$ such that 
$\varphi(i) = x_0, \; \varphi(j) = x_2, \; \varphi(l) = x_1$. Then $\varphi$ maps $L_{i}^0$ to $L_i$. Moreover, 
$\varphi$ maps the $h_0$-orthonormal basis $(u_i)_{i=3}^{-3}$ from \eqref{BaragliaBasis} to the $h$-orthonormal basis $(z_i)_{i=3}^{-3}$, so that $\varphi$
maps $h_0$ to $h$ and hence $\sigma_0$ to $\sigma$.  \end{proof} 

\begin{corollary}
There is a natural projection $\pi_{\quadric}: \Gtwo^\C/T \rightarrow \quadric$ by $( \, (L_i)_{i=3}^{-3}, \, \sigma) \mapsto x_0$, 
where $x_0 \in Q_+(\Fix_{\sigma}|_{L_0})$ is the unique element satisfying $L_{-1} \subset \mathbb{E}_{\sqrt{-1}}(\mathcal{C}_{x_0})$. 
\end{corollary}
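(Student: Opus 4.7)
The proposal is to verify that the formula $((L_i),\sigma)\mapsto x_0$ produces a well-defined element of $\quadric$ in a $\Gtwo^\C$-equivariant way; smoothness will come along for free.

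First, I would show that $\Fix(\sigma)\cap L_0$ is a real 1-dimensional subspace on which $q^\C$ is positive definite. Since $\sigma$ is antilinear and condition (iii) gives $\sigma(L_0)=L_0$, the restriction $\sigma|_{L_0}$ is an antilinear involution of the complex line $L_0$, so its fixed set is a real line. The positive definiteness of $q^\C$ on this line has already been established in the proof of Lemma \ref{GeometricCyclicSpace}: condition (i) forces $\mathcal{C}_{x_0}(L_i)=L_i$ for $i\neq 0$, so $\mathcal{C}_{x_0}$ is diagonalizable on $x_0^\bot$; the contrapositive of the failure of diagonalizability when $q^\C(x_0)=0$ (via Proposition \ref{Annihilators} and identity \eqref{DCP}) shows $q^\C(x_0)\neq 0$, and the signature argument using condition (iv) given in the lemma's proof forces $q^\C(x_0)>0$. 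Consequently $Q_+(\Fix_\sigma|_{L_0})=\{\hat{x}_0,-\hat{x}_0\}$ consists of two antipodal points.

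Second, condition (v) selects one of these two points. Since $q^\C(\hat{x}_0)=1$, the double cross-product identity \eqref{DCP} gives $\mathcal{C}_{\hat{x}_0}^2=-\id$ on $\hat{x}_0^\bot$, so each line $L_i$ with $i\neq 0$ is an eigenline of $\mathcal{C}_{\hat{x}_0}$ with eigenvalue $+\sqrt{-1}$ or $-\sqrt{-1}$. Replacing $\hat{x}_0$ by $-\hat{x}_0$ flips $\mathcal{C}_{\hat{x}_0}$ and hence swaps these two eigenspaces, so exactly one of the candidates satisfies $L_{-1}\subset \mathbb{E}_{\sqrt{-1}}(\mathcal{C}_{\hat{x}_0})$. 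This unambiguously specifies $x_0$ from the pair $((L_i),\sigma)$, and $x_0\in\quadric=Q_+\imoct$ because $\Fix(\sigma)$ is identified with $\imoct$ under the transitivity of $\Gtwo^\C$ proven in the lemma.

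For $\Gtwo^\C$-equivariance, the action on $\mathscr{T}$ is $g\cdot((L_i),\sigma)=((gL_i),g\sigma g^{-1})$. Because $\Gtwo^\C$ preserves the cross-product and $q^\C$, it commutes with each step of the construction: $\Fix(g\sigma g^{-1})=g\Fix(\sigma)$, $gL_0\cap\Fix(g\sigma g^{-1})=g(L_0\cap\Fix(\sigma))$, and $\mathcal{C}_{gx_0}=g\mathcal{C}_{x_0}g^{-1}$ yields $g\mathbb{E}_{\sqrt{-1}}(\mathcal{C}_{x_0})=\mathbb{E}_{\sqrt{-1}}(\mathcal{C}_{gx_0})$. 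Hence the distinguished element associated to $g\cdot((L_i),\sigma)$ is precisely $gx_0$. Smoothness follows from the algebraic nature of the construction: $((L_i),\sigma)\mapsto\Fix(\sigma)\cap L_0$ varies smoothly as a rank-1 real sub-bundle, and the sign choice dictated by (v) is locally constant since it is cut out by an open eigenspace condition. The only genuine subtlety is the sign ambiguity, but it is resolved directly by the normalization condition built into $\mathscr{T}$, so no substantial obstacle remains.
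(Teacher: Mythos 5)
Your proposal is correct and is essentially the argument the paper intends: the Corollary is stated without its own proof because all the ingredients are already in the proof of Lemma~\ref{GeometricCyclicSpace}, and you have correctly assembled them. In particular, you identify the two points that need to be spelled out: that $q^\C>0$ on the real line $\Fix(\sigma)\cap L_0$ (established in the lemma's proof), so $Q_+(\Fix_\sigma|_{L_0})$ is a pair of antipodes, and that $\mathcal{C}_{-\hat{x}_0}=-\mathcal{C}_{\hat{x}_0}$ swaps the $\pm\sqrt{-1}$-eigenspaces, so the normalization condition (v) selects exactly one of the two. The equivariance and smoothness observations are also the standard ones. One small point worth making explicit (you implicitly rely on it): the restriction $\sigma|_{L_0}$ is an antilinear involution of a complex \emph{line}, so its fixed set is automatically a \emph{real} line and never trivial --- this is what guarantees $\Fix(\sigma)\cap L_0\neq 0$ before the positivity argument even starts, and it is worth a sentence since it is where the real structure on $L_0$ actually enters.
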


\begin{remark}\label{InverseFrenetFrame}
We now see the inverse operation of taking the complex Frenet frame. Let $\hat{\nu}: \tilde{S} \rightarrow \quadric$ be an alternating almost-complex curve
and $\mathscr{F}^\C: \tilde{S} \rightarrow \Gtwo^\C/T$ its complex Frenet frame. Then $\hat{\nu}= \pi_{\quadric} \circ \mathscr{F}^\C$.
\end{remark}

\begin{remark}\label{SymmetricSpaceInclusion}
Recalling Lemma \ref{GTGeometricModel}, we find the standard inclusion $\iota: \Gtwosplit/T \hookrightarrow \Gtwo^\C/T$ is given geometrically in the model spaces by  
$(x, T, N, B) \mapsto (\, ( B', N'', T'', \C\{x\}, T', N', B''), \, \sigma_0)$, where $\sigma_0$ is the standard conjugation fixing $\imoct \hookrightarrow \imoct \otimes_\R \C$
and $T',N',B'$ and $T'',N'',B''$ are the $\pm \sqrt{-1}$-eigenspaces of $\mathcal{C}_x$, respectively, in $T^\C, N^\C, B^\C$. 
In fact, one finds $$\iota(\Gtwosplit/T ) = \{ \,( (L_i)_{i=3}^{-3}, \sigma) \in \mathscr{T} \; | \; \sigma = \sigma_0 \}.$$\\
\end{remark} 
 
\subsection{Definition of the Geometric Structures}  \label{GeometricStructuresDefinition} 
In this section, we define the `decorated' geometric structures of interest on $\mathscr{C}$, 
using the notions defined 
in the previous two subsections. In particular, our $(\Gtwosplit, \Eintwothree)$-geometric structures come with extra \emph{cyclic-fibered}, \emph{compatible}, and \emph{radial} (CCR) conditions. These three conditions will allow us to define a moduli space $\mathscr{M}$ of geometric structures on which we can invert the construction of Section \ref{Subsection:DevMap}, and recover the almost-complex curve $\hat{\nu}$ from the geometric structure $\dev(\hat{\nu})$ in Lemma \ref{Lemma:DevelopingMap}.\\

One may view the following definition as a similar version to the \emph{fibered} and \emph{maximal} conditions
\cite[Definitions 4.19 and 4.20]{CTT19}, respectively. 

 \begin{definition}\label{CyclicCondition}
 Let $\dev: \widetilde{\mathscr{C}} \rightarrow \Eintwothree$ be an equivariant immersion and $\hol: \pi_1 \mathscr{C} \rightarrow \Gtwosplit$ the associated holonomy. We say $\dev$ is \textbf{cyclic-fibered} when:
	 \begin{enumerate}
 		\item (fibered) The holonomy factors as $\hol = \rho \, \circ \, \pi_{*} $, where $\rho: \pi_1S \rightarrow \Gtwosplit$ and $\pi: \mathscr{C} \rightarrow S$ is the bundle projection and  		
		the map $\dev$ lifts $\hol$-equivariantly to $\widehat{\dev}: \widetilde{\mathscr{C}} \rightarrow \widehat{\Ein}^{2,3}$ such that  
		 for $p \in \tilde{S}$, the map $\overline{\dev}$ surjectively maps the fiber $\widetilde{\mathscr{C}}_p$ onto an $(\mathbb{S}^1 \times \mathbb{S}^1 \times \R_+)$-family 
		 $\widehat{\mathscr{S}}_p$ in $ \widehat{\Ein}^{2,3}$ with associated splitting $\mathscr{F}_p := (\hat{x}_p , T_p , N_p , B_p)$ (Recall Corollary \ref{Cor:USplitting}).
		 
		\item (cyclic) The map $\mathscr{F}^\C: \tilde{S} \rightarrow \Gtwo^\C/T$ by $\mathscr{F}^\C = \iota \circ \mathscr{F}$ is a (CT) cyclic surface, where $\mathscr{F}: \tilde{S} \rightarrow \Gtwosplit/T$ is given by $p \mapsto (\hat{x}_p , T_p , N_p , B_p)$
		and $\iota: \Gtwosplit/T \hookrightarrow \Gtwo^\C/T$.
	 \end{enumerate}
\end{definition} 
Suppose $(\dev, \hol)$ defines a cyclic-fibered $(\Gtwosplit, \Eintwothree)$-structure on $\mathscr{C}$. 
We denote $\overline{\hol} := \rho$ and $\overline{\dev}: \overline{\mathscr{C}} \rightarrow \Eintwothree$ as the descended developing map
to the $\pi_1S$-cover $\overline{\mathscr{C}} = UT\tilde{S} \oplus UT\tilde{S} \oplus \uline{\R_+}$ of $\mathscr{C}$. 
It follows that $\overline{\dev}$ maps the fiber $\overline{\mathscr{C}}_p$ bijectively onto $\mathscr{S}_p = \pi( \widehat{\mathscr{S}}_p)$. Also, note that the lift
$\widehat{\dev}$ in the above definition is unique, as $-\widehat{\dev}$ has associated splitting $(-\hat{x}_p , T_p , N_p , B_p)$. Thus,
the associated map $-\mathscr{F}^\C$ is not a (CT) cyclic surface due to the orientation condition. \\

In the case we have a cyclic-fibered structure $(\dev, \hol)$ on $ \mathscr{C}$, the equivariance of $\dev$ implies that the map 
$\mathscr{F}$ is $\overline{\hol}$-equivariant. Moreover, there are a number of associated maps to $\mathscr{F}$. 
By Theorem \ref{FrenetFrameTheorem} and Remark \ref{InverseFrenetFrame}, the $\overline{\hol}$-equivariant cyclic surface $\mathscr{F}^\C$ projects to a $\overline{\hol}$-equivariant alternating almost-complex curve $\hat{\nu}: \tilde{S} \rightarrow \quadric$, where $\hat{\nu}(p) = \hat{x}_p$ in our notation. 
Using the Frenet frame splitting, there are also $\hol$-equivariant maps
$\Pi_T: \widetilde{\mathscr{C}} \rightarrow \Ha^{3,3} $ by $ \Pi_T( X_p ) := \Pi_{T_p} \circ \dev(X_p) $ as well as 
$\Pi_N: \widetilde{\mathscr{C}} \rightarrow \Stwofour $ by $ \Pi_N( X_p) := \Pi_{N_p} \circ \dev(X_p) $.
Moreover, using $ \hat{\nu}: \tilde{S} \rightarrow\hat{\mathbb{S}}^{2,4}$ instead of $[\nu$], 
we can construct construct lifts $\hat{\Pi}_T, \hat{\Pi}_N: \widetilde{\mathscr{C}} \rightarrow \imoct$ of $ \Pi_T, \Pi_N$, respectively. 
Denoting $\pi: \overline{\mathscr{C}} \rightarrow \tilde{S}$ as the bundle projection, these maps are related by 
\begin{align}\label{DevEquation}
	 \overline{\dev}(x) = [ \hat{\nu}( \, \pi(x) \, ) + \hat{\Pi}_T(x) + \hat{\Pi}_N(x) ].
\end{align}
We may also define a function $g_R: \widetilde{\mathscr{C}}  \rightarrow \R_+$ by 
\begin{align}\label{RadialFunction} 
	g_R(x) = q_{\imoct}( \, \hat{\Pi}_N(x) \,)^{1/2}. 
\end{align}
Using that $\overline{\dev}(x) \in \Eintwothree$ is a null line, the equation \eqref{DevEquation} may be refined to the following form:
\begin{align}\label{DevReduction} 
	 \overline{\dev}(x) =  \left[  \hat{\nu}(\, \pi(x) \, ) + (g_R^2(x) +1)^{1/2} \, \tilde{\Pi}_T(x) + g_R(x) \, \tilde{\Pi}_N(x) \right],
\end{align} 
 where $\tilde{\Pi}_T(x) := \frac{ \hat{\Pi}_T(x)}{|q( \hat{\Pi}_T(x) )|^{1/2}}$ and $ \tilde{\Pi}_N(x) :=\frac{ \hat{\Pi}_N(x)}{q( \hat{\Pi}_N(x) )^{1/2}}$; this division is possible by Lemma \ref{USplitting}. 
 Observe that by equation \eqref{DevReduction}, the developing map $\overline{\dev}$ of a cyclic-fibered structure on $\mathscr{C}$ is determined by the tuple $(\hat{\nu}, \tilde{\Pi}_T, \tilde{\Pi}_N, g_R)$. We now introduce additional conditions 
for the maps $\tilde{\Pi}_T, \tilde{\Pi}_N, g_R$. In the definition below, we regard $g_R$ as a map $g_R: \overline{\mathscr{C}} \rightarrow \R_+$. 
 
 \begin{definition} 
 Suppose $(\dev, \hol)$ defines a cyclic-fibered structure on $\mathscr{C}$. We say that the structure is \textbf{radial} when two further conditions occur. 
 \begin{itemize}
	\item For any point $p \in \tilde{S}$ and $r \in \R_+$, we have $g_R(p, (u, v, r )) = g_R(p, (u', v', r )) $ for all $(u,v) \in T_p\tilde{S} \times T_p\tilde{S} $ and $(u',v') \in T_p\tilde{S} \times T_p\tilde{S} $. That is, 
	 $g_R$ descends to a map $g_R: \tilde{S} \times \R_+ \rightarrow \R_+$. 
	 \item For $x \in \tilde{S}$, the map $ g_R|_{ \{x\} \times \R_+}: \R_+ \rightarrow \R_+$ is an orientation-preserving diffeomorphism. 
\end{itemize} 
\end{definition} 
In fact, if $(\dev, \hol)$ is a cyclic-fibered, radial geometric structure on $\mathscr{C}$, then by $\hol$-equivariance of $\dev$, the map $g_R$, descends to a map $g_R: S \times \R_+ \rightarrow \R_+$. 

We need one final constraint on our geometric structures. 
\begin{definition} 
We say a cyclic-fibered structure is \textbf{compatible} with its associated almost-complex curve $\hat{\nu}$ when the following rays coincide:
 \begin{align}
 	\R_+ \{ \, \tilde{\Pi}_T( p, (u, v, r)) \} \, &=  \R_+ \, \{ d\hat{\nu}_x(u) \, \} \label{Compatible1} \\
	\R_+ \{ \, \tilde{\Pi}_N(p, (u, v, r))) \, \} &= \R_+ \, \{ \, \sff (d\hat{\nu}(u), d\hat{\nu}(v)) \, \} \label{Compatible2}.
\end{align} 
\end{definition} 

Combing the three conditions together, we finally have the geometric structures of interest.

\begin{definition}\label{Def:CCRPairs}
Call a development-holonomy pair $\dev: \widetilde{\mathscr{C}} \rightarrow \Eintwothree, \, \hol: \pi_1 \mathscr{C} \rightarrow \Gtwosplit$, that is cyclic-fibered, compatible, and radial to be a \textbf{CCR pair} on $\mathscr{C}$ going forward. 
\end{definition} 

Next, we note that the developing map constructed in Section \ref{Subsection:DevMap} satisfies the CCR conditions.

\begin{lemma}\label{DevCCR}
The developing map $\dev$ from Lemma \ref{Lemma:DevelopingMap} and its and its holonomy define a CCR pair on $\mathscr{C}$. 
\end{lemma}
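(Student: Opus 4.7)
The plan is to read off the three CCR conditions directly from the explicit developing formula \eqref{DevMapFromNu}. Decompose the distinguished lift as
\[ \widehat{\dev}(p,u,v,r) \;=\; \hat{\nu}(p) \;+\; \hat{\Pi}_T(p,u,v,r) \;+\; \hat{\Pi}_N(p,u,v,r), \]
where $\hat{\Pi}_T = (r^2+1)^{1/2} d\hat{\nu}(u)/|q(d\hat{\nu}(u))|^{1/2}$ lies in $T_{\hat{\nu},p}$ and $\hat{\Pi}_N = r \, \sff(d\hat{\nu}(u),d\hat{\nu}(v))/q(\sff(d\hat{\nu}(u),d\hat{\nu}(v)))^{1/2}$ lies in $N_{\hat{\nu},p}$. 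Comparing with the model \eqref{ModelFamily} and using Proposition \ref{StiefelTripletModel} to produce the $\Gtwosplit$-element carrying the standard Stiefel triplet to a triplet adapted to the Frenet frame at $p$, one sees that each fiber $\widetilde{\mathscr{C}}_p$ develops onto an $(\radialtorus)$-family $\widehat{\mathscr{S}}_p$ in $\widehat{\Ein}^{2,3}$, whose associated splitting from Corollary \ref{Cor:USplitting} is precisely $\mathscr{F}_p = (\hat{\nu}(p), T_{\hat{\nu},p}, N_{\hat{\nu},p}, B_{\hat{\nu},p})$. This verifies the fibered half of Definition \ref{CyclicCondition}, and identifies the map $\mathscr{F}: \tilde{S} \to \Gtwosplit/T$ of the definition with the Frenet frame of $\hat{\nu}$ from Section \ref{AC_Curve}.

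For the cyclic half of the cyclic-fibered condition, I would apply Theorem \ref{FrenetFrameTheorem}: since $\hat{\nu}$ is a $\rho$-equivariant alternating almost-complex curve associated to a Hitchin representation $[\rho] \in \Hit(S,\Gtwosplit)$, its complex Frenet frame $\mathscr{F}^\C = \iota \circ \mathscr{F}$ is a (CT) cyclic surface in $\Gtwo^\C/T$. The non-vanishing of $\mathscr{F}^{\C,*}\omega_{-\beta}$ holds because $\hat{\nu}$ is an immersion, and the non-vanishing of $\mathscr{F}^{\C,*}\omega_{-\alpha}$ comes from the non-degeneracy of $\sff$ guaranteed by Remark \ref{NonDegeneracyRemark2} in the Hitchin case.

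For compatibility, reading off $\tilde{\Pi}_T = \hat{\Pi}_T / |q(\hat{\Pi}_T)|^{1/2}$ and $\tilde{\Pi}_N = \hat{\Pi}_N/q(\hat{\Pi}_N)^{1/2}$ from the decomposition above yields
\[ \tilde{\Pi}_T(p,u,v,r) = \frac{d\hat{\nu}(u)}{|q(d\hat{\nu}(u))|^{1/2}}, \qquad \tilde{\Pi}_N(p,u,v,r) = \frac{\sff(d\hat{\nu}(u), d\hat{\nu}(v))}{q(\sff(d\hat{\nu}(u), d\hat{\nu}(v)))^{1/2}}, \]
both scaled by positive constants, so equations \eqref{Compatible1} and \eqref{Compatible2} hold. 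For the radial condition, observe that $q(\hat{\Pi}_N) = r^2$ directly, so $g_R(p,u,v,r) = r$; in particular $g_R$ is independent of $(u,v)$ and restricts to the identity on each $\R_+$ fiber, which is an orientation-preserving diffeomorphism.

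The only non-elementary input is Theorem \ref{FrenetFrameTheorem} of Collier--Toulisse, which supplies the cyclic surface from the almost-complex curve; every other check is a direct manipulation of the explicit formula \eqref{DevMapFromNu}, so no serious obstacle is anticipated.
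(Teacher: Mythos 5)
Your proof is correct and takes essentially the same route as the paper's: read the CCR conditions off the explicit formula \eqref{DevMapFromNu} and invoke Theorem \ref{FrenetFrameTheorem} for the cyclic half. The paper's version is terser (it says the compatibility and radial conditions are ``obviously satisfied'' where you spell out $g_R(p,r)=r$ and the $\tilde\Pi_T,\tilde\Pi_N$ identifications), and you add a redundant but harmless explanation of why the Frenet frame satisfies the (CT) cyclic surface conditions — Theorem \ref{FrenetFrameTheorem} already grants this for any alternating almost-complex curve, with no need for the Hitchin hypothesis or Remark \ref{NonDegeneracyRemark2} at that stage.
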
 

\begin{proof} 
Lemma \ref{Lemma:DevelopingMap} shows that $\dev$ has holonomy that factors through $\pi_1S$. It is clear that $\overline{\dev}$ develops the fiber $\overline{\mathscr{C}}_p$ bijectively onto the ($\radialtorus$)-family $\mathscr{S}(\R\{\hat{\nu}(p)\}, T_p, N_p)$ in $\Eintwothree$, which lifts to the ($\radialtorus$)-family
in $\widehat{\Ein}^{2,3}$ associated to $(\hat{\nu}(p), T_p, N_p).$ Since $\hat{\nu}$ is an equivariant alternating almost-complex curve, Theorem \ref{FrenetFrameTheorem} says the Frenet frame $\mathscr{F}^\C: \tilde{S} \rightarrow \Gtwo^\C/T$
of $\hat{\nu}$ defines a (CT) cyclic surface. Thus, $\overline{\dev}$ is cyclic-fibered. The compatibility and radial conditions are obviously satisfied by
\eqref{DevMapFromNu}. \end{proof}

\vspace{2ex}

\subsection{The Moduli Space $\mathscr{M}$ of Geometric Structures}\label{ModuliSpace}

In this section, we define the moduli space $\mathscr{M}$ of CCR geometric structures up to isomorphism. Once $\mathscr{M}$ is defined,
we show that Lemma \ref{Lemma:DevelopingMap} naturally yields a map 
$s: \Hit(S,\Gtwosplit) \rightarrow \mathscr{M}$, which, on the level of representatives, turns $\rho$ into a $\rho$-equivariant almost-complex curve $\nu$,
then turns $\nu$ into a $(\rho \, \circ \, \pi_*)$-equivariant developing map $\overline{\dev}$, whose equivalence class defines a point in $\mathscr{M}$. \\

To start, we recall some standard terminology. We call a smooth manifold $M$ endowed with a maximal atlas of diffeomorphic charts to $X$ with locally constant transitions in $G$ a \emph{$(G,X)$-manifold}. Let $M$ be a smooth manifold and $\phi: M \rightarrow Y$ be a diffeomorphism of $M$ onto a $(G,X)$-manifold $Y$. Then we call $(M, \phi)$ a \emph{marked} $(G,X)$-manifold. 
A map $\psi: M_1 \rightarrow M_2$ between two $(G,X)$-manifolds is a \emph{${(G,X)}$-map} when $\psi$ is locally expressed in any pair of charts by 
the restriction of a single element $g \in G$. A diffeomorphism $\psi: M_1 \rightarrow M_2$ such that both $\psi$ and $\psi^{-1}$ are $(G,X)$-maps is a
\emph{$(G,X)$-isomorphism}.

We now need one more formal definition before we may define $\mathscr{M}$. 
\begin{definition} 
Suppose that $F_i \rightarrow M$ are fiber-bundles over a manifold $M$. Form the direct sum bundle $F := \bigoplus_{i=1}^n F_i$. Let 
$f_i: F_i \rightarrow F_i$ be fiber-preserving diffeomorphisms of $F_i$, each lifting the same diffeomorphism $\overline{f}: M \rightarrow M$ of the base. Then we say $(f_i)_{i=1}^n$ induce the map
$f:= \bigoplus_{i=1}^n f_i$, where $f: \bigoplus_{i=1}^n F_i \rightarrow \bigoplus_{i=1}^n F_i$ is given by $f = (f_1, \dots, f_n)$. 

We call a diffeomorphism $f \in \Diff( \bigoplus_{i=1}^n F_i)$ \textbf{factor-preserving} when $f = \bigoplus_{i=1}^n f_i$ for $(f_i)$ fiber-preserving diffeomorphisms
of $F_i$ lifting the same diffeomorphism $\overline{f}$ of the base $M$. More generally, if $F_i \rightarrow M$ and $F_i' \rightarrow N$ are $X_i$-fiber bundles, with $f_i: F_i \rightarrow F_i'$ fiber-preserving diffeomorphisms lifting the same diffeomorphism $\overline{f}: M \rightarrow N$, then $(f_i)$ induce a diffeomorphism $f:= \bigoplus f_i$ that we call factor-preserving. 
\end{definition} 

Here, we recall the well-known fact that a $(G,X)$-structure, in the chart-atlas sense, is equivalent data to a $G$-orbit of developing-holonomy 
pairs. Here, the $G$-action is by $g \cdot (\dev, \hol) = (L_g \circ \dev, C_g \circ \hol)$, where $L_g: X \rightarrow X$ is $L_g(x) = g \cdot x$ and $C_g(h) = ghg^{-1}$ (cf. \cite{Gol22} Part 2, on locally homogeneous geometric structures.) 

The notion of a CCR pair $(\dev, \hol)$ on $\mathscr{C}$ is a delicate one. We cannot precompose such a developing map $\dev$ by any map  $f \in \Diff(\mathscr{C})$ and expect
$(\dev \circ \tilde{f}, \hol)$ to still be a CCR pair. This leads to the following remark and subsequent definitions. 

\begin{remark}\label{RadialTori}
Given a diffeomorphism $F: M_1 \rightarrow M_2$ between two smooth manifolds, $F$ induces a diffeomorphism
$\widehat{dF}: UTM_1 \stackrel{\cong}{\longrightarrow} UTM_2$ between unit tangent bundles (where $UTM_i$ should be regarded
as $TM/\R_+$.) The map $F$ also induces a factor-preserving diffeomorphism $RT(F)$ between the ``radial tori'' $RT(M_i): = UTM_i \oplus UTM_i \oplus \uline{\R_+}$ given by $RT(F) = \widehat{dF} \oplus \widehat{dF} \oplus \mathsf{id}$. Here, $\mathsf{id}: M_1 \times \R_+ \rightarrow M_2 \times \R_+$ means $\mathsf{id}(p, r) = (F(p), r)$. 
\end{remark} 

We use the notion of such induced maps $\widehat{dF}$ in the following definition. 

\begin{definition} \label{CCRStructure}
Suppose that $M$ is a $(\Gtwosplit, \Eintwothree)$-manifold with underlying smooth manifold $UT\mathcal{S} \oplus UT\mathcal{S} \oplus \uline{\R_+}$,
where $\mathcal{S}$ is an oriented surface. Let $\phi: \mathscr{C} \rightarrow M$ be a factor-preserving diffeomorphism  
of the form $\phi = \widehat{df} \oplus \widehat{df} \oplus \phi_+$, where $f: S \rightarrow \mathcal{S}$ is orientation-preserving and $\phi_+$ is orientation-preserving on the $\R_+$-fibers. 
Pull back the $(\Gtwosplit, \Eintwothree)$-structure on $M$ to $\mathscr{C}$, denote it as $\mathcal{G}$. 
We then call the pair $(M, \phi)$ a \textbf{marked CCR structure on} $\mathscr{C}$ when any $(\dev, \hol)$ pair of $\mathcal{G}$ is a
CCR pair. \end{definition}

Let us interpret equation \eqref{DevMapFromNu} in a more general sense. Given $(\hat{\nu}, \rho) \in \mathcal{A}(\mathcal{S})$ and $g_R:\mathcal{S} \times \R_+ \rightarrow \R_+$,
where $\mathcal{S}$ is an oriented surface, we can define $\overline{\dev}: RT(\mathcal{S}) \rightarrow \Eintwothree $ by 
\begin{align}\label{Eqn:GeneralDevMap}
	\overline{\dev}(\hat{\nu}, g_R)(p,u,v,r) : = \left[ \hat{\nu}(p) +(g_R^2(p,r)+1)^{1/2} \frac{ d\hat{\nu}(u) }{ |q(d\hat{\nu}(u))|^{1/2}} + g_R(p,r)  \frac{ \sff_{\hat{\nu}}(d\hat{\nu}(u), d\hat{\nu}(v) )}{ q(\sff_{\hat{\nu}}(d\hat{\nu}(u), d\hat{\nu}(v) )^{1/2}} \right] .
\end{align} 
The map $\overline{\dev}(\hat{\nu}, g_R)$ is a CCR developing map that is a $(\rho \circ \pi_*)$-equivariant immersion by the proof of Lemma \ref{Lemma:DevelopingMap}. 
Then in the notation of the previous definition, $\phi$ induces a map $\overline{\phi}: RT(\tilde{S}) \rightarrow RT(\tilde{\mathcal{S}})$, where
$\overline{\phi}:= \widehat{d \tilde{f}} \oplus \widehat{d \tilde{f}} \oplus \tilde{\phi}_+$, $\tilde{f}: \tilde{S} \rightarrow \tilde{\mathcal{S}}$ lifts $f$, and $\tilde{\phi}: \tilde{S} \times \R_+ \rightarrow \tilde{\mathcal{S}} \times \R_+$ is uniquely constrained to lift $\phi$ 
and lift $\tilde{f}$. Then one finds the equality 
$$\overline{\dev}(\hat{\nu}, g_R) \circ \overline{\phi} = \overline{\dev}(\hat{\nu} \circ \tilde{f}, g_{R} \circ \tilde{\phi}).$$
Thus, such maps $\phi$ in Definition \ref{CCRStructure} are the natural ``CCR-preserving maps'': they pull back CCR developing maps on 
$RT(\mathcal{S})$ to CCR developing maps on $\mathscr{C} = RT(S)$. 

We now define the relevant notion of isomorphism to build our moduli space $\mathscr{M}$ of interest. 

\begin{definition}\label{ModuliSpaceDefn}
The moduli space $\mathscr{M}$ is the quotient of the space $\widehat{\mathscr{M}}$ of all marked CCR structures on $\mathscr{C}$ by the equivalence relation $\sim$.
Here, $(M_i, \phi_i) \in \widehat{\mathscr{M}}$ has underlying smooth manifold $M_i = UT\mathcal{S}_i \oplus UT\mathcal{S}_i \oplus \uline{\R_+}$
and $\phi_i = \widehat{df}_{i} \oplus \widehat{df}_{i} \oplus \phi_{i,+}$. 
Declare $(M_1, \phi_1) \sim (M_2, \phi_2)$ when there exists a $(\Gtwosplit, \Eintwothree)$-isomorphism $\psi: M_1 \rightarrow M_2$ such that
\begin{enumerate}[label=(\roman*)] \label{Defn:ModuliSpace} 
	\item  $\psi = \widehat{df} \oplus \widehat{df} \oplus \psi_+$ is a factor-preserving diffeomorphism, for $f: \mathcal{S}_1 \rightarrow \mathcal{S}_2$ orientation-preserving.
	\item $\psi_+$ is orientation-preserving on the $\mathbb{R}_+$-fibers.
	\item The maps $f \circ f_1 $ and $ f_2$ are isotopic and $\psi_+ \circ \phi_{1, +}$ and $\phi_{2,+}$ are isotopic via fiber-preserving diffeomorphisms that preserve orientation on the $\R_+$-fibers.  
\end{enumerate} 
\end{definition} 
With the $(\dev, \hol)$ perspective, we 
topologize $\mathscr{M}$ now.  \\

Now, let $(M, \phi)$ be a marked CCR structure on $\mathscr{C}$. 
Denote $ \dev_M : \tilde{M} \rightarrow \Eintwothree$ as one of its developing maps. Lift the marking $\phi$ to a diffeomorphism
$\tilde{\phi}: \tilde{\mathscr{C}} \rightarrow \tilde{M}$ and define $ \dev: = \dev_M \circ \tilde{\phi} $. Then $\dev$ is 
equivariant with respect to $\hol :=  \hol_M \circ \phi_*$. In particular, we see that $(M,\phi) \in \widehat{\mathscr{M}}$ corresponds to the 
$\Gtwosplit$-orbit $\{ g\cdot (\dev, \hol) \; | \; g \in \Gtwosplit\}$ of development-holonomy pairs. Now, consider the space $\mathscr{M}'$ of 
CCR pairs on $\mathscr{C}$, in the sense of Definition \ref{Def:CCRPairs}. Then we see $\widehat{\mathscr{M}}$ is naturally bijective 
with $\mathscr{M}'/\Gtwosplit$, under the previously described $\Gtwosplit$-action.

Next, we
reinterpret the equivalence relation $\sim$ on $\widehat{\mathscr{M}}$ instead on $\mathscr{M}'/\Gtwosplit$. 
Fixing a basepoint $p_0 \in \mathscr{C}$, we define $\Diff_{F}(\mathscr{C})  < \Diff_0(\mathscr{C}, p_0)$ as the subgroup of factor-preserving diffeomorphisms $\phi$ of $\mathscr{C}$ of the form $\phi = \widehat{df} \oplus \widehat{df} \oplus \phi_+$ such that $f \in \Diff_0(S)$, and $\phi_+$ is isotopic to 
$\id_{\uline{\R_+}}$ via fiber-preserving diffeomorphisms that preserve the orientation on $\R_+$-fibers. 
Fixing a lift $\tilde{p}_0 \in \tilde{\mathscr{C}}$, we get a unique lift of $\phi$ to $\tilde{\phi} \in \Diff( \tilde{\mathscr{C} } , \tilde{p}_0)$ via the standard lifting lemma. In this way, $\Diff_F(\mathscr{C})$ acts on the space of developing maps by pre-composition. 
The action of $\Diff_{F}(\mathscr{C} )\times \Gtwosplit$ on $\mathscr{M}'$ by $(\phi ,g ) \cdot (\dev, \hol) = ( L_g \circ \dev \circ \tilde{\phi}, \; C_g \circ \hol)$ 
captures the equivalence relation $\sim$. That is, 
there is a natural bijection $\mathscr{M}'/(\Diff_{F}(\mathscr{C})\times \Gtwosplit) \rightarrow \mathscr{M}$.
We topologize 
$\mathscr{M}'$ with the topology of $C^{\infty}$-convergence on compacta on the developing maps and 
$\mathscr{M}$ inherits the quotient topology from $\mathscr{M}'$.

We are heading towards a definition of the map $s: \Hit(S, \Gtwosplit) \rightarrow \mathscr{M}$.  
Now, the map $s: \Hit(S, \Gtwosplit) \rightarrow \mathscr{M}$ we wish to define factors through $\mathcal{H}(S)$. By Theorem \ref{HitchinAC}, we have an inverse map $s_{AC}: \Hit(S, \Gtwosplit) \rightarrow \mathcal{H}(S)_{6g-6}$ of the holonomy map. Then the map $s$ to be defined can instead be written $s = s_G \circ s_{AC}$, where $s_G:  \mathcal{H}(S)_{6g-6} \rightarrow \mathscr{M}$. The map $s_G$ is (implicitly) defined via Section \ref{Subsection:DevMap}. To show the well-definedness of $s$, it suffices to show $s_G$ is well-defined, which is completed below.

\begin{proposition}
Lemma \ref{Lemma:DevelopingMap} yields a well-defined map $s_G: \mathcal{H}(S)_{6g-6} \rightarrow \mathscr{M}$. 
\end{proposition}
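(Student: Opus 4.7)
The plan is to define $s_G$ on representatives using the construction of Lemma \ref{Lemma:DevelopingMap}, then check invariance under the action of $\Diff_0(S) \times \Gtwosplit$ on $\mathcal{A}(S)$.

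Given a representative $(\hat{\nu}, \rho) \in \mathcal{A}(S)$ with $[\rho] \in \Hit(S, \Gtwosplit)$, Remark \ref{NonDegeneracyRemark2} guarantees non-degeneracy of $\sff_{\hat{\nu}}$, so formula \eqref{DevMapFromNu} produces a well-defined immersion $\overline{\dev}(\hat{\nu}): \overline{\mathscr{C}} \rightarrow \Eintwothree$. By Lemma \ref{Lemma:DevelopingMap}, Remark \ref{DevDescends}, and Lemma \ref{DevCCR}, the corresponding lift $\dev(\hat{\nu})$ together with holonomy $\rho \circ \pi_*$ constitutes a CCR pair, hence a point of $\mathscr{M}'$. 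I set $s_G([(\hat{\nu}, \rho)])$ equal to the class of this pair in $\mathscr{M} = \mathscr{M}'/(\Diff_F(\mathscr{C}) \times \Gtwosplit)$. The substance of the proposition is that this assignment descends to the quotient $\mathcal{H}(S)_{6g-6} = \mathcal{A}(S)/(\Diff_0(S) \times \Gtwosplit)$.

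I would handle the two factors of $\Diff_0(S) \times \Gtwosplit$ separately. For the $\Gtwosplit$-factor: since every $g \in \Gtwosplit$ acts on $\imoct$ by isometries preserving the cross product, it commutes with both the tangent map and the second fundamental form of any almost-complex curve. Formula \eqref{DevMapFromNu} then yields $\overline{\dev}(L_g \circ \hat{\nu}) = L_g \circ \overline{\dev}(\hat{\nu})$, and the holonomy is conjugated by $g$ accordingly; this is precisely the $\Gtwosplit$-factor of the action on $\mathscr{M}'$. For the $\Diff_0(S)$-factor: given $f \in \Diff_0(S)$, select the unique lift $\tilde{f}: \tilde{S} \rightarrow \tilde{S}$ that commutes with deck transformations (available since $f_{*} = \id_{\pi_1 S}$). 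Combining $d(\hat{\nu} \circ \tilde{f}) = d\hat{\nu} \circ d\tilde{f}$, the tensorial naturality $\sff_{\hat{\nu} \circ \tilde{f}, p}(d(\hat{\nu} \circ \tilde{f})(X), d(\hat{\nu} \circ \tilde{f})(Y)) = \sff_{\hat{\nu}, \tilde{f}(p)}(d\hat{\nu}(d\tilde{f}(X)), d\hat{\nu}(d\tilde{f}(Y)))$, and the fact that \eqref{DevMapFromNu} is invariant under positive rescaling of its tangent-vector slots (the $|q(\cdot)|^{1/2}$ and $q(\sff(\cdot,\cdot))^{1/2}$ normalizations absorb the scale), I would obtain
\begin{equation*}
\overline{\dev}(\hat{\nu} \circ \tilde{f}) \; = \; \overline{\dev}(\hat{\nu}) \; \circ \; \bigl(\widehat{d\tilde{f}} \oplus \widehat{d\tilde{f}} \oplus \id_{\uline{\R_+}}\bigr).
\end{equation*}
By $\pi_1 S$-equivariance of $\tilde{f}$, the right-hand factor descends to a factor-preserving diffeomorphism $\phi_f: \mathscr{C} \rightarrow \mathscr{C}$ whose base component is $f$ and whose $\uline{\R_+}$-component is the identity. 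Lifting basepoint-consistently to $\widetilde{\mathscr{C}}$ then identifies the two CCR pairs under the action of $(\phi_f, e) \in \Diff_F(\mathscr{C}) \times \Gtwosplit$ on $\mathscr{M}'$.

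The main obstacle is verifying that $\phi_f$ genuinely lies in $\Diff_F(\mathscr{C})$ rather than in a larger group of factor-preserving diffeomorphisms: the base piece is $f \in \Diff_0(S)$ by hypothesis, and the $\uline{\R_+}$-piece is the identity, which is trivially isotopic to itself through fiber-preserving, $\R_+$-orientation-preserving diffeomorphisms, as required by the definition of $\Diff_F(\mathscr{C})$. Once this is checked, combining the $\Gtwosplit$ and $\Diff_0(S)$ invariances exhibits $s_G$ as a well-defined map $\mathcal{H}(S)_{6g-6} \rightarrow \mathscr{M}$; everything else is routine naturality of the differential-geometric invariants appearing in \eqref{DevMapFromNu}.
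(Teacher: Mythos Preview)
Your proposal is correct and follows essentially the same approach as the paper: define the map on representatives via \eqref{DevMapFromNu}, then verify the key transformation law $\overline{\dev}(L_g \circ \hat{\nu} \circ \tilde{f}) = L_g \circ \overline{\dev}(\hat{\nu}) \circ RT(\tilde{f})$ and check that $RT(f) \in \Diff_F(\mathscr{C})$. The only organizational difference is that you treat the $\Gtwosplit$ and $\Diff_0(S)$ factors separately and spell out the naturality of $\sff$ and the scale-invariance of \eqref{DevMapFromNu} explicitly, whereas the paper handles both factors at once and packages the transformation law via the identity $\overline{\dev}(\hat{\nu}, g_R) \circ \overline{\phi} = \overline{\dev}(\hat{\nu} \circ \tilde{f}, g_R \circ \tilde{\phi})$ stated just before the proof.
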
 

\begin{proof} 
By Lemma \ref{DevCCR}, the developing map defined in Lemma \ref{Lemma:DevelopingMap} amounts to a map 
$\widehat{s}: \mathcal{A}(S)_{6g-6} \rightarrow \mathscr{M}'$, where
$\mathscr{M}'$ is the space of all CCR
pairs on $\mathscr{C}$ and $\mathcal{A}(S)_{6g-6}= \{ \, (\hat{\nu}, \rho) \in \mathcal{A}(S) \; | \; b(\hat{\nu}) = 6g-6\}$. 
The map $\widehat{s}$ is described by $\widehat{s}(\hat{\nu}, \rho) = \overline{\dev}(\hat{\nu}, g_{R}^0)$, where $g_R^0(p, r) = r$, in the sense of equation \eqref{Eqn:GeneralDevMap}. 
We show $\widehat{s}$ descends to a well-defined 
map $s: \Hit(S,\Gtwosplit) \rightarrow \mathscr{M}$. 

Take $[\rho] \in \Hit(S, \Gtwosplit)$ and denote $[(\hat{\nu}, \rho)] := s_{AC}(\,  [\rho] \, )$ and choose representatives\\
 $(\hat{\nu}_1, \rho_1), (\hat{\nu}_2, \rho_2) \in [(\hat{\nu}, \rho)]$. Hence, write $\rho_2 = C_g \circ \rho_1$, where $C_g: \Gtwosplit \rightarrow \Gtwosplit$ is conjugation by some element $g \in \Gtwosplit$ and $\hat{\nu}_2 = L_g \circ \hat{\nu}_1 \circ \tilde{f} $, where $\tilde{f} \in \Diff(\tilde{S})$ is a lift of $f \in \Diff_0(S, p_0)$. 
Define $\overline{\dev}_i := \widehat{s}(\hat{\nu}_i, \rho_i)$. 

Now, let $RT(f) \in \mathsf{Diff}_F(\mathscr{C})$ be induced by $f$. Note that $RT(f)$ is isotopic to $\id_{\mathscr{C}}$ by fiber-preserving diffeomorphisms,
as $RT(f_t)$ is such an isotopy, where $f_t$ is an isotopy of $f$ to $\id_{S}$. 
The equality $\overline{\dev}_2 = L_g \circ \overline{\dev}_1 \circ RT(\tilde{f})$ implies that 
$ (RT(f), g) \cdot (\dev_1, \hol_1) = (\dev_2, \hol_2)$,
so we conclude $[(\dev_1, \hol_1)] = [(\dev_2, \hol_2)] $ in $\mathscr{M}$.
 \end{proof} 

\vspace{2ex}
\subsection{Continuity of the Map $s: \Hit(S, \Gtwosplit) \rightarrow \mathscr{M}$}\label{Continuity} 

Before we give the formal proof that $s$ is continuous, we explain the idea of factoring the map. 
We imagine the associations in the following order:
from $[\rho] \in \Hit(S, \Gtwosplit)$, using Labourie \cite{Lab17}, we construct 
the pair $( [\Sigma], [q]) $ with $[\Sigma] \in \Teich(S)$ and $q \in H^0(K_{\Sigma}^6)$. Next, we prove the continuous $C^{\infty}$-dependence of the solutions $(\psi_1, \psi_2)$ to Hitchin's equations for cyclic $\Gtwosplit$-Higgs bundles with respect to the input data $(\sigma, q)$,
where $\sigma$ is a conformal metric on $\Sigma$. 
The invariants $r,s$ of the almost-complex curve $\hat{\nu}$ from Hitchin's equation \eqref{HitchinsEquations_rs} are then smoothly constructed from $(\psi_1, \psi_2, \sigma, q)$ and then $\hat{\nu}$ can be smoothly constructed from these invariants. This shows the continuity of the map $s_{AC}: \Hit(S, \Gtwosplit) \rightarrow \mathcal{H}(S)_{6g-6}$.
The process of constructing the geometric structure from the almost-complex curve is then continuous map because the $C^{\infty}$-convergence
of the almost-complex curves $\hat{\nu}$ forces the $C^{\infty}$-convergence of the differentials $d\hat{\nu}$ and second fundamental forms $\sff_{\hat{\nu}}$, which altogether controls the developing map.

\begin{lemma} \label{s_continuity}
The map $s: \Hit(S, \Gtwosplit) \rightarrow \mathscr{M}$ is continuous.
\end{lemma}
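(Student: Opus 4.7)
The plan is to factor $s = s_G \circ s_{AC}$ through the intermediary moduli space $\mathcal{H}(S)_{6g-6}$ as in Figure \ref{GeomStrDiagram}, and to verify continuity of each factor separately. Theorem \ref{HitchinAC} asserts that $\hol : \mathcal{H}(S)_{6g-6} \to \Hit(S, \Gtwosplit)$ is a continuous bijection, but the continuity of its inverse $s_{AC}$ in the $C^\infty$-topology on almost-complex curves is not built into the statement and will form the technical core. The second factor $s_G$ will fall out from the explicit formula \eqref{DevMapFromNu} once the convergence of its inputs has been secured.

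For the continuity of $s_{AC}$, I would proceed through the Higgs bundle route. By Labourie \cite{Lab17}, there is a canonical homeomorphism $\Hit(S, \Gtwosplit) \cong \V_6$, where $\V_6 \to T(S)$ has fiber $H^0(\Sigma, \K_\Sigma^6)$ over $[\Sigma]$; thus locally it suffices to track how $\hat{\nu}$ depends on a continuously-varying pair $(\Sigma, q_6)$. Choosing a local continuous slice of background conformal metrics $\sigma = \sigma_{[\Sigma]}$ (for instance via a harmonic-maps slice to the $\Diff_0(S)$-action), I would view the Hitchin system \eqref{G2Hitchin_GeneralMetric} as a semilinear elliptic system in fixed function spaces over $S$. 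Its unique solution $(\psi_1, \psi_2)$ depends continuously in $C^\infty$ on $(\sigma, q_6)$: the linearization at any solution is a strictly cooperative Schr\"odinger-type operator (the zeroth-order terms coming from differentiating the exponentials in \eqref{G2Hitchin_GeneralMetric} have the correct signs), hence an isomorphism on H\"older spaces $C^{k,\alpha}$, so the implicit function theorem gives continuous dependence and elliptic bootstrapping upgrades this to all $C^k$. From $(\psi_1, \psi_2, \sigma, q_6)$ one reconstructs the invariants $r, s$ and the Hitchin metric $h$ via \eqref{HitchinMetric}, and hence the flat connection $\nabla = \nabla_{\delbar, h} + \varphi + \varphi^{*h}$ on $\V$ together with the $\nabla$-parallel real structure of \eqref{HUnitaryMultiplicationFrame}. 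Parallel transport of the tautological section $\hat{\mu}$ out of a fixed basepoint then produces $\hat{\nu} : \tilde{S} \to \quadric$; this amounts to solving an ODE with smoothly varying coefficients, so $\hat{\nu}$ depends continuously on $[\rho]$ in $C^\infty_{\mathrm{loc}}$, establishing continuity of $s_{AC}$.

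For the continuity of $s_G$, I would observe that the representative developing map \eqref{DevMapFromNu} depends only on the $2$-jet of $\hat{\nu}$. Given a convergent sequence $[(\hat{\nu}_n, \rho_n)] \to [(\hat{\nu}, \rho)]$ in $\mathcal{H}(S)_{6g-6}$, after a coherent choice of marking and basepoint I may pass to representatives with $\hat{\nu}_n \to \hat{\nu}$ in $C^\infty_{\mathrm{loc}}(\tilde{S}, \quadric)$, after which $d\hat{\nu}_n \to d\hat{\nu}$ and $\sff_{\hat{\nu}_n} \to \sff_{\hat{\nu}}$ in $C^\infty_{\mathrm{loc}}$ automatically. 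Because the stratum condition $b(\hat{\nu}) = 6g-6$ is equivalent to $\sff_{\hat{\nu}}$ being nowhere vanishing (Remark \ref{NonDegeneracyRemark2}), the denominators in \eqref{DevMapFromNu} stay locally uniformly bounded away from zero, so the developing maps $\overline{\dev}_n$ converge to $\overline{\dev}$ in $C^\infty_{\mathrm{loc}}$. Lifting from $\overline{\mathscr{C}}$ to $\widetilde{\mathscr{C}}$ and passing to the quotient topology on $\mathscr{M}$ closes the argument.

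The hard part is the $C^\infty$-continuous dependence of the Hitchin solutions on the parameters $(\sigma, q_6)$: the invertibility of the linearization must exploit the sign structure of \eqref{G2Hitchin_GeneralMetric} together with the a priori bounds of Proposition \ref{Prop:GlobalEstimates}, which guarantee the solution remains in a region where the linearized operator is coercive. A secondary wrinkle is arranging for the representatives $\hat{\nu}_n$ above to converge on the nose rather than only up to the $\Gtwosplit \times \Diff_0(S)$-action; this is handled by normalizing each representative via the Frenet frame at a fixed basepoint, so that the curve is uniquely determined within its isomorphism class.
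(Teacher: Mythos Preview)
Your proposal is correct and follows essentially the same route as the paper: factor $s = s_G \circ s_{AC}$, establish continuity of $s_{AC}$ via Labourie's parametrization together with an implicit-function-theorem argument for the Hitchin system (the paper carries this out in Appendix~\ref{Appendix:ImplicitFunctionTheorem}, using the pairing argument and the estimate of Proposition~\ref{Prop:GlobalEstimates} exactly as you anticipate), normalize representatives at a basepoint via the Frenet frame, and then read off continuity of $s_G$ from the explicit dependence of \eqref{DevMapFromNu} on the $2$-jet of $\hat{\nu}$. Your identification of the two delicate points---invertibility of the linearization and the need to pin down representatives---matches the paper's treatment precisely.
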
 

\begin{proof}

To start, we use Labourie's map $\Psi^{-1}: \Hit(S, \Gtwosplit)\rightarrow \V_{6}$, (as discussed in Section \ref{HiggsBundlePreliminaries}) 
where $\V_6 \rightarrow \Teich(S)$ is the bundle over Teichm\"uller space with
fiber at $\Sigma = (S,J)$ given by $H^0(\Sigma, \K_{\Sigma}^6)$. 

We first prove the continuity of the map $s_{AC}: \Hit(S, \Gtwosplit) \rightarrow \mathcal{H}(S)_{6g-6}$. 
Choose any sequence of representations $[\rho_i] \in \Hit(S, \Gtwosplit)$ such that  $[\rho_i ]\rightarrow [\rho_{\infty}] $. Write $\Psi( [\rho_i] = ([\sigma_i], [q_i])$. Then the unique hyperbolic representatives
$\sigma_i \in [\sigma_i]$ satisfy $\sigma_i \stackrel{C^{\infty}}{\longrightarrow} \sigma_{\infty}$ and by continuity of $\Psi$, the associated holomorphic sextic differentials
$q_i$ converge smoothly to $q_{\infty}$. Denote
$(\psi^1_i, \psi^2_i)$ as the unique solution to Hitchin's equations \eqref{G2Hitchin_GeneralMetric} with respect to $(\sigma_i, q_i)$. 
In Appendix \ref{Appendix:ImplicitFunctionTheorem} in Lemma \ref{HitContinuity}, using the implicit function theorem, we show that $(\psi^1_i, \psi^2_i)$ 
converges in $C^{\infty}(S)\times C^{\infty}(S)$ to $(\psi^1_{\infty}, \psi^2_{\infty})$, the solution to \eqref{G2Hitchin_GeneralMetric} with respect to $(\sigma_{\infty}, q_{\infty})$. 
Next, the tensors $(r_i, s_i) $ given by  $s_i= e^{2\psi^2_i} \sigma_i$ and $r_i = e^{\psi^1_i-\psi^2_i}\sigma_i^2$ converge smoothly to $s_{\infty} = e^{2\psi^2_{\infty}} \sigma_{\infty}$
and $r_{\infty} =  e^{\psi^1_{\infty} -\psi^2_{\infty} }\sigma_{\infty}^2$, respectively. 
Since $\Diff_0(S)$ acts freely on the space of all complex structures on $S$,
then there is a unique $\Gtwosplit$-orbit $\Gtwosplit \cdot \{ (\hat{\nu}_i, \rho_i) \}$ of almost-complex curves (in the conventional sense) $\hat{\nu}_i: (\tilde{S}, \tilde{\sigma}_i) \rightarrow \quadric$ with the invariants $(r_i, s_i, q_i)$. 
By Proposition \ref{StiefelTripletModel}, we can remove the $\Gtwosplit$-freedom and choose a distinguished representative $\hat{\nu}_i$ as follows: fix $p_0 \in \tilde{S}, X \in T_{p_0} \tilde{S}$ and demand $\hat{\nu}_i(p_0) = i, \, d \hat{\nu}_i|_{p_0}(X) = l, \, \sff_{\hat{\nu}_i}(X, X) = j$. Note that by abuse $i$ is both an index $i \in \mathbb{Z}_+$ and an element $i \in \imoct$ in the previous sentence. Then 
it follows that the almost-complex curves $\hat{\nu}_i$ satisfy $ \hat{\nu}_i  \stackrel{C^{\infty}}{\longrightarrow} \hat{\nu}_{\infty}$. This proves the continuity of $s_{AC}$. 

Next, we consider the map $s_G: \mathcal{H}(S)_{6g-6} \rightarrow \mathscr{M}$.
Recall the space $\mathcal{A}(S)$ from \eqref{LiftedModuliHS}. Define $\mathcal{A}(S)_{6g-6} = \{ \,(\hat{\nu}, \rho) \in \mathcal{A}(S) \; | \; \deg \mathcal{B}_{\hat{\nu}}  = 6g-6\},$
so that $\mathcal{H}(S)_{6g-6}  = \mathcal{A}(S)_{6g-6} / (\Diff_0(S) \times \Gtwosplit) $. 
Pick any point $[\hat{\nu}_{\infty}] \in \mathcal{H}(S)_{6g-6} $. 
Choose a sequence $[\hat{\nu}_i] \in \mathcal{H}(S)$ such that
$[\hat{\nu}_i] \rightarrow [\hat{\nu}_{\infty}]$ as well as representatives $\hat{\nu}_i \in[\hat{\nu}_i] $ such that 
$\hat{\nu}_i \stackrel{C^{\infty}}{\longrightarrow} \hat{\nu}_{\infty}$. To prove the continuity of $s_{G}$, we show 
$\dev(\hat{\nu}_i) \stackrel{C^{\infty}}{\longrightarrow} \dev(\hat{\nu}_{\infty})$. 
By the $C^{\infty}$-convergence of $\hat{\nu}_i$ to $\hat{\nu}_{\infty}$, the maps $d\hat{\nu}_i: T \tilde{S} \rightarrow T\quadric$ also converge smoothly on compacta
to $d\hat{\nu}_{\infty}$. Recall the unit norm projection $\tilde{\Pi}_{T}: T\tilde{S} \rightarrow Q_-(\imoct)$ given by $\tilde{\Pi}_{T}(X) = \frac{d\hat{\nu}(X)}{(-q( d\hat{\nu}(X))^{1/2}}$.
The maps $(\tilde{\Pi}_T)_i $ converge $C^{\infty}$ on compacta to $(\tilde{\Pi}_T)_{\infty}$.
Recall also the unit norm projection $\tilde{\Pi}_N: T\tilde{S} \oplus T\tilde{S} \rightarrow \quadric$ by $\tilde{\Pi}_{N}(p, X,Y) = \frac{ \sff_p(X,Y)}{q(\sff_p(X,Y))^{1/2}}$. 
Here, we regard the second fundamental form of $\hat{\nu}_i$ as a map 
$\sff_i: T\tilde{S} \oplus T\tilde{S} \rightarrow \imoct $ by 
$$\sff_i(p, X, Y) = \nabla^{\quadric}_{d\hat{\nu}_i(X)} (d\hat{\nu}_i(Y))
- \mathsf{proj}_{d\hat{\nu}_i(T_p\tilde{S}) } \nabla^{\quadric}_{d\hat{\nu}_i(X)} (d\hat{\nu}_i(Y)).$$
By the $C^{\infty}$-convergence of $\hat{\nu}_i, d\hat{\nu}_i$, it follows that $\sff_{i} \stackrel{C^{\infty}}{\rightarrow} \sff_{\infty}$ and then 
 $(\tilde{\Pi}_N)_{i} \stackrel{C^{\infty}}{\rightarrow} (\tilde{\Pi}_N)_{\infty}$ as well. 
The radial functions $(g_R)_{i}, (g_R)_{\infty}$ from \eqref{RadialFunction} are all just $g_{R}(p, r) = r$ by the
definition of the map $s_G$. 
Since $\hat{\Pi}_T = (g_R^2+1)^{1/2} \tilde{\Pi}_T, \, \hat{\Pi}_N= g_R\, \tilde{\Pi}_N$, it follows that $\hat{\nu_i } \stackrel{C^{\infty}}{\rightarrow} \hat{\nu}_\infty$ implies 
\begin{align}\label{TupleConvergence} 
	(\hat{\nu}_i, (\hat{\Pi}_T)_i,  (\hat{\Pi}_N)_i) \stackrel{C^{\infty}}{\longrightarrow}  (\hat{\nu}_{\infty}, (\hat{\Pi}_T)_{\infty},  (\hat{\Pi}_N)_{\infty}),
\end{align}
where now we regard $\hat{\Pi}_T, \hat{\Pi}_N$ as maps $\overline{\mathscr{C}} \rightarrow \imoct$. 
Now, we have a lift $\hat {\dev}_i: \overline{\mathscr{C}} \rightarrow \imoct$ by $ \hat{\dev}_i= \hat{\nu}_i+ (\hat{\Pi}_T)_i + (\hat{\Pi}_N)_i$. Thus, 
\eqref{TupleConvergence} says that $\hat{\dev}_i \stackrel{C^{\infty}}{\rightarrow} \hat{\dev}_{\infty}$, which then implies $\dev_i \stackrel{C^{\infty}}{\rightarrow} \dev_{\infty}$. 
\end{proof} 

\subsection{The Descended Holonomy Map $\alpha: \mathscr{M} \rightarrow \Hit(S,\Gtwosplit)$} \label{GXtoRepn}
In this section, we show the descended holonomy map $\alpha: \mathscr{M} \rightarrow \Hit(S,\Gtwosplit)$ is the inverse of the map $s$ and thereby prove the main theorem. 
The map $s$ satisfies by definition that $\alpha \circ s = \id_{\Hit(S,\Gtwosplit)}$, so we need only verify $s \circ \alpha = \id_{\mathscr{M}}$ here. \\

We now show that $\alpha$ factors through $\mathcal{H}(S)_{6g-6}$ via 
the following map $H$.

\begin{proposition}\label{Hmap} 
There is a well-defined map $H: \mathscr{M} \rightarrow \mathcal{H}(S)_{6g-6}$.
\end{proposition}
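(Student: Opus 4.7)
The plan is to extract from any CCR representative $(\dev,\hol)$ an equivariant alternating almost-complex curve $\hat{\nu}:\tilde S\to \quadric$ via the cyclic-fibered condition, check that compatibility forces $\sff_{\hat\nu}$ to be non-vanishing (so the image lands in the top stratum $\mathcal{H}(S)_{6g-6}$), and finally verify that the construction descends to the moduli space.

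First, I would fix a CCR pair $(\dev,\hol)$ representing a class in $\mathscr{M}$, and recall from the cyclic-fibered condition that $\overline{\dev}$ develops each fiber $\overline{\mathscr{C}}_p$ onto an $(\radialtorus)$-family $\mathscr{S}_p\subset\Eintwothree$, lifting uniquely to $\widehat{\mathscr{S}}_p\subset\widehat{\Ein}^{2,3}$ with associated splitting $\mathscr{F}_p=(\hat x_p,T_p,N_p,B_p)$ (Corollary \ref{Cor:USplitting}). By the cyclic part of the definition, the map $\mathscr{F}^\C=\iota\circ\mathscr{F}:\tilde S\to\Gtwo^\C/T$ is a (CT) cyclic surface. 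Theorem \ref{FrenetFrameTheorem} then produces a unique $\overline{\hol}$-equivariant alternating almost-complex curve $\hat\nu:\tilde S\to\quadric$ whose complex Frenet frame recovers $\mathscr{F}^\C$; by Remark \ref{InverseFrenetFrame}, $\hat\nu(p)=\pi_{\quadric}(\mathscr{F}^\C(p))=\hat x_p$. I would define $H([(\dev,\hol)]):=[(\hat\nu,\overline{\hol})]$.

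Next, I would show $[(\hat\nu,\overline{\hol})]\in\mathcal{H}(S)_{6g-6}$. The compatibility condition \eqref{Compatible2} requires $\R_+\{\tilde\Pi_N(p,(u,v,r))\}=\R_+\{\sff_{\hat\nu}(d\hat\nu(u),d\hat\nu(v))\}$; since $\tilde\Pi_N$ is defined and non-vanishing on the entire fiber (as the projection onto the $N_p$-factor of the splitting is non-vanishing by Lemma \ref{USplitting}), the right-hand side must also be non-vanishing for every choice of $(u,v)\in T_p\tilde S\times T_p\tilde S$. This forces $\sff_{\hat\nu}$ to be pointwise non-zero, hence non-degenerate (Remark \ref{NonDegeneracyRemark}). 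By Remark \ref{NonDegeneracyRemark2}, non-degeneracy of $\sff_{\hat\nu}$ is equivalent to $[\overline{\hol}]\in\Hit(S,\Gtwosplit)$, which via the bijection of Theorem \ref{HitchinAC} is equivalent to $b(\hat\nu)=6g-6$. This is the main step to get right, and the crux is recognising that compatibility of $\overline{\dev}$ with $\hat\nu$ directly implies non-vanishing of the second fundamental form.

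Finally, I would check that $H$ is well-defined on equivalence classes. If $[(\dev_1,\hol_1)]=[(\dev_2,\hol_2)]$ in $\mathscr{M}$, then on the level of representatives there exist $(\phi,g)\in\Diff_F(\mathscr{C})\times\Gtwosplit$ with $\phi=\widehat{df}\oplus\widehat{df}\oplus\phi_+$, $f\in\Diff_0(S)$, such that $\dev_2=L_g\circ\dev_1\circ\tilde\phi$ and $\hol_2=C_g\circ\hol_1$. Chasing through the construction, the splitting data transforms by $\mathscr{F}_{2,p}=g\cdot\mathscr{F}_{1,\tilde f(p)}$, whence $\hat\nu_2=L_g\circ\hat\nu_1\circ\tilde f$ and $\overline{\hol}_2=C_g\circ\overline{\hol}_1$. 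Since $(f,g)\in\Diff_0(S)\times\Gtwosplit$ is precisely the group acting to form $\mathcal{H}(S)$ from $\mathcal{A}(S)$, the pairs $(\hat\nu_1,\overline{\hol}_1)$ and $(\hat\nu_2,\overline{\hol}_2)$ represent the same class in $\mathcal{H}(S)_{6g-6}$. This completes the well-definedness, and I would record that the uniqueness of the lift $\widehat{\dev}$ in Definition \ref{CyclicCondition} (pinned down by the orientation condition on $\mathscr{F}^\C$) is what prevents a sign ambiguity in $\hat\nu$.
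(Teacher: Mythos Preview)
Your proof is correct and follows essentially the same approach as the paper: extract $\hat\nu$ from the cyclic-fibered data via Theorem \ref{FrenetFrameTheorem}, use compatibility \eqref{Compatible2} together with Remarks \ref{NonDegeneracyRemark}--\ref{NonDegeneracyRemark2} to land in $\mathcal{H}(S)_{6g-6}$, and verify equivariance under $\Diff_F(\mathscr{C})\times\Gtwosplit$ to descend to $\mathscr{M}$. The paper organizes the steps in a slightly different order (well-definedness before the stratum check) and phrases the equivariance as naturality properties of a lift $\hat H:\mathscr{M}'\to\mathcal{A}(S)$, but the content is identical.
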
 

\begin{proof}
We first define a lift $\hat{H}: \mathscr{M}' \rightarrow \mathcal{A}(S) $ of $H$ on the level of representatives. 
Let $(\dev, \hol)$ be a CCR pair on $\mathscr{C}$. 
By the cyclic-fibered condition from definition \ref{CyclicCondition}, the 
associated map $\mathscr{F}^\C: \tilde{S} \rightarrow \Gtwo^\C/T$ is a $\overline{\hol}$-equivariant cyclic surface. 
We can then invoke Theorem \ref{FrenetFrameTheorem} and Remark \ref{InverseFrenetFrame} to see that $\pi_{\quadric} \circ \mathscr{F}^\C$ 
is a $\overline{\hol}$-equivariant alternating almost-complex curve $\hat{\nu}: \tilde{S} \rightarrow \quadric$. Define $\hat{H}( \dev, \hol) := (\hat{\nu}, \overline{\hol})$. 

We claim $\hat{H}$ descends to a well-defined map $H: \mathscr{M} \rightarrow \mathcal{H}(S)_{6g-6}$ by $ H( \, [(\dev, \hol)]\, ) = [ \, \hat{H}(\dev, \hol) \, ]$. 
First, observe the following naturality properties of $\hat{H}$:  
\begin{itemize}
	\item If $\phi \in \Diff_F(\mathscr{C})$ lifts $f \in \Diff_0(S)$, then 
	$\hat{H}( \phi \cdot (\dev, \hol) \, ) = f \cdot \hat{H}( \dev, \hol)$
	\item If $g \in \Gtwosplit$, then $\hat{H}( g \cdot (\dev, \hol) ) = g \cdot \hat{H}(\dev, \hol)$. 
\end{itemize} 
These equivariance conditions imply the well-definedness of $H$. Indeed, take $[(\dev, \hol)] \in \mathscr{M}$ and two representatives 
$(\dev_i, \hol_i) $ for $ i \in \{1,2\}$. Then the discussion following Definition \ref{ModuliSpaceDefn}, we can write $(\dev_2, \hol_2) = (\phi, g) \cdot (\hol_1, \dev_1)$, 
where $\phi = \widehat{df} \oplus \widehat{df} \oplus \phi_+$ lifts 
some $f \in \Diff_0(S)$. But then we have
$(\hat{\nu}_2, \overline{\hol}_2) = (f, g) \cdot (\hat{\nu}_1, \overline{\hol}_1)$. Since $f \in \Diff_0(S)$, we conclude 
$[ \, \hat{H}(\dev_1, \hol_1) \, ] = [\,   \hat{H}(\dev_2, \hol_2) \, ]$ in $\mathcal{H}(S)$. 

Now, by equation \eqref{Compatible2} of the compatibility condition, along with Remark \ref{NonDegeneracyRemark2},
we see any pair $[(\hat{\nu}, \rho)] = H( \, [(\dev, \hol) ] \, )$ has $\Gtwosplit$-Hitchin holonomy, since the second fundamental form of $\hat{\nu}$ is non-vanishing. 
 \end{proof} 

The descended holonomy map $\alpha: \mathscr{M} \rightarrow \chi(\pi_1S, \Gtwosplit)$ via $ \alpha( \, [(\dev, \hol)] \, ) = [ \, \overline{\hol} \, ]$ factors through the map $H$ as $ \alpha = \hol \circ H$, where $\hol:  \mathcal{H}(S)\rightarrow \chi(\pi_1S, \Gtwosplit)$ is 
the holonomy map $[ \, (\hat{\nu}, \rho) \,] \mapsto [ \rho ]$. Hence, Proposition \ref{Hmap} says $\alpha$ obtains the form $\alpha: \mathscr{M} \rightarrow \Hit(S,\Gtwosplit)$ by Theorem \ref{HitchinAC}. We now prove the main theorem. 

\begin{theorem}\label{MainTheorem}
The descended holonomy map $\alpha:  \mathscr{M} \rightarrow \Hit(S, \Gtwosplit)$ by $[ \,(\dev, \hol )\,] \mapsto [ \, \overline{\hol} \, ]$ is a homeomorphism
onto the $\Gtwosplit$-Hitchin component. 
\end{theorem}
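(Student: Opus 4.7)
The strategy is to exploit the factorization $\alpha = \hol \circ H$ from Proposition \ref{Hmap} and $s = s_G \circ s_{AC}$, combined with the fact that $\hol: \mathcal{H}(S)_{6g-6} \to \Hit(S, \Gtwosplit)$ is a homeomorphism with inverse $s_{AC}$ by Theorem \ref{HitchinAC}. It suffices to show that $s_G$ and $H$ are mutually inverse maps between $\mathcal{H}(S)_{6g-6}$ and $\mathscr{M}$; then $\alpha = \hol \circ H$ and $s = s_G \circ \hol^{-1}$ will automatically be mutually inverse.

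The identity $H \circ s_G = \id_{\mathcal{H}(S)_{6g-6}}$ is essentially tautological: for $[(\hat{\nu}, \rho)] \in \mathcal{H}(S)_{6g-6}$, the developing map produced by $s_G$ (equation \eqref{DevMapFromNu} with $g_R^0(p,r) = r$) has by construction cyclic-fibered data $(\R\{\hat{\nu}(p)\}, T_p, N_p, B_p)$ at each $p \in \tilde{S}$ -- exactly the Frenet frame of $\hat{\nu}$ -- so the inverse-Frenet operation $\pi_{\quadric} \circ \mathscr{F}^{\C}$ appearing in the definition of $H$ returns $\hat{\nu}$ itself.

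The substantive direction is $s_G \circ H = \id_{\mathscr{M}}$. Given $[(\dev, \hol)] \in \mathscr{M}$ with $\hat{H}(\dev, \hol) = (\hat{\nu}, \overline{\hol})$, I would use the three CCR conditions to rigidify $\dev$ as follows. The cyclic-fibered condition together with decomposition \eqref{DevReduction} writes
\[
\overline{\dev}(p, u, v, r) = \left[\hat{\nu}(p) + (g_R^2+1)^{1/2}\,\tilde{\Pi}_T(p,u,v,r) + g_R\,\tilde{\Pi}_N(p,u,v,r)\right];
\]
compatibility \eqref{Compatible1}--\eqref{Compatible2}, together with the fact that $\tilde{\Pi}_T, \tilde{\Pi}_N$ are unit-normalized in their respective signatures, pins them down to $\tilde{\Pi}_T = d\hat{\nu}(u)/|q(d\hat{\nu}(u))|^{1/2}$ and $\tilde{\Pi}_N = \sff(d\hat{\nu}(u), d\hat{\nu}(v))/q(\sff(\cdot,\cdot))^{1/2}$; and the radial condition forces $g_R$ to descend to $g_R: S \times \R_+ \to \R_+$ with each $g_R|_{\{p\}\times\R_+}$ an orientation-preserving diffeomorphism of $\R_+$. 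Hence $\overline{\dev} = \overline{\dev}(\hat{\nu}, g_R)$ in the notation of \eqref{Eqn:GeneralDevMap}, while $s_G \circ H([(\dev, \hol)])$ is represented by $\overline{\dev}(\hat{\nu}, g_R^0)$. To produce the equivalence in $\mathscr{M}$, define $\phi_+ \in \Diff(S \times \R_+)$ by $\phi_+(p, r) = (p, g_R(p,\cdot)^{-1}(r))$ and set $\phi = \widehat{d\,\id_S} \oplus \widehat{d\,\id_S} \oplus \phi_+$. Then $\phi \in \Diff_F(\mathscr{C})$: it is factor-preserving over $\id_S \in \Diff_0(S)$, and $\phi_+$ is isotopic to $\id_{\uline{\R_+}}$ through fiber-preserving orientation-preserving diffeomorphisms because $\Diff^+(\R_+)$ is contractible. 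Direct substitution then yields $\overline{\dev}(\hat{\nu}, g_R) \circ \tilde{\phi} = \overline{\dev}(\hat{\nu}, g_R^0)$, giving the desired equivalence.

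The main obstacle is the rigidification step: carefully unpacking the CCR conditions through Lemma \ref{USplitting} and Corollary \ref{Cor:USplitting} to conclude that $\dev$ is determined by $(\hat{\nu}, g_R)$ with $g_R$ only a fiberwise reparametrization away from the standard one. For continuity, $s$ is continuous by Lemma \ref{s_continuity}; continuity of $\alpha$ follows from continuity of $H$, since a $C^{\infty}$-convergent sequence of representative developing maps gives $C^{\infty}$-convergent fiber images $\widehat{\mathscr{S}}_p$ in $\widehat{\Ein}^{2,3}$, and the algebraic extraction of the splitting $(\hat{x}_p, T_p, N_p, B_p)$ from Corollary \ref{Cor:USplitting} is continuous, yielding $C^{\infty}$-convergence of the associated almost-complex curves. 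Combined with the mutual inversion, $\alpha$ is a continuous bijection with continuous inverse, hence a homeomorphism.
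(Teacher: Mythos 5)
Your proof is correct and takes essentially the same route as the paper: both reduce to showing that $s_G$ and $H$ are mutually inverse by using the compatibility condition to pin down $\tilde{\Pi}_T, \tilde{\Pi}_N$ from $\hat{\nu}$, and both re-gauge the radial function via a factor-preserving diffeomorphism of $\mathscr{C}$ whose $\R_+$-component is isotopic to the identity by contractibility of $\Diff^+(\R_+)$. Your explicit definition $\phi_+(p,r) = (p, g_R(p,\cdot)^{-1}(r))$ and your sketch of why $H$ is continuous are slightly cleaner than the paper's presentation (which simply asserts continuity of $\alpha$ and writes the re-gauging diffeomorphism a bit informally), but the underlying argument is the same.
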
 

\begin{proof}
The map $\alpha$ is continuous and the candidate inverse $s: \Hit(S, \Gtwosplit) \rightarrow \mathscr{M}$ is also continuous by Lemma \ref{s_continuity}. 
By the construction of $s$ and $ \alpha$, we have $\alpha \circ s = \id_{\Hit(S,\Gtwosplit)}$. It remains to argue that $s \circ \alpha = \id_{\mathscr{M}}$. We show this now using the CCR conditions. 

Take $G_1 = [ \, (\dev_1, \hol_1) \, ] \in \mathscr{M}$ and define $G_2:= s \, \circ\,  \alpha(G) $. Since $\alpha \circ s = \id$, we have $\alpha(G_1) = \alpha(G_2)$, hence $H(G_1) = H(G_2)$. Thus, choose representatives $(\dev_1, \hol_1) \in G_1, (\dev_2, \hol_2) \in G_2$ such that $\hat{H}(\dev_1, \hol_1) = \hat{H}(\dev_2, \hol_2)$, using the notation from Proposition \ref{Hmap}.  
The key going forwards is the fact from Section \ref{GeometricStructuresDefinition} that the developing map $\dev$ of a CCR structure in $\Eintwothree$ is determined by the tuple $(\hat{\nu}, \tilde{\Pi}_T, \tilde{\Pi}_N, g_R)$ of equivariant maps from \eqref{DevReduction}. The compatibility of the almost-complex curve and the developing map in equations \eqref{Compatible1}, \eqref{Compatible2} says that the projections ${\Pi}_T, {\Pi}_N$ are determined by $\hat{\nu}$. That is, $\overline{\dev}_i$ must obtain the form \eqref{Eqn:GeneralDevMap}. In other words, Since $\hat{\nu}_1 = \hat{\nu}_2$, we actually have $(\tilde{\Pi}_T)_1 = (\tilde{\Pi}_T)_2$ and $(\tilde{\Pi}_N)_1 = (\tilde{\Pi}_N)_2$. Hence, we need only align the radial functions with a re-gauging to finish the proof.  

Recall that the subgroup $\Diff^+(\R^n)$ of orientation-preserving diffeomorphisms of $\R^n$ deformation retracts onto $\{\id_{\R^n}\}$. 
It follows that $\Diff^+(\R_+)$ is contractible too.

We are now ready to show $(\dev_1, \hol_1)$ and $(\dev_2, \hol_2)$ represent the same point in $\mathscr{M}$. Define $f: \mathscr{C} \rightarrow \mathscr{C}$ by
$f = \id \oplus \id \oplus ({g_R})_2$. Then $f$ is a factor-preserving diffeomorphism that is orientation-preserving on the $\R_+$-fibers.
 Since $\Diff^+(\R_+)$ is contractible, $(g_R)_2$ is isotopic to $\id_{\uline{\R_+}}$ via fiber-preserving diffeomorphisms. 
 Then $f$ lifts to $\overline{f}: \overline{\mathscr{C}} \rightarrow \overline{\mathscr{C}}$ and 
one finds that $ \overline{\dev}_2 = \overline{\dev}_1 \circ \overline{f} $, meaning $[ \, (\dev_1, \hol_1) \, ] = [\,  (\dev_2, \hol_2) \, ]$ in $\mathscr{M}$. 
\end{proof}

\section{Explicit Computations in the Fuchsian Case} \label{FuchsianCase}

In this section, we examine the geometric structures associated to $\Gtwosplit$-Fuchsian representations. 
We first review a description of the \emph{principal embedding} $ \iota: \mathsf{PSL}_2(\R) \hookrightarrow \Gtwo'$, in which $\PSL_2\R$ acts on $\R^7$ via the action on symmetric homogeneous sextic polynomials. 
We then describe the Fuchsian almost-complex curves in terms of polynomials and examine the resulting developing maps and study the injectivity of $\overline{\dev}$. 
An exceptional feature of
the Fuchsian case is that there is a `universal' almost-complex curve $f: \Ha^2 \rightarrow \quadric$ that is not just $\rho$-equivariant, 
but $\PSL_2\R$-equivariant; this phenomenon is not particular to the $\Gtwosplit$-Fuchsian setting, as it occurs also in \cite{CTT19, GW08}. By uniqueness, then all $\Gtwosplit$-Fuchsian almost-complex curves 
have the same image, just the identification $\tilde{S} \cong \Ha^2$ changes. 
Then, examining the Guichard-Wienhard domain $\Omega \subset \Eintwothree$ in the Fuchsian case, we show our geometric structures are distinct from theirs: the image of $\overline{\dev}$
intersects both $\Omega$ and $\Eintwothree \backslash \Omega$. We discuss the structure of these intersection in detail. 
				     
\subsection{The Principal Embedding $ \iota: \mathsf{PSL}_2 \R \hookrightarrow \Gtwosplit$ }

Let us recall the principal embedding $ \iota:  \mathsf{PSL}_2 \R \hookrightarrow \mathsf{SL}_7 \R $. First, we 
identify $\R^7 \cong_{\mathbf{Vec}} \Sym^6(\R^2)$ and take a canonical basis $X,Y$ for $\R^2$ in which $  \mathsf{PSL}_2 \R $ acts by the fundamental representation. 
Then $\mathcal{P} = (X^i Y^{6-i})_{i=0}^6$ is a basis for $ \R^7$ in which $ g= \pm \begin{pmatrix} a & b \\ c & d \end{pmatrix}$ acts via $ g \cdot (X^i Y^{6-i}) = (g\cdot X)^i (g\cdot Y)^{6-i} $. 
This representation turns out to preserve the bilinear form $Q_6$ of signature $(3,4)$, which in the basis $\mathcal{P}$ is represented by the following matrix \cite[Page 50]{CTT19}: 
$$ Q_6 = \begin{pmatrix} & & & & && 1 \\
			& && & & -\frac{1}{6} &  \\
			& && & \frac{1}{15}& &  \\
				& && -\frac{1}{20} && & \\
			& & \frac{1}{15}& & &&   \\
			& -\frac{1}{6}&& & &&  \\
			1 & && & &&    \end{pmatrix} .$$ 
We will also refer to the induced quadratic form $Q_6(x) = Q_6(x,x)$ by $Q$ as well. 
Thus, the irreducible representation $\iota$ is a representation into $\mathsf{SO}_0(3,4)$. 
In fact, $\iota$ is a $\Gtwosplit$-representation if identifications are made correctly: we can identify $ \R^7 \cong_{\mathbf{Vec}} \imoct$ 
in such a way that $ \iota(\PSL_2\R)$ preserves the cross-product on $\imoct$. Hence, $\iota$
upgrades to a representation $\iota: \PSL_2\R \hookrightarrow \Gtwosplit$. One such basis is the following $\R$-cross-product basis for $\imoct$: 
\begin{align}\label{CrossProductBasis}
 B = \left( \, \frac{i+li}{\sqrt{2}} , \;  \frac{lj-j}{\sqrt{2}\sqrt{6}}, \; \frac{k-lk}{\sqrt{2}\sqrt{15}} , \; -\frac{l}{\sqrt{20}}, \;\frac{k+lk}{\sqrt{2}\sqrt{15}}, \;\frac{j+lj}{\sqrt{2}\sqrt{6}}, 
\;\frac{i-li}{\sqrt{2}} \right) . 
\end{align} 
As in \cite[Definition 8.21]{Eva22}, we call an ordered basis $(x_i)_{i=3}^{-3}$ an $\R$-cross-product basis for $\imoct$ when $x_i \times x_j = c_{i,j} x_{i+j}$ for constants $c_{i,j} \in \R$. 

	\begin{table}[ht]
 	\begin{center}
			\begin{tabular}{||c | c | c||} 
				 \hline
			$\Sym^6(\R^2) $ & $\R^7$  & $ \imoct $ \\ [0.5ex] 
			 \hline\hline
			$X^6$    &  $\mathbf{e}_3$ & $ \frac{i+li}{\sqrt{2}} $ \\ 
			 \hline
		 	$X^5 Y$  &  $\mathbf{e}_2$  & $\frac{lj-j}{\sqrt{2}\sqrt{6}}$ \\
			 \hline
			$X^4 Y^2$ &  $\mathbf{e}_1$ & $\frac{k-lk}{\sqrt{2}\sqrt{15}}$ \\
					\hline 
			$X^3 Y^3 $ & $\mathbf{e}_0$ & $ -\frac{l}{\sqrt{20}},$ \\
					\hline 
			$X^2 Y^4$ & $\mathbf{e}_{-1} $ &  $\frac{k+lk}{\sqrt{2}\sqrt{15}}$ \\
					\hline 
			$X Y^5$ &  $\mathbf{e}_{-2} $ & $\frac{j+lj}{\sqrt{2}\sqrt{6}}$ \\
					\hline
			$Y^6$ &  $\mathbf{e}_{-3} $  & $\frac{i-li}{\sqrt{2}}$ \\
					\hline 
		\end{tabular}
		\caption{The identifications between ordered bases. \label{BasisIdentification}}
	\end{center} 
	\end{table} 
	
We now discuss why the cross-product is preserved by $\iota(\PSL_2 \R)$. Straightforward calculations show that under the identification $\imoct \cong \Sym^6\R^2$, we see the generators 
\begin{enumerate}
	\item $ \begin{pmatrix} \lambda & 0 \\ 0 & \frac{1}{\lambda} \end{pmatrix}$, \; $ \lambda \in \R$,
	\item $ \begin{pmatrix} 1 & s \\ 0 & 1 \end{pmatrix}$, \; $s \in \R$,
	\item $ \begin{pmatrix} 0 & 1 \\ -1 & 0 \end{pmatrix} $,
\end{enumerate} 
of $\mathsf{PSL}_2 \R$ identify, respectively, as the following matrices in the basis $B'$, where we re-normalize the basis $B$ as follows:
\begin{align}\label{RealCrossProductBasis}
	 B' = \left( \, \frac{i+li}{\sqrt{2}} , \;  \frac{lj-j}{\sqrt{2} }, \; \frac{k-lk}{\sqrt{2} } , \; l , \;\frac{k+lk}{\sqrt{2} }, \;\frac{j+lj}{\sqrt{2} }, 
\;\frac{i-li}{\sqrt{2}} \right). 
\end{align} 
The matrices are: 
\begin{enumerate}
	\item $ \mathsf{diag} ( \lambda^6, \lambda^4, \lambda^2, 1, \lambda^{-2}, \lambda^{-4}, \lambda^{-6} )$. \\
	\item $\mathsf{exp} \left(  s \cdot \begin{pmatrix} 0&\sqrt{6} & & & & &\\
				      & 0& \sqrt{10}& & & & \\
			  	      & &0 &-\sqrt{12}  & & &\\
			 	      & & &0 &-\sqrt{12} & &\\
				      & & & & 0&\sqrt{10}&\\
				      & & & & &0& \sqrt{6} \\
				      & & & & &&0  \end{pmatrix}   \right ) $
	\item $ \begin{pmatrix} & & & & && 1 \\
			& && & & -1 &  \\
			& && & 1& &  \\
				& && -1 && & \\
			& & 1& & &&   \\
			& -1&& & &&  \\
			1 & && & &&  \end{pmatrix} $.
\end{enumerate} 
The matrices (1) and (2) are seen to be in $\Gtwosplit$ in the basis $B'$ by examining the matrix representation $[\g_2']_{B'}$ of $\g_2'$ in the basis $B'$ \cite[page 89]{Bar10}. Then for (3),
observe that the given transformation $g$ is equivalent to $g = +1$ on $\mathsf{Im} \Ha$ and $g = -1$ on $l\Ha$, which is indeed a $\Gtwosplit$-transformation, seen in the 
model $V_{(+,+,-)}$ from Proposition \ref{StiefelTripletModel} as the tuple $(i, j, -l)$. 
Going forward, there is a fixed 3-fold identification of vector spaces $ \Sym^6(\R^2) \cong \R^7 \cong \imoct $, shown in Table \ref{BasisIdentification}. For indexing purposes, we denote the standard basis of $\R^7$ as $(\mathbf{e}_i)_{i=3}^{-3}$.  

\subsection{A Description of $\nu$ and $\dev$ in Coordinates} \label{FuchsianDev} 
In this section, we describe the almost-complex curve explicitly in coordinates in the Fuchsian case. 

The first ingredient is the \emph{Veronese embedding}, which identifies as the $\rho$-equivariant (projective) almost-complex curve $[ \hat{\nu} ]$ for 
$\rho$ a $\Gtwosplit$-Fuchsian representation. Before defining this map, we introduce some more notation. 
Set $\mathbb{S}^{0,2} := \mathbb{P}Q_+ \R^{1,2}$, where we identify $\R^{1,2} \cong (\Sym^2(\R^2), Q_2)$. 
Here, we equip $\Sym^2(\R^2)$ with the $\mathsf{PSL}_2\R $-invariant quadratic form $Q_2 = \begin{pmatrix} & & 1 \\ & - \frac{1}{2} & \\ 1 & & \end{pmatrix}$,
in the basis $(X^2, XY, Y^2)$, which equivalently defines the isomorphism $\PSL_2\R \stackrel{\cong}{\longrightarrow} \mathsf{SO}_0(1,2)$.
Here, $-4Q_2 = \Delta$ is just the discriminant of the polynomial $P \in \Sym^2(\R^2)$. The evidently $\PSL_2\R$-equivariant map $\mathbb{P}\Sym^2(\R^2) \rightarrow \mathbb{P}\Sym^2(\R^6)$ by $ [P] \mapsto [P^3]$ restricts to the smooth 
\emph{Veronese embedding} $\overline{f}: \mathbb{P}Q_+\Sym^2(\R^2)  \rightarrow \mathbb{P}Q_+\Sym^6(\R^2) $ by $[P] \mapsto [P^3]$. An easy calculation shows that 
that for $\mathsf{S} := \mathsf{image}(\overline{f})$, the tangent space is given by $T_{[P^3]} \mathsf{S} = \{ \; P^2 Q \; \; | \; \; Q \bot P, \; Q \in \Sym^2(\R^2) \}$. 
At the point $P_0 = [X^2 +Y^2]$, using $T_{[P_0]} \mathbb{S}^{0,2} = \spann_{\R} \langle XY, X^2- Y^2 \rangle$, we find that 
 $$T_{[P_0^3]} \mathsf{S} =  \spann_{\R} \langle (X^2+Y^2)^2(X^2-Y^2), (X^2 + Y^2)^2(XY) \rangle $$ 
 A calculation under the previous identification with $\imoct$ shows that we indeed have $ [P_0^3] \times_{\imoct} T_{[P_0^3]} S = T_{[P_0^3]} S $. 
 Indeed, $\overline{f}(P_0) = [\sqrt{5}i+ \sqrt{3}k]$ and $
 T_{ \overline{f}(P_0)}\mathsf{S} = \spann_{\R} \langle \sqrt{5}\,  lj - \sqrt{3} \, l, \; \sqrt{15} \, li - lk \rangle $. Hence, $\overline{f}$ 
 is an almost-complex curve by $\PSL_2\R$-equivariance. We show shortly that $f$ is, in fact, alternating. 

Next, we recall the the following $\PSL_2\R$-equivariant map $\hat{g}: \Ha^2 \rightarrow Q_+(\Sym^2(\R^2))$, first defined by Baraglia \cite[Page 63]{Bar10}: 
$$ \hat{g}(z) = \frac{1}{\sqrt{2} \, y} \, (zX+Y )(\zbar X + Y) ,$$
where $z = x + iy$. The map $\hat{g}$ provides an explicit identification of the upper half-plane $\Ha^2 \subset \C$ with $Q_+(\Sym^2\R^2) = \hat{\mathbb{S}}^{0,2}$. 
A simple and direct calculation verifies the equivariance of $\hat{g}$. One easily checks $Q_2( \hat{g})  = + 1$, so that $\hat{g} \in \hat{\mathbb{S}}^{0,2}$. 

The partial derivatives of $\hat{g}$ are shown below: 
\begin{align}
	\hat{g}_x &=   \frac{\sqrt{2}\, x}{y} \, X^2 + \frac{\sqrt{2}}{y} \, XY \\
	\hat{g}_y &=  \frac{y^2- x^2}{\sqrt{2} \, y^2} \, X^2 - \sqrt{2} \frac{x}{y^2} XY - \frac{1}{\sqrt{2}\, y^2} Y^2.
\end{align} 

Next, define the $\PSL_2\R$-equivariant map $ f: Q_+(\Sym^2(\R^2)) \rightarrow Q_+(\Sym^6(\R^2))$ by $ f(P) = c P^3$, lifting the Veronese embedding, by choosing an appropriate constant $c \in \R^*$. Then the map 
$\hat{f}: \Ha^2 \rightarrow \quadric$ by $\hat{f} = f \circ \hat{g}$ is a `universal' alternating almost-complex curve that is $\PSL_2\R$-equivariant. 
While the alternating condition has not been verified yet, this point will be addressed momentarily. 
The point $[\Sigma]$ in Teichm\"uller space merely moves the marking $\phi_{\Sigma}: \tilde{\Sigma} \rightarrow \Ha^2$ around, but the image of any Fuchsian almost-complex curve remains the same:
it is $\mathsf{image}( f).$ 

The Frenet frame splitting of $\hat{f}$ is given as follows:
\begin{align} \label{FuchsianFrenetFrame}
	\begin{cases} 
	 \mathscr{L} &= \spann_{\R} \langle \hat{g}^3 \rangle \\
	 T &=  \spann_{\R} \langle \hat{g}^2 \, \hat{g}_x, \; \hat{g}^2 \, \hat{g}_y \rangle \\ 
	 N & =  \spann_{\R} \langle \, \hat{g}\, \hat{g}_x \, \hat{g}_y, \; \proj_{\mathscr{L}^\bot} (\hat{g} \, \hat{g}_x^2) \; ) \rangle  \\
	 B &= \spann_{\R} \langle \,  \proj_{(\mathscr{L} \oplus T \oplus N)^\bot} (\hat{g}_x^2 \,  \hat{g}_y), \;  \proj_{(\mathscr{L} \oplus T \oplus N)^\bot}( \hat{g}_y^2 \,\hat{g}_x) \; \rangle.
	 \end{cases}
\end{align} 
Up to scalars $c,c', c'' \in \R^*$, one finds $ \sff( \der{x}, \der{y}) = c \, \hat{g} \hat{g}_x \hat{g}_y$ and $\sff(\der{x}, \der{x}) = c' \, \proj_{\mathscr{L}^\bot} (\hat{g}\, \hat{g}_x^2 ) $
and $\sff(\der{y}, \der{y}) = c'' \, \proj_{\mathscr{L}^\bot} (\hat{g}\, \hat{g}_y^2 ) $. We discuss the equality for $B$ momentarily. 

The identities \eqref{FuchsianFrenetFrame} yield coordinate-invariant expressions for the Frenet frame splitting purely in terms of
divisibility of polynomials and orthogonal complements. Two such facts we will need are: 
\begin{align} \label{LTNNull}
	U_p &= \{ P \in \Sym^6\R^2 \; | \; \; \hat{g}(p) \, | \, P \} \\
	\mathscr{L}_p \oplus T_p &= \{ P \in \Sym^6\R^2 \; \;| \; \; \hat{g}^2(p) \, | \, P \}. \label{LTNull} 
\end{align} 
The alternating nature of $\hat{f}$ is verified by the following orthonormal frame refining the Frenet frame splitting at $i \in \mathbb{H}^2$: 
\begin{align}\label{FuchsianFrenetFrameGenerators} 
	\begin{cases}
	  \hat{f} = \frac{\sqrt{5}}{4} (X^2+Y^2)^3 \text{ with }  q(\hat{f}) = +1 . \\
	 \hat{f}_x = \frac{ \sqrt{15} }{ \sqrt{8} }( \, X^5Y + 2X^3Y^3 + XY^5 \, )   \text{ with }   q(\hat{f}_x) = - 1. \\
	 \hat{f}_y = \frac{ \sqrt{15} }{ \sqrt{32} }( \, X^6 +X^4Y^2 -X^2 Y^4 - Y^6\, )  \text{ with }   q(\hat{f}_y) = -1. \\
	 \widehat{ \sff(\der{x}, \der{y}) } = \sqrt{3} ( \, X^5Y - XY^5 \, )  \text{ with }   q= +1 . \\
	 \widehat{ \sff(\der{x}, \der{x})  }= \frac{ \sqrt{3} }{4} ( \, X^6-5X^4Y^2-5X^2Y^4+Y^6 \, )  \text{ with }   q= +1 . \\
	 b_1 = (X^2-Y^2)(X^2-4XY+Y^2)(X^2+4XY+Y^2) = X^6-15X^4Y^2+15X^2Y^4-Y^6.\\
	 b_2 = XY(X^2-3Y^2)(3X^2-Y^2) =3 (X^5Y+XY^5)-10X^3Y^3 , \end{cases}
\end{align}
Here, $b_1$ and $b_2$ were computed with cross-products, using that $B = T \times_{\imoct} N$. 
Thus, $\hat{f}$ is an alternating almost-complex curve. Since $\hat{f}$ is an alternating almost-complex curve equivariant under a $\Gtwosplit$-Hitchin representation, the 
third fundamental form $\tff$ is non-degenerate, as discussed in Remark \ref{NonDegeneracyRemark}. Hence, defining the non-vanishing section $\alpha := p \mapsto  \hat{g}\, \hat{g}_x \, \hat{g}_y(p)$ of $N$, then $\tff(\der{x}, \alpha), \, \tff(\der{y}, \alpha)$ generate $B$; these expressions yield the desired equation 
for $B$ by deleting any terms divisible by $\hat{g}$.

Recall the natural map $\psi:  \mathcal{Q}_{\neq 0} \rightarrow \Eintwothree$ by $\psi(p, L) = L$, used to factor the developing map, from Section \ref{DevReinterpret}. 
Now, in the Fuchsian case, each $g \in \PSL_2\R$ acts as a fiber-preserving diffeomorphism of the bundle $\mathcal{Q}_{\neq 0} \rightarrow \Ha^2$ and the map $\psi$ is $\PSL_2\R$ equivariant.
As noted in Section \ref{DevReinterpret}, $\mathsf{image}(\psi) =\mathsf{image}(\dev)$.

To study $\psi$, we first examine the second extended osculating subspaces $U_p = \mathscr{L}_p \oplus T_p \oplus N_p$ and the bundle $\mathcal{Q}$ containing $\mathcal{Q}_{\neq 0}$. 
\begin{proposition}
For any $p \neq q \in \Ha^2$, we have $\dim(U_p \cap U_q) =3$ and $\sig(U_p \cap U_q) = (1,2). $
\end{proposition}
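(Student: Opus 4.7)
My plan is to translate the statement into a question about divisibility of binary sextic polynomials and then exploit the $\PSL_2\R$-equivariance of the Fuchsian almost-complex curve to reduce the signature calculation to a single one-parameter family of model configurations.

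For the dimension statement, I will invoke the polynomial description \eqref{LTNNull}: $U_p = \hat{g}(p)\cdot \Sym^4\R^2$, so $P \in U_p \cap U_q$ iff both $\hat{g}(p)$ and $\hat{g}(q)$ divide $P$ in $\R[X,Y]$. For $p \neq q$ the polynomials $\hat{g}(p), \hat{g}(q)$ are coprime: each is a positive-definite binary quadratic with no real zeros, hence irreducible over $\R$, and they are non-proportional since $\hat{g}:\Ha^2 \to Q_+(\Sym^2\R^2)$ is injective and both have $Q_2$-norm $+1$. Unique factorization in $\R[X,Y]$ therefore forces $\hat{g}(p)\hat{g}(q)\mid P$, whence the multiplication map $m_{p,q}: \Sym^2\R^2 \to \Sym^6\R^2$ given by $Q \mapsto \hat{g}(p)\hat{g}(q)Q$ is a linear isomorphism onto $U_p \cap U_q$. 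In particular, $\dim(U_p \cap U_q) = 3$.

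For the signature, I will pull back $Q_6|_{U_p\cap U_q}$ via $m_{p,q}$ to a symmetric bilinear form $B_{p,q}$ on $\Sym^2\R^2$, whose signature coincides with that of $Q_6|_{U_p\cap U_q}$. Since each $g \in \PSL_2\R$ preserves $Q_6$ and satisfies $m_{g\cdot p,\, g\cdot q} = g\circ m_{p,q}\circ g^{-1}$ (by multiplicativity of the substitution action on polynomials), $B_{p,q}$ and $B_{g\cdot p,\, g\cdot q}$ have the same signature; consequently the signature depends only on the $\PSL_2\R$-orbit of $\{p,q\}$, equivalently on the hyperbolic distance $d(p,q)\in(0,\infty)$. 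I will therefore restrict to the family $(p,q)=(i,si)$, $s\in(0,\infty)$, write out the $3\times 3$ matrix of $B_{i,si}$ in the monomial basis $(X^2, XY, Y^2)$ of $\Sym^2\R^2$, and apply Sylvester's criterion at a single convenient value (e.g.\ $s=2$) to pin down the signature as $(1,2)$.

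The main obstacle is ensuring that no signature jump occurs as $s$ varies: signature is only locally constant at non-degenerate points, so I must verify $\det B_{i,si}\neq 0$ for all $s>0$, $s \neq 1$. I anticipate this reducing, after clearing denominators, to showing that the polynomial $4s^2T^2-(16s^2+T^2)^2$ (with $T:=1+s^2$) has constant sign on $s>0$. Factoring it as a difference of squares produces two quadratic expressions in $T$, each of which has negative discriminant as a quadratic in $T$ and positive value at $T=0$, hence each is strictly positive; together with the non-vanishing of the middle diagonal entry $-(20s^2+3T^2)/60$, this forces $\det B_{i,si}\neq 0$ on all of $(0,\infty)$. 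Combined with the Sylvester calculation at $s=2$ and the $\PSL_2\R$-invariance of signature, this pins $\sig(U_p\cap U_q)=(1,2)$ for every $p\neq q$.
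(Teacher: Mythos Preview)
Your proposal is correct and follows essentially the same route as the paper: both use the divisibility description \eqref{LTNNull} to identify $U_p\cap U_q$ with $\hat g(p)\hat g(q)\cdot\Sym^2\R^2$, and both use the $\PSL_2\R$-action to reduce the signature computation to the one-parameter model $(p,q)=(i,ti)$ (equivalently $P_1=X^2+Y^2$, $P_t=tX^2+\tfrac1tY^2$) in the monomial basis $(X^2,XY,Y^2)$.

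The only substantive difference is in execution. The paper observes directly that $P_1P_t\cdot XY$ is timelike and $Q_6$-orthogonal to both $P_1P_t X^2$ and $P_1P_t Y^2$ (a parity observation), so the signature question reduces to a single $2\times2$ Gram determinant, which it asserts is negative for all $t$; this gives $\sig=(1,2)$ for every $t$ in one stroke. You instead compute the full $3\times3$ Gram matrix, check the signature at $s=2$ by Sylvester, and separately verify non-degeneracy for all $s$ via your determinant factorization to rule out signature jumps. Both work; the paper's orthogonal splitting saves a little algebra, while your version makes every step explicit. Note that once you have $\det B_{i,si}\neq 0$ for all $s\neq 1$, the ``middle diagonal entry'' check is superfluous for the continuity argument---non-vanishing of the determinant alone guarantees constancy of signature on the connected set $(0,\infty)\setminus\{1\}$ (and hence on both components, with $s\leftrightarrow 1/s$ symmetry covering the other component).
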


\begin{proof} 
Take any two subspaces $U_i := \{ \, P \in \Sym^6(\R^2) \; | \; P_i \, | \, U_i \; \}$, by \eqref{LTNNull}, of polynomials divisible by a fixed
 irreducible quadratic $P_i \in \Sym^2(\R^2)$. Clearly, $\dim U_i = 5$ and $ \dim( \mathbb{P} (U_1 \cap U_2) \, ) = 2$, which tells us 
 $U_1$ and $U_2$ are transverse. Indeed, $[P] \in \mathbb{P}(U_1 \cap U_2)$ has 4 of its 6 projective roots fixed. In the case of $P_1 = X^2 +Y^2$ and $P_t = tX^2+ \frac{1}{t}Y^2$, we show that $ \mathsf{sig}( U_1 \cap U_2 ) = (1,2)$.
 Observe that $P_1P_t\,XY $ is timelike and orthogonal to the spacelike elements $P_1P_t\, X^2, \; P_1P_t Y^2.$ One then
 calculates that the two-plane $\langle P_1P_t\, X^2, P_1P_t Y^2 \rangle$ has signature (1,1). Since $[P_1] = [ \hat{f}(i)]$ and $[P_t] = [\hat{f}(t\,i) ]$, by the action of $\PSL_2\R$ and
 basic hyperbolic geometry, we conclude, $\mathsf{sig}(U_p \cap U_q) = (1,2)$ in general for any $p\neq q \in \Ha^2$. \end{proof} 
 
Next, we examine $\mathcal{Q}$. Since $U_1 \cap U_2 \cong \R^{1,2}$, we see $\mathcal{Q}_1 \cap \mathcal{Q}_2 \cong \mathbb{P}Q_0\R^{1,2} = \Ein^{0,1}$ is a topological circle. The question remains of what is $(\mathcal{Q}_1)_{\neq 0} \cap (\mathcal{Q}_2)_{\neq 0} $, which we address now. 

\begin{proposition} 
For any $p \neq q \in \Ha^2$, we have $(\mathcal{Q}_{\neq 0})_p \cap (\mathcal{Q}_{\neq 0})_q \cong \mathbb{S}^1 \backslash D$, where $|D| \leq 4$.
\end{proposition}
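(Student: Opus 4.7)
By $\PSL_2(\R)$-transitivity on unordered pairs of distinct points in $\Ha^2$, I would normalize $p = i$, $q = ti$ with $t > 0$, $t \neq 1$, so that $\hat g(p)$ and $\hat g(q)$ are scalar multiples of $P_1 := X^2 + Y^2$ and $P_t := tX^2 + t^{-1}Y^2$. By \eqref{LTNNull}, the $3$-dimensional space $U_p \cap U_q = P_1 P_t \cdot \Sym^2(\R^2)$ is coordinatized by $R = aX^2 + bXY + cY^2$, and the previous proposition gives $\mathbb{P}Q_0(U_p \cap U_q) \cong \Ein^{0,1} \cong \mathbb{S}^1$. The bad locus $D$ consists of null lines where at least one of the six projections onto $\mathscr{L}_\bullet, T_\bullet, N_\bullet$ (at $p$ and at $q$) vanishes, and each such vanishing is equivalent to $L$ being contained in an explicit linear subspace of $U_p \cap U_q$. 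Counting null lines in $\R^{1,2}$ inside each of these six subspaces then bounds $|D|$.

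My plan is to show that the four $T$- and $N$-conditions cut out non-null ($1$-dimensional) subspaces, while each of the two $\mathscr{L}$-conditions cuts out a $2$-dimensional subspace. For $\Pi_{N_p}(L) = 0$, I have $L \subset (\mathscr{L}_p \oplus T_p) \cap U_q = \R \cdot P_1^2 P_t$, and a direct $Q_6$ computation gives $q_{\imoct}(P_1^2 P_t)$ as a sum of strictly positive monomials in $t$ and $t^{-1}$. For $\Pi_{T_p}(L) = 0$, the two orthogonality equations $\langle P_1 P_t R, P_1^2 \hat g_x(p)\rangle = 0$ and the analogue with $\hat g_y(p)$ reduce at $p = i$ to $b = 0$ and $c = t^2 a$, and the resulting spanning vector $P_1 P_t (X^2 + t^2 Y^2)$ is again spacelike by direct calculation. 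The $p \leftrightarrow q$ symmetry $t \leftrightarrow t^{-1}$ handles the remaining two $T_q$- and $N_q$-conditions. Hence none of these four contributes to $D$.

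The two $\mathscr{L}$-conditions are the single linear orthogonalities $\langle \hat g(\bullet)^3, \cdot \rangle = 0$ on $U_p \cap U_q$; testing with $R = X^2$ shows each functional is non-trivial, so each condition cuts out a $2$-plane in the $3$-dimensional space $U_p \cap U_q$. Any $2$-plane in $\R^{1,2}$ has signature $(1,1)$, $(0,2)$, or is null-degenerate, and correspondingly meets the projectivized null cone in $2$, $0$, or $1$ points. Combining gives $|D| \leq 2 + 2 = 4$. The main technical step will be verifying the signs of $q_{\imoct}$ on the four codimension-two lines uniformly in $t$; I expect this to fall directly out of the $Q_6$ matrix computation, since both $q_{\imoct}(P_1^2 P_t)$ and $q_{\imoct}(P_1 P_t(X^2 + t^2 Y^2))$ are manifestly positive sums of even powers of $t$.
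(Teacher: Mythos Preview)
Your approach is correct and essentially matches the paper's proof: normalize to $p=i$, $q=ti$, use $U_p\cap U_q = P_1P_t\cdot\Sym^2(\R^2)$, show that the $N$-vanishing loci $(\mathscr{L}_\bullet\oplus T_\bullet)\cap U_\circ = \R\cdot P_\bullet^2 P_\circ$ are spacelike lines (contributing nothing), and bound the two $\mathscr{L}$-vanishing $2$-planes by at most two null lines each.

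Two small remarks. First, your treatment of the $T$-conditions is unnecessary: as the paper notes after \eqref{FamilyReformulation}, any null line in $U_p$ automatically has nonzero $T_p$-projection, since $\mathscr{L}_p\oplus N_p$ is positive-definite. Your computation that $P_1P_t(X^2+t^2Y^2)$ is spacelike just rediscovers this. Second, the paper goes slightly further and checks that the null loci of the two $\mathscr{L}$-planes $W_{1,t}$ and $W_{t,1}$ are disjoint, and that all three cardinalities $0,1,2$ actually occur for $|\mathbb{P}Q_0(W_{1,t})|$ as $t$ varies; neither of these is needed for the stated bound $|D|\le 4$, so your argument is in fact the cleaner route to the proposition as written.
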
 

\begin{proof} 
We compute first in the case that $P_1 = X^2+Y^2$ and $P_t = tX^2+ \frac{1}{t}Y^2$, which again is completely general by the $\PSL_2\R$-transitivity. 
Note that 
$ (\mathcal{Q}_1)_{\neq 0} \cap (\mathcal{Q}_2)_{\neq 0} = (\mathcal{Q}_1 \cap \mathcal{Q}_2 ) \backslash D$,
where the degenerate set $D$ where the projection onto $\mathscr{L}, T, $ or $N$ vanishes either at $P_1$ or $P_t$. 
The degenerate locus at $p$ is given by $\mathcal{Q}_p \backslash (\mathcal{Q}_{\neq 0})_p = \mathbb{P}Q_0(\mathscr{L}_p \oplus T_p)
\sqcup \mathbb{P}Q_0( T_p \oplus N_p)$. 
Thus, the set $D$ naturally decomposes into four subsets $$D = \mathbb{P}Q_0 ( \,( \mathscr{L}_1 \oplus T_1) \cap U_2) 
\cup  \mathbb{P}Q_0 ( \, (\mathscr{L}_t \oplus T_t )\cap U_1) \cup  \mathbb{P}Q_0 ( (T_1 \oplus N_1) \cap U_t)  \cup \mathbb{P}Q_0 ( (T_t \oplus N_t) \cap U_1). $$  
Using the Frenet frame expressions from \eqref{FuchsianFrenetFrame}, one finds 
$$\mathbb{P}\left( (\mathscr{L}_t \oplus T_t )\cap U_1 \right) = [P_1P_t^2] \; \; \text{and} \; \; \mathbb{P} \left( (\mathscr{L}_1 \oplus T_1 )\cap U_t \right) =  [P_tP_1^2].$$
Since
$Q(P_1P_t^2) > 0$ and $ Q(P_tP_1^2) >0$, the first two sets in the definition of $D$ are empty. On the other hand, 
each of $ \mathbb{P}Q_0 ( (T_1 \oplus N_1) \cap U_t) \, , \, \mathbb{P}Q_0 ( (T_t \oplus N_t) \cap U_1) $ are non-empty, as we now show. 
Take $v \in U_1 \cap U_t$, so $ v $ is of the form $v= P_1P_t (aX^2+b XY+ cY^2)$. We can compute the coordinates of $v$ in the above basis for $U_1$
 from \eqref{FuchsianFrenetFrameGenerators} as well as in the corresponding basis for $U_t$ achieved by the action of $ \begin{pmatrix} \sqrt{t} & 0 \\ 0 & \frac{1}{\sqrt{t}} \end{pmatrix} $ on the basis for $U_1$. A (computer assisted) calculation shows $\proj_{\mathscr{L}_t} v = 0$ when
$c(1+ 3t^2) + a(1+\frac{3}{t^2} ) = 0$ and  $\proj_{\mathscr{L}_1}(v) = 0$ when $a(1+3t^2) + c (3 + t^2) =0 $. These equations are mutually exclusive unless $t =1$ or 
$a = c = 0$, but in the latter case, $Q_6(v) < 0$. Hence, $ \mathbb{P} Q_0( \, (T_1 \oplus N_1) \cap (T_{t} \cap N_{t}) \, )= \emptyset$ for $t \neq 1$,
which means $ \mathbb{P} Q_0( \, (T_1 \oplus N_1) \cap U_t) $ and  $\mathbb{P} Q_0( \, (T_t \oplus N_t) \cap U_1) $ are disjoint.
Denote $W_{1,t} := (T_1 \oplus N_1) \cap U_t$ and $W_{t,1} := (T_t \oplus N_t) \cap U_1$. Note that $\mathbb{P}Q_0(W_{t,1}), \,\mathbb{P}Q_0(W_{1,t})$
are each finite sets. We determine the possibilities for $| \mathbb{P} Q_0( \, W_{1,t} )|$.  
Define $w = P_1P_t\left ( \, (3+t^2) X^2 -(1+3t^2) Y^2 \right)$ so that $w$ and $u:= P_1P_t XY$ span $W_{1,t}$. 
Since $Q_6(w, u) =0 $ and $Q_6(u) < 0$, we need only determine the sign of $Q_6(w)$ to determine the signature $\mathsf{sig}(W_{1,t})$. 
Note that if $Q_6(w) = 0$, then $\mathsf{sig}(W_{1,t}) = (0,1,1)$, so that $| \mathbb{P} Q_0(W_{1,t})| =1$. Here $\mathsf{sig}(W) = (p,k, n)$ denotes 
$p$ positive, $k$ negative, and $n$ null parts in the signature. If $Q_6(w) > 0$, then $\mathsf{sig}(W_{1,t}) = (1,1)$ 
and $| \mathbb{P} Q_0( W_{1,t}) | = 2$,
and if $Q_6(w) < 0$, then $\mathsf{sig}(W_{1,t}) = (0,2)$ and $| \mathbb{P} Q_0(W_{1,t})| =0$. Another calculation shows 
$Q_6(w) = \frac{4}{15}(3 - 24 t^2 - 86 t^4 - 24 t^6 + 3t^8)$, so that for $t \in \R_+$, all three aforementioned cases occur
for $|\mathbb{P}Q_0(W_{1,t})|$. By equivariance, for every $t \in \R_+$, there is $s \in \R_+$ so that $W_{t,1} = W_{1,s}$. Hence, 
we conclude that for any $p\neq q$, we have $(\mathcal{Q}_{\neq 0})_p \cap (\mathcal{Q}_{\neq 0})_q \cong \mathbb{S}^1 \backslash D$, where $|D| \leq 4$. \end{proof}

In particular, we see the fact that for any $p \neq q \in \Ha$, the developed images  $ \psi( \, (\mathcal{Q}_{\neq 0})_p \, )$ and $\psi( \, (\mathcal{Q}_{\neq 0})_q \, )$ of the fibers intersect 1-dimensionally even though $\psi$ is a local diffeomorphism.  Let us illustrate geometrically what is happening with an example (here, $i \in \Ha^2$ again). While we can find points $X_t \in (\mathcal{Q}_{\neq 0})_{t \,i}$ for any $t > 0$ such that 
 $X_t \in (\mathcal{Q}_{\neq 0})_i$, if we take any such sequence with $t \rightarrow 1$, then $X_t$ subconverges to $ X_{\infty} \in \mathcal{Q} \backslash \mathcal{Q}_{\neq 0}$. Indeed, the limiting point $X_{\infty}$ will be divisible by $(X^2 +Y^2)^2$ and hence satisfy $X_{\infty} \in \mathscr{L}_i \oplus T_i$. 
The degeneration in the example above shows geometrically why the map $\mathcal{Q} \rightarrow \Eintwothree $ is not 
 an immersion in the Fuchsian case, as a complementary explanation to the Higgs bundle proof of the general case in Section \ref{DevReinterpret}.
 
The map $\psi$ is not proper onto its image in the Fuchsian case. Indeed, if $\psi$ were proper, then it would be a covering map. 
On the other hand, this is impossible since the point $[(X^2+Y^2)Y^4]$ has one pre-image under $\psi$, while any point 
$P \in \Eintwothree$ satisfying $P \in (\mathcal{Q}_{\neq 0})_p \cap (\mathcal{Q}_{\neq 0})_q$ for $p \neq q$ has two pre-images under $\psi$.
In fact, the map $\psi$ is either 3-1, 2-1, or 1-1, but but never $k$-1 for $k > 3$. 
By $\PSL_2\R$-transitivity, if $| \psi^{-1}([P]) | = 2$, then $P$ is of the form $P= Q_1Q_2R_1R_2$, where $Q_i$ are distinct irreducible quadratics
and $R_i \in \Sym^1(\R^2)$ are linear factors, and if $| \psi^{-1}([P]) | = 3$, then $P$ is of the form $P =Q_1Q_2Q_3$, where $Q_i$ are distinct irreducible quadratics;
both such cases occur. To see that 3-1 points do occur, note that the the set $\Omega_3$ of points of the form $[Q_1Q_2Q_3] \in \Eintwothree$ is a non-empty open set in $\Eintwothree$ and $\Omega_3 \subset \mathsf{image}(\overline{\psi})$, hence $\mathsf{image}(\psi)\cap \Omega_{3} \neq \emptyset$ too. A  final remark here is that if $u,v,w \in \Ha^2$ lie on a mutual geodesic then $Q_6( \hat{g}_u \hat{g}_v \hat{g}_w) > 0$. Thus, if $[P] \in \mathcal{Q}_u \cap \mathcal{Q}_v \cap \mathcal{Q}_w$, then $u,v,w$ are not collinear in $\Ha^2$.

\subsection{Relation to Guichard-Wienhard}\label{GW}
In \cite{GW12}, Guichard and Wienhard prove that every $G$-Hitchin component is realized as the holonomies of certain (unknown) 
$(G,X)$-structures on an (unknown) \emph{compact} manifold $M$, where the holonomy on $M$ factors through a homomorphism $\pi_1M \rightarrow \pi_1S$. They prove this result by constructing domains of discontinuity $\Omega$ in a flag manifold $X$ of $G$. 
 The theory developed in \cite{GW12} was then extended by Kapovich, Leeb, and Porti in \cite{KLP18}. 
 In this section, we discuss the relationship between the $(\Gtwosplit, \Eintwothree)$-structures from \cite{GW12} 
 and the $(\Gtwosplit, \Eintwothree)$-structures of this paper. We begin by recalling some relevant notation.\\

Denote $\mathcal{F}_0 = \mathbb{P}Q_0(\R^{3,4}) = \Eintwothree$ as the space of isotropic lines in $\R^{3,4}$ and $\mathcal{F}_1 = \Iso_{3}(\R^{3,4}) = \{ P \in \Gr_{3}(\R^{3,4}) \; | q|_P \equiv 0  \}$ as the space of \emph{maximally} isotropic planes in $\R^{3,4}$. Denote $Q_i := \Stab_{\mathsf{SO}_0(3,4)}(x_i)$ as the stabilizers of points $x_i \in \mathcal{F}_i$, each of which is a maximal parabolic subgroup of $ \mathsf{SO}_0(3,4)$. Finally, denote $\mathcal{F}_{01} : = \{ \, (l, P) \; |\; l \in \mathcal{F}_0,\;  P \in \mathcal{F}_1,
\; l \subset P \}$. There are natural projections $\pi_i: \mathcal{F}_{01} \rightarrow \mathcal{F}_i$ for $i \in \{0,1\}$. As a final notion, we need the ``flip'' defined in \cite{GW12} as follows:
given $A \subset \mathcal{F}_i$, we define $K_{A} := \pi_{1-i}(\, \pi_i^{-1} (A))$. Then, for example, if $A \subset \mathcal{F}_0$, then $K_A:=$ the set of isotropic 3-planes containing some point $l \in A$; the roles reverse if $A \subset \mathcal{F}_1$. 

We now recall the relevant notion of transversality as it pertains to the Anosov definition \cite[Definition 2.10]{GW12}. 
 Let $P$ be a parabolic subgroup conjugate to its opposite subgroup. Call a pair 
$(p_1, p_2)\in G/P \times G/P$ \emph{transverse} when $\Stab(p_1) \cap \Stab( p_2) \cong L$ is the Levi subgroup $L$ of $P$. 
Equivalently, this means $P_1 := \Stab(p_1)$ and $P_2:= \Stab(p_2)$ are opposite parabolic subgroups.\footnote{See \cite[page 15]{GW12} for a brief review on the structure of parabolic subgroups
and subalgebras or \cite[Chapter 7, $\S$ 7]{Kna96} for more comprehensive details.} 
Denote $\Gamma = \pi_1S$ and $\partial_{\infty}\Gamma$ as the Gromov boundary of $\Gamma$. 
In the cases of interest here of $P_1 := \Stab_{\Gtwosplit}(x)$ for $x \in \Eintwothree$ and $Q_1 < \mathsf{SO}_0(3,4)$, both parabolic 
subgroups are conjugate to their opposite subgroup. 
We shall use going forwards that $G$-Hitchin representations are $B_G < G$-Anosov, where $B_G$ is the Borel subgroup, and 
hence $(P^+, P^-)$-Anosov for any parabolic subgroup $P^+ = P <G$ by \cite[Lemma 3.18, Theorem 6.2]{GW12}. 
In particular, this implies that for each $\rho \in \Hit(S,G)$ and parabolic subgroup $P < G$ conjugate to its opposite, there is a (unique) continuous $\rho$-equivariant map $\xi_P: \partial_{\infty}\Gamma \rightarrow G/P$ 
that is \emph{transverse}, meaning $(\xi_P(s), \xi_P(t))$ are transverse for $s \neq t \in \partial_{\infty}\Gamma$, and satisfying an additional contraction property. 

Let $B < \Gtwosplit$ denote a copy of the Borel subgroup. Then $\Gtwosplit/B \hookrightarrow \mathsf{SL}_7\R/\mathsf{SO}_7\R \cong \mathsf{Flag}(\R^7)$ as a space of full flags $F = (F_1 \subset F_2 \subset \cdots \subset F_7)$ in $\imoct$ \cite[Lemma 2.5.3]{Eva24}. The rank two nature of $\Gtwosplit$ causes the flag $F$ to be determined by the pair $(F_1, F_2)$. Before we describe such full flags, recall from Section \ref{G2Prelims} that for $u \in \imoct$, we define the \emph{annihilator} of $u$ as $\Ann(u) := \{ v \in \imoct \; | \; u \times v = 0 \}.$
By Proposition \ref{Annihilators}, if $ [u] \in \mathcal{F}_0$, then $ \mathbb{P}\Ann( [u]) \in \mathcal{F}_1$. To simplify notation, we may conflate $\Ann( [u]) \subset \imoct$ with
$\mathbb{P}\Ann( [u]) \in \mathcal{F}_1$ when convenient.  

Now, by \cite[Corollary 2.5.4]{Eva24}, the flag $F \in \Gtwosplit/B$
obtains the following form for some $\R$-cross-product basis $(x_i)_{i=3}^{-3}$ for $\imoct:$ 
\begin{align}\label{FullFlagForm}
F = \left ( \; [x_3] \subset \spann_\R \langle x_3, x_2 \rangle \subset \Ann([x_3]) \subset \Ann([x_3])^\bot \subset (\spann_\R \langle x_3, x_2 \rangle)^\bot \subset [x_3]^\bot \subset \imoct  \;\right).
\end{align}
We observe that the subspaces $F_1, F_2$ from the flag $F$ have the following form: $F_1 \in \mathcal{F}_0$ and $F_2 \in \Gr_{2}Q_0(\imoct)$ is an isotropic 2-plane such that $F_2 \times_{\imoct} F_2 = \{0\}$. As it turns out, the space of such cross-product-trivial isotropic 2-planes $\mathsf{Iso}_2^\times \imoct$ 
is $\Gtwosplit$-equivariantly diffeomorphic to $\Gtwosplit/P_2$, where $P_2$ is a maximal parabolic subgroup of $\Gtwosplit$. The description \eqref{FullFlagForm} of the flag $F$ also says that 
$F_3= \Ann(F_1)$. As a consequence of this fact, if $\rho $ is $\Gtwosplit$-Hitchin and consequently $\mathsf{SO}_0(3,4)$-Hitchin, then the $Q_1$-Anosov boundary map 
is determined by the $Q_0$-Anosov map, as we now show. 

\begin{proposition} 
Let $\rho \in \Hit(S,\Gtwosplit)$. Then the $Q_i $-Anosov boundary maps $\xi_0, \xi_1$ for $\rho$ are related by $\xi_1 = \Ann \circ \xi_0$. 
\end{proposition}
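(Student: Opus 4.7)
The plan is to exploit the fact that a $\Gtwosplit$-Hitchin representation is Borel-Anosov, so that the two $Q_i$-Anosov boundary maps arise as the two natural projections of a single Borel-Anosov boundary map, and then read off the desired relation from the explicit form \eqref{FullFlagForm} of a $\Gtwosplit$-full flag.

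More precisely, since $\rho \in \Hit(S,\Gtwosplit)$, by \cite[Lemma 3.18, Theorem 6.2]{GW12} the representation $\rho$ is $B$-Anosov, where $B<\Gtwosplit$ is the Borel subgroup. Hence there is a unique continuous $\rho$-equivariant transverse map $\xi_B \colon \partial_\infty \Gamma \to \Gtwosplit/B$. Using the flag-manifold embedding $\Gtwosplit/B \hookrightarrow \mathsf{Flag}(\imoct)$ from \cite[Lemma 2.5.3]{Eva24}, define the natural projections
\[
p_0 \colon \Gtwosplit/B \to \mathcal{F}_0, \qquad p_1 \colon \Gtwosplit/B \to \mathcal{F}_1,
\]
by $p_0(F) := F_1$ and $p_1(F) := F_3$; these land in $\mathcal{F}_0$ and $\mathcal{F}_1$ since every $F \in \Gtwosplit/B$ has the form \eqref{FullFlagForm}, which in particular exhibits $F_1 \in \mathcal{F}_0$ and $F_3 \in \mathsf{Iso}_3(\imoct) = \mathcal{F}_1$.

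Set $\eta_i := p_i \circ \xi_B$ for $i \in \{0,1\}$. Each $\eta_i$ is a continuous $\rho$-equivariant map $\partial_\infty \Gamma \to \mathcal{F}_i$. For transversality, note that $B < Q_0 \cap \Gtwosplit$ and $B < Q_1 \cap \Gtwosplit$, so $p_i$ descends from the projection $\Gtwosplit/B \to \Gtwosplit/(Q_i \cap \Gtwosplit) \cong \mathcal{F}_i$; since $\xi_B$ is Borel-transverse and the Borel-opposite pair projects to a $Q_i$-opposite pair under this quotient, the pair $(\eta_i(s), \eta_i(t))$ is transverse for $s \neq t$. By the uniqueness of the $Q_i$-Anosov boundary map (see \cite[Section 2]{GW12}), we conclude $\eta_i = \xi_i$.

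Finally, the key algebraic observation comes from reading off \eqref{FullFlagForm}: for any $F \in \Gtwosplit/B$ of the form displayed, one has $F_1 = [x_3]$ and $F_3 = \Ann([x_3]) = \Ann(F_1)$. Therefore $p_1 = \Ann \circ p_0$ on $\Gtwosplit/B$, so
\[
\xi_1 = p_1 \circ \xi_B = \Ann \circ p_0 \circ \xi_B = \Ann \circ \xi_0,
\]
as desired. The only non-trivial point is the transversality claim for $\eta_1$ (since $Q_1$-transversality is a slightly delicate condition on 3-planes), but this is immediate from the fact that $B$-opposite flags in $\Gtwosplit$ remain $Q_i$-opposite after projection, which is a standard property of parabolic subgroups containing a common Borel.
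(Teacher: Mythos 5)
Your proof is correct, but it takes a genuinely different route from the paper. The paper does not introduce the Borel-Anosov map $\xi_B$ at all: it takes $\Ann \circ \xi_0$ as the candidate for $\xi_1$, then directly checks that this map is transverse (via the explicit algebraic characterization: two isotropic 3-planes $R_1, R_2$ are transverse iff $q_{3,4}$ pairs them non-degenerately, which in turn is equivalent to $\ell_1 \oplus \ell_2$ having signature $(1,1)$ when $R_i = \Ann(\ell_i)$), continuous, $\rho$-equivariant, and satisfies the attracting-fixed-point constraint ($P_1 = \Ann(P_0)$ for the attracting fixed points of $\exp(X)$, $X \in \mathfrak{a}^+$), then concludes by uniqueness. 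Your approach instead factors both $\xi_0$ and $\xi_1$ through the $B$-Anosov limit map and reads off the relation $p_1 = \Ann \circ p_0$ from \eqref{FullFlagForm}. Conceptually yours is tidier, and the structural observation $F_3 = \Ann(F_1)$ is indeed stated in the text just before the proposition.

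The one spot where you should be more careful is the transversality step. You invoke that ``$B$-opposite flags in $\Gtwosplit$ remain $Q_i$-opposite after projection, which is a standard property of parabolic subgroups containing a common Borel.'' That statement is correct \emph{for parabolics of the same group}: $B$-opposite $\Rightarrow$ $P$-opposite for any $\Gtwosplit$-parabolic $P \supset B$. But $Q_0, Q_1$ are parabolics of the larger group $\SO_0(3,4)$, and the map $\xi_1$ is the $Q_1$-Anosov map with transversality meant in the $\SO_0(3,4)$-sense. So you additionally need that the $\Gtwosplit$-equivariant inclusion $\Gtwosplit/P \hookrightarrow \SO_0(3,4)/Q_i$ carries $\Gtwosplit$-transverse pairs to $\SO_0(3,4)$-transverse pairs. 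This is true, but it is precisely the content of the chain of equivalences the paper verifies by hand in its first paragraph: transversality of $\ell_1, \ell_2 \in \Eintwothree$ $\iff$ $\ell_1 \oplus \ell_2$ non-degenerate $\iff$ $q_{\imoct}$ pairs $\Ann(\ell_1), \Ann(\ell_2)$ non-degenerately $\iff$ $\Ann(\ell_1), \Ann(\ell_2)$ transverse in $\mathcal{F}_1$. Either cite this (or an equivalent compatibility of Borel subgroups of $\Gtwosplit$ and $\SO_0(3,4)$), or replace the ``standard property'' sentence with the paper's explicit computation, and your proof is fully rigorous.
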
 

\begin{proof}
We recall equivalent conditions for transversality in $\Eintwothree$ and in $\mathcal{F}_1$.  
Two points $(\ell_1, \ell_2) \in \Eintwothree \times \Eintwothree$ are transverse $\iff$ $\ell_1 \oplus \ell_2$ is of signature (1,1)
$\iff$ $\Ann(\ell_1) \cap \Ann(\ell_2) = \emptyset $ $\iff$ $q_{\imoct}$ defines a non-degenerate pairing $\Ann(\ell_1) \times \Ann(\ell_2) \rightarrow \R$. 
On the other hand, $(R_1, R_2) \in \mathcal{F}_1 \times \mathcal{F}_1$ are transverse if and only if
$q_{3,4}$ defines a non-degenerate pairing $R_1 \times R_2 \rightarrow \R$. Thus, the map $\Ann \circ \xi_0: \partial_{\infty} \Gamma \rightarrow \mathcal{F}_1$ is a 
transverse map if $\xi_0$ is transverse. Since $\Ann$ is a continuous $\Gtwosplit$-equivariant map, if $\xi_0$ is $\rho$-equivariant and continuous, then so is $\Ann \circ \xi_0$. 

Now, consider an element $g = \exp(X)$ for $X \in \mathfrak{a}^+$, where $\mathfrak{a}^+$ is a Weyl chamber for an 
$\R$-split CSA $\mathfrak{a} < \g_2'$. Then the unique attracting fixed points $P_0$ and $P_1$ of $g$ in $\mathcal{F}_0$ and $\mathcal{F}_1$, respectively, are related by 
$P_1 = \Ann(P_0)$. Now, let $t_{\gamma}^+ \in \partial_{\infty} \Gamma$ be the unique attracting fixed point of $\gamma \in \Gamma$. The Anosov boundary map $\xi_0$ is uniquely constrained by 
$\xi_0( t_{\gamma}^+) $ being the unique attracting fixed point of $\rho(\gamma)$ in $\mathcal{F}_0$ \cite[Lemma 3.1, 3.3]{GW12}. It follows that  
$\Ann \circ \xi_0 $ satisfies the analogous constraint in $\mathcal{F}_1$ and thus $\xi_1 =\Ann \circ \xi_0$ by all our observations. \end{proof} 

Next, we recall the relevant result of Guichard-Wienhard on domains of discontinuity in $\Eintwothree$ for $\mathsf{SO}_0(3,4)$-Hitchin representations 
(and hence for $\Gtwosplit$-Hitchin representations as well.) The following result is proven by \cite[Proposition 8.3, Theorem 8.6, Theorem 9.12]{GW12}. 

\begin{theorem} 
Let $\rho \in \Hit(S, \mathsf{SO}_0(3,4) \, )$. Consider the unique $\rho$-equivariant 
Anosov boundary map $\xi_1: \partial_{\infty} \Gamma \rightarrow \mathcal{F}_1$. Then $\rho(\Gamma)$ acts properly discontinuously and co-compactly 
on the domain $\Omega_{\rho}^1 := \Eintwothree \diagdown K_{\im \, \xi_1}$. The topology of the quotient
$M = \rho(\Gamma) \backslash \Omega_{\rho}^1$ is independent of $\rho$. 
\end{theorem}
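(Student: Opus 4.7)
The plan is to follow the Anosov-representation framework of \cite{GW12}, in which the three assertions (proper discontinuity, cocompactness, topological invariance) each correspond to general principles applicable to $P$-Anosov representations whose boundary map carves out a domain in a second flag manifold via the incidence relation $\mathcal{F}_{01}$.

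First I would establish the dynamical input. Since any $\SO_0(3,4)$-Hitchin representation is $B$-Anosov for the Borel $B < \SO_0(3,4)$, and since $B < Q_1$, standard facts (e.g.\ \cite[Lemma 3.18, Theorem 6.2]{GW12}) upgrade this to $Q_1$-Anosov. The resulting map $\xi_1:\partial_{\infty}\Gamma \to \mathcal{F}_1$ is continuous, transverse, $\rho$-equivariant, and carries the usual contraction dynamics: for any infinite-order $\gamma\in\Gamma$, the iterates $\rho(\gamma^n)$ act on $\mathcal{F}_1$ with attracting fixed point $\xi_1(t_{\gamma}^+)$ and repelling fixed point $\xi_1(t_{\gamma}^-)$. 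Moreover $K_{\im\xi_1}$ is a closed $\rho(\Gamma)$-invariant subset of $\Eintwothree$: closedness because $\im\xi_1$ is the continuous image of the compact space $\partial_{\infty}\Gamma$ and the incidence projections $\pi_i: \mathcal{F}_{01} \to \mathcal{F}_{i}$ are continuous and proper, and $\rho(\Gamma)$-invariance from equivariance of $\xi_1$.

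Proper discontinuity of the $\rho(\Gamma)$-action on $\Omega_\rho^1 = \Eintwothree \setminus K_{\im\xi_1}$ would then follow from a north-south-type argument. If $K \subset \Omega_\rho^1$ is compact, $(\gamma_n) \to \infty$ in $\Gamma$, and $p_n \in K$ satisfy $\rho(\gamma_n)p_n \to q$, pass to a subsequence so that $t_{\gamma_n}^{\pm} \to t^{\pm} \in \partial_{\infty}\Gamma$ and $p_n \to p_{\infty} \in K$. The hypothesis $p_{\infty} \notin K_{\im\xi_1}$ places $p_{\infty}$ outside the repelling set $K_{\{\xi_1(t^-)\}}$, so the $\mathcal{F}_0$-dynamics induced by $\rho(\gamma_n)$ drives $\rho(\gamma_n)p_n$ into every neighborhood of $K_{\{\xi_1(t^+)\}} \subset K_{\im\xi_1}$. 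This forces $q \notin \Omega_\rho^1$, contradicting $q \in K$.

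The main obstacle I expect is cocompactness, which is the technical heart of \cite[\S 8--9]{GW12}. I would follow their strategy: construct a compact set $\mathcal{L} \subset \overline{\Omega_\rho^1}$ whose $\rho(\Gamma)$-translates cover $\Omega_\rho^1$, either via an expansion argument showing that any point near $K_{\im\xi_1}$ can be pulled back into a fixed compact set by some $\rho(\gamma)$, or via the Morse-theoretic Kapovich--Leeb--Porti fundamental-set construction in a suitable Finsler compactification of the symmetric space $\Gtwosplit/K$. The essential point is the balance provided by transversality of $\xi_1$ (keeping $K_{\im\xi_1}$ small enough) against Anosov contraction (making $K_{\im\xi_1}$ large enough to absorb every divergent orbit). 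Finally, topological invariance of $M$ would follow from connectedness of $\Hit(S,\SO_0(3,4))$ together with continuous dependence of $\xi_1$ on $\rho$: the family $\{(\Omega_\rho^1, \rho(\Gamma))\}$ assembles into a continuous family of cocompact properly discontinuous actions, and structural stability then gives a locally trivial $M$-bundle over $\Hit(S,\SO_0(3,4))$, so the diffeomorphism type of $M$ is constant.
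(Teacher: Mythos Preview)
The paper does not give its own proof of this theorem: it is stated as a citation, with the line ``The following result is proven by \cite[Proposition 8.3, Theorem 8.6, Theorem 9.12]{GW12}.'' There is therefore nothing in the paper to compare your argument against beyond the pointer to Guichard--Wienhard.

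Your outline is a faithful sketch of the \cite{GW12} approach and correctly identifies the three ingredients (Anosov dynamics for proper discontinuity, the balanced-thickening argument for cocompactness, and Ehresmann--Thurston-type continuity over the connected Hitchin component for topological invariance). That said, the cocompactness paragraph is only a sketch: neither the ``expansion'' idea nor the KLP fundamental-domain construction is made precise here, and in \cite{GW12} this step genuinely requires the specific combinatorics of the relative position between $\mathcal{F}_0$ and $\mathcal{F}_1$ (what KLP later called a balanced ideal). If you intend this as more than a citation, you would need to verify that balance condition explicitly for the pair $(Q_0, Q_1)$ in $\SO_0(3,4)$. As a pointer-level proof matching the paper's own treatment, though, what you have written is already more detailed than what the paper provides.
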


Let $\rho \in \Hit(S, \Gtwosplit)$ be $\Gtwosplit$-Fuchsian now and write $\rho = \iota \circ \rho_0$ for $\rho_0 \in T(S)$ a Fuchsian representation. 
In this case, since any orbit map of $\rho_0$ is a quasi-isometry, identify $\partial_{\infty} \Gamma \cong \partial_{\infty} \Ha^2$.
Again, we view hyperbolic space $\Ha^2$ as (note the sign change from earlier) 
 $\Ha^2 \cong Q_-( \Sym^2(\R^2), -Q_2) $, where $Q_2 $ is the quadratic form $Q_2 = \begin{pmatrix} 0 & 0 & -1 \\ 0 & \frac{1}{2} & 0 \\  -1 & 0 & 0 \end{pmatrix}$
 in the basis $(X^2, XY, Y^2)$, invariant under the $\PSL_2\R$ action on $\Sym^2(\R^2)$. 
 We then identify $\partial_{\infty} \Ha^2 \cong \Ein^{1,0} = \mathbb{P}Q_0( \Sym^2(\R^2), -Q_2 \, ) $, the set of projective homogeneous quadratic polynomials with repeated real root. 
Up to pre-composing by the identification $\partial_{\infty} \Gamma \cong \Ein^{1,0}$ depending on $\rho_0$, the $Q_0$-Anosov boundary map of $\rho$ is the $\mathsf{PSL}_2\R$-equivariant transverse map 
$\xi_0: \Ein^{1,0} \rightarrow \Eintwothree = \mathbb{P} Q_0( \Sym^6(\R^2), Q_6) $
 given by $[P] \mapsto [P^3]$. This follows from the uniqueness of $\xi_0$ and the dynamics of the action of $\Gamma$ on $\partial_{\infty} \Ha^2 = \Ein^{1,0}$ as a Fuchsian representation. 
Hence, 
$$\mathsf{image}(\xi_0) = \{ \;[L^6] \in \Eintwothree \; | \; L \in \Sym^{1}(\R^2) \}.$$ 

In general, the complement
$K_{\rho} := \Eintwothree \diagdown \Omega_{\rho} $ of the domain $\Omega_{\rho}$ is a locally trivial $\RP^2$-bundle over $\mathsf{image}( \xi_1)$ \cite[Page 36]{GW12}. 
However, in this simplified case, we can describe $K := K_{\rho}$, $\Omega := \Omega_{\rho}$ more precisely. 
Using the transitive $\PSL_2\R$ action on $\mathsf{image}( \xi_0)$, for any $L = aX + bY \in \Sym^1(\R^2)$,
one finds that
 \begin{align}\label{Xi1} 
 	\Ann( \, [L^6] \, ) \; = \; \{ \, [L^4Q] \; | \; Q \in \Sym^2(\R^2) \; \}, 
 \end{align} 
since \eqref{Xi1} holds at $L = X$ by our identifications from Table \ref{BasisIdentification} and the fact that $B$ from \eqref{CrossProductBasis} is an $\R$-cross-product basis. In particular, if $P \in \mathbb{P}\Sym^6(\R^2)$ has a multiplicity $\geq 4$ real root, then $Q_6(P) = 0$ automatically. Moreover, the bundle $K \rightarrow \mathsf{image}(\xi^1)$ is 
globally trivial in this case. Indeed, there is a 
global trivialization $\tau: \mathbb{S}^1 \times \RP^2 \rightarrow K$ as follows: identify $\mathbb{S}^1 \cong \mathbb{P}Q_0\Sym^2(\R^2)$ 
and then $$\tau( [P^2], \, [a:b:c] ) = [ P^4(aX^2+bXY+cY^2)] .$$ 

We now give a slightly different description of $K$. Note that $K= \bigcup_{i=1}^5 \mathcal{O}(P_i) $, where $\mathcal{O}(x)$ denotes the $\PSL_2\R$-orbit of $x \in \Eintwothree$ and 
$$ (P_i )_{i=1}^5 = ( \, X^6, \, X^5Y, X^4Y^2, \, X^4 Y(X-Y), \, X^4(X^2+Y^2) ).$$ 
Write $K_i := \mathcal{O}(P_i).$ 
The set $(P_i)$ serves as representatives for the combinatorial possibilities 
for the roots of a sextic with a multiplicity $\geq 4$ real root. 

Our developing map can be seen to be unrelated to the Guichard-Wienhard construction by showing that
in the case of $\rho$ a $\Gtwosplit$-Fuchsian representation, the image of $\overline{\dev}:=\overline{\dev}_{\rho}$ intersects both $K$ and $\Omega$.
Again, we factor $\overline{\dev} = \psi \circ \phi$ as in Section \ref{DevReinterpret}.  
To prove the claim, we show that $K_i \cap \mathsf{image}(\psi) = \emptyset$ for $1\leq i \leq 4$, but $K_5 \subset \mathsf{image}(\psi)$. 
By equivariance, we need only show $(X^2+Y^2)Y^4 \in \mathsf{image}(\psi)$. 
To see this, start we use the Frenet frame generators from \eqref{FuchsianFrenetFrameGenerators}. 
One then immediately finds $(X^2+Y^2)Y^4 \in (\mathcal{Q}_{\neq 0})_i$, since  
$$(X^2+Y^2)Y^4 = \frac{3}{8} (X^2+Y^2)^3 - \frac{1}{2} ( \, X^6 +X^4Y^2 -X^2 Y^4 - Y^6\, ) + \frac{1}{8}( \, X^6-5X^4Y^2-5X^2Y^4+Y^6 \, ).$$
Thus, the 3-dimensional degenerate locus $K_5$ is contained in the image of our developing map: 
$$ K_5 = \{ \,[ QR^4] \in \Eintwothree \; | \; Q \in Q_+(\Sym^2(\R^2)), \; R \in \Sym^1(\R^2) \} \subset \mathsf{image}(\psi) .$$
 On the other hand, the other generators $P_i$ for $i\in \{1,2,3,4\}$ each do not have an irreducible quadratic factor,
meaning $K_i \cap \mathsf{image}(\psi) = \emptyset$. 
We conclude that our $(\Gtwosplit, \Eintwothree)$-structures are distinct from those of Guichard-Wienhard.

We now briefly remark on $\mathsf{image}(\psi) \,  \cap \Omega$. The set $\Omega$ has four $\PSL_2\R$-invariant open subsets $\Omega_i$, for $i \in \{0,1,2, 3\}$
such that $\Omega \backslash (\cup_{i=1}^4\Omega_i) $ is nowhere dense in $\Eintwothree$,
where $\Omega_i \subset \Eintwothree$ is the set of projective polynomials $[P] \in \Eintwothree$ with $i$ distinct complex conjugate pairs of roots and the remaining roots real and distinct. Equation \eqref{LTNNull} from Section \ref{FuchsianDev} implies that $\mathsf{image}(\psi )\cap \Omega_0 = \emptyset$. 
On the other hand, for $i \in \{1,2,3\}$, the subsets $\mathsf{image}(\psi) \cap \Omega_i$ are open in $\Omega_i$, with the 
complement $\Omega_i \backslash (\mathsf{image} (\psi )\cap \Omega_i) $ difficult to describe explicitly. 

It remains an interesting open problem to geometrically clarify the $(\Gtwosplit, \Eintwothree)$-geometric structures of \cite{GW12}. 
A first step would seem to be understanding the topology of the compact quotient $M = \rho(\Gamma) \, \backslash \, \Omega_{\rho}$, for $\rho$ a $\Gtwosplit$-Fuchsian representation. 
This is currently work in progress. 

\appendix 
\section{$C^{\infty}$-Convergence of Solutions to Hitchin's Equations} \label{Appendix:ImplicitFunctionTheorem}

In this appendix, we prove the $C^{\infty}$-dependence of the solution to Hitchin's equations with respect to the input data of $\sigma, q$. 
This result is needed for the proof of Lemma \ref{s_continuity}.  
Going forward, we shall need some global estimates for the cyclic $\Gtwosplit$-Hitchin system. 
\begin{proposition}\label{Prop:GlobalEstimates}
Let $(\psi_1, \psi_2)$ be the unique solution to \eqref{G2Hitchin_GeneralMetric} with respect to $\sigma, q$. Then the following inequalities hold globally on $S$:
\begin{align}
	||q||_{\sigma}^2e^{-2\psi_1-2\psi_2} &< 3 \label{GlobalEstimate1} \\
	e^{\psi_1-5\psi_2} &< \frac{6}{5} \label{GlobalEstimate2} .
\end{align} 
\end{proposition}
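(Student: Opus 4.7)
The plan is to apply the maximum principle to two auxiliary functions built from $\psi_1, \psi_2$, and $|q|^2_\sigma$, obtaining the non-strict forms of the two estimates from the weak maximum principle and the strict forms from the strong maximum principle together with the fact that $\deg \K_\Sigma^6 > 0$ forces $q$ to have zeros when $q\not\equiv 0$.

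For \eqref{GlobalEstimate1}, set $v := |q|^2_\sigma e^{-2\psi_1 - 2\psi_2}$, a nonnegative smooth function on $S$ vanishing precisely on $q^{-1}(0)$, and on $S \setminus q^{-1}(0)$ define $F := \log v = \log|q|^2_\sigma - 2\psi_1 - 2\psi_2$. Using $\Delta_\sigma \log|q|^2_\sigma = 3\kappa_\sigma$ away from zeros of $q$ (since $\log(q\bar q)$ is locally harmonic there) and summing the two equations of \eqref{G2Hitchin_GeneralMetric}, a short computation gives
\begin{equation*}
	\Delta_\sigma F \;=\; 2e^{2\psi_2}(v - 3).
\end{equation*}
Because $F\to-\infty$ at each zero of $q$, its supremum is attained at an interior point $p_0\in S\setminus q^{-1}(0)$; the weak maximum principle at $p_0$ then forces $v(p_0)\le 3$, and hence $v\le 3$ globally.

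To upgrade to the strict inequality, suppose for contradiction $v(p_0)=3$. Then $F$ achieves its maximum $\log 3$ at an interior point of the connected open set $S\setminus q^{-1}(0)$, and by the bound just obtained $F$ is superharmonic there. Hopf's strong maximum principle then forces $F\equiv\log 3$ on $S\setminus q^{-1}(0)$, contradicting $F\to-\infty$ at $q^{-1}(0)$, which is nonempty when $q\not\equiv 0$ since $\deg \K^6_\Sigma = 12g-12>0$. The case $q\equiv 0$ gives $v\equiv 0 < 3$ trivially. For \eqref{GlobalEstimate2}, set $G := \psi_1-5\psi_2$; combining the two Hitchin equations in the ratio $1:-5$ and substituting $e^{\psi_1-3\psi_2}=e^{2\psi_2}e^G$ and $|q|^2_\sigma e^{-2\psi_1}=ve^{2\psi_2}$ produces
\begin{equation*}
	\Delta_\sigma G \;=\; e^{2\psi_2}\bigl(15 e^G - v - 15\bigr).
\end{equation*}
At a maximum point $p_1$ of $G$, the inequality $\Delta_\sigma G(p_1)\le 0$ gives $15e^{G(p_1)}\le v(p_1)+15$, and plugging in the strict bound $v(p_1)<3$ from \eqref{GlobalEstimate1} yields $e^{G(p_1)}<6/5$, hence $e^G<6/5$ everywhere.

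The main obstacle will be the strict inequality in \eqref{GlobalEstimate1}, not its weak form: the weak maximum principle delivers only $v\le 3$, and upgrading requires combining the strong maximum principle on $S\setminus q^{-1}(0)$ with the topological observation that this punctured surface is connected and that $F$ blows down at its punctures. It is precisely this strict bound that then propagates to the second estimate; a non-strict analogue would give only $e^G \le 6/5$.
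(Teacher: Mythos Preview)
Your overall strategy matches the paper's: form $F=\log(|q|^2_\sigma e^{-2\psi_1-2\psi_2})$ and $G=\psi_1-5\psi_2$, compute their Laplacians from \eqref{G2Hitchin_GeneralMetric}, and apply the maximum principle. Your equations $\Delta_\sigma F = 2e^{2\psi_2}(v-3)$ and $\Delta_\sigma G = e^{2\psi_2}(15e^G - v - 15)$ are correct; the weak bound $v\le 3$ and the argument for \eqref{GlobalEstimate2} go through and are essentially identical to the paper's.

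The gap is in your upgrade of \eqref{GlobalEstimate1} to a strict inequality. From $v\le 3$ you deduce $\Delta_\sigma F \le 0$, declare $F$ superharmonic, and invoke Hopf's strong maximum principle to conclude $F\equiv\log 3$ from the interior maximum at $p_0$. But superharmonic functions obey the strong \emph{minimum} principle, not the maximum principle; a superharmonic function can perfectly well have an isolated interior maximum (take $F=-|x|^2$ on a disk). So the conclusion $F\equiv\log 3$ does not follow from your hypothesis as stated. The paper closes this by using the semilinear structure rather than bare superharmonicity: the constant $\log 3$ is itself a solution of $\Delta_{e^{2\psi_2}\sigma}\beta = 2e^\beta - 6$, and the strong maximum principle applied to the difference (equivalently, the comparison principle for a semilinear elliptic equation with nonlinearity increasing in $\beta$) forces the inequality $F \le \log 3$ to be either strict everywhere or an identity on each component of $S\setminus q^{-1}(0)$. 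Since $F\to -\infty$ at the nonempty zero set of $q$, only the strict alternative survives. Concretely, set $w=\log 3 - F\ge 0$; then $\Delta_\sigma w - c(x)\,w = 0$ with $c(x)=6e^{2\psi_2}\,\tfrac{1-e^{-w}}{w}>0$, and now $w\ge 0$ attaining an interior minimum $0$ \emph{does} force $w\equiv 0$, giving the contradiction you want.
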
 

\begin{proof} 
The estimates are nearly the same as \cite[Lemma 4.5]{Eva22}. 
If $q \equiv 0$, \eqref{GlobalEstimate1} is trivial. 
Otherwise, set $\alpha :=||q||_{\sigma}^2e^{-2\psi_1-2\psi_2}$. Choose $p_1 \in S$ such that $\alpha(p_1) = \max_{x \in S} \alpha(x)$. On any open set $U \subset S$ such that $q(x) \neq 0 $ for $x \in U$, define $\beta: U \rightarrow \R $ by $\beta := \log \alpha $. Using \eqref{G2Hitchin_GeneralMetric},
one finds 
\begin{align}\label{EllipticEqn1}
	\Delta_{e^{2\psi_2} \sigma} \beta + (6 -2e^{\beta} ) = 0.
\end{align} 
Re-writing $\Delta_{e^{2\psi_2} \sigma } \sigma(p_1) \leq 0$ gives the desired unstrict inequality. Next, observe that the constant function $\beta_0 \equiv \log(1/3)$ is also a solution
to \eqref{EllipticEqn1} and $\beta \leq \beta_0$ on all of $U$. By the strong maximum principle 
\cite[Theorem 3.3.1]{Jos13}, the inequality $\beta \leq \beta_0$ is a strict global inequality or a global equality on any such set $U$. However, $q$ must have a zero, so we must have a strict inequality globally. 

 For \eqref{GlobalEstimate2}, we find that for 
$$\Delta_{e^{2\psi_2}\sigma} (\psi_1 -5 \psi_2) = \frac{1}{2} ( 30 e^{\psi_1 - 5\psi_2} - 30 - 2 |q|_{\sigma}^{-2\psi_1 -2\psi_2} )>  \frac{1}{2} ( 30 e^{\psi_1 - 5\psi_2} - 36 ).$$
Using $\Delta_{e^{2\psi_2 } \sigma} (\psi_1 -5 \psi_2)(p_2) \leq 0$ at a maximum $p_2$ of $\psi_1 -5 \psi_2$ proves the desired result. 
\end{proof} 

We now prove the dependence result of interest. The case of interest is local. So, fix a point $[\Sigma_0] \in T(S)$ and the unique hyperbolic metric $\sigma_0$ representing the conformal structure on $\Sigma_0 = (S, J_0)$. We recall some details from \cite{Tro92} on Teichm\"uller space. 
Define $\mathcal{M}_{-1}(S) , \mathcal{M}_{-1}^{k,\alpha}(S) $ to be the spaces of Riemannian metrics of constant curvature $-1$ 
with of regularities $C^{\infty}, C^{k,\alpha}$, respectively. We use the definition of Teichm\"uller space as $T(S) = \mathcal{M}_{-1}(S)/\Diff_0(S)$. Next, we select a local slice neighborhood 
for $T(S)$ in the Banach manifold $\mathcal{M}_{-1}^{k,\alpha}(S)$ for the following proof. 
By \cite[Theorem 2.4.2, Theorem 2.4.5]{Tro92}, there is a $(6g-6)$ dimensional neighborhood $U_0 \subset \mathcal{M}_{-1} \subset \mathcal{M}_{-1}^{k,\alpha}(S)$  
upon which $\Diff_0(S)$ identifies no two points, so that $U$ is
a lift of an open neighborhood $\overline{U}_0 =\pi(U_0) $ of $T(S)$, where $\pi: \mathcal{M}_{-1}(S) \rightarrow T(S)$ the quotient map. Then fix a holomorphic
sextic differential $q_0 \in H^0(\Sigma_0, \K^6_{\Sigma_0})$ as well as an open neighborhood $W_0$ of $(\sigma_0, q_0)$ in the bundle $\bigsqcup_{\sigma \in U_0} H^0(\Sigma_{\sigma}, \K^6_{\Sigma_\sigma})$, where $\Sigma_{\sigma}:= (S, J_{\sigma})$. 

Denote $\mathcal{H}^k(S): = W^{k,2}(S)$ as the Sobolev space of $u \in L^2(S)$ such that $X_1 X_2 \cdots X_j u \in L^2(S)$ for $j \leq k$ and any smooth vector fields $X_i \in \mathfrak{X}(M)$.  
For $k \geq 2$, the Sobolev embedding theorem says $\mathcal{H}^k(S) \hookrightarrow C^0(S)$. Thus, we can work with honest $C^{0}$-functions. 
The following proof is similar to the 
proof of \cite[Lemma 5.12]{DE23}, where an analogous regularity result was 
proven for the (cyclic) $\mathsf{SL}_3\R$-Hitchin equation.  

\begin{lemma}\label{HitContinuity}
The map 
$F: W_0 \rightarrow C^{\infty}(S) \times C^{\infty}(S)$ is continuous at $(\sigma_0, q_0)$, where $F(\sigma, q ):= (\psi_1, \psi_2)$ is the unique solution to \eqref{G2Hitchin_GeneralMetric} with respect to $(\sigma, q)$.  
\end{lemma}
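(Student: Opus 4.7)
The plan is to apply the implicit function theorem in a Banach space setting, and then upgrade regularity via elliptic bootstrapping. Fix a sufficiently large integer $k \geq 3$ and $\alpha \in (0,1)$. Define the operator
\begin{align*}
\Phi: W_0 \times C^{k,\alpha}(S)^2 &\longrightarrow C^{k-2,\alpha}(S)^2 \\
(\sigma, q, \psi_1, \psi_2) &\longmapsto \begin{pmatrix} 2\Delta_\sigma \psi_1 - 5e^{\psi_1 - 3\psi_2} + 2|q|_\sigma^2 e^{-2\psi_1} - \tfrac{5}{2}\kappa_\sigma \\ 2\Delta_\sigma \psi_2 + 5e^{\psi_1-3\psi_2} - 6 e^{2\psi_2} - \tfrac{1}{2}\kappa_\sigma \end{pmatrix}.
\end{align*}
The unique solution $(\psi_1^0, \psi_2^0)$ at $(\sigma_0, q_0)$ satisfies $\Phi(\sigma_0, q_0, \psi_1^0, \psi_2^0) = 0$, and the nonlinearities are smooth in all arguments (as Nemytskii operators on Hölder spaces), so $\Phi$ is $C^1$.

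The key step is to show that the partial derivative $L := D_{(\psi_1,\psi_2)} \Phi\bigl|_{(\sigma_0, q_0, \psi_1^0, \psi_2^0)} : C^{k,\alpha}(S)^2 \to C^{k-2,\alpha}(S)^2$ is a Banach space isomorphism. Writing $e^{\psi_1 - 3\psi_2}=:E_1$, $|q|_\sigma^2 e^{-2\psi_1}=:E_2$, $e^{2\psi_2}=:E_3$, the linearization is
\[
L(u_1, u_2) = \begin{pmatrix} 2\Delta u_1 - (5 E_1 + 4 E_2)\, u_1 + 15 E_1 \, u_2 \\ 2\Delta u_2 + 5 E_1 \, u_1 - (15 E_1 + 12 E_3)\, u_2 \end{pmatrix}.
\]
This is a second-order linear elliptic system with smooth coefficients on a closed surface, and hence Fredholm of index zero between the indicated Hölder spaces. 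Injectivity reduces to showing that $L(u_1, u_2) = 0$ implies $(u_1, u_2) = 0$. The crucial observation is that $L$ is formally self-adjoint with respect to the weighted $L^2$-inner product $\langle (u_1, u_2), (v_1, v_2) \rangle_w := \int_S (u_1 v_1 + 3 u_2 v_2)\, dA_\sigma$ (because the off-diagonal coefficients $15 E_1$ and $5 E_1$ satisfy the weight-ratio compatibility $15 = 3 \cdot 5$). Pairing $L(u_1, u_2)$ against $(u_1, u_2)$ in this inner product and integrating by parts yields
\[
\langle L(u), u \rangle_w = -\int_S \Bigl( 2|\nabla u_1|^2 + 6|\nabla u_2|^2 + 4 E_2 u_1^2 + 36 E_3 u_2^2 + 5 E_1 (u_1 - 3 u_2)^2 \Bigr) dA_\sigma,
\]
using the perfect-square identity $5u_1^2 - 30 u_1 u_2 + 45 u_2^2 = 5(u_1 - 3u_2)^2$. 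Since $E_3 = e^{2\psi_2} > 0$ everywhere, the right-hand side vanishes iff $u_1 = u_2 = 0$, establishing injectivity. Thus $L$ is an isomorphism.

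By the implicit function theorem, there is a neighborhood $W' \subset W_0$ of $(\sigma_0, q_0)$ and a $C^1$ map $\widetilde F: W' \to C^{k,\alpha}(S)^2$ with $\widetilde F(\sigma, q)$ solving $\Phi(\sigma, q, \widetilde F(\sigma, q)) = 0$. By uniqueness of solutions to the Hitchin system (via the maximum principle argument behind Proposition~\ref{Prop:GlobalEstimates}) together with the fact that our $(\sigma, q)$ lie in the smooth locus, we conclude $F = \widetilde F$ on $W'$; in particular $F: W' \to C^{k,\alpha}(S)^2$ is continuous. Finally, any $C^{k,\alpha}$-convergent sequence of solutions $(\psi_1^n, \psi_2^n) \to (\psi_1^0, \psi_2^0)$ satisfies the elliptic system with smoothly convergent data $(\sigma_n, q_n) \to (\sigma_0, q_0)$; a standard bootstrap applied to the Schauder estimates for the equations \eqref{G2Hitchin_GeneralMetric} upgrades convergence to $C^{m,\alpha}$ for every $m$, hence to $C^\infty(S)^2$. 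The main obstacle is verifying invertibility of $L$; I expect this to hinge entirely on finding the correct weight $(1,3)$ and the completing-the-square identity above, both dictated by the underlying $\Gtwosplit$-structure of the cyclic Higgs bundle.
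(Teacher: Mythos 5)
Your proof is correct, and the key step---injectivity of the linearization---is handled by a genuinely different and arguably cleaner argument than the one in the paper.

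The paper pairs $A(w_1,w_2)$ against $(w_1,w_2)$ in the \emph{unweighted} $L^2_{\sigma_0}$ inner product; after factoring out $5E_1$ the cross terms yield the expression $4w_1w_2 - w_1^2 - 3w_2^2 - \tfrac{12}{5}e^{5\psi_2^0 - \psi_1^0} w_2^2$, and showing this is $\leq 0$ requires the Cauchy--Schwarz bound $4w_1w_2 \leq w_1^2 + 4w_2^2$ together with the global pointwise estimate $e^{\psi_1^0 - 5\psi_2^0} < 6/5$ from Proposition~\ref{Prop:GlobalEstimates}. You instead introduce the weight $\mathsf{diag}(1,3)$, under which $L$ becomes formally self-adjoint (the off-diagonal entries $15E_1$ and $5E_1$ match after weighting), and the cross term $30E_1 u_1 u_2$ combines with the diagonal $E_1$-terms $5E_1 u_1^2 + 45E_1 u_2^2$ into the perfect square $5E_1(u_1 - 3u_2)^2$. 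The resulting quadratic form is manifestly a sum of nonnegative terms, and injectivity follows purely from the pointwise positivity of $E_1, E_3$---no appeal to the global estimate \eqref{GlobalEstimate2} is needed. This decouples the implicit-function-theorem argument from the maximum-principle estimates, which is a tidy structural improvement. (As a side remark, the paper's displayed matrix $A$ has the off-diagonal entries transposed relative to the correct linearization that you wrote down; this is harmless for the paper's unweighted pairing, where only the sum $A_{1,2} + A_{2,1} = 20E_1$ enters, but your weighted argument relies on the correct placement.) The remaining pieces of your argument---smoothness of the Nemytskii operators on Hölder spaces, the Fredholm index-zero assertion, identification of the IFT solution with $F$ via uniqueness, and the Schauder bootstrap to $C^\infty$---are standard and parallel the paper, which does the Fredholm step slightly more explicitly via compact perturbation of a diagonal operator on Sobolev spaces followed by elliptic regularity to return to Hölder spaces.
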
 

\begin{proof} 
Our goal is to use the implicit function theorem. Again, we take $k \geq N_0$ to be some large positive integer and $\alpha \in (0,1)$ arbitrary.
Define $\boldsymbol{\psi}_0 := (\psi_1^0, \psi_2^0) = F(\sigma_0, q_0)$. 
Now, consider the map 
$\hat{F}: W_0 \times C^{k, \alpha}(S) \times C^{k, \alpha}(S) \rightarrow C^{k-2,\alpha}(S) \times C^{k-2, \alpha}(S) $: 
$$ \hat{F}( \, (\sigma, q), (\psi_1, \psi_2) \, )= \left( 2\Delta_{\sigma}\psi_1 -5e^{\psi_1-3\psi_2}+2|q|^2_{\sigma}e^{-2\psi_1}- \frac{5}{2}\kappa_{\sigma}, \, 2\Delta_{\sigma}\psi_2+ 5 e^{\psi_1-3\psi_2} -6e^{2\psi_2} - \frac{1}{2}\kappa_{\sigma}  \right),$$
so that $\hat{F}( (\sigma, q), (\psi_1, \psi_2) ) = 0$ if and only if $(\psi_1, \psi_2)$ is the solution to \eqref{G2Hitchin_GeneralMetric}. 
Observe that the map $\hat{F}$ is smooth. 
Define the linear operator 
$A: C^{k,\alpha}(S) \times C^{k, \alpha}(S) \rightarrow C^{k-2,\alpha}(S) \times C^{k-2, \alpha}(S)$ 
by $A:= \deriv{\hat{F} }{\mathbf{\psi}} |_{ (\sigma_0, q_0, {\boldsymbol{\psi}_0} ) }$. The map $A$ is given by
$$A = \begin{pmatrix} (2\Delta_{\sigma_0}-5e^{\psi_1^0-3\psi_2^0}-4|q_0|^2_{\sigma_0}e^{-2\psi_1^0} ) & 5e^{\psi_1^0-3\psi_2^0} \\ 15e^{\psi_1^0-3\psi_2^0} & (2\Delta_{\sigma_0}-15e^{\psi_1^0-3\psi_2^0} -12e^{2\psi_2^0})\end{pmatrix}=: \begin{pmatrix} A_{1,1} & A_{1,2} \\ A_{2,1} & A_{2,2} \end{pmatrix}.$$ 
We first show $A$ is injective. Suppose that $A(w_1, w_2) = 0$ for some $w_1, w_2 \in C^{k, \alpha}(S)$. Then $ \langle  A(w_1, w_2), (w_1, w_2) \rangle_{L^2_{\sigma_0}(S) \times L^2_{\sigma_0}(S)} = 0$. Writing out this pairing,
one finds it has a sign. First, split $ \langle  A(w_1, w_2), (w_1, w_2) \rangle$ into two terms as follows:
\begin{align*}
  \langle  A(w_1, w_2), (w_1, w_2) \rangle_{L^2_{\sigma_0}(S) \times L^2_{\sigma_0}(S)} = &\left[ \langle 2\Delta_{\sigma_0}w_1-4|q_0|_{\sigma_0}^2 w_1, \, w_1 \rangle + \langle 2 \Delta_{\sigma_0} w_2,\, w_2 \rangle \right] \\
  	&+  \int_{S} 5e^{\psi_1^0-3\psi_2^0}( 4w_1w_2-w_1^2-3w_2^2-\frac{12}{5}e^{5\psi_2^0-\psi_1^0}\, w_2^2 )\, dV_{\sigma_0}
\end{align*} 
Observe that the first term is $\leq 0$ by the divergence theorem: 
$$\int_S u \Delta_{\sigma_0} u\, dV_{\sigma_0} = - \int_S \langle \nabla_{\sigma_0} u, \nabla_{\sigma_0} u \rangle dV_{\sigma_0} \leq 0,$$ for all $u \in \mathcal{H}^2(S)$. 
Of course, $4w_1w_2 \leq w_1^2 + 4w_2^2$. Hence, the second term is non-positive by the inequality $\frac{12}{5} e^{5\psi_2^0-\psi_1^0} > 2$ that follows from Proposition \ref{GlobalEstimate2}. 
These arguments show that $ \langle A(w_1, w_2), (w_1, w_2) \rangle  \leq 0$ with equality if and only if $w_1 = w_2 = 0$. Thus, $A$ is injective. 

We now show $A$ is surjective. 
Note that the linear transformation $K: \mathcal{H}^k(S) \times \mathcal{H}^k(S)  \rightarrow  \mathcal{H}^{k-2}(S) \times  \mathcal{H}^{k-2}(S)$ by $K := \begin{pmatrix} 0 & A_{1,2} \\ A_{2,1} & 0 \end{pmatrix}$ is compact. Indeed, this follows by the Rellich compactness theorem \cite[\S4 Proposition 3.4]{Tay11} 
that $\mathcal{H}^k(S) \hookrightarrow \mathcal{H}^{k-2}(S)$ is compact. Classical elliptic PDE techniques show that $T: \mathcal{H}^k(S) \times \mathcal{H}^k(S)  \rightarrow  \mathcal{H}^{k-2}(S) \times  \mathcal{H}^{k-2}(S)$ by $T = \begin{pmatrix} A_{1,1} & 0 \\ 0 & A_{2,2} \end{pmatrix}$ is a linear isomorphism. Indeed, each $A_{1,1}, A_{2,2}$ are injective by a pairing argument similar to the above proof for $A$. Then since $\Delta_{\sigma_0}: \mathcal{H}^k(S) \rightarrow \mathcal{H}^{k-2}(S)$ is Fredholm and of index 0 \cite[Theorem 19.2.1]{Hor07}, 
the maps $A_{i,i} = 2\Delta_{\sigma_0} + K_{i,i}$ are Fredholm and of index 0 too, showing both maps $A_{i,i}$ are isomorphisms. Hence, $A = T+K $ is of index 0 as well, so that $ A:  \mathcal{H}^k(S) \times \mathcal{H}^k(S)  \rightarrow  \mathcal{H}^{k-2}(S) \times  \mathcal{H}^{k-2}(S)$ is surjective since it is injective (by the same argument above). Hence, take any $(v_1, v_2) \in C^{k-2, \alpha}(S) \times C^{k-2, \alpha}(S)$ and we have $(w_1, w_2) \in  \mathcal{H}^k(S) \times \mathcal{H}^k(S) $ such that $A(v_1, v_2) = (w_1, w_2)$. But then elliptic regularity says that $(v_1, v_2) \in C^{k, \alpha}(S)$, meaning the original map $A: C^{k, \alpha}(S) \times C^{k,\alpha}(S) \rightarrow C^{k-2, \alpha}(S) \times C^{k-2,\alpha}(S) $ is surjective. Then by the open mapping theorem we conclude that $A$ is a topological linear isomorphism.

 Finally, we may apply the implicit function theorem for Banach spaces \cite[$\S1$ Theorem 5.9]{Lan99}. We get a neighborhood 
 $\mathcal{U}_k$ of $(\sigma_0, q_0)$ in $W_0$ such that
 $F_k: \mathcal{U}_k \rightarrow C^{k, \alpha}(S) \times C^{k, \alpha}(S)$ is smooth, where $F_k(\sigma, q)$ is the unique solution to \eqref{G2Hitchin_GeneralMetric}. 
Of course, $F_k $ is just $\iota_{k, \alpha} \circ F$, where $\iota_{k, \alpha}: C^{\infty}(S) \times C^{\infty}(S) \hookrightarrow C^{k, \alpha}(S) \times C^{k, \alpha}(S)$ denotes the inclusion map. We conclude the map $\iota_{k} \circ F: W_0 \rightarrow C^{k}(S) \times C^{k}(S)$
 is continuous at $(\sigma_0, q_0)$ for all $k$ sufficiently large. 
Thus, for any sequence $x_k:= (\sigma_k, q_k) \in W_0$ such that $x_k \rightarrow x_0 := (\sigma_0, q_0)$, then $F(x_k) \stackrel{C^k}{\rightarrow} F(x_0)$ for all $k$ and hence $F(x_k) \stackrel{C^\infty}{\rightarrow} F_i(x_0)$, so that $F$ is continuous at $(\sigma_0, q_0)$.  \end{proof} 

\printbibliography

\end{document}